\renewcommand\normalsize{%
    \@setfontsize\normalsize{11.7}{14pt plus .3pt minus .3pt}%
    \abovedisplayskip 10\p@ \@plus4\p@ \@minus4\p@
    \abovedisplayshortskip 6\p@ \@plus2\p@
    \belowdisplayshortskip 6\p@ \@plus2\p@
    \belowdisplayskip \abovedisplayskip}
\renewcommand\small{%
    \@setfontsize\small{9.5}{12\p@ plus .2\p@ minus .2\p@}%
    \abovedisplayskip 8.5\p@ \@plus4\p@ \@minus1\p@
    \belowdisplayskip \abovedisplayskip
    \abovedisplayshortskip \abovedisplayskip
    \belowdisplayshortskip \abovedisplayskip}
\renewcommand\footnotesize{%
    \@setfontsize\footnotesize{8.5}{9.25\p@ plus .1pt minus .1pt}
    \abovedisplayskip 6\p@ \@plus4\p@ \@minus1\p@
    \belowdisplayskip \abovedisplayskip
    \abovedisplayshortskip \abovedisplayskip
    \belowdisplayshortskip \abovedisplayskip}
\newtheorem*{remark*}{\bf Remark}
\newtheorem{theorem}{\bf Theorem}[section]
\newtheorem{proposition}[theorem]{\bf Proposition}
\newtheorem{definition}[theorem]{\bf Definition}
\newtheorem{Theorem}{\bf Theorem}
\newtheorem*{claim}{\bf Claim}
\newtheorem{lemma}[theorem]{\bf Lemma}
\newtheorem{corollary}[theorem]{\bf Corollary}
\newtheorem{remark}[theorem]{\bf Remark}
\def\C{{\mathbb C}}
\def\Q{{\mathbb Q}}
\def\B{\mathbb{B}}
\def\p{\mathbb{P}}
\def\Aut{\mathrm{Aut}}
\def\supp{\textup{supp}}
\def\id{\mathrm{Id}}
\def\P{{\mathbb P}}
\newcommand{\Db}{\mathbb{D}}
\newcommand{\Pb}{\mathbb{P}}
\newcommand{\Cb}{\mathbb{C}}
\newcommand{\Nb}{\mathbb{N}}
\newcommand{\Zb}{\mathbb{Z}}
\newcommand{\Qb}{\mathbb{Q}}
\newcommand{\Rb}{\mathbb{R}}
\newcommand{\Sb}{\mathbb{S}}
\newcommand{\codim}{\text{codim}}
\newcommand{\crit}{{\rm Crit}}
\newcommand{\dist}{{\rm dist}}
\newcommand{\ddc}{{dd^c}}
\title[Sparsity of postcritically finite maps of $\mathbb{P}^k$ and beyond]{Sparsity of postcritically finite maps of $\mathbb{P}^k$ and beyond: A complex analytic approach }
\author{Thomas Gauthier}
\address{Laboratoire de Math\'ematiques d'Orsay, B\^atiment 307, Universit\'e Paris-Saclay, 91405 Orsay Cedex, France}
\email{thomas.gauthier@universite-paris-saclay.fr}
\author{Johan Taflin}
\address{Institut de Math\' ematiques de Bourgogne, UMR 5584 CNRS, Universit\' e de Bourgogne, F-21000 Dijon, France}
\email{johan.taflin@u-bourgogne.fr}
\author{Gabriel Vigny}
\address{LAMFA, Universit\'e de Picardie Jules Verne, 33 rue Saint-Leu, 80039 AMIENS Cedex 1, FRANCE}
\email{gabriel.vigny@u-picardie.fr}
\thanks{The first author is partially supported by the Institut Universitaire de France.}
\thanks{The second author is partially supported by the EIPHI Graduate School, ANR-17-EURE-0002.}
\thanks{The third author is partially supported by the ANR QuaSiDy, grant no ANR-21-CE40-0016.}
\begin{document}
\begin{abstract}
An endomorphism $f:\mathbb{P}^k\to\mathbb{P}^k$ of degree $d\geq2$ is said to be postcritically finite (or PCF) if its critical set $\mathrm{Crit}(f)$ is preperiodic, i.e. if there are integers $m>n\geq0$ such that $f^m(\mathrm{Crit}(f))\subseteq f^n(\mathrm{Crit}(f))$. When $k\geq2$, it was conjectured in \cite{IRS} that, in the space $\mathrm{End}_d^k$ of all endomorphisms of degree $d$ of $\mathbb{P}^k$, such endomorphisms are not Zariski dense.
We prove this conjecture. Further, in the space $\mathrm{Poly}_d^2$ of all regular polynomial endomorphisms of degree $d\geq2$ of the affine plane $\mathbb{A}^2$, we construct a dense and Zariski open subset where we have a uniform bound on the number of preperiodic points lying in the critical set.

The key object in the article are the complex bifurcation measure and its properties. The proofs are a combination of the theory of heights in arithmetic dynamics and methods from real dynamics to produce open subsets with maximal bifurcation.  
\end{abstract}

%
%

\maketitle

%
\setcounter{tocdepth}{1} 
\tableofcontents

\section*{Introduction}

Let $\pi:\mathcal{X}\to S$ be a family of complex projective varieties, where $S$ is a smooth complex projective variety, and let $\mathcal{L}$ be a nef and relatively ample line bundle on $\mathcal{X}$. We let $f:\mathcal{X}\dashrightarrow\mathcal{X}$ be a rational map such that $(\mathcal{X},f,\mathcal{L})$ is a family of polarized endomorphisms of degree $d\geq2$ over a Zariski open subset $S^0$ of $S$, i.e. for all $t\in S^0(\C)$, $X_t:=\pi^{-1}\{t\}$ is normal, $L_t:=\mathcal{L}|_{X_t}$ is ample and $f_t^*L_t\simeq L_t^{\otimes d}$. We further assume that the generic fiber is smooth. If $\mathcal{X}^0=\pi^{-1}(S^0)$, the family $\mathcal{X}^0\to S^0$ is the \emph{regular part} of $(\mathcal{X},f,\mathcal{L})$. The purpose of the article is to study maximal instability phenomena in both complex and arithmetic dynamics, each viewpoint giving deep insights into the other.   

From the arithmetic viewpoint, we are mainly interested in the notion of canonical height of a subvariety. Such height is a function meant to measure the arithmetic dynamical complexity of the orbit of the subvariety. Studying such objects in family,
we are particularly interested in two cases:
\begin{itemize}
 \item the moduli space $\mathscr{M}_d^k$ of degree $d$ endomorphisms of the projective space $\mathbb{P}^k$,
 \item the moduli space $\mathscr{P}_d^k$ of degree $d$ regular endomorphisms of the affine space $\mathbb{A}^k$.
 \end{itemize}
In both cases, we study a family which is finite to one over a Zariski open subset of the moduli space, and the family of subvarieties we consider is the critical set, see \S~ \ref{sec:moduli-space} for more details. More precisely, we 
\begin{itemize}
	\item show that this height is in fact a moduli height on a Zariski open set $U$ of $\mathscr{M}_d^k$.
	\item use that height to show that  postcritically finite maps --  PCF maps for short -- (see below) are not Zariski dense in $\mathscr{M}_d^k$ nor in $\mathscr{P}_d^2$. 
	\item prove a uniform bound on the number of preperiodic critical points for regular polynomial endomorphisms whose conjugacy lies in a Zariski open set of $\mathscr{P}_d^2$.
\end{itemize}
The complex analytic viewpoint is essential in that process to 
\begin{itemize}
	\item show that the support of the bifurcation measure (see below) has non-empty interior in both $\mathscr{M}_d^k$ and $\mathscr{P}_d^2.$
	\item prove that the correspondence between an endomorphism in $\mathscr{M}_d^k$ (or  $\mathscr{P}_d^2$) and the collection of the multipliers of its periodic points is finite-to-one outside a Zariski closed set. 
\end{itemize} 

We are strongly inspired by the recent results on families of abelian varieties where similar type of results have been established, as well as by the recent uniform bounds on the number of common preperiodic points for rational maps of $\mathbb{P}^1$, initiated by DeMarco, Krieger and Ye \cite{DKY1,DKY2} in the cases of flexible Latt\`es maps and quadratic polynomials, and developed since then by Mavraki and Schmidt~\cite{Mavraki_Schmidt} and DeMarco and Mavraki~\cite{DeMarco-Mavraki}. Concerning families of abelian varieties, they naturally fall in the setting of family of polarized endomorphisms when taking the multiplication by $[n]$ morphism. In particular, we used ideas coming from the work of  
 Gao-Habegger \cite{Gao-Habegger} and Cantat-Gao-Habegger-Xie \cite{Cantat-Gao-Habegger-Xie} where the Geometric Bogomolov conjecture is proved in characteristic $0$ (note that Xie and Yuan recently managed the tour de force of proving it in arbitrary characteristic \cite{XY}). We also rely on the work of Dimitrov-Gao-Habegger \cite{Dimitrov-Gao-Habegger} where a uniform bound on the number of rational points of a curve $C$, defined over a number field, inside its Jacobian is established (Uniform Mordell-Lang) and the works of K\"uhne \cite{Kuhne}, generalized by Yuan in arbitrary characteristic \cite{yuan_chara}, and Gao-Ge-K\"uhne \cite{GGK} where the Uniform Mordell-Lang Conjecture is generalized to arbitrary subvariety of an abelian variety.

~

A crucial point in our work is to link the notion of dynamical stability in complex dynamics, which can be characterized by positive closed currents, with the notion of dynamical height. In \cite{GV_Northcott}, relying on the theory of DSH functions of Dinh and Sibony \cite{dinhsibony2}, the first and third authors established such link for the $(1,1)$ \emph{bifurcation current} of a family of subvarieties, here we need to deal with the \emph{bifurcation measure}, which measures higher bifurcation phenomena. Let us explain those terms.

Let $\omega$ be a smooth positive form representing the first Chern class $c_1(\mathcal{L})$ on $\mathcal{X}$. As $f^*\mathcal{L}\simeq \mathcal{L}^{\otimes d}$ on $\mathcal{X}^0$, there is a smooth function $g:\mathcal{X}^0\to\mathbb{R}$ such that $d^{-1}f^*\omega=\omega+dd^cg$ as forms on $\mathcal{X}^0$.
In particular, the following limit exists as a closed positive $(1,1)$-current with continuous potential on the quasi-projective variety $\mathcal{X}^0(\C)$:
\[ \widehat{T}_f := \lim_{n\to\infty} \frac{1}{d^n} (f^n)^*(\omega),\]
and can be written as $\widehat{T}_f=\omega+dd^cg_f$, where $g_f$ is continuous on $\mathcal{X}^0(\C)$. The 
current $\widehat{T}_f$ is the \emph{fibered Green current} of $f$  (note that for abelian varieties, $\widehat{T}_f$ is the Betti form).
Let $\mathcal{Y}\to S$ be a family of subvarieties of $\mathcal{X}$, i.e. $\mathcal{Y}$ is a subvariety of $\mathcal{X}$ and $\pi|_{\mathcal{Y}}:\mathcal{Y}\to S$ of $\pi$ is flat over $S^0$. If $q$ is the relative dimension of $\mathcal{Y}$, for $1\leq m\leq \dim S$, the $m$-\emph{bifurcation current} of $(\mathcal{X},f,\mathcal{L},\mathcal{Y})$ can be defined on $S^0(\C)$ as
\[T_{f,\mathcal{Y}}^{(m)}:=(\pi_{[m]})_*\left(\widehat{T}_{f^{[m]}}^{m(\dim Y_\eta+1)}\wedge[\mathcal{Y}^{[m]}]\right),\]
where $Y_\eta$ is the generic fiber of $\mathcal{Y}$, $\pi_{[m]}:\mathcal{X}^{[m]}\to S$ is the $m$-fiber product of $\mathcal{X}$, and $f^{[m]}$ is the map induced by the fiberwise diagonal action of $f$. The \emph{bifurcation measure} of $(\mathcal{X},f,\mathcal{L},\mathcal{Y})$ is then \[\mu_{f,\mathcal{Y}}:=T_{f,\mathcal{Y}}^{(\dim S)}.\]

We now focus on the case of a family of rational maps of $\P^k(\C)$, parametrized by a projective variety $S$. In this case, the regular part is $\mathcal{X}^0=\mathbb{P}^k\times S^0$, where $S^0$ is a Zariski open subset of $S$. We then are interested in the bifurcation of the critical set $\mathrm{Crit}(f):=\{(z,t)\in\mathbb{P}^k\times S^0\, : \ \det(D_zf_t)=0\}$. So, the bifurcation measure is
\[\mu_{f,\mathrm{Crit}}:=T_{f,\mathrm{Crit}(f) }^{(\dim S)}=  (\pi_{[\dim S]})_*\left(\widehat{T}_{f^{[\dim S ]}}^{k(\dim S)}\wedge[\mathrm{Crit}(f)^{[\dim S]}]\right),\]
since $\mathrm{Crit}(f)$ is a hypersurface of $\mathbb{P}^k\times S^0$. 

 When $k=1$, the bifurcation current has been introduced by DeMarco \cite{DeMarco1} and the bifurcation measure by Bassanelli-Berteloot \cite{BB1}. For families of endomorphisms of $\P^k$, the bifurcation current has been introduced by Bassanelli-Berteloot~\cite{BB1}. In this higher dimensional setting, Berteloot-Bianchi-Dupont showed it is the appropriate tool for studying bifurcations in the important work \cite{BBD} and the bifurcation measure was first considered by Astorg and Bianchi \cite{astorg-bianchi} in the very particular case of families of polynomial skew-product.

It is an important question in complex dynamics to understand what kind of phenomena these currents (or this measure) actually characterize. One way to explore this question is to prove that the measure $\mu_{f,\mathrm{Crit}}$ equidistributes specific type of dynamical behaviors (\cite{BB2, favredujardin, favregauthier, GOV}). 

~

We now come to stating precise results. Define the critical height of a degree $d$ endomorphism $f:\mathbb{P}^k\to\mathbb{P}^k$ defined over a number field as the canonical height of $f$ evaluated at the critical set of $f$:
\[h_{\mathrm{crit}}(f):=\widehat{h}_f(\mathrm{Crit}(f))\]
and remark that this quantity depends only on the conjugacy class. In particular, this defines a function
\[h_{\mathrm{crit}}:\mathscr{M}_d^k(\bar{\mathbb{Q}})\to\mathbb{R}_+.\]
Our first result here is the following 
\begin{Theorem}[The critical height is a moduli height]\label{tm:critical-ample-height}
	The critical height $h_{\mathrm{crit}}$ of the moduli space $\mathscr{M}_d^k$ of degree $d$ of endomorphisms of $\mathbb{P}^k$ is an ample height on a non-empty Zariski open subset $U$ of $\mathscr{M}_d^k$, i.e. for any ample line bundle $M$ on a projective model of $\mathscr{M}_d^k$, there are constants $C_1,C_2>0$ and $C_3,C_4\in\mathbb{R}$ such that
	\[C_1\cdot h_M([f]) +C_3\leq h_\mathrm{crit}([f]) \leq C_2 h_M([f])+C_4,\]
	for all $[f]\in U(\bar{\mathbb{Q}})$.
	Moreover, a subvariety $Z$ is an irreducible component of $\mathscr{M}_d^k\setminus U$ if and only if the bifurcation measure $\mu_{f,\mathrm{Crit},Z}$ of the family induced by $Z$ vanishes.
\end{Theorem}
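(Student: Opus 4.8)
The plan is to realise $h_{\mathrm{crit}}$ as the height associated with a semipositive adelically metrized line bundle on the moduli space and to locate the locus where this bundle is ample by means of the positivity of the bifurcation measure. Concretely, I would first invoke the construction of \cite{GV_Northcott}, applied to the universal family $f:\mathbb{P}^k_{\mathscr{M}_d^k}\to\mathbb{P}^k_{\mathscr{M}_d^k}$ with $\mathcal{Y}=\mathrm{Crit}(f)$ and carried out over a fixed projective model $X$ of $\mathscr{M}_d^k$: it yields a semipositive adelic line bundle $\overline{L}_{\mathrm{crit}}$ on $X$ with $h_{\overline{L}_{\mathrm{crit}}}=h_{\mathrm{crit}}+O(1)$ on $\mathscr{M}_d^k(\bar{\mathbb{Q}})$, whose archimedean curvature restricts over $\mathscr{M}_d^k$ to $c\cdot T^{(1)}_{f,\mathrm{Crit}}$ for some fixed $c>0$ (the continuity of $g_f$ ensuring that $\widehat{T}_f$ genuinely defines a continuous metric). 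In particular the underlying line bundle $L_{\mathrm{crit}}$ is nef.

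The crucial point is then to show that $L_{\mathrm{crit}}$ is \emph{big}. Being nef, this is equivalent to $\int_X c_1(\overline{L}_{\mathrm{crit}})^{\dim X}>0$; and this intersection number equals, up to the factor $c^{\dim X}>0$, the total mass of $\mu_{f,\mathrm{Crit}}=\big(T^{(1)}_{f,\mathrm{Crit}}\big)^{\wedge\dim\mathscr{M}_d^k}$ (a cohomological computation on the iterated fibre product, using that this measure carries no mass on $X\setminus\mathscr{M}_d^k$). So the statement reduces to $\|\mu_{f,\mathrm{Crit}}\|>0$, which I would deduce from the fact that $\mathrm{supp}(\mu_{f,\mathrm{Crit}})$ has non-empty interior in $\mathscr{M}_d^k$ — the production of open sets of maximal bifurcation established by the complex-analytic part of the paper. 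This is where the real content lies and is the step I expect to be the main obstacle; the remaining arguments are formal.

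Assuming bigness, put $U:=\mathscr{M}_d^k\setminus\mathbb{B}_+(L_{\mathrm{crit}})$, a non-empty Zariski open subset. Since $L_{\mathrm{crit}}$ is nef and $M$ ample, $h_{\mathrm{crit}}=h_{\overline{L}_{\mathrm{crit}}}+O(1)\leq C_2\,h_M+C_4$ on all of $\mathscr{M}_d^k(\bar{\mathbb{Q}})$. For the lower bound, recall that $\mathbb{B}_+(L_{\mathrm{crit}})$ is the stable base locus of $L_{\mathrm{crit}}-\varepsilon M$ for all small $\varepsilon>0$; as $L_{\mathrm{crit}}$ is big, $L_{\mathrm{crit}}-\varepsilon M$ is effective and has no stable base point over $U$, so $h_{\mathrm{crit}}\geq\varepsilon\,h_M-O(1)$ on $U(\bar{\mathbb{Q}})$, which yields the two-sided comparison on $U$. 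Finally, by Nakamaye's theorem (applicable since $L_{\mathrm{crit}}$ is big and nef) one has $\mathbb{B}_+(L_{\mathrm{crit}})=\mathrm{Null}(L_{\mathrm{crit}})$, the union of the positive-dimensional irreducible subvarieties $Z$ with $(L_{\mathrm{crit}}|_Z)^{\dim Z}=0$. Since the fibered Green current and the critical set are compatible with restriction to $Z$, the curvature of $\overline{L}_{\mathrm{crit}}|_Z$ is $c$ times the $(1,1)$-bifurcation current of the family induced by $Z$, so $(L_{\mathrm{crit}}|_Z)^{\dim Z}=c^{\dim Z}\,\|\mu_{f,\mathrm{Crit},Z}\|$, which vanishes exactly when $\mu_{f,\mathrm{Crit},Z}=0$. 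Thus $\mathscr{M}_d^k\setminus U$ is the union of the irreducible subvarieties on which the induced bifurcation measure vanishes, and its irreducible components are precisely the maximal such subvarieties, as claimed.
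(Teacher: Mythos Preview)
Your strategy is genuinely different from the paper's. The paper does \emph{not} construct an adelic line bundle on a projective model of $\mathscr{M}_d^k$ whose height is $h_{\mathrm{crit}}$; it works directly on the total space. For the lower bound it applies Siu's bigness criterion to the iterated line bundles $d^{-n}(F_n)^*\mathcal{L}_0$ restricted to $\mathcal{Y}^{[m]}$ (Theorem~6.2), obtaining a pointwise inequality $h_{S,\mathcal{M}}(\pi(x))\leq C\,\widehat{h}_{f_{\pi(x)}}(x)+C'$ on a Zariski open set of $\mathcal{Y}$, then passes to $\widehat{h}_{f_t}(Y_t)$ via Zhang's inequalities (Proposition~6.5); the description of the complement comes from noetherian induction on the base, re-running the argument on each component where the bifurcation measure is still nonzero. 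The upper bound is obtained separately from Chow forms (Lemma~6.6) plus Call--Silverman. The only input from the complex-analytic part is $\mu_{f,\mathrm{Crit}}\neq 0$, just as in your argument.

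Your route---Deligne pairing/adelic line bundle on the base, bigness from $\|\mu_{f,\mathrm{Crit}}\|>0$, complement via Nakamaye---is conceptually cleaner and gives the characterisation of $\mathscr{M}_d^k\setminus U$ in one stroke rather than by induction. But there is a gap in your first step: \cite{GV_Northcott} does not furnish a semipositive adelic line bundle $\overline{L}_{\mathrm{crit}}$ on a \emph{projective} model $X$; that paper identifies $\widehat{h}_{f_\eta}(Y_\eta)$ with an intersection number and links $T_{f,\mathcal{Y}}$ to the function-field height, but it does not produce the metrized line bundle on the base, let alone extend it across the boundary $X\setminus\mathscr{M}_d^k$. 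What you need is exactly the Yuan--Zhang theory of adelic line bundles on quasi-projective varieties \cite{YZ-adelic} (the paper itself remarks, after Theorem~6.2 and Theorem~7.1, that one could alternatively use \cite{YZ-adelic}). Once you invoke that machinery, your $L_{\mathrm{crit}}$ is only an adelic line bundle in their generalised sense, so applying Nakamaye also needs the version valid in that framework and you must check that the null locus does not pick up components supported on the boundary of the compactification. These are all addressable, but they are not the ``formal'' steps you describe; the paper's more pedestrian approach avoids them at the cost of a less elegant description of the exceptional set.
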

In Theorem~\ref{tm:critical-ample-height}, $h_M$ stands for a Weil height on a projective model of $\mathscr{M}_d^k$, associated with the ample line bundle $M$.

We want to stress the fact that Yuan and Zhang already showed Theorem~\ref{tm:critical-ample-height} under the hypothesis that  $\mu_{f,\mathrm{Crit}} \neq 0$ on  $\mathscr{M}_d^k(\C)$ (\cite[Theorem 5.3.5 (2) and Problem 6.3.9]{YZ-adelic}). Their approach is arithmetic in nature and allows an optimal control on the constant $C_3$, our approach has a more complex geometric flavor and permits instead a control of the multiplicative constant $C_1$.  

\medskip

For $k=1$, Theorem~\ref{tm:critical-ample-height} is due to Ingram \cite{Ingram_moduli} (see also \cite{GOV2}). For $k\geq 1$, Ingram also proved explicit versions of the above theorem for specific families using convenient parametrizations (e.g. \cite{Ingram_minimally_critical2, Ingram_minimally_critical}). In dimension 1, McMullen's result \cite{McMullen-families} implies that the algebraic subvariety $\mathscr{M}_d^1\setminus U$ where we do not have the inequality in Theorem~\ref{tm:critical-ample-height} is exactly the flexible Latt\`es family. Characterizing that subvariety in higher dimension is one of the main questions in bifurcation theory in higher dimension. 

In order to prove Theorem~\ref{tm:critical-ample-height}, we follow Gao and Habbeger and Dimitrov in the abelian case \cite{Gao-Habegger, Dimitrov-Gao-Habegger} to prove an estimate in a family with positive (suitable) height which compares the height of a parameter with the heights of generic point in the $m$-fiber product of $m$-fiber product of $\mathcal{Y}$. Our arguments are based on the early work \cite{GV_Northcott} of the first and third authors (see Theorem~\ref{tm:Gao-Habegger}). We then use notably Zhang inequalities \cite{Zhang-positivity} to conclude.

~

Let $k\geq1$. Let $\mathrm{End}_d^k$ denote the set of endomorphisms $f:\mathbb{P}^k(\mathbb{C})\to\mathbb{P}^k(\mathbb{C})$ of degree $d$ (in homogeneous coordinates, $f$ is the data of $k+1$ homogeneous polynomials with no common factor and the same degree $d$).  Such $f$ is \emph{postcritically finite} (PCF for short) if its postcritical set
\[\mathrm{PC}(f):=\bigcup_{n\geq1}f^n(\mathrm{Crit}(f))\]
is an algebraic subvariety of $\mathbb{P}^k$, where $\mathrm{Crit}(f)=\{z\in\mathbb{P}^k(\mathbb{C})\, : \ \det(D_zf)=0\}$ is its \emph{critical set}. In dimension $1$, the critical set is a finite set of cardinality $2d-2$ so, for all $n$, $f^n(\mathrm{Crit}(f))$ is again a finite set of cardinality $2d-2$ (counting the multiplicity), so PCF maps are not so hard to exhibit and it turns out that PCF maps are in fact Zariski dense (e.g. \cite{favredujardin,  buffepstein,
	GOV, Good-height}). In higher dimension, the algebraic hypersurface $\mathrm{Crit}(f)$ has positive dimension, hence it is not finite, and this fact is responsible for several new phenomena arising in complex dynamics in several variables.
In particular, only few examples of PCF maps which do not derive directly from 1-dimensional PCF maps are known: the first examples were given by Fornaess and Sibony \cite{FS-PCF}, interesting examples were also produced by Rong \cite{Rong-PCF} and by Koch \cite{Koch-PCF} who used constructions from Teichm\"uller theory. The dynamical study of such maps was notably developed by Ueda \cite{Ueda-PCF} and latter by Astorg~\cite{Astorg-PCF}.

Furthermore, since being PCF is invariant by conjugacy, such maps define elements of $\mathscr{M}_d^k$ since being PCF is invariant by conjugacy, and are of the utmost arithmetic importance since they satisfy $h_{\mathrm{crit}}(f)=0$ when they are defined over $\bar{\Q}$ (in dimension $1$, it is known that the converse is true by  Northcott's property, for $k>1$, this is an open and important problem). This motivates the following theorem:
\begin{Theorem}[Sparsity of PCF maps]\label{tm:not-Zariski-dense}
Fix two integers $k,d\geq2$. There exists a strict subvariety $V_d^k\subsetneq \mathrm{End}_d^k$ such that any PCF endomorphism $f$ is contained in $V_d^k$.
\end{Theorem}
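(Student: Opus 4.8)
The plan is to combine Theorem~\ref{tm:critical-ample-height} with the two complex-analytic facts announced above, namely that $\mathrm{supp}(\mu_{f,\mathrm{Crit}})$ has non-empty interior in $\mathscr{M}_d^k$ and that the periodic-multiplier correspondence is finite-to-one off a Zariski closed set. The first move is purely formal. The PCF locus in $\mathrm{End}_d^k$ is the countable union, over $m>n\geq 0$, of the Zariski closed sets $V_{m,n}=\{f:\ f^m(\mathrm{Crit}(f))\subseteq f^n(\mathrm{Crit}(f))\}$, each of which is cut out over $\mathbb{Q}$; since every $\mathbb{C}$-point of $V_{m,n}$ is PCF and $\bar{\mathbb{Q}}$-points are Zariski dense in each $V_{m,n}$, any $\bar{\mathbb{Q}}$-closed subvariety containing all PCF points defined over $\bar{\mathbb{Q}}$ automatically contains every $V_{m,n}$, hence all PCF maps. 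As PCF-ness is invariant under $\mathrm{PGL}_{k+1}$-conjugacy and the quotient map $\mathrm{End}_d^k\to\mathscr{M}_d^k$ is dominant with fibres the conjugacy classes, it therefore suffices to prove that the set of PCF \emph{classes} is not Zariski dense in $\mathscr{M}_d^k$; I will argue this by contradiction.

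Next I would pass from the PCF condition to a height statement. If $f$ is PCF then $\mathrm{Crit}(f)$ is preperiodic, so $h_{\mathrm{crit}}(f)=\widehat{h}_f(\mathrm{Crit}(f))=0$ by the basic vanishing of canonical heights on preperiodic subvarieties; thus every PCF class lies in $\{[f]:h_{\mathrm{crit}}([f])=0\}$. Assume PCF classes are Zariski dense in $\mathscr{M}_d^k$; then they are Zariski dense in the open set $U$ of Theorem~\ref{tm:critical-ample-height}. Endow a projective model of $\mathscr{M}_d^k$ with the critical adelic semipositive metrized line bundle $\overline{M}$ whose height is $h_{\mathrm{crit}}$ — ample over $U$ by Theorem~\ref{tm:critical-ample-height} — and whose archimedean canonical measure is, up to normalization, the bifurcation measure $\mu_{f,\mathrm{Crit}}$; this identification is the higher-order analogue of the dynamical-stability-versus-dynamical-height dictionary of \cite{GV_Northcott}, now at the level of the bifurcation measure. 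Because $h_{\mathrm{crit}}\geq 0$ while there is a Zariski dense set of classes with $h_{\mathrm{crit}}=0$, the essential minimum of $\overline{M}$ is $0$; Zhang's inequalities \cite{Zhang-positivity} (together with $\overline{M}$ nef) then force the normalized height $h_{\overline{M}}(\mathscr{M}_d^k)$ to be $0$ as well. Taking a generic sequence $([f_n])_n$ of distinct PCF classes and invoking the arithmetic equidistribution theorem for $\overline{M}$, the Galois orbits of the $[f_n]$ equidistribute, at the archimedean place, towards the normalized $\mu_{f,\mathrm{Crit}}$.

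Finally comes the contradiction, and this is where I expect the real difficulty to lie. Since $\mathrm{supp}(\mu_{f,\mathrm{Crit}})$ has non-empty interior $\Omega\subseteq\mathscr{M}_d^k(\mathbb{C})$ — this is exactly the output of the real-dynamics constructions of open subsets of maximal bifurcation — the previous paragraph forces the PCF classes to accumulate, in the Euclidean topology, on every point of $\Omega$; equivalently, PCF maps would be Euclidean-dense in a non-empty open set. One must then show this is impossible. The point is that the argument must genuinely break down for $k=1$ (where PCF maps \emph{are} Zariski dense and $\mathrm{supp}(\mu_{f,\mathrm{Crit}})$ has empty interior), so the proof has to use in an essential way that $\mathrm{Crit}(f)$ is a \emph{positive-dimensional} hypersurface when $k\geq 2$. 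Concretely, after shrinking $\Omega$ so that the periodic-multiplier map $\Lambda$ is finite on a Zariski open set meeting it, $\Lambda$ transports the equidistribution above into an equidistribution of the multiplier spectra of the $[f_n]$; these spectra consist of algebraic integers whose heights are bounded (via $h_{\mathrm{crit}}([f_n])=0$ and Theorem~\ref{tm:critical-ample-height}), whereas the limit measure $\Lambda_\ast\mu_{f,\mathrm{Crit}}$ still charges an open subset of the multiplier space. Ruling out such a configuration — i.e. showing that bounded-height, algebraic-integer multiplier data attached to PCF maps cannot equidistribute to a measure with non-empty interior — is, in my view, the crux of the proof; it is the step that cannot be purely formal and that fully consumes the positive-dimensionality of the critical set and the existence of regions of maximal bifurcation.
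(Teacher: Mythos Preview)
Your argument tracks the paper closely through the equidistribution step: the reduction to $\mathscr{M}_d^k$, the vanishing $h_{\mathrm{crit}}=0$ on PCF classes, the comparison with an ample height on a Zariski open $U$, and the conclusion that Galois orbits of a generic small sequence equidistribute toward $\mu_{f,\mathrm{Crit}}$ are all exactly what the paper does (the equidistribution is packaged there via the ``good height'' formalism and Corollary~\ref{tm:small-distrib} rather than a single adelic line bundle, but that is a technicality). One small point you gloss over: before equidistributing you need the generic sequence to consist of closed points, i.e.\ the PCF classes in $U$ to be zero-dimensional over $\bar{\mathbb Q}$; the paper handles this by noting that a positive-dimensional PCF component in $U$ would have $h_{\mathrm{crit}}\equiv 0$, hence bounded ample height, which is impossible.

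The genuine gap is your final paragraph. You propose to derive the contradiction by pushing the equidistribution through the multiplier map $\Lambda$ and then arguing that bounded-height algebraic-integer multiplier data cannot equidistribute to a measure charging an open set. You yourself flag this as ``the crux'' and leave it unproven, and indeed there is no such argument in the paper---nor is it clear how one would make it work. The contradiction in the paper is immediate and uses the \emph{second} bullet of Theorem~\ref{tm:mu-interior}, which you have overlooked: the open set $\Omega\subset\mathrm{supp}(\mu_{f,\mathrm{Crit}})$ is constructed so that it contains \emph{no} PCF class whatsoever. Once you know PCF orbits equidistribute to $\mu_{f,\mathrm{Crit}}$, they must accumulate on $\Omega$, contradicting that $\Omega$ is PCF-free. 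That is the whole endgame.

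The generic finiteness of the multiplier map is not used here at all; it is consumed earlier, inside the proof of Theorem~\ref{tm:mu-interior}, to show that a family satisfying condition~$(\star)$ has codimension at least $\mathcal N_d^k$ and hence that $\Omega$ lies in the support of the bifurcation measure. So your instinct that the multiplier rigidity matters is correct, but it enters one level down, in producing $\Omega$, not in the final contradiction.
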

Such a result was conjectured by Ingram, Ramadas and Silverman in \cite{IRS}, who showed that $\{f \in\mathrm{End}_d^k, \  f^n(\mathrm{Crit}(f))=  f^m(\mathrm{Crit}(f))\}$ is not Zariski dense for $m\in \{0, 1, 2\}$ and $d\geq 3$. Our approach is inspired by K\"uhne's Relative Equidistribution~\cite{Kuhne} on families of abelian varieties defined over a number field $\mathbb{K}$ which we generalize to the setting of families of polarized endomorphisms using the arithmetic equidistribution theorem of the first author \cite{Good-height} and Theorem~\ref{tm:critical-ample-height} (again, we stress that the arithmetic equidistribution result that we need was already proved by Yuan and Zhang \cite{YZ-adelic} using the properties of metrics on the Deligne pairing on adelic line bundles). Thus, if PCF maps were Zariski dense, they would equidistribute the bifurcation measure. In order to get a contradiction, we inject the following crucial theorem working at the complex place. 
\begin{Theorem}[Robust strong bifurcations]\label{tm:mu-interior}
	Fix two integers $k,d\geq2$. There exists a non-empty analytic open subset $\Omega\subset \mathscr{M}_d^k(\mathbb{C})$ (resp. $\Omega\subset \mathscr{P}_d^2$) such that
	\begin{itemize}
		\item the open set $\Omega$ is contained in $\mathrm{supp}(\mu_{f,\mathrm{Crit}})$,
		\item the open set $\Omega$ contains no PCF conjugacy class.
	\end{itemize}
\end{Theorem}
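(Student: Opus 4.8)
The plan is to construct the open set $\Omega$ by hand, around a carefully chosen non-PCF map $f_0$, and to verify the two properties essentially separately; we first treat $\mathscr{P}_d^2$ as a model and then adapt the construction to $\mathscr{M}_d^k$. Write $N$ for the dimension of the ambient moduli space. Up to a positive constant, $\mu_{f,\mathrm{Crit}}$ is the push-forward to parameter space of $\widehat{T}_{f^{[N]}}^{kN}\wedge[\mathrm{Crit}(f)^{[N]}]$; by continuity of the fibered potentials $g_f$ together with a slicing/intersection argument in the spirit of \cite{GV_Northcott} and of the higher-dimensional bifurcation theory of \cite{BBD} (equivalently, the description of the bifurcation current as $dd^c$ of the sum of Lyapunov exponents of the measure of maximal entropy), a parameter lies in $\mathrm{supp}(\mu_{f,\mathrm{Crit}})$ as soon as it is a limit of maps carrying $N$ \emph{independent, transverse critical-orbit relations}: $N$ holomorphically moving pieces of the postcritical set meeting $N$ hyperbolic repelling sets transversally with respect to $N$ independent parameter directions. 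Hence both bullet points will follow once we produce such a configuration that is \emph{robust}, i.e. realized at a set of parameters dense in a whole non-empty open set $\Omega$, and that is moreover incompatible with post-critical finiteness.

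To get robustness we use real-dynamical mechanisms generating perturbation-stable intersections. In $\mathscr{P}_d^2$ we follow and quantify the second-bifurcation phenomenon of Astorg--Bianchi \cite{astorg-bianchi}: starting from a polynomial skew product $f_0(z,w)=(p_0(z),q_0(z,w))$ with $p_0$ chosen so that its critical orbit is active and wildly recurrent — e.g. a suitable Misiurewicz or parabolic $p_0$ whose one-dimensional bifurcation current has full support near $[p_0]$ — and $q_0$ chosen so that the fibered critical point is caught transversally by a hyperbolic repeller of the fibered dynamics, the base bifurcation and the fibered one occur densely on an open subset of the skew-product locus; a further perturbation transverse to that locus, preserving a positive density for $\widehat{T}_{f^{[N]}}^{kN}\wedge[\mathrm{Crit}(f)^{[N]}]$, produces the sought open $\Omega_0\subset\mathscr{P}_d^2$. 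For $\mathscr{M}_d^k$, $N$ is large, so many relations are needed at once: we arrange $f_0$ to carry many critical components, each of which robustly catches — via a blender-type thickening of a hyperbolic repeller, which meets low-dimensional disks in an open manner — its own positive-dimensional family of critical-orbit relations, so that every parameter direction activates a fresh relation. Realizing such an $f_0$ as a tower of fibered polynomial skew products of $\mathbb{A}^k$ (which descend to $\mathscr{M}_d^k$), or by perturbing a product or Latt\`es-type model of $\mathbb{P}^k$, and pushing the resulting currents forward, yields a non-empty open $\Omega_0\subset\mathscr{M}_d^k(\mathbb{C})$ contained in $\mathrm{supp}(\mu_{f,\mathrm{Crit}})$.

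To remove the PCF locus we use the same real-dynamical input: the hyperbolic set produced in the construction can be taken Zariski-generic — not contained in any algebraic hypersurface of $\mathbb{P}^k$ — and to capture the forward orbit of a critical component $C(f)$ in a perturbation-stable way. The Zariski closure of $\bigcup_{j\geq 1}f^j(C(f))$ is then a forward-invariant algebraic set not contained in a hypersurface, hence equals $\mathbb{P}^k$, so $\mathrm{PC}(f)\supseteq\bigcup_{j\geq 1}f^j(C(f))$ is Zariski-dense; since $\mathrm{PC}(f)$ always has dimension at most $k-1$, it cannot be algebraic, so $f$ is not PCF. As this is an open condition around $f_0$, shrinking $\Omega_0$ to a smaller non-empty open $\Omega$ gives a set contained in $\mathrm{supp}(\mu_{f,\mathrm{Crit}})$ that contains no PCF conjugacy class, which proves the theorem.

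The main obstacle is the conjunction of \emph{maximality} and \emph{robustness} while remaining PCF-free. The top power $\widehat{T}_{f^{[N]}}^{kN}\wedge[\mathrm{Crit}(f)^{[N]}]$ forces one to activate all $N$ bifurcation directions simultaneously on a genuine open set — impossible when $k=1$, where PCF maps are dense, so the mechanism must be genuinely higher-dimensional and, for $\mathscr{M}_d^k$, withstand $N$ growing with $d$ and $k$. At the same time, the classical source of points in $\mathrm{supp}(\mu_{f,\mathrm{Crit}})$ is Misiurewicz-type relations, which are dense among bifurcations and of PCF type, so one must instead work with robust-but-not-algebraic mechanisms — blenders and real-expanding models — whose hyperbolic sets have to be simultaneously perturbation-stable, parameter-sensitive in $N$ directions, and large (Zariski-dense) enough to keep the postcritical set from ever stabilizing. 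Orchestrating these three requirements, uniformly in $d$ and $k$ and in both moduli spaces, is the heart of the argument.
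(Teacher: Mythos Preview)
Your proposal identifies the right goal --- producing $N=\dim\mathscr{M}_d^k$ (resp.\ $\dim\mathscr{P}_d^2$) independent robust postcritical/repelling intersections --- but the argument for actually achieving this many independent directions has a genuine gap, and this is precisely the hard part of the theorem.

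For $\mathscr{P}_d^2$: invoking Astorg--Bianchi gives non-empty interior for $\mu_{f,\mathrm{Crit}}$ only inside the low-dimensional slice $\mathrm{Sk}(p,d)$ with \emph{fixed} base $p$. You then write ``a further perturbation transverse to that locus, preserving a positive density for $\widehat{T}_{f^{[N]}}^{kN}\wedge[\mathrm{Crit}(f)^{[N]}]$, produces the sought open $\Omega_0$''. But why would positive density be preserved in the transverse directions? That is exactly the content of the theorem: nothing in the skew-product construction tells you that bifurcations also occur in the directions that change the base $p$ or break the skew-product structure. For $\mathscr{M}_d^k$: the phrases ``each critical component robustly catches its own positive-dimensional family'' and ``tower of fibered skew products'' are not an argument. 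The critical set is a single irreducible hypersurface for generic $f$; there is no evident way to split it into $N$ pieces providing $N$ genuinely independent parameter-relations, and $N$ grows like $d^k$.

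The paper's route is structurally different from what you sketch. It builds a \emph{single} configuration (a blender $\Lambda$, a saddle fixed point $p$ with $W^u_p$ robustly meeting $\Lambda$, and $\mathrm{Crit}(f)$ transverse to $W^s_p$), so the inclination lemma yields \emph{infinitely many} postcritical/$\Lambda$ intersections. The crucial step --- showing these intersections span all $N$ moduli directions --- is proved \emph{by contradiction via rigidity}: if they did not, one would obtain a positive-dimensional analytic family in the moduli space on which (after further work with local foliations and conjugacies near $r$) the multipliers of all $J_k$-periodic points are constant; this contradicts the generic finiteness of the multiplier map (Corollary~\ref{cor-finite} on $\mathscr{M}_d^k$, Corollary~\ref{cor-finite-poly} on $\mathscr{P}_d^2$, the latter resting on the new rigidity Theorem~\ref{th-rigidity}). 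This ``independence via rigidity of multipliers'' mechanism is entirely absent from your proposal and is the core of the proof. Your non-PCF argument is fine in spirit; the paper realizes it concretely via the same saddle/inclination-lemma setup (Assumption~\ref{critical} forces infinitely many disjoint local hypersurfaces in the postcritical set).
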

Observe that we do not prove the theorem for the moduli space $\mathscr P_d^k$ when $k\geq3$. This is a technical issue that comes from the fact that our proof of the generic finiteness of the multiplier maps only works on $\mathscr M_d^k$ and $\mathscr P_d^2$. The same result probably holds on $\mathscr P_d^k$ in all dimensions but our main motivation for the polynomial case is Theorem \ref{tm:uniformity} whose counterpart (see Theorem \ref{prop-empty-interior}) is weaker when $k\geq3.$

In dimension $1$, the works of Lyubich \cite{lyubich} and  Ma\~n\'e-Sad-Sullivan~\cite{MSS} imply that the bifurcation locus (i.e. the support of the bifurcation current) has empty interior. In higher dimension,
Bianchi and the second author first gave an example (the Desboves family) where this is not the case \cite{Bianchi_Taflin}. In general, the fact that the support of the bifurcation current has non-empty interior is due to the seminal work of Dujardin \cite{Dujardin_blender}, whose ideas were expanded upon by the second author in \cite{Taflin_blender} to produce open sets of bifurcation in other situations. A different approach, due to Biebler, was to construct an open set of bifurcations around Latt\`es maps \cite{Biebler_Lattes}. Turning to the bifurcation measure, the firsts to prove the non-emptiness of the support of the bifurcation measure were Astorg and Bianchi \cite{astorg-bianchi} in the very particular case of the family of polynomial skew-products (with given base dynamics satisfying certain additional assumptions) of $\C^2$.

An important ingredient in the proof of Theorem \ref{tm:mu-interior} is a mechanism called \emph{blender} in smooth dynamics. It was introduced by Bonatti-D\'\i az in \cite{bonatti-diaz-blender} to obtain new examples of robustly transitive diffeomorphisms. Since then, it was used in a wide range of contexts in real dynamics (see e.g. \cite{new-crit}, \cite{bonatti-diaz-tan} or \cite{berger-parablender}). A remarkable feature about blenders is that they are much easier to construct than other mechanisms given robust intersections (like the Newhouse phenomenon). This characteristic is of particular importance in the rigid setting of holomorphic dynamics, where they were first introduced by Dujardin in \cite{Dujardin_blender}.

Roughly speaking, in our context, a blender for a map $f$ is a repelling hyperbolic set (typically a Cantor set) that intersects an open family of (local) hypersurfaces and this property persists for small perturbations of $f$. Dujardin constructed in \cite{Dujardin_blender} a map with a blender for which a part of the postcritical set belongs to the associated family of hypersurfaces. This provides a robust intersection between the blender and the postcritical set which turns out to be sufficient to have an open set in the bifurcation locus. The same strategy can be followed in order to prove Theorem \ref{tm:mu-interior} except that instead of a single intersection we need as many as possible (i.e. the dimension of the moduli space) independent intersections, i.e. which satisfy the transversality condition of Definition \ref{def-trans}. To that end, we consider a map $f$ with a blender $\Lambda(f)$ and a saddle point $p(f)$ whose unstable manifold intersects robustly $\Lambda(f)$. Observe that in the terminology of smooth dynamics, this corresponds in our non-invertible context to a \emph{robust heterodimensional cycles} (see \cite{bonatti-diaz-hetero} for the interplays between these cycles and blenders in the $C^1$-setting). As the critical set has to intersect the stable manifold of $p(f)$, the inclination lemma gives infinitely many intersections between $\Lambda(f)$ and the postcritical set of $f$. All the difficulty in the proof is to check that they provide enough independent intersections. This brings us to prove (very) partial generalizations to higher dimension of several results from one-dimensional complex dynamics, like extension of local conjugacies \cite{buff-epstein}, the rigidity of stable algebraic families \cite{McMullen-families} or the fact that multipliers of periodic points provide (generically) local coordinates in the moduli space \cite{McMullen-families,gorbovickis-rat,JX}.

\bigskip

In the particular case of regular polynomial endomorphisms of the affine plane, we consider bifurcations of the finite part of the critical set, i.e. of the closure $C_f$ in $\mathbb{P}^2$ of the set $\{z\in\mathbb{C}^2, \det(D_zf)=0\}$. In this case, the non-negativity of the Green function at every place allows us to prove the following uniform result.

\begin{Theorem}[Uniformity]\label{tm:uniformity}
	Fix an integer $d\geq2$. There exists a constant $B(d)\geq1$ and a non-empty Zariski open subset $U\subset \mathrm{Poly}_d^2$ such for any $f\in U(\mathbb{C})$, we have
	\[\#\mathrm{Preper}(f)\cap C_f\leq B(d).\]
\end{Theorem}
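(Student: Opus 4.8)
The plan is to combine the arithmetic input of Theorem~\ref{tm:critical-ample-height} (and its counterpart for the polynomial moduli space) with the Dimitrov--Gao--Habegger and K\"uhne strategy for uniform counting \cite{Dimitrov-Gao-Habegger, Kuhne}, the point being that for regular polynomial endomorphisms of $\mathbb{A}^2$ the local Green functions are everywhere non-negative, which is precisely what turns a qualitative finiteness statement into a uniform bound. Since $\#\big(\mathrm{Preper}(f)\cap C_f\big)$ is an affine-conjugacy invariant, we work on the moduli space $\mathscr{P}_d^2$ and pull the resulting Zariski open set back to $\mathrm{Poly}_d^2$. First I would reduce to parameters defined over $\bar{\mathbb{Q}}$: if the statement failed, one would get, for each $N$, some $[f_N]\in U(\C)$ with more than $N$ distinct preperiodic points on $C_{f_N}$, each satisfying an identity $f_N^{\,n}(x)=f_N^{\,m}(x)$ with $m>n\geq 0$; spreading out the finitely many equations involved over a finitely generated $\Zb$-subalgebra of $\C$ and specializing at a closed point yields a parameter in $U(\bar{\mathbb{Q}})$ (one stays in the Zariski open $U$) for which the specialized points are still distinct and still preperiodic with the same exponents. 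So it suffices to bound the count uniformly over $[f]\in U(\bar{\mathbb{Q}})$.

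Over a number field, the canonical height $\widehat{h}_f$ of a regular polynomial endomorphism satisfies $\widehat{h}_f\geq 0$ at every place and vanishes precisely at the preperiodic points. Decomposing $C_f$ into its finite part and its (at most $d-1$) points at infinity, the theorem reduces to a \emph{uniform Manin--Mumford statement for the critical curve}:
\[\#\big\{x\in C_f(\bar{\mathbb{Q}})\ :\ \widehat{h}_f(x)=0\big\}\ \leq\ B(d)-(d-1).\]
By (the polynomial counterpart of) Theorem~\ref{tm:critical-ample-height} --- equivalently, by the non-vanishing of the bifurcation measure $\mu_{f,\mathrm{Crit}}$ on $\mathscr{P}_d^2$, which is ensured by Theorem~\ref{tm:mu-interior} --- the family $\mathcal{C}\to\mathscr{P}_d^2$ of critical curves is non-degenerate over a non-empty Zariski open subset $U$; shrinking $U$ we also assume $\mathcal{C}\to U$ is flat. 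Observe that every PCF parameter lies outside $U$: for such $f$ the entire curve $C_f$ is preperiodic and $\widehat{h}_f\equiv 0$ on it, the most degenerate possible situation --- consistent with, and essentially re-proving, the polynomial case of Theorem~\ref{tm:not-Zariski-dense}.

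The heart of the matter is then the fiber-power argument. Fix $m$ of the order of $\dim\mathscr{P}_d^2$ and pass to the $m$-fold fiber power $f^{[m]}$ over $\mathscr{P}_d^2$ together with $\mathcal{C}^{[m]}$. The family height inequality of Theorem~\ref{tm:Gao-Habegger} furnishes a proper Zariski closed subset $Z_m\subsetneq\mathcal{C}^{[m]}$ and a constant depending only on $d$ and $m$ that bounds an ample height of any $y\in\mathcal{C}^{[m]}(\bar{\mathbb{Q}})\setminus Z_m$ in terms of $\widehat{h}_{f^{[m]}}(y)$ and the height $h_{\mathscr{P}_d^2}(\pi_{[m]}(y))$ of the base point. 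Combining this with Zhang's inequalities \cite{Zhang-positivity}, which relate the canonical heights of $\mathcal{C}^{[m]}$ and of the fibers $C_f^{[m]}$ to their essential minima, one obtains a ``gap principle'': height-zero points of $C_f$ that are sufficiently well separated produce, in $m$-tuples, points of $\mathcal{C}^{[m]}$ escaping $Z_m$, which caps their number by a constant depending on $d$ alone. It is here that the non-negativity of the local Green functions is decisive: it yields a one-sided lower bound on local heights that does not deteriorate as $[f]$ approaches $\mathscr{P}_d^2\setminus U$, and this is what makes the resulting $B(d)$ independent of the individual map $f$ (when $k\geq 3$ the Green functions need not be non-negative, and only the weaker Theorem~\ref{prop-empty-interior} survives).

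I expect the main obstacle to be exactly this uniformity. A priori the essential minimum of $C_f$ for $\widehat{h}_f$ tends to $0$ as $[f]$ approaches the boundary of $U$ --- for instance near PCF parameters, where $h_{\mathrm{crit}}$ vanishes --- so non-degeneracy alone, through the relative Bogomolov-type inequality, only gives that the height-zero points of $C_f$ are \emph{finite in number}, with an a priori $f$-dependent count. Squeezing out a genuinely uniform bound $B(d)$ is what forces the passage to large fiber powers and the delicate bookkeeping of the Dimitrov--Gao--Habegger / K\"uhne counting scheme, and is the one place where the special arithmetic of polynomial endomorphisms --- non-negative Green functions at every place, hence uniform one-sided control of heights --- is genuinely required rather than merely convenient. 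Everything else (the spreading-out reduction, the height-zero reformulation, and locating the non-degenerate open set) should be comparatively routine given the results already established in the paper.
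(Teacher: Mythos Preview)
Your high-level plan is sound---reduction to $\bar{\mathbb{Q}}$, fiber powers, non-negativity of the polynomial Green function, the K\"uhne/Gao--Ge--K\"uhne scheme---and matches the paper's architecture. But the concrete mechanism you describe for extracting the uniform bound does not work as written. You invoke Theorem~\ref{tm:Gao-Habegger} to get a Zariski closed $Z_m\subsetneq\mathcal{C}^{[m]}$ outside of which an ample height of $y$ is controlled by $\widehat h_{f^{[m]}}(y)$ and $h_{\mathscr P_d^2}(\pi_{[m]}(y))$; but that theorem bounds the \emph{base} height by the canonical height, and for a fixed parameter $[f]$ this says nothing about the number of height-zero points on $C_f$. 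The sentence ``height-zero points \ldots\ produce, in $m$-tuples, points of $\mathcal{C}^{[m]}$ escaping $Z_m$, which caps their number'' is the gap: escaping $Z_m$ only bounds $h_{\mathscr P_d^2}([f])$, which is already fixed.

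What the paper actually does (Theorem~\ref{prop-empty-interior} and Theorem~\ref{tm:gap}) is a Bogomolov-type argument via equidistribution, not a height-inequality counting. From Theorem~\ref{tm:mu-interior} one gets that $\mathrm{supp}(\mu^{\mathrm{pol}}_{f,\mathcal{C}})$ has non-empty interior in $\mathscr P_d^2(\C)$, and Lemma~\ref{lm:interior} upgrades this to $\int G_{f^{[m+1]}}\,\widehat T_{f^{[m+1]}}^{\dim\mathcal C^{[m+1]}}>0$. If the $\varepsilon$-small points were Zariski dense in $\mathcal C^{[m+1]}$, the relative equidistribution Theorem~\ref{tm-dyn-REC} would force their Galois orbits to equidistribute towards this measure; testing against a cutoff of $G_{f^{[m+1]}}$ and using that non-negativity of the Green function at \emph{every} place gives $\widehat h_{f^{[m+1]}}(x)\geq n_{v_0}\cdot(\text{archimedean Green average})$ (inequality~\eqref{ineg-heigh-poly}) yields a positive lower bound on the heights, contradicting smallness. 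Hence the $\varepsilon$-small locus sits in a fixed proper closed $V\subsetneq\mathcal C^{[m+1]}$, and the Gao--Ge--K\"uhne lemma (Lemma~\ref{lm:GGK}) converts $\Sigma_t^{m+1}\subset V_t$ into $\#\Sigma_t\leq B(d)$ uniformly. One side remark: your explanation for why $k\geq3$ fails is not correct---the Green function of a regular polynomial endomorphism of $\mathbb A^k$ is non-negative for every $k$; the obstruction is that Theorem~\ref{tm:mu-interior} (non-empty interior of the support of $\mu_{f,\mathrm{Crit}}$) is only established on $\mathscr P_d^2$, because the rigidity input of Corollary~\ref{cor-finite-poly} is missing in higher dimension.
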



 As mentioned before, similarly flavored uniform results already exist in complex dynamics and are a very important source of inspiration for us: the quotient $\tilde{S}:=(\mathrm{End}_d^1\times\mathrm{End}_d^1)/\mathrm{PGL}(2)$ by the diagonal action by conjugacy is a quasiprojective variety. On a suitable subvariety $S$ of $\tilde{S}$, one wants to show that outside a Zariski closed set of $S$, then all pairs of rational maps $f, g : \P^1 \to \P^1$ will have at most $B$ preperiodic points in
common. This is a statement very similar to Theorem~\ref{tm:uniformity} where one considers $\mathcal{Y}$ to be the fibered diagonal in $\mathbb{P}^1\times\mathbb{P}^1$ instead of the critical set. Then, such a result was first shown by DeMarco, Krieger and Ye in the Legendre family  \cite{DKY1} and in the quadratic family \cite{DKY2}. Then, Mavraki and Schmidt proved it in the case of any algebraic curve $S$ \cite{Mavraki_Schmidt}. Finally, DeMarco and Mavraki have shown very recently the optimal result that there is a uniform bound $B$, depending only on the degree $d$, so that for a Zariski open and dense
set in the space of all pairs of rational maps $f, g : \P^1 \to \P^1$ 
with degree $d$, $f$ and $g$ have at most $B$ preperiodic points in
common \cite{DeMarco-Mavraki}. 

\medskip 

To prove Theorem~\ref{tm:uniformity}, we show in Theorem~\ref{tm:gap} that there is a height gap (there exists a $\varepsilon>0$ such that all points of canonical height $\leq \varepsilon$ are contained in a Zariski closed proper subset of the fibered critical locus). For that, we follow the idea of Gao, Ge and K\"uhne on abelian varieties \cite{GGK} (first introduced by Ullmo \cite{Ullmo-Bogomolov} and Zhang~\cite{Zhang-Bogomolov}) to overfiber the dynamics (see also \cite{Mavraki_Schmidt} and \cite{DeMarco-Mavraki} where this strategy is used). The fact that local heights are all non-negative in the polynomial setting allows us to get the bound from the complex place.  

\begin{remark*}\normalfont
 To prove Theorem~\ref{tm:uniformity}, we give a positive answer to Problem 6.3.9 of \cite{YZ-adelic} in the moduli space $\mathscr{P}_d^2$ of regular polynomial endomorphisms of $\mathbb{P}^2$. To obtain a similar statement on $\mathscr{P}_d^k$ with $k\geq3$, the only missing piece is to prove that multiplier maps are generically finite to one on $\mathscr{P}_d^k$. This follows from a work in progress of the second author and Gorbovickis. 
\end{remark*}
\subsection*{Organization of the article}
 Section~\ref{height_and_current} is devoted to the construction of bifurcation currents and the corresponding volume we will need to construct the $m$-order canonical height.
 The proof of Theorem \ref{tm:mu-interior} occupies the next three sections. In Section~\ref{sec-rigidity}, we establish that the eigenvalues of the periodic points determine a conjugacy class up to finitely many choices generically in $\mathscr{M}_d^k$ and $\mathscr{P}_d^2$. In Section~\ref{sec-blender}, we prove that if an open subset of $\mathscr{M}_d^k$ or $\mathscr{P}_d^k$ satisfies a certain set of assumptions and is not contained in the support of the bifurcation measure then it has to contain lots of families where the eigenvalues of most of the periodic points are constant. Open subsets verifying this large set of assumptions are constructed in Section~\ref{sec-existence}, for all $k\geq2$ and $d\geq2$. In Section~\ref{section3}, we prove important height inequalities, and in particular Theorem~\ref{tm:critical-ample-height}. Section~\ref{section_equi} is devoted to the proof of the needed Relative Equidistribution theorem. In Section~\ref{section_harvest}, we prove Theorems \ref{tm:not-Zariski-dense} and \ref{tm:uniformity}.

 \subsection*{Acknowledgment} We would like to express our gratitude to Laura DeMarco and Myrto Mavraki for many discussions during the elaboration of this work and to Christian Bonatti and Romain Dujardin for sharing their insight on blenders. The first author would also like to thank Jean-Beno\^it Bost for many helpful discussions on Geometric Invariant Theory and the IHES for its hospitality. Finally, we would like to thank the referees for their very helpful comments and questions.

\section{The dynamical volumes of a family of subvarieties}\label{height_and_current}

\subsection{The canonical height over a function field of characteristic zero}\label{sec:def-currents}
We refer to \cite{GV_Northcott} for more details on the material of this Section. We treat here the case of general families of polarized endomorphisms and not only of families of endomorphisms of $\mathbb{P}^k$ since we will latter on need to take fibered product of such families (see \S~\ref{sec:fibered-product}).

In the whole section, we let $(\mathcal{X},f,\mathcal{L})$ be a family of polarized endomorphisms of degree $d\geq2$ parametrized by a normal projective variety $S$ with regular part $\mathcal{X}^0\to S^0$, all defined over $\C$. We also let $\mathcal{Y}\subsetneq\mathcal{X}$ be a subvariety such that $\pi(\mathcal{Y})=S$. As $\pi|_\mathcal{Y}$ is surjective, there is a non-empty Zariski open subset $S^0_\mathcal{Y}\subset S^0$ such that $\pi|_\mathcal{Y}$ is flat and projective over $S^0_\mathcal{Y}$. Up to replacing $S^0_\mathcal{Y}$ by a larger Zariski open of $S^0$ over which $\pi|_\mathcal{Y}$ is flat and projective, we can assume it is maximal for the inclusion with this property.

\begin{definition}
We say the tuple $(\mathcal{X},f,\mathcal{L},\mathcal{Y})$ is a \emph{dynamical pair} parametrized by $S$ and with regular part $S^0_\mathcal{Y}$.
\end{definition}
It is convenient for us to work in the more general setting of families of polarized endomorphisms since we shall be considering fibered products of several families. Nevertheless, as the first and third authors showed in \cite[\S 3.1]{GV_Northcott}, one can always reduce to the case of families of endomorphisms of a projective space. Indeed, by a result of Fakhruddin~\cite[Corollary 2.2]{Fakhruddin}, any family of polariezd endomorphisms  $(\mathcal{X},f,\mathcal{L})$, there exist $N\geq1$, an embedding $\iota:\mathcal{X}\to \mathbb{P}^N_\mathbb{C}\times S$, an integer $e\geq1$ and a family of endomorphisms $\mathcal{F}:\mathbb{P}^N_\mathbb{C}\times S\to \mathbb{P}^N_\mathbb{C}\times S$ such that
\[\iota \circ f = \mathcal{F} \circ \iota \quad \text{on} \ \mathcal{X}^0 \]
and $\mathcal{L}^{\otimes e}=\iota^*\mathcal{O}(1)$ on $\mathcal{X}^0$.
%
%
%
%

\bigskip

In particular, we can construct the fibered Green current $\widehat{T}_f$ of a family of polarized endomorphisms by pulling back the Green current of $\mathcal{F}$ to $\mathcal{X}^0(\C)$. This shows that $\widehat{T}_f$ is the restriction to a (possibly singular) subvariety of a current with continuous potentials in the ambient space $\mathbb{P}^N_\mathbb{C}\times S^0(\C)$. Demailly~\cite{Demailly_SMF} theory of intersection on singular variety thus applies to our case.   

The current $\widehat{T}_f$ is a closed positive $(1,1)$-current of finite mass on $\mathcal{X}^0(\C)$ and, for any $\lambda\in S^0(\C)$, the slice $T_{\lambda}:=\widehat{T}_{f}|_{X_\lambda}$ is the Green current of $f_\lambda$, see e.g.~\cite{GV_Northcott} for more details. Furthermore, if $k:= \dim X_\lambda$, we let $\mu_\lambda:=\deg_{L_\lambda}(X_\lambda)^{-1}\cdot T_\lambda^{k}$ be the unique maximal entropy measure of $f_\lambda$, we call its support the \emph{small Julia set}, and denote it by $J_k$ (or $J_k(f_\lambda)$ to stress the dependence of $f_\lambda$) \cite{briendduval, dinhsibony2}.  

\medskip

Let $\mathcal{M}$ be an ample $\mathbb{Q}$-line bundle on $S$. Let $Y_\eta$ be the generic fiber of the family $\mathcal{Y}\to S^0_\mathcal{Y}$. For any $n\geq1$, the map $f^n:\mathcal{X}\dashrightarrow \mathcal{X}$ has a priori indeterminacy points in $\mathcal{X}\setminus\mathcal{X}^0$ and we can find a projective variety $\mathcal{X}_n$, birational projective morphism $\psi_n:\mathcal{X}_n\to\mathcal{X}$ which is an isomorphism above $\mathcal{X}^0$ and a morphism $F_n:\mathcal{X}_n\to\mathcal{X}$ such that the following diagram commutes
\[\xymatrix{\mathcal{X}_n\ar[d]_{\psi_n} \ar[rd]^{F_n}& \\
\mathcal{X}\ar@{-->}[r]_{f^n}  & \mathcal{X}}
\]
Moreover, one can choose $\mathcal{X}_n$ as a finite sequence of blow-ups of $\mathcal{X}_{n-1}$.
Following \cite{Zhang-positivity}, we then define
\[\widehat{h}_{f_\eta}(Y_\eta):=\lim_{n\to\infty}d^{-n(\dim Y_\eta+1)}\frac{\left( (F_n)_*\left(\psi_n^*\{\mathcal{Y}\}\right)\cdot c_1(\mathcal{L})^{\dim Y_\eta+1}\cdot c_1(\pi^*\mathcal{M})^{\dim S-1}\right)}{(\dim Y_\eta+1)\deg_{Y_\eta}(L_\eta)}.\]

The next lemma follows from~\cite{GV_Northcott}:
\begin{lemma}\label{lm:GV}
For any $\mathcal{Y}$ as above, $\widehat{h}_{f_\eta}(Y_\eta)$ is well-defined and satisfies
\[\widehat{h}_{f_\eta}((f_\eta)_*(Y_\eta))=d\widehat{h}_{f_\eta}(Y_\eta).\]
In addition, we can compute $\widehat{h}_{f_\eta}(Y_\eta)$ as
\[\widehat{h}_{f_\eta}(Y_\eta)=\frac{1}{(\dim Y_\eta+1)\deg_{Y_\eta}(L_\eta)}\int_{\mathcal{X}^0(\C)}\widehat{T}^{\dim Y_\eta+1}_f\wedge[\mathcal{Y}]\wedge(\pi^*\omega_S)^{\dim S-1},\]
where $\omega_S$ is any smooth form representing $c_1(\mathcal{M})$.
\end{lemma}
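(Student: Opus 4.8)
The plan is to establish Lemma~\ref{lm:GV} by reconciling two descriptions of $\widehat{h}_{f_\eta}(Y_\eta)$: the arithmetic-intersection-theoretic one coming from Zhang's positivity formalism, and the complex-analytic one in terms of the fibered Green current $\widehat{T}_f$. The strategy rests on the observation that over the function field $\mathbb{K}=\mathbb{C}(S)$, the canonical height of $Y_\eta$ is computed by an adelic metrized line bundle whose archimedean component at the place corresponding to $S(\C)$ is encoded precisely by $\widehat{T}_f$. So my first step would be to recall the construction from \cite{GV_Northcott}: the semi-positive metric on $\mathcal{L}$ obtained as the uniform limit of $d^{-n}(f^n)^*\|\cdot\|_0$ (for an initial smooth metric with curvature $\omega$) has curvature current $\widehat{T}_f$, and the associated canonical height of a subvariety $Y_\eta\subset X_\eta$ defined over $\mathbb{K}$ admits the standard intersection-theoretic expression — this is exactly the formula with $(f^n)_*\{\mathcal{Y}\}$ defining $\widehat{h}_{f_\eta}(Y_\eta)$ above.

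The second step is the well-definedness and the functional equation $\widehat{h}_{f_\eta}((f_\eta)_*Y_\eta)=d\,\widehat{h}_{f_\eta}(Y_\eta)$. Well-definedness (existence of the limit) follows from the telescoping/Cauchy argument that is standard for canonical heights: writing $a_n := d^{-n(\dim Y_\eta+1)}\big((f^n)_*\{\mathcal{Y}\}\cdot c_1(\mathcal{L})^{\dim Y_\eta+1}\cdot c_1(\pi^*\mathcal{M})^{\dim S-1}\big)$, one uses $f^*\mathcal{L}\simeq\mathcal{L}^{\otimes d}$ on the regular part together with the projection formula to show $|a_{n+1}-a_n|$ decays geometrically, so the sequence converges; this is carried out in \cite{GV_Northcott} and I would just cite it. The functional equation is then immediate from the definition by reindexing $n\mapsto n+1$ and using $(f_\eta)_*$, the extra factor of $d$ emerging from the normalization exponent $\dim Y_\eta+1$ versus $\dim((f_\eta)_*Y_\eta) = \dim Y_\eta$ being unchanged while one power of $f_*$ is absorbed.

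The third and central step is the identification with the integral $\frac{1}{(\dim Y_\eta+1)\deg_{Y_\eta}(L_\eta)}\int_{\mathcal{X}^0(\C)}\widehat{T}_f^{\dim Y_\eta+1}\wedge[\mathcal{Y}]\wedge(\pi^*\omega_S)^{\dim S-1}$. The idea is that each term $a_n$ in the defining limit equals $\frac{1}{(\dim Y_\eta+1)\deg_{Y_\eta}(L_\eta)}\int (d^{-n}(f^n)^*\omega)^{\dim Y_\eta+1}\wedge[\mathcal{Y}]\wedge(\pi^*\omega_S)^{\dim S-1}$ up to harmless cohomological bookkeeping — this is just the translation of the intersection number on $\mathcal{X}$ into a wedge of closed smooth forms, valid because $\omega$ represents $c_1(\mathcal{L})$ and $\pi^*\omega_S$ represents $c_1(\pi^*\mathcal{M})$, and because $(f^n)_*\{\mathcal{Y}\}\cdot(\cdots)=\{\mathcal{Y}\}\cdot(f^n)^*(\cdots)$ by the projection formula. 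Then one passes to the limit inside the integral: since $d^{-n}(f^n)^*\omega = \omega + dd^c g_n$ with $g_n\to g_f$ uniformly on compact subsets of $\mathcal{X}^0(\C)$ (continuity of the potential, as recalled in the Introduction), the wedge products $(d^{-n}(f^n)^*\omega)^{\dim Y_\eta+1}\wedge[\mathcal{Y}]$ converge weakly to $\widehat{T}_f^{\dim Y_\eta+1}\wedge[\mathcal{Y}]$ by Bedford–Taylor continuity for currents with continuous potentials, and integrating against the fixed smooth form $(\pi^*\omega_S)^{\dim S-1}$ preserves the convergence. The main obstacle I anticipate is precisely the convergence of these wedge products against the cycle $[\mathcal{Y}]$ on the \emph{non-compact} variety $\mathcal{X}^0(\C)$: one must control mass escaping near $\partial\mathcal{X}^0$, which is why the hypothesis that $\pi|_{\mathcal{Y}}$ is flat and projective over $S^0_{\mathcal{Y}}$ (so that $[\mathcal{Y}]$ has locally finite mass and the slices behave well) is essential, and one invokes the finite-mass property of $\widehat{T}_f$ on $\mathcal{X}^0(\C)$ together with the fact that these intersection numbers, being limits of the honest algebraic intersection numbers $a_n$ computed on the compactification $\mathcal{X}$, are automatically finite. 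Modulo this technical point — which \cite{GV_Northcott} handles in the $(1,1)$ setting and which extends verbatim to higher exterior powers since $\widehat{T}_f$ has continuous local potentials — the proof is a formal matching of two limits.
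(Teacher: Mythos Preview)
Your proposal is correct and aligns with the paper's approach: both rely on \cite[Theorem~B]{GV_Northcott} for the convergence of the iterated intersection numbers to the integral against $\widehat{T}_f^{\dim Y_\eta+1}$, which is the only substantive analytic input. The one difference is in how the functional equation $\widehat{h}_{f_\eta}((f_\eta)_*Y_\eta)=d\,\widehat{h}_{f_\eta}(Y_\eta)$ is derived. You obtain it by reindexing the defining limit (valid, though your explanation of where the factor $d$ comes from is a bit imprecise: it is $d^{q+1}$ from absorbing one extra $f_*$ into the numerator, divided by $d^q$ from $\deg_{(f_\eta)_*Y_\eta}(L_\eta)=d^q\deg_{Y_\eta}(L_\eta)$ in the denominator). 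The paper instead first establishes the integral formula and then computes directly with currents, using $f^*\widehat{T}_f=d\,\widehat{T}_f$, $\pi\circ f=\pi$, and the projection formula to get
\[
\int \widehat{T}_f^{q+1}\wedge f_*[\mathcal{Y}]\wedge(\pi^*\omega_S)^{p-1}=\int f^*\bigl(\widehat{T}_f^{q+1}\wedge(\pi^*\omega_S)^{p-1}\bigr)\wedge[\mathcal{Y}]=d^{q+1}\int \widehat{T}_f^{q+1}\wedge[\mathcal{Y}]\wedge(\pi^*\omega_S)^{p-1}.
\]
This current-theoretic route is slightly cleaner once the integral formula is in hand, but your limit-reindexing argument is equally valid and perhaps more elementary since it does not require the integral identification first.
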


\begin{remark*}\normalfont
As explained above, the integral can be lifted to $\P^N(\C) \times S$ where the map $f$ is lifted to a family of endomorphisms $F$. So, we can apply the theory of psh functions and currents on singular complex varieties of Demailly~\cite{Demailly_SMF} and the integral is well defined. Note also that, in what follows, we only consider the wedge product of currents which satisfy this lift property and that those currents either have continuous potential or are integration currents on closed subvarieties.
\end{remark*}

\begin{proof}
Let $q:=\dim Y_\eta$ and $p:=\dim S$. The fact that it is well-defined and the formula relating the limit of $d^{-n(q+1)}\left((F_n)_*\left(\psi_n^*\{\mathcal{Y}\}\right)\cdot c_1(\mathcal{L})^{q+1}\cdot c_1(\pi^*\mathcal{M})^{p-1}\right)$ with $\widehat{T}_f^{q+1}\wedge[\mathcal{Y}]\wedge (\pi^*\omega_S)^{p-1}$ are contained in \cite[Theorem~B]{GV_Northcott}. We then can compute
\begin{align*}
\widehat{h}_{f_\eta}((f_\eta)_*(Y_\eta)) & =\frac{1}{(q+1)\deg_{Y_\eta}(f_\eta^*L_\eta)}\int_{\mathcal{X}^0(\C)}\widehat{T}_f^{q+1}\wedge\left(f_*[\mathcal{Y}]\right)\wedge (\pi^*\omega_S)^{p-1}\\
& = \frac{1}{(q+1)d^q\deg_{Y_\eta}(L_\eta)}\int_{\mathcal{X}^0(\C)}\widehat{T}_f^{q+1}\wedge\left(f_*[\mathcal{Y}]\right)\wedge (\pi^*\omega_S)^{p-1}\\
& = \frac{1}{(q+1)d^q\deg_{Y_\eta}(L_\eta)}\int_{\mathcal{X}^0(\C)}\left(f^*\left(\widehat{T}_f^{q+1}\wedge (\pi^*\omega_S)^{p-1}\right)\right)\wedge [\mathcal{Y}]\\
& = \frac{d^{q+1}}{(q+1)d^q\deg_{Y_\eta}(L_\eta)}\int_{\mathcal{X}^0(\C)}\widehat{T}_f^{q+1}\wedge [\mathcal{Y}]\wedge (\pi^*\omega_S)^{p-1}=d\widehat{h}_{f_\eta}(Y_\eta),
\end{align*}
where we used that $\dim Y_\eta=q$ and $\pi\circ f=\pi$.
\end{proof}

 In particular, the last part of the lemma states that the height $\widehat{h}_{f_\eta}(Y_\eta)$ is $>0$ if and only if the bifurcation current  $T_{f,\mathcal{Y}}:=\pi_*\left(\widehat{T}_{f}^{\dim Y_\eta+1}\wedge[\mathcal{Y}]\right) $  is not identically $0$ in $S^0$, since 
 \[ \int_{\mathcal{X}^0(\C)}\widehat{T}^{\dim Y_\eta+1}_f\wedge[\mathcal{Y}]\wedge (\pi^*\omega_S)^{\dim S-1} =  \int_{S^0(\C)} \pi_*(\widehat{T}^{\dim Y_\eta+1}_f\wedge[\mathcal{Y}])\wedge \omega_S^{\dim S-1}.\]

\subsection{The higher bifurcation currents of a pair}\label{sec:fibered-product}
As above, let $(\mathcal{X},f,\mathcal{L},\mathcal{Y})$ be a dynamical pair parametrized by $S$ with regular part $S^0_\mathcal{Y}$.

 Let $\mathcal{M}$ be an ample $\mathbb{Q}$-line bundle on $S$ of volume $1$.
For any $m\geq1$, let $\mathcal{X}^{[m]}:=\mathcal{X}\times_S\cdots\times_S\mathcal{X}$ and $\mathcal{Y}^{[m]}:=\mathcal{Y}\times_S\cdots\times_S\mathcal{Y}$ be the respective $m$-fiber power of $\mathcal{X}$ and $\mathcal{Y}$. Denote also by $\pi_{[m]}:\mathcal{X}^{[m]}\to S$ the morphism induced by $\pi$. We define $f^{[m]}$ as
\[f^{[m]}(x)=(f_t(x_1),\ldots,f_t(x_m)), \quad x=(x_1,\ldots,x_m)\in X_t^m=\pi_{[m]}^{-1}\{t\}.\]
For any $1\leq j\leq m$, we let $p_j:\mathcal{X}^{[m]}\to\mathcal{X}$ be the projection onto the $j$-th factor of the fiber product and $\mathcal{L}^{[m]}:=p_1^*\mathcal{L}+\cdots+p_m^*\mathcal{L}$. By construction and using $\widehat{T}_f^{\dim X_\eta+1}=0$, we have
\begin{equation}\label{eq_factoriel}
\widehat{T}_{f^{[m]}}=p_1^*(\widehat{T}_f)+\cdots+p_m^*(\widehat{T}_f) \quad \text{and} \quad \widehat{T}_{f^{[m]}}^{m\dim X_\eta}= C(m,\dim X_\eta) \bigwedge_{j=1}^mp_j^*\left(\widehat{T}_f^{\dim X_\eta}\right),
\end{equation}
where $C(m,\dim X_\eta):=  \prod_{j=1}^m \binom{j \dim X_\eta}{\dim X_\eta}$.
We define higher bifurcation currents as follows:
\begin{definition}
For $1\leq m\leq \dim S$, the $m$-\emph{bifurcation current} of $(\mathcal{X},f,\mathcal{L},\mathcal{Y})$ is the closed positive $(m,m)$-current on $S^0_\mathcal{Y}(\C)$ given by
\[T_{f,\mathcal{Y}}^{(m)}:=(\pi_{[m]})_*\left(\widehat{T}_{f^{[m]}}^{m(\dim Y_\eta+1)}\wedge[\mathcal{Y}^{[m]}]\right).\]
The \emph{bifurcation measure} of $(\mathcal{X},f,\mathcal{L},\mathcal{Y})$ is 
\[\mu_{f,\mathcal{Y}}:=T_{f,\mathcal{Y}}^{(\dim S)}.\]
\end{definition}
As we will see below, the current $T_{f,\mathcal{Y}}^{(m)}$ $-$ which can be a priori non-zero even though $T_{f,\mathcal{Y}}^{m}$ is zero $-$is the current which characterizes a condition of non-degeneracy \`a la Yuan-Zhang~\cite{YZ-adelic}. In addition, it is more more practical, in order
to use dynamical arguments, to consider a bifurcation current associated to a
single (fibered) map rather than intersecting several bifurcation currents (e.g.
\cite{AGMV}) and we will consider, in a crucial way, families of fibered maps in the sequel.
We give basic properties of those currents.

\begin{proposition}\label{prop-higher-currents}
The following properties hold
\begin{enumerate}
\item For any $1\leq m\leq \dim S$ and any $j\leq m$
\begin{center}
$T_{f,\mathcal{Y}}^{(m)}\geq T_{f,\mathcal{Y}}^{(j)}\wedge T_{f,\mathcal{Y}}^{(m-j)}$, 
\end{center}
\item For all $m$, $T_{f,\mathcal{Y}}^{(m)}\neq 0$ implies $T_{f,\mathcal{Y}}^{(m-1)}\neq 0$.
Similarly, for all $m \geq \dim S$,
\[(\pi_{[m]})_*\left(\widehat{T}_{f^{[m]}}^{\dim\mathcal{Y}^{[m]}}\wedge [\mathcal{Y}^{[m]}]\right)\geq \mu_{f,\mathcal{Y}}.\] 
\item if $\dim Y_\eta=\dim X_\eta-1$, we have 
\[\mu_{f,\mathcal{Y}}=  \prod_{j=1}^{\dim S} \binom{j \dim X_\eta}{\dim X_\eta}\cdot\left( T_{f,\mathcal{Y}}^{(1)}\right)^{\wedge \dim S},\]
and for any $m\geq \dim S$, there is a constant $C_m\geq1$ such that
\[(\pi_{[m]})_*\left(\widehat{T}_{f^{[m]}}^{\dim\mathcal{Y}^{[m]}}\wedge [\mathcal{Y}^{[m]}]\right)=C_m\cdot \mu_{f,\mathcal{Y}}.\]
\end{enumerate}
\end{proposition}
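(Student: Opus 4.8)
The plan is to treat the three items in order, each building on the structure formula \eqref{eq_factoriel} for the fibered Green current of the fiber power, together with the elementary fact that wedge products of closed positive currents are closed positive (here legitimate because $\widehat{T}_f$ has continuous local potential on $\mathcal{X}^0(\C)$, so all intersections below are of the Bedford–Taylor type and commute).

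For item (1), I would write $\mathcal{X}^{[m]}=\mathcal{X}^{[j]}\times_S\mathcal{X}^{[m-j]}$ and let $\varpi_1,\varpi_2$ be the two natural projections, so that $p_i$ for $i\le j$ factors through $\varpi_1$ and $p_i$ for $i>j$ through $\varpi_2$. Using \eqref{eq_factoriel} one sees that $\widehat{T}_{f^{[m]}}^{m(\dim Y_\eta+1)}\wedge[\mathcal{Y}^{[m]}]$ dominates $\varpi_1^*\!\big(\widehat{T}_{f^{[j]}}^{j(\dim Y_\eta+1)}\wedge[\mathcal{Y}^{[j]}]\big)\wedge\varpi_2^*\!\big(\widehat{T}_{f^{[m-j]}}^{(m-j)(\dim Y_\eta+1)}\wedge[\mathcal{Y}^{[m-j]}]\big)$: indeed, expanding the $m(\dim Y_\eta+1)$-th power of $\widehat{T}_{f^{[m]}}=\sum p_i^*\widehat T_f$ by the multinomial formula, the term in which each of the first $j$ factors carries exactly $(\dim Y_\eta+1)$ copies of $\widehat T_f$ (and likewise for the last $m-j$) is one positive summand among many, and all cross terms are positive. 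Pushing forward by $\pi_{[m]}=\pi_{[j]}\circ(\text{first proj})=\pi_{[m-j]}\circ(\text{second proj})$ and using the projection formula together with the fact that $\pi_{[m]}$ is a fiber product over $S$ (so $(\pi_{[m]})_*(\varpi_1^*A\wedge\varpi_2^*B)=(\pi_{[j]})_*A\wedge(\pi_{[m-j]})_*B$ by flat base change) gives the inequality.

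For item (2): the first assertion $T_{f,\mathcal{Y}}^{(m)}\neq0\Rightarrow T_{f,\mathcal{Y}}^{(m-1)}\neq0$ follows by taking $j=1$ in (1), since $T_{f,\mathcal{Y}}^{(m)}\ge T_{f,\mathcal{Y}}^{(m-1)}\wedge T_{f,\mathcal{Y}}^{(1)}$ and, if $T_{f,\mathcal{Y}}^{(m-1)}=0$, the right-hand side is $0$ forcing $T_{f,\mathcal{Y}}^{(m)}=0$. For the second assertion, with $p=\dim S$ and $m\ge p$, I compare $\widehat{T}_{f^{[m]}}^{\dim\mathcal{Y}^{[m]}}$ against $\widehat{T}_{f^{[m]}}^{m(\dim Y_\eta+1)}$: since $\dim\mathcal{Y}^{[m]}=m\dim Y_\eta+p$ (relative dimension $m\dim Y_\eta$ plus $\dim S$) whereas the exponent in $\mu_{f,\mathcal{Y}}$ is $p(\dim Y_\eta+1)=p\dim Y_\eta+p\le m\dim Y_\eta+p$, the difference of exponents is $(m-p)\dim Y_\eta\ge0$. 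Expanding $\widehat{T}_{f^{[m]}}^{\dim\mathcal{Y}^{[m]}}\wedge[\mathcal{Y}^{[m]}]$ multinomially and keeping only the summand that on $p$ chosen factors puts $(\dim Y_\eta+1)$ copies of $p_i^*\widehat T_f$ and on the remaining $m-p$ factors puts exactly $\dim Y_\eta$ copies of $p_i^*\widehat T_f$ (so that the $\mathcal{Y}$-direction is saturated on those factors and the extra powers vanish against $[\mathcal{Y}^{[m]}]$ only if one exceeds $\dim Y_\eta+1$, which this term does not), one obtains a positive current whose $\pi_{[m]}$-pushforward equals a positive multiple of $\mu_{f,\mathcal{Y}}$; since all other multinomial terms are positive, the full pushforward dominates $\mu_{f,\mathcal{Y}}$. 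The bookkeeping of which term survives the wedge with $[\mathcal{Y}^{[m]}]$ is the one place one must be careful.

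For item (3): when $\dim Y_\eta=\dim X_\eta-1$, the fibered critical-type situation, $\widehat T_f^{\dim Y_\eta+1}=\widehat T_f^{\dim X_\eta}$ is the top fibered power, so by the second identity in \eqref{eq_factoriel} (applied with $m=\dim S$ in place of the generic $m$ and $\dim X_\eta$ as written) we get $\widehat{T}_{f^{[\dim S]}}^{\dim S\,\dim X_\eta}=C(\dim S,\dim X_\eta)\bigwedge_j p_j^*(\widehat T_f^{\dim X_\eta})$; wedging with $[\mathcal{Y}^{[\dim S]}]=\bigwedge_j p_j^*[\mathcal{Y}]$ and pushing forward factor by factor (again flat base change over $S$) yields exactly $C(\dim S,\dim X_\eta)\big(T_{f,\mathcal Y}^{(1)}\big)^{\wedge\dim S}$, which is the displayed formula since $C(\dim S,\dim X_\eta)=\prod_{j=1}^{\dim S}\binom{j\dim X_\eta}{\dim X_\eta}$. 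For the last equality, with $m\ge\dim S$, one notes that on each of the $m$ factors the wedge with $[\mathcal{Y}]$ kills any power of $p_j^*\widehat T_f$ beyond $\dim X_\eta=\dim Y_\eta+1$ (because $\widehat T_f^{\dim X_\eta+1}=0$ and $[\mathcal{Y}]$ has fiber codimension $1$), so in the multinomial expansion of $\widehat{T}_{f^{[m]}}^{\dim\mathcal{Y}^{[m]}}$ only the single term assigning $\dim X_\eta$ copies to each factor survives — hence $(\pi_{[m]})_*(\widehat{T}_{f^{[m]}}^{\dim\mathcal{Y}^{[m]}}\wedge[\mathcal{Y}^{[m]}])=C_m\,(T_{f,\mathcal Y}^{(1)})^{\wedge m}$ with $C_m$ the corresponding multinomial coefficient — and since $m\ge\dim S$ and $(T_{f,\mathcal Y}^{(1)})^{\wedge m}$ with $m>\dim S$ would be a current of bidimension $<0$, in fact the only consistent reading is $m=\dim S$ on the nose or one interprets the statement as giving $C_m\cdot\mu_{f,\mathcal Y}$ directly; combining with the first part of (3) identifies the constant. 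The main obstacle throughout is purely combinatorial: correctly identifying, in each multinomial expansion, which single term survives the wedge with the cycle $[\mathcal{Y}^{[m]}]$ and does not vanish for dimension reasons, and checking that pushforward commutes with the external wedge over the fiber product — the latter being flat base change, valid over $S^0_\mathcal{Y}$ where everything is flat.
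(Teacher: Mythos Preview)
Your approach to item~(1) is essentially the same as the paper's (the paper only writes out $j=1$, and your block-splitting via $\varpi_1,\varpi_2$ is the natural generalization), and the Fubini/pushforward identity you invoke is exactly what the paper uses. Item~(3), first display, is also correct and matches the paper.

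There is, however, a genuine logical error in your argument for the first assertion of item~(2). You write that from $T_{f,\mathcal Y}^{(m)}\ge T_{f,\mathcal Y}^{(m-1)}\wedge T_{f,\mathcal Y}^{(1)}$, vanishing of $T_{f,\mathcal Y}^{(m-1)}$ ``forces $T_{f,\mathcal Y}^{(m)}=0$''. But the inequality goes the wrong way: if the right-hand side is zero you only learn $T_{f,\mathcal Y}^{(m)}\ge 0$, which is vacuous. Item~(1) cannot be used to deduce this implication. The paper instead argues directly: if $T_{f,\mathcal Y}^{(m)}\neq 0$, expand $\widehat T_{f^{[m]}}^{m(\dim Y_\eta+1)}$ multinomially to find \emph{some} multi-index $(\alpha_1,\dots,\alpha_m)$ summing to $m(\dim Y_\eta+1)$ with $\bigwedge_j p_j^*(\widehat T_f^{\alpha_j}\wedge[\mathcal Y])\neq 0$; by pigeonhole one can arrange $\alpha_2+\cdots+\alpha_m\ge (m-1)(\dim Y_\eta+1)$, and then projecting off the first factor (and lowering the $\alpha_j'$ back to total $(m-1)(\dim Y_\eta+1)$) yields a nonzero term in $T_{f,\mathcal Y}^{(m-1)}$. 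You need this extraction-of-a-nonzero-term argument; the lower bound from~(1) is not enough.

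There is also a combinatorial slip in the last part of item~(3). You claim the only surviving multinomial term assigns $\dim X_\eta$ copies of $\widehat T_f$ to \emph{each} of the $m$ factors. But $\dim\mathcal Y^{[m]}=m\dim Y_\eta+\dim S$, while $m\cdot\dim X_\eta=m\dim Y_\eta+m$; for $m>\dim S$ these differ, so your term does not even have the right total degree (hence your worry about ``bidimension $<0$''). The correct bookkeeping, as in the paper, is that since $\widehat T_f^{\dim Y_\eta+2}\wedge[\mathcal Y]=0$ each factor carries at most $\dim Y_\eta+1$ copies; with total $m\dim Y_\eta+\dim S$, exactly $\dim S$ factors carry $\dim Y_\eta+1$ and the remaining $m-\dim S$ carry $\dim Y_\eta$. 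The latter push forward to the constant $\deg_{L_\eta}(Y_\eta)$ each, and the former produce $(T_{f,\mathcal Y}^{(1)})^{\dim S}$, which by the first display of~(3) is a constant multiple of $\mu_{f,\mathcal Y}$. Summing over the $\binom{m}{\dim S}$ choices gives the constant $C_m$.
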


\begin{proof} 
	Fix $1\leq m \leq \dim S$. For the sake of simplicity, let us only consider the case where $j=1$. Let $p_i:\mathcal{X}^{[m]}\to\mathcal{X}$ be the projection onto the $i$-th factor and $\pi_{[m]}: \mathcal{X}^{[m]}\to S$ be the canonical projection (so that $\pi_{[1]}$ is the projection $ \mathcal{X}\to S$). Let also $\tau_m:\mathcal{X}^{[m]}\to\mathcal{X}^{[m-1]}$ be the projection forgetting the first factor. Then 
	\begin{align*}
	\widehat{T}_{f^{[m]}}^{m(\dim Y_\eta+1)} & =\left(p_1^*(\widehat{T}_f)+\cdots+p_m^*(\widehat{T}_f)\right)^{m(\dim Y_\eta+1)}\\
	&\geq  p_1^*\left(\widehat{T}_f^{\dim Y_\eta+1}\right)\wedge \left(p_2^*(\widehat{T}_f)+\cdots+p_m^*(\widehat{T}_f)\right)^{(m-1)(\dim Y_\eta+1)}
	\end{align*}
as $(p_i^*\widehat{T}_f)^{\dim Y_\eta+1}= p_i^*(\widehat{T}_f^{\dim Y_\eta+1})$. Using the equality 
\[[\mathcal{Y}^{[m]}]= p_1^*( [\mathcal{Y}])\wedge \tau_m^*( [\mathcal{Y}^{[m-1]}])\]
and the equality
\[ \tau_m^*\left(\widehat{T}_{f^{[m-1]}}^{(m-1)(\dim Y_\eta+1)}\right)=\left(p_2^*(\widehat{T}_f)+\cdots+p_m^*(\widehat{T}_f)\right)^{(m-1)(\dim Y_\eta+1)},\]
we deduce that for a positive test form $\phi$ of bidimension $(m,m)$ on $S_\mathcal{Y}^0$,
\begin{align*}
\langle T_{f,\mathcal{Y}}^{(m)}, \phi \rangle & =\langle (\pi_{[m]})_*\left(\widehat{T}_{f^{[m]}}^{m(\dim Y_\eta+1)}\wedge[\mathcal{Y}^{[m]}]\right), \phi \rangle\\
 &= \langle \widehat{T}_{f^{[m]}}^{m(\dim Y_\eta+1)}\wedge[\mathcal{Y}^{[m]}], \pi_{[m]}^*\phi \rangle\\
&\geq\left \langle p_1^*\left(\widehat{T}_f^{\dim Y_\eta+1}\right)\wedge \tau_m^*\left(\widehat{T}_{f^{[m-1]}}^{(m-1)(\dim Y_\eta+1)}\right), \pi_{[m]}^*\phi \right\rangle\\
&\geq  \left\langle \widehat{T}_f^{\dim Y_\eta+1} \wedge [\mathcal{Y}]\wedge (p_1)_*\left(\tau_m^*\left(\widehat{T}_{f^{[m-1]}}^{(m-1)(\dim Y_\eta+1)}\right)\right), \pi_{[1]}^*\phi \right\rangle
\end{align*}
 where we used $\pi_{[m]}^*\phi=p_1^*(\pi_{[1]}^*\phi)$. By construction, we have
 \[(p_1)_*\left(\tau_m^*\left(\widehat{T}_{f^{[m-1]}}^{(m-1)(\dim Y_\eta+1)}\right)\right)=  \pi_{[1]}^*\left((T_{f,\mathcal{Y}}^{(m-1)})\right) \]
so that 	
\begin{align*}
\langle T_{f,\mathcal{Y}}^{(m)}, \phi \rangle &\geq \left\langle \widehat{T}_f^{\dim Y_\eta+1} \wedge [\mathcal{Y}]\wedge \pi_{[1]}^*\left(T_{f,\mathcal{Y}}^{(m-1)}\right), \pi_{[1]}^*\phi \right\rangle
\end{align*}	
which gives the first point.

We prove the second point. Assume $T_{f,\mathcal{Y}}^{(m)}\neq 0$. Then, we develop the product 
\[\widehat{T}_{f^{[m]}}^{m(\dim Y_\eta+1)}=\left(p_1^*(\widehat{T}_f)+\cdots+p_{m}^*(\widehat{T}_f)\right)^{m(\dim Y_\eta+1)}\] 
in $T_{f,\mathcal{Y}}^{(m)}$. We deduce that there exists $(\alpha_1, \dots \alpha_{m})$ with $\alpha_1 + \dots + \alpha_m= m(\dim Y_\eta+1) $ such that 
\[  \bigwedge_{i=1}^{m} p_i^*\left(\widehat{T}_{f}^{\alpha_j}\wedge[\mathcal{Y}]\right) >0.\]
By symmetry, we can assume that $\alpha_2 + \dots + \alpha_m \geq (m-1)(\dim Y_\eta+1)$. Take $\phi$ a test form on $S^0_\mathcal{Y}$ so that
\begin{align*}
0\neq &\left\langle  \bigwedge_{i=1}^{m} p_i^*\left(\widehat{T}_{f}^{\alpha_i}\wedge[\mathcal{Y}]\right) , \pi_{[m]}^*(\phi)\right\rangle\\
&\qquad = \left\langle \widehat{T}_{f}^{\alpha_1}\wedge[\mathcal{Y}]\wedge 
	(p_1)_*\left(\bigwedge_{i=2}^{m}  p_i^*\left(\widehat{T}_{f}^{\alpha_i}\wedge[\mathcal{Y}]\right) \right) , \pi_{[1]}^*(\phi) \right\rangle.
\end{align*}
In particular, we deduce that $
(p_1)_*\left(\bigwedge_{i=2}^{m}  p_i^*\left(\widehat{T}_{f}^{\alpha_i}\wedge[\mathcal{Y}]\right) \right)$ is non zero, which in turn implies $
(p_1)_*\left(\bigwedge_{j=2}^{m}  p_i^*\left(\widehat{T}_{f}^{\alpha'_i}\wedge[\mathcal{Y}]\right) \right)$ is non-zero, where the $\alpha'_i$ are non-negative integers such that $\alpha'_i\leq \alpha_i$ for all $i$ and  $\alpha'_2 + \dots + \alpha'_m = (m-1)(\dim Y_\eta+1)$. This in turn implies that $T_{f,\mathcal{Y}}^{(m-1)}\neq0$. The case $m\geq \dim S$ is similar.

The proof of the third point is now similar to that of the first point. Indeed, assume $\dim Y_\eta=\dim X_\eta -1$, then $p_i^*(\widehat{T}_f)^{\dim Y_\eta + 2} \wedge p_i^*( [\mathcal{Y}])= p_i^*(\widehat{T}_f^{\dim Y_\eta + 2} \wedge [\mathcal{Y}])=0$. In particular,  
\[\widehat{T}_{f^{[\dim S]}}^{\dim S(\dim Y_\eta +1)}\wedge[\mathcal{Y}^{[\dim S]}]=\prod_{i=1}^{\dim S} \binom{i \dim X_\eta}{\dim X_\eta}\bigwedge_{i=1}^{\dim S}p_i^*\left(\widehat{T}_f)^{\dim Y_\eta + 1}\wedge [\mathcal{Y}]\right) \]
and the rest follows. 
To conclude, take $m \geq \dim S$:
\begin{align*}(\pi_{[m]})_*\left(\widehat{T}_{f^{[m]}}^{\dim\mathcal{Y}^{[m]}}\wedge [\mathcal{Y}^{[m]}]\right)&=  (\pi_{[m]})_*\left(  \left(p_1^*(\widehat{T}_f)+\cdots+p_m^*(\widehat{T}_f)\right)^{\dim\mathcal{Y}^{[m]}}\wedge [\mathcal{Y}^{[m]}]\right)  \\ 
                          & = (\pi_{[m]})_*\left(  \left(p_1^*(\widehat{T}_f)+\cdots+p_m^*(\widehat{T}_f)\right)^{m \dim Y_\eta + \dim S}\wedge [\mathcal{Y}^{[m]}]\right).          
\end{align*}
Developing all terms in the product and using  $ \widehat{T}_f^{\dim Y_\eta + 2} \wedge [\mathcal{Y}]=0$, we end up, up to permutations, to a sum of terms of the form 
\begin{align*}(\pi_{[m]})_*\left( \bigwedge_{i=1}^{\dim S} \left(p_i^*(\widehat{T}_f^{\dim Y_\eta +1} \wedge [\mathcal{Y}]\right) \wedge  \bigwedge_{\ell=\dim S+1}^{m} \left(p_\ell^*(\widehat{T}_f^{\dim Y_\eta} \wedge [\mathcal{Y}]) \right) \right).          
\end{align*}
The assertion now follows by Fubini.
\end{proof}

\subsection{The dynamical volumes of a pair}\label{Sec:dyn-volumes}
As above, let $(\mathcal{X},f,\mathcal{L},\mathcal{Y})$ be a dynamical pair parametrized by $S$ with regular part $S^0_\mathcal{Y}$.
We now can define the dynamical volumes of $\mathcal{Y}$ as follows
\begin{definition} 
For any $m\geq\dim S$, we define the $m$-\emph{dynamical volume} $\mathrm{Vol}_{f}^{(m)}(\mathcal{Y})$ of $\mathcal{Y}$ for $(\mathcal{X},f,\mathcal{L})$ as the non-negative real number
\[\mathrm{Vol}_{f}^{(m)}(\mathcal{Y}):=\int_{(\mathcal{X}^{[m]})^0(\C)}\widehat{T}_{f^{[m]}}^{\dim \mathcal{Y}^{[m]}}\wedge[\mathcal{Y}^{[m]}].\]
For any ample $\mathbb{Q}$-line bundle $\mathcal{M}$ on $S$, we also define the $m$-\emph{parametric degree} $\deg_{f,\mathcal{M}}^{(m)}(\mathcal{Y})$ of $\mathcal{Y}$ relative to $\mathcal{M}$ as 
\[\deg_{f,\mathcal{M}}^{(m)}(\mathcal{Y}):=\int_{(\mathcal{X}^{[m]})^0(\C)}\widehat{T}_{f^{[m]}}^{\dim\mathcal{Y}^{[m]}-1}\wedge[\mathcal{Y}^{[m]}]\wedge \pi_{[m]}^*\left(\omega_S\right),\]
where $\omega_S$ is any smooth form on $S$ representing $c_1(\mathcal{M})$.
\end{definition}

\begin{remark}\normalfont
 When $\dim S=1$, a computation gives
\[\deg^{(1)}_{f,\mathcal{M}}(\mathcal{Y})=\deg_{Y_\eta}(L_\eta)\cdot \deg_\mathcal{M}(S)>0.\]
In particular, if $\deg_\mathcal{M}(S)=1$, then
\[\widehat{h}_{f_\eta}(Y_\eta)=\frac{\mathrm{Vol}^{(1)}_{f}(\mathcal{Y})}{(\dim Y_\eta+1)\cdot \deg^{(1)}_{f,\mathcal{M}}(\mathcal{Y})}.\]
\end{remark}

We can relate the non-vanishing of the bifurcation measure $\mu_{f,\mathcal{Y}}$ with the non vanishing of the volumes $\mathrm{Vol}^{(m)}_{f}(\mathcal{Y})$ for all $m\geq \dim S$. 

\begin{proposition}\label{prop-volumes} The following properties hold:
\begin{enumerate}
\item We have $\mu_{f,\mathcal{Y}}$ is non-zero if and only if for all $m\geq \dim S$, $\mathrm{Vol}_{f}^{(m)}(\mathcal{Y})>0$. 
\item For any integer $m\geq \dim S$, and for any ample $\mathbb{Q}$-line bundle $\mathcal{M}$ on $S$ of volume $1$, we have
\[\deg_{f,\mathcal{M}}^{(m)}(\mathcal{Y})\geq m\deg_{Y_\eta}(L_\eta)^{m-\dim S+1}\int_{S^0(\C)}T_{f,\mathcal{Y}}^{(\dim S-1)}\wedge \omega_S,\]
for any smooth form $\omega_S$ which represents $c_1(\mathcal{M})$.
\end{enumerate}
In particular, if $\mu_{f,\mathcal{Y}}\neq0$, then for all $m\geq\dim S$, and all $\mathcal{M}$, we have $\mathrm{Vol}^{(m)}_{f}(\mathcal{Y})>0$ and $\deg_{f,\mathcal{M}}^{(m)}(\mathcal{Y})>0$.
\end{proposition}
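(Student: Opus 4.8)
\textbf{Proof proposal for Proposition~\ref{prop-volumes}.}

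The plan is to deduce both statements from Proposition~\ref{prop-higher-currents} together with the basic positivity of the currents involved, using Fubini on fiber products exactly as in the proof of Proposition~\ref{prop-higher-currents}. For the first point, observe that $\mathrm{Vol}_f^{(m)}(\mathcal{Y})=\int_{(\mathcal{X}^{[m]})^0(\C)}\widehat{T}_{f^{[m]}}^{\dim\mathcal{Y}^{[m]}}\wedge[\mathcal{Y}^{[m]}]$ is, by definition, the total mass of the positive measure $(\pi_{[m]})_*(\widehat{T}_{f^{[m]}}^{\dim\mathcal{Y}^{[m]}}\wedge[\mathcal{Y}^{[m]}])$ on $S^0_\mathcal{Y}(\C)$ (here $\dim\mathcal{Y}^{[m]}=m\dim Y_\eta+\dim S$, so this is the top-degree situation). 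By Proposition~\ref{prop-higher-currents}(2), for any $m\geq\dim S$ we have $(\pi_{[m]})_*(\widehat{T}_{f^{[m]}}^{\dim\mathcal{Y}^{[m]}}\wedge[\mathcal{Y}^{[m]}])\geq\mu_{f,\mathcal{Y}}$, so $\mathrm{Vol}_f^{(m)}(\mathcal{Y})\geq\|\mu_{f,\mathcal{Y}}\|$; hence if $\mathrm{Vol}_f^{(m)}(\mathcal{Y})=0$ for some $m\geq\dim S$ then $\mu_{f,\mathcal{Y}}=0$. Conversely, if $\mu_{f,\mathcal{Y}}\neq0$ then $T_{f,\mathcal{Y}}^{(\dim S)}\neq0$, so by iterating Proposition~\ref{prop-higher-currents}(2) downwards $T_{f,\mathcal{Y}}^{(m')}\neq 0$ for all $m'\leq\dim S$, and in particular one can run the same fiber-product/Fubini estimate as in Proposition~\ref{prop-higher-currents} to bound $\mathrm{Vol}_f^{(m)}(\mathcal{Y})$ from below by a positive multiple of $\|T_{f,\mathcal{Y}}^{(\dim S-1)}\|\cdot\|\mu_{f_\eta}\|$-type quantities; the point is that developing $\widehat{T}_{f^{[m]}}=\sum p_j^*\widehat{T}_f$ and using $\widehat{T}_f^{\dim X_\eta+1}=0$ isolates, up to permutation, a term $\bigwedge_{j=1}^{\dim S}p_j^*(\widehat{T}_f^{\dim Y_\eta+1}\wedge[\mathcal{Y}])\wedge\bigwedge_{j=\dim S+1}^{m}p_j^*(\widehat{T}_f^{\dim Y_\eta}\wedge[\mathcal{Y}])$, whose pushforward by $\pi_{[m]}$ equals (by Fubini, and $\int\widehat{T}_f^{\dim Y_\eta}\wedge[\mathcal{Y}]$ restricted to a fiber being $\deg_{Y_\eta}(L_\eta)>0$) a positive multiple of $\mu_{f,\mathcal{Y}}$, so it is nonzero.

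For the second point, I would argue again by developing $\widehat{T}_{f^{[m]}}^{\dim\mathcal{Y}^{[m]}-1}=\big(\sum_{j=1}^m p_j^*\widehat{T}_f\big)^{m\dim Y_\eta+\dim S-1}$ in the integral defining $\deg_{f,\mathcal{M}}^{(m)}(\mathcal{Y})$, wedging against $[\mathcal{Y}^{[m]}]=\bigwedge_j p_j^*[\mathcal{Y}]$ and $\pi_{[m]}^*\omega_S$. Because $\widehat{T}_f^{\dim Y_\eta+2}\wedge[\mathcal{Y}]=0$ on each factor, every surviving monomial has, on $\dim S-1$ of the factors, exactly $\widehat{T}_f^{\dim Y_\eta+1}\wedge[\mathcal{Y}]$, on the remaining $m-\dim S+1$ factors at most $\widehat{T}_f^{\dim Y_\eta}\wedge[\mathcal{Y}]$ (with total multidegree accounting for the one missing power that is ``used'' by $\omega_S$), and $\pi_{[m]}^*\omega_S=p_{j_0}^*(\pi^*\omega_S)$ sits on one of those latter factors. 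Keeping only one such monomial (positivity lets us discard the rest) and applying Fubini over the $m-\dim S$ factors carrying only $\widehat{T}_f^{\dim Y_\eta}\wedge[\mathcal{Y}]$ — each contributing a factor $\deg_{Y_\eta}(L_\eta)$ — collapses the integral to a positive multiple of $\int_{S^0(\C)}T_{f,\mathcal{Y}}^{(\dim S-1)}\wedge\omega_S$, where the combinatorial multiplicity of that monomial in the multinomial expansion provides the factor $m$ (any lower bound on the multinomial coefficient that is at least $m$ suffices, e.g. the $m$ ways to place the $\omega_S$-factor); the power $\deg_{Y_\eta}(L_\eta)^{m-\dim S+1}$ absorbs the $m-\dim S$ Fubini factors plus one more coming from integrating $\widehat{T}_f^{\dim Y_\eta}\wedge[\mathcal{Y}]\wedge\pi^*\omega_S$ against the slice. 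The final ``in particular'' clause is then immediate: if $\mu_{f,\mathcal{Y}}\neq0$ then $\mathrm{Vol}_f^{(m)}(\mathcal{Y})>0$ by (1), and $T_{f,\mathcal{Y}}^{(\dim S-1)}\neq 0$ by Proposition~\ref{prop-higher-currents}(2), so one may choose $\omega_S$ (ample class) so that $\int T_{f,\mathcal{Y}}^{(\dim S-1)}\wedge\omega_S>0$, giving $\deg_{f,\mathcal{M}}^{(m)}(\mathcal{Y})>0$ via the inequality in (2).

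The main obstacle I anticipate is purely bookkeeping: correctly matching degrees in the multinomial expansion so that the distinguished monomial genuinely survives (i.e.\ verifying $\dim\mathcal{Y}^{[m]}-1 = (\dim S-1)(\dim Y_\eta+1) + (m-\dim S+1)\dim Y_\eta + \dim S - \dim S$, equivalently that after placing $\widehat{T}_f^{\dim Y_\eta+1}$ on $\dim S - 1$ factors and $\pi^*\omega_S$ on one, the residual bidegree $\dim Y_\eta$ can be distributed one per remaining factor) and checking that the combinatorial coefficient is bounded below by $m$ and the various $\deg_{Y_\eta}(L_\eta)$ factors assemble into the stated exponent $m-\dim S+1$. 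There is also a minor point to make rigorous: that $\pi_{[m]}$-pushforwards of wedge products of slice-currents compute correctly via Fubini on the fiber product $\mathcal{X}^{[m]}\to S$, which is exactly the manipulation already carried out in the proof of Proposition~\ref{prop-higher-currents} and may simply be cited. None of these steps requires a new idea beyond what is in that proof; the content is organizing the multinomial expansion and tracking positivity.
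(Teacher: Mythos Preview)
Your approach matches the paper's: cite Proposition~\ref{prop-higher-currents} for part~(1), and for part~(2) isolate one monomial in the expansion of $\widehat{T}_{f^{[m]}}^{\dim\mathcal{Y}^{[m]}-1}$, discard the rest by positivity, and push forward via Fubini exactly as in the proof of Proposition~\ref{prop-higher-currents}.

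Two bookkeeping corrections. First, the vanishing $\widehat{T}_f^{\dim Y_\eta+2}\wedge[\mathcal{Y}]=0$ only holds when $\mathcal{Y}$ is a relative hypersurface (cf.\ Proposition~\ref{prop-higher-currents}(3)), which is not assumed here; fortunately you do not actually use it, since positivity alone justifies keeping a single monomial. Second, the factor $m$ cannot come from ``$m$ ways to place the $\omega_S$-factor'': $\pi_{[m]}^*\omega_S$ is pulled back from the base and does not live on any particular fiber factor. The paper obtains the factor $m$ from symmetry: writing $p=\dim S$, the monomial with $p_j^*\widehat{T}_f^{\dim Y_\eta+1}$ on $p-1$ factors and $p_j^*\widehat{T}_f^{\dim Y_\eta}$ on the remaining $m-p+1$ has $\binom{m}{p-1}\geq m$ distinct permuted copies in the multinomial expansion, each contributing the same quantity $\deg_{Y_\eta}(L_\eta)^{m-p+1}\int_{S^0(\C)}T_{f,\mathcal{Y}}^{(p-1)}\wedge\omega_S$ after pushing down to $S$. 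That is what the paper means by ``proceeding similarly for the other terms of the sum''.
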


\begin{proof}
The first point follows from Proposition~\ref{prop-higher-currents} (2).
Let $p:=\dim S$. To prove the second point, we remark that
\[ \widehat{T}_{f^{[m]}}^{\dim \mathcal{Y}^{[m]}-1}\geq \left(\bigwedge_{j=p}^{m}p_{\ell_j}^*\widehat{T}_f^{\dim Y_\eta}\right)\wedge\bigwedge_{j=1}^{p-1}\left(p_j^*\widehat{T}_f^{\dim Y_\eta+1}\right).\]
Let $\pi_p:\mathcal{X}^{[m]}\to\mathcal{X}^{[p-1]}$ be the projection forgetting the $m-p+1$ last variables and for $1\leq j\leq p-1$, let $p_j':\mathcal{X}^{[p-1]}\to\mathcal{X}$ be the projection onto the $j$-th factor. The measure
\[\left(\bigwedge_{j=p}^{m}p_{\ell_j}^*\widehat{T}_f^{\dim Y_\eta}\right)\wedge\bigwedge_{j=1}^{p-1}\left(p_j^*\widehat{T}_f^{\dim Y_\eta+1}\right)\wedge[\mathcal{Y}^{[m]}]\wedge \pi_{[m]}^*(\omega_S)\]
rewrites as
\[\bigwedge_{\ell=p}^mp_\ell^*\left(\widehat{T}_f^{\dim Y_\eta}\wedge[\mathcal{Y}]\right)\wedge(\pi_p)^*\left(\bigwedge_{j=1}^{p-1}(p_j')^*\left(\widehat{T}_f^{\dim Y_\eta+1}\wedge[\mathcal{Y}]\right)\wedge \pi_{[p-1]}^*(\omega_S)\right)\]
where we used that $\pi_{[p]}^*\omega_S=(\pi_p)^*\left(\pi_{[p-1]}^*\omega_S\right)$. In particular, its volume is that of its push-forward by $\pi_p$, which is the measure
\[(\pi_p)_*\left(\bigwedge_{\ell=p}^mp_\ell^*\left(\widehat{T}_f^{\dim Y_\eta}\wedge[\mathcal{Y}]\right)\right)\wedge\bigwedge_{j=1}^{p-1}(p_j')^*\left(\widehat{T}_f^{\dim Y_\eta+1}\wedge[\mathcal{Y}]\right)\wedge \pi_{[p-1]}^*(\omega_S).\]
We now remark that $\pi_p$ has fibers of dimension $k(m-p+1):=\dim X_\eta^{m-p+1}$, where $k=\dim X_\eta$, and that 
$\widehat{T}_f^{\dim Y_\eta}\wedge[\mathcal{Y}]$ is a $(k,k)$-current on $\mathcal{X}^0(\C)$, so that the current 
\[T:=(\pi_p)_*\left(\bigwedge_{\ell=p}^{m}p_\ell^*\left(\widehat{T}_f^{\dim Y_\eta}\wedge[\mathcal{Y}]\right)\right)\]
 is a $(0,0)$-current on $(\mathcal{X}^{[p-1]})^0(\C)$ which is nothing but the constant $\deg_{Y_\eta}(L_\eta)^{m-p+1}$. 
Therefore, the volume of the studied measure is exactly 
\[\deg_{Y_\eta}(L_\eta)^{m-p+1}\cdot \int_{S^0(\C)}T_{f,\mathcal{Y}}^{(\dim S-1)}\wedge\omega_S.\]
 As the wedge product is symmetric, proceeding similarly for the other terms of the sum, we find
\[\deg_{f,\mathcal{M}}(\mathcal{Y})\geq m\cdot \deg_{Y_\eta}(L_\eta)^{m-p+1}\cdot \int_{S^0(\C)}T_{f,\mathcal{Y}}^{(\dim S-1)}\wedge\omega_S,\]
and the proof of the second point is complete (observe that the second point of Proposition~\ref{prop-higher-currents} guarantees that $T_{f,\mathcal{Y}}^{(\dim S-1)}\neq0$). 
\end{proof}

\subsection{Dynamical volume as limits of iterated intersection numbers}
Let $(\mathcal{X},f,\mathcal{L})$ be a family of polarized endomorphisms of degree $d$ and $\mathcal{Y}\subsetneq\mathcal{X}$ be a subvariety with $\pi(\mathcal{Y})=S$. Let also $m\geq1$ be an integer and let $(\mathcal{X}^{[m]},f^{[m]},\mathcal{L}^{[m]})$ be the polarized endomorphism induced on $\mathcal{X}^{[m]}:=\mathcal{X}\times_S\cdots\times_S\mathcal{X}$ as above with induced morphism $\pi_{[m]}:\mathcal{X}^{[m]}\to S$ and let $\mathcal{Y}^{[m]}:=\mathcal{Y}\times_S\cdots\times_S\mathcal{Y}$. One can check that $\pi_{[m]}(\mathcal{Y}^{[m]})=S$ and we have the following.
\begin{lemma}\label{lm:goodmodel}
For any $m\geq1$, there is a sequence $(\mathcal{X}_n^{(m)})_{n\geq0}$ of projective varieties, a sequence $\psi_n^{(m)}:\mathcal{X}_n^{(m)}\to\mathcal{X}^{[m]}$ of birational projective morphisms which are isomorphisms above the regular part of $(\mathcal{X}^{[m]})^0$ and a sequence of morphisms $F_n^{(m)}:\mathcal{X}_n^{(m)}\to \mathcal{X}^{[m]}$ such that $\mathcal{X}_0^{(m)}=\mathcal{X}^{[m]}$ and the following diagram commutes
\[\xymatrix{\mathcal{X}_n^{(m)}\ar[d]_{\psi_n^{(m)}} \ar[rd]^{F_n^{(m)}}& \\
\mathcal{X}^{[m]}\ar@{-->}[r]_{(f^{[m]})^n}  & \mathcal{X}^{[m]}}.
\]
Moreover, one can choose $\mathcal{X}_n^{(m)}$ as a finite sequence of blow-ups of $\mathcal{X}_{n-1}^{(m)}$.
\end{lemma}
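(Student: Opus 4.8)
The plan is to resolve the indeterminacy of the iterate $(f^{[m]})^n:\mathcal{X}^{[m]}\dashrightarrow\mathcal{X}^{[m]}$ by a standard blow-up argument, done inductively on $n$. First I would fix $m$ and work entirely with the family $(\mathcal{X}^{[m]},f^{[m]},\mathcal{L}^{[m]})$, which is again a family of polarized endomorphisms over $S$ (with regular part $(\mathcal{X}^{[m]})^0\to S^0_\mathcal{Y}$, say), so it suffices to prove the statement for a single family $(\mathcal{X},g,\mathcal{L})$ with $g$ a polarized endomorphism over $S$ and drop the superscript $(m)$; the rational self-map $g:\mathcal{X}\dashrightarrow\mathcal{X}$ is a morphism over the regular part $\mathcal{X}^0$. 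For $n=0$ set $\mathcal{X}_0:=\mathcal{X}$, $\psi_0:=\id$, $F_0:=\id$. For the inductive step, assume $\mathcal{X}_n$, $\psi_n:\mathcal{X}_n\to\mathcal{X}$ and $F_n:\mathcal{X}_n\to\mathcal{X}$ have been constructed with $F_n=g^n\circ\psi_n$ over $\mathcal{X}^0$; I then want to resolve the rational map $g\circ F_n:\mathcal{X}_n\dashrightarrow\mathcal{X}$.

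The key step is the resolution of indeterminacy of $g\circ F_n$. The map $g:\mathcal{X}\dashrightarrow\mathcal{X}$ is a rational map of projective varieties, so by Hironaka (or, more elementarily, by repeatedly blowing up the scheme-theoretic base locus together with the exceptional loci and appealing to the fact that the graph closure eventually becomes the blow-up of a coherent ideal sheaf), there is a finite sequence of blow-ups $\mu:\widetilde{\mathcal{X}}\to\mathcal{X}$ along smooth centers, each center lying over the indeterminacy locus of $g$, such that $g\circ\mu:\widetilde{\mathcal{X}}\to\mathcal{X}$ is a morphism; since $g$ is already a morphism on $\mathcal{X}^0$, all centers lie in $\mathcal{X}\setminus\mathcal{X}^0$ and $\mu$ is an isomorphism over $\mathcal{X}^0$. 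Now form the fiber product $\mathcal{X}_{n+1}:=\mathcal{X}_n\times_{\mathcal{X}}\widetilde{\mathcal{X}}$ along $F_n:\mathcal{X}_n\to\mathcal{X}$ and $\mu$; over the open set where $F_n$ lands in $\mathcal{X}^0$ this is an isomorphism onto $\mathcal{X}_n$, and a small additional subtlety is that one should take instead the total transform / strict-transform-type construction: pull back the ideal sheaves defining the centers of $\mu$ under $F_n$ and blow those up on $\mathcal{X}_n$ successively, which produces $\mathcal{X}_{n+1}$ together with a map $\widetilde{F}_n:\mathcal{X}_{n+1}\to\widetilde{\mathcal{X}}$ lifting $F_n$ and a sequence of blow-ups $\rho:\mathcal{X}_{n+1}\to\mathcal{X}_n$. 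Set $\psi_{n+1}:=\psi_n\circ\rho$ and $F_{n+1}:=(g\circ\mu)\circ\widetilde{F}_n:\mathcal{X}_{n+1}\to\mathcal{X}$. Then $\psi_{n+1}$ is a birational morphism, a composition of blow-ups of $\mathcal{X}_n$, hence itself obtained from $\mathcal{X}_n$ by a finite sequence of blow-ups; it is an isomorphism over $(\mathcal{X}^{[m]})^0$ because each blown-up center lies over $F_n^{-1}(\mathcal{X}\setminus\mathcal{X}^0)$, which is disjoint from the preimage of the regular part. Finally, over $\mathcal{X}^0$ one has $F_{n+1}=g\circ F_n=g\circ g^n\circ\psi_n=g^{n+1}\circ\psi_{n+1}$, so the diagram commutes, completing the induction.

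The main obstacle—such as it is—is purely bookkeeping: making sure that the resolution of $g$ is taken along centers disjoint from $\mathcal{X}^0$ (so that $\psi_n$ stays an isomorphism over the regular part for every $n$), and phrasing the inductive step so that $\mathcal{X}_{n+1}$ is genuinely a blow-up of $\mathcal{X}_{n-1}^{(m)}$, i.e.\ of $\mathcal{X}_n$, rather than merely a projective variety with a birational morphism to it. This is handled by pulling back, via $F_n$, the ideal sheaves cutting out the successive centers used to resolve $g$ and blowing those up on $\mathcal{X}_n$ one at a time; since a blow-up of a blow-up is again a (finite sequence of) blow-up(s), the assertion ``$\mathcal{X}_n^{(m)}$ is a finite sequence of blow-ups of $\mathcal{X}_{n-1}^{(m)}$'' follows. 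Everything else is the standard fact that rational maps between projective varieties can be resolved by blow-ups, which is why the lemma is, as stated, ``essentially immediate.''
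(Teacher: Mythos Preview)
Your argument is correct and is precisely the standard resolution-of-indeterminacy-by-blow-ups construction the paper has in mind; the paper itself gives no proof beyond the remark that the lemma is ``essentially immediate,'' and your inductive scheme (resolve $g\circ F_n$ by blowing up along the pulled-back base ideal on $\mathcal{X}_n$, then set $\psi_{n+1}=\psi_n\circ\rho$ and $F_{n+1}=(g\circ\mu)\circ\widetilde F_n$) is exactly how one makes this precise. The only cosmetic point is that the detour through first resolving $g$ on $\mathcal{X}$ and then pulling back the centers is unnecessary: you can directly blow up $\mathcal{X}_n$ along the base ideal of the rational map $g\circ F_n$ and invoke the universal property of the blow-up, which slightly streamlines the bookkeeping but changes nothing substantive.
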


 Relying on estimates from \cite{GV_Northcott} we can deduce
\begin{lemma}\label{cor:GV}
For any $m\geq\dim S$, there is a constant $C_m\geq1$ depending only on $(\mathcal{X},f,\mathcal{L},\mathcal{Y})$ and $m$ such that for any $n\geq1$, if $\mathcal{Y}_{n}^{(m)}:=(F_n^{(m)})_*(\psi_n^{(m)})^*\mathcal{Y}^{[m]}$, then
\[\left|\frac{\left(\{\mathcal{Y}_{n}^{(m)}\}\cdot c_1(\mathcal{L}^{[m]})^{\dim\mathcal{Y}^{[m]}}\right)}{d^{n\dim\mathcal{Y}^{[m]}}}-\mathrm{Vol}^{(m)}_{f}(\mathcal{Y})\right|\leq C_md^{-n},\]
and,  for any ample $\mathbb{Q}$-line bundle $\mathcal{M}$ on $S$ of volume $1$,
\[\left| \frac{\left(\{\mathcal{Y}_{n}^{(m)}\}\cdot c_1(\mathcal{L}^{[m]})^{\dim\mathcal{Y}^{[m]}-1}\cdot c_1(\pi_{[m]}^*\mathcal{M})\right)}{d^{n(\dim\mathcal{Y}^{[m]}-1)}}-\deg_{f,\mathcal{M}}^{(m)}(\mathcal{Y})\right|\leq C_md^{-n}.\]
\end{lemma}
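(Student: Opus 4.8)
The plan is to deduce both estimates by applying the quantitative convergence result of \cite{GV_Northcott} --- the same one that underlies Lemma~\ref{lm:GV}, i.e. \cite[Theorem~B]{GV_Northcott} --- not to $(\mathcal{X},f,\mathcal{L})$ but to its $m$-fiber power $(\mathcal{X}^{[m]},f^{[m]},\mathcal{L}^{[m]})$, with the subvariety $\mathcal{Y}^{[m]}$ in the role of $\mathcal{Y}$. So the first step is to verify that this $m$-fibered data is again a dynamical pair of polarized endomorphisms of degree $d$ to which that result applies, which is routine: $\mathcal{X}^{[m]}\to S$ is projective with smooth fibers over $S^0$, the bundle $\mathcal{L}^{[m]}=p_1^*\mathcal{L}+\cdots+p_m^*\mathcal{L}$ is nef (a sum of pullbacks of a nef class) and relatively ample over $S$ (on the fiber $X_t^m$ it restricts to an external tensor product of copies of the ample bundle $L_t$, hence is ample), one has $(f^{[m]})^*\mathcal{L}^{[m]}\simeq(\mathcal{L}^{[m]})^{\otimes d}$ over the regular part since $f_t^*L_t\simeq L_t^{\otimes d}$ fiberwise, and $\mathcal{Y}^{[m]}$ is a subvariety with $\pi_{[m]}(\mathcal{Y}^{[m]})=S$, flat over a Zariski-dense open subset of $S^0$. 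Its fibered Green current is $\widehat{T}_{f^{[m]}}=p_1^*\widehat{T}_f+\cdots+p_m^*\widehat{T}_f$, with continuous potential on $(\mathcal{X}^{[m]})^0(\C)$, by \eqref{eq_factoriel}; and the sequence of good models of Lemma~\ref{lm:goodmodel} is precisely what is needed to realize the forward-image cycle $(f^{[m]})^n_*[\mathcal{Y}^{[m]}]=(F_n^{(m)})_*(\psi_n^{(m)})^*\mathcal{Y}^{[m]}=\mathcal{Y}^{(m)}_n$ on the projective variety $\mathcal{X}^{[m]}$.

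Granting this, set $e:=\dim\mathcal{Y}^{[m]}=m\dim Y_\eta+\dim S$; then $\codim\mathcal{Y}^{[m]}=m(\dim X_\eta-\dim Y_\eta)$, so that both $\widehat{T}_{f^{[m]}}^{e}\wedge[\mathcal{Y}^{[m]}]$ and $\widehat{T}_{f^{[m]}}^{e-1}\wedge[\mathcal{Y}^{[m]}]\wedge\pi_{[m]}^*\omega_S$ are measures on $(\mathcal{X}^{[m]})^0(\C)$. Applied to $(\mathcal{X}^{[m]},f^{[m]},\mathcal{L}^{[m]},\mathcal{Y}^{[m]})$, the estimate behind \cite[Theorem~B]{GV_Northcott} --- in the form allowing an arbitrary partition of the total intersection power --- will give, for each $a,b\geq0$ with $a+b=e$, a constant $C=C(a,b)$ depending only on $m$ and on the geometry of $(\mathcal{X},f,\mathcal{L},\mathcal{Y})$ such that for all $n\geq1$
\[\Bigl|\,d^{-na}\bigl(\{\mathcal{Y}^{(m)}_n\}\cdot c_1(\mathcal{L}^{[m]})^{a}\cdot c_1(\pi_{[m]}^*\mathcal{M})^{b}\bigr)-\int_{(\mathcal{X}^{[m]})^0(\C)}\widehat{T}_{f^{[m]}}^{a}\wedge[\mathcal{Y}^{[m]}]\wedge(\pi_{[m]}^*\omega_S)^{b}\,\Bigr|\leq C\,d^{-n}.\]
Specializing to $(a,b)=(e,0)$ gives the first inequality, since the integral on the right is $\mathrm{Vol}^{(m)}_f(\mathcal{Y})$ by definition and the normalization $d^{ne}=d^{n\dim\mathcal{Y}^{[m]}}$ is the right one; specializing to $(a,b)=(e-1,1)$ gives the second, the integral then being $\deg^{(m)}_{f,\mathcal{M}}(\mathcal{Y})$ and the normalization $d^{n(e-1)}$ matching. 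One finishes by taking $C_m:=\max\{1,C(e,0),C(e-1,1)\}$; note that $C_m$ depends on $m$ (because the family to which the estimate is applied varies with $m$) but not on $n$.

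The step that will require genuine care, as opposed to bookkeeping, is extracting the \emph{geometric rate} $O(d^{-n})$ with a constant uniform in $n$: the version quoted in Lemma~\ref{lm:GV} records only the limit, so one must enter the proof of the estimate. The rate comes from the telescoping identity $d^{-n}\bigl((f^{[m]})^n\bigr)^*\omega^{[m]}=\omega^{[m]}+\ddc\!\sum_{i=0}^{n-1}d^{-i}\bigl(g_{f^{[m]}}\circ(f^{[m]})^i\bigr)$, with $\omega^{[m]}:=\sum_j p_j^*\omega$ representing $c_1(\mathcal{L}^{[m]})$ and $g_{f^{[m]}}:=\sum_j p_j^*g$ bounded on $(\mathcal{X}^{[m]})^0(\C)$, combined with control of the contributions of the exceptional divisors of $\psi_n^{(m)}$ --- harmless because $\psi_n^{(m)}$ is an isomorphism over the regular locus, above which $\mathcal{Y}^{[m]}$ lives --- and an integration by parts (Stokes) on the compact models $\mathcal{X}_n^{(m)}$ to absorb the $\ddc$-terms into lower-order intersections. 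Everything else --- the identification of $\mathcal{Y}^{(m)}_n$ with the iterated forward image, the dimension count, and the matching of the powers of $d$ --- is direct. (The hypothesis $m\geq\dim S$ plays no role in the estimate itself; it is only the range in which $\mathrm{Vol}^{(m)}_f$ and $\deg^{(m)}_{f,\mathcal{M}}$ were defined.)
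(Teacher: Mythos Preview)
Your proposal is correct and follows essentially the same route as the paper: reduce to the $m$-fiber power $(\mathcal{X}^{[m]},f^{[m]},\mathcal{L}^{[m]},\mathcal{Y}^{[m]})$, rewrite the intersection numbers as integrals of pullbacks $((f^{[m]})^n)^*\omega^{[m]}$ against $[\mathcal{Y}^{[m]}]$ (and $\pi_{[m]}^*\omega_S$), and invoke the quantitative $O(d^{-n})$ estimate from \cite{GV_Northcott}. The only difference is cosmetic: the paper cites \cite[Proposition~3.4]{GV_Northcott} directly for the rate rather than Theorem~B, and skips the explicit verification that the $m$-fibered data is again a polarized family---which you spell out but which is indeed routine.
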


\begin{proof}
Let  $\omega_S$ be a smooth form on $S(\C)$ which represents $c_1(\mathcal{M})$ (it has mass $\deg_S(\mathcal{M})=c_1(\mathcal{M})^{\dim S}=1$) and $\omega_\mathcal{L}$ be a smooth form on $\mathcal{X}(\C)$ which represents $c_1(\mathcal{L})$. For $m\geq\dim S$, define $\omega^{[m]}:=\sum_jp_j^*\omega$. Let $(\mathcal{X}^{[m]})^0:=\pi_{[m]}^{-1}(S^0_\mathcal{Y})$ as above. By definition, we have
\begin{align*}
\left(\{\mathcal{Y}_{n}^{(m)}\}\cdot c_1(\mathcal{L}^{[m]})^{\dim\mathcal{Y}^{[m]}}\right) & =\int_{(\mathcal{X}^{[m]})^0(\C)}\left(\omega^{[m]}\right)^{\dim\mathcal{Y}^{[m]}}\wedge[\mathcal{Y}_{n}^{(m)}]\\
&=\int_{(\mathcal{X}^{[m]})^0(\C)}\left(((f^{[m]})^n)^*\omega^{[m]}\right)^{\dim\mathcal{Y}^{[m]}}\wedge[\mathcal{Y}^{[m]}].
\end{align*}
We rely on Proposition~3.4 of \cite{GV_Northcott}: we have
\[d^{-n\dim\mathcal{Y}^{[m]}}\left(\{\mathcal{Y}_{n}^{(m)}\}\cdot c_1(\mathcal{L}^{[m]})^{\dim\mathcal{Y}^{[m]}}\right)=\int_{(\mathcal{X}^{[m]})^0(\C)}\widehat{T}_{f^{[m]}}^{\dim\mathcal{Y}^{[m]}}\wedge[\mathcal{Y}^{[m]}]+O\left(\frac{1}{d^n}\right).\]
This is the first assertion we want to prove.
Similarly, 
\begin{align*}
I_{n,m}:&=\bigg(\{\mathcal{Y}_{n}^{(m)}\}\cdot c_1(\mathcal{L}^{[m]})^{\dim\mathcal{Y}^{[m]}-1} \cdot c_1(\pi_{[m]}^*\mathcal{M})\bigg)  \\
& =\int_{(\mathcal{X}^{[m]})^0(\C)}\left(((f^{[m]})^n)^*\omega^{[m]}\right)^{\dim\mathcal{Y}^{[m]}-1}\wedge[\mathcal{Y}^{[m]}]\wedge(\pi_{[m]}^*\omega_S)
\end{align*}
and the same argument using Proposition~3.4 of~\cite{GV_Northcott} gives
\begin{align*}
d^{-n(\dim\mathcal{Y}^{[m]}-1)}I_{n,m} & =\int_{(\mathcal{X}^{[m]})^0(\C)}\widehat{T}_{f^{[m]}}^{\dim\mathcal{Y}^{[m]}-1}\wedge[\mathcal{Y}^{[m]}]\wedge\pi_{[m]}^*(\omega_S)+O\left(\frac{1}{d^n}\right).
\end{align*}
This concludes the proof.
\end{proof}

\subsection{A sufficient criterion for positive volume}
To finish this section, we give a sufficient criterion for a parameter to belong to the support of the measure $\mu_{f,\mathcal{Y}}$. The existence of such a parameter implies in particular that $\mathrm{Vol}_f(\mathcal{Y})>0$.

\begin{definition}\label{def-trans}
Pick an integer $m\geq1$. We say that $\mathcal{Y}$ is $m$-\emph{transversely $J_k$-prerepelling (resp. properly $J_k$-prerepelling)} at a point $z=(z_1,\ldots,z_{m})\in \mathcal{X}^{[m]}$ with $\lambda_0:=\pi_{[m]}(z)\in S^0$ if $z_1,\ldots,z_{m}$ are $J_k(f_{\lambda_0})$-repelling periodic points of $f_{\lambda_0}$ and if there exist an integer $N\geq1$ and a neighborhood $U$ of $\lambda_0$ such that, if $z_j(\lambda)$ is the natural continuation of $z_j$ as a repelling periodic point of $f_\lambda$ in $U$, then
\begin{enumerate}
\item $z_j\in f_{\lambda_0}^N(Y_{\lambda_0})$ for all $1\leq j\leq m$,
\item $z_j(\lambda)\in J_k(f_\lambda)$ for all $\lambda\in U$ and all $1\leq j\leq m$,
\item the image of the local section $Z:\lambda\in U\mapsto(z_1(\lambda),\ldots,z_{m}(\lambda))\in (\mathcal{X}^{[m]})^0$ of $\pi_{[m]}$ intersects transversely, as local submanifolds, a local branch of $(f^{[m]})^{N}(\mathcal{Y}^{[m]})$ at $z$ (resp. $z$ lies in an proper intersection between the image of $Z$ and a local branch of $(f^{[m]})^{N}(\mathcal{Y}^{[m]})$ of pure dimension $\dim S-m$).
\end{enumerate}
\end{definition}

 In some sense, this definition is equivalent to the existence of $m$ \emph{independent} Misiurewicz intersections. The case of single Misiurewicz intersections corresponds to \emph{Misiurewicz parameters} in \cite{BBD}.  The third point in the definition seems a bit technical but in the examples we will construct, we cannot a priori exclude the case where $\mathcal{Y}^{[m]}$ is not locally irreducible and the periodic points lie persistently in a local branch of $\mathcal{Y}$ but transversely to another local branch. Another important remark for what follows is that, as observed by Dujardin (see \cite[Proposition-Definition 2.5]{Dujardin_blender}), the repelling periodic points can be replaced by points in a repelling hyperbolic set contained in $J_k.$ Finally, notice that when $m=\dim S$ and $\mathcal{Y}$ is locally irreducible near $z_1,\ldots,z_m$, Definition \ref{def-trans} is exactly what DeMarco and Mavraki \cite{DeMarco-Mavraki} call a rigid $m$-repeller.

We prove the following, which is a general criterion in the spirit of \cite[Proposition~4.8]{DeMarco-Mavraki}.
\begin{proposition}\label{tm:densitypart1}
Let $(\mathcal{X},f,\mathcal{L})$ be a family of polarized endomorphisms parametrized by $S$ and let $\mathcal{Y}\subsetneq\mathcal{X}$ be a hypersurface which projects dominantly to $S$. Let $1\leq m\leq \dim S$ and assume $\mathcal{Y}$ is $m$-properly $J_k$-prerepelling at $z\in (\mathcal{X}^{[m]})^0$. Then 
\begin{center}
$z\in\mathrm{supp}\left(\widehat{T}_{f^{[m]}}^{m(\dim Y_\eta+1)}\wedge [\mathcal{Y}^{[m]}]\right)$.
\end{center}
In particular, $\pi_{[m]}(z)\in\mathrm{supp}(T^{(m)}_{f,\mathcal{Y}})$.
\end{proposition}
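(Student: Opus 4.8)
The plan is to reduce the statement to a local computation: near the point $z=(z_1,\dots,z_m)$ the current $\widehat{T}_{f^{[m]}}^{m(\dim Y_\eta+1)}\wedge[\mathcal{Y}^{[m]}]$ should be bounded below by a product of slices of Green currents times a current of integration over the relevant branch, and one checks this product is nonzero at $z$. First I would use \eqref{eq_factoriel} to write, in a neighborhood of $z$,
\[
\widehat{T}_{f^{[m]}}^{m(\dim Y_\eta+1)}\wedge[\mathcal{Y}^{[m]}]\;\geq\;c\bigwedge_{j=1}^m p_j^*\!\left(\widehat{T}_f^{\dim Y_\eta+1}\wedge[\mathcal{Y}]\right),
\]
using $\dim Y_\eta=k-1$ so that $(p_j^*\widehat{T}_f)^{k+1}\wedge p_j^*[\mathcal{Y}]=0$ kills all the mixed terms except the diagonal one, exactly as in the proof of Proposition~\ref{prop-higher-currents}(3). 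Since wedge products of the currents $\widehat{T}_f^{k}\wedge[\mathcal{Y}]$ are well defined (continuous potentials for $\widehat{T}_f$), it suffices to show $z_j\in\mathrm{supp}\!\left(\widehat{T}_{f_{\lambda_0}}^{k}\wedge [\text{branch of }f_{\lambda_0}^N(Y_{\lambda_0})]\right)$-type facts after a further localization; more precisely it suffices to prove the one-factor statement in the parameter direction.

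The heart of the argument is the following local picture near $z$. After replacing $\mathcal{Y}^{[m]}$ by $(f^{[m]})^N(\mathcal{Y}^{[m]})$ (which only changes the current by a bounded factor $d^{Nm}$ and does not move supports, by Lemma~\ref{lm:GV}-type invariance), hypothesis (3) of Definition~\ref{def-trans} gives a local branch $W$ of $(f^{[m]})^N(\mathcal{Y}^{[m]})$ of pure dimension $\dim S-m$ meeting the graph $\Gamma$ of the section $Z:\lambda\mapsto(z_1(\lambda),\dots,z_m(\lambda))$ properly at $z$. Now I would argue that $\widehat{T}_{f^{[m]}}^{m(\dim Y_\eta+1)}$ restricted to $W$ dominates, near $z$, a positive multiple of $[\Gamma\cap W]$: indeed $z_1(\lambda),\dots,z_m(\lambda)$ are $J_k(f_\lambda)$-repelling periodic points for all $\lambda\in U$ by (2), hence $\Gamma\subset\bigcap_j p_j^{-1}(\text{small Julia set bundle})$, and the key analytic input is the classical fact (as in Bassanelli--Berteloot, Berteloot--Bianchi--Dupont \cite{BBD}, and used by Dujardin and DeMarco--Mavraki) that a holomorphic motion of a repelling periodic point inside the small Julia set lies in the support of the appropriate power of the fibered Green current. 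Concretely: the map $(\lambda,w)\mapsto$ (local inverse branch of $f_\lambda^{p}$ fixing $z_j(\lambda)$) is a contraction with image accumulating the Green current's mass, so the laminar/woven structure of $\widehat{T}_f^k$ forces $\mathrm{supp}(\widehat{T}_f^{k})$ to contain the graph of $\lambda\mapsto z_j(\lambda)$ over $U$; taking the product over $j$ and intersecting with $[W]$ (a proper intersection by (3)) yields a nonzero mass at $z$.

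The main obstacle I expect is precisely justifying that the graph of the holomorphic motion of the repelling cycle sits in the support of $\widehat T_f^{k}$ (not merely that each fiberwise point is in the small Julia set $J_k$) and that intersecting with the branch $W$ of the iterated hypersurface is genuinely a proper, mass-carrying intersection rather than an excess one. This is where one must invoke the transversality/properness in condition (3) together with a normal-families or Lojasiewicz-type lower bound to ensure no cancellation, and where Dujardin's remark that repelling cycles may be replaced by repelling hyperbolic sets (\cite[Proposition-Definition 2.5]{Dujardin_blender}) is used to get enough points to support the full power $\widehat{T}_f^{k}$ rather than a smaller power. Once the local lower bound
\[
\widehat{T}_{f^{[m]}}^{m(\dim Y_\eta+1)}\wedge[\mathcal{Y}^{[m]}]\;\geq\;c\,[\Gamma\cap W]\quad\text{near }z
\]
is established with $c>0$, we get $z\in\mathrm{supp}\!\left(\widehat{T}_{f^{[m]}}^{m(\dim Y_\eta+1)}\wedge [\mathcal{Y}^{[m]}]\right)$, and pushing forward by $\pi_{[m]}$ (which is proper on the support and maps $z$ to $\pi_{[m]}(z)$) gives $\pi_{[m]}(z)\in\mathrm{supp}(T_{f,\mathcal{Y}}^{(m)})$, as claimed.
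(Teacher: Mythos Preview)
Your approach has a genuine gap at exactly the point you flag as the ``main obstacle,'' and the resolution you sketch does not work. The claimed lower bound
\[
\widehat{T}_{f^{[m]}}^{m(\dim Y_\eta+1)}\wedge[\mathcal{Y}^{[m]}]\ \geq\ c\,[\Gamma\cap W]
\]
is false in general. The Green current $\widehat{T}_f^k$ has continuous potentials and gives zero mass to proper analytic subsets; slicing it by the fiber $X_\lambda$ yields the equilibrium measure $\mu_{f_\lambda}$, which has no atoms. So even though the graph $\Gamma$ of the motion of the repelling cycle lies in the support of $\widehat{T}_{f^{[m]}}^{mk}$, that current cannot dominate any positive multiple of $[\Gamma]$, for the same reason Lebesgue measure on $[0,1]$ does not dominate $c\,\delta_0$. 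Neither laminarity nor a Lojasiewicz inequality produces such a domination; these tools control dimensions and local mass, not the existence of atomic or algebraic lower bounds. Your argument uses only that the $z_j(\lambda)$ lie in $J_k(f_\lambda)$; it never exploits that they are \emph{repelling}.

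The paper's proof uses the repelling dynamics in an essential way, in the style of Buff--Epstein and Berteloot--Bianchi--Dupont. After reducing to $m=\dim S$ by slicing with a generic $m$-plane (using continuous potentials to restrict the current), one writes $F:=f^{[m]}$ and exploits $F^*\widehat{T}_F^{mk}=d^{mk}\widehat{T}_F^{mk}$ to get
\[
\int_{\pi_{[m]}^{-1}(\mathbb{B})}\widehat{T}_F^{mk}\wedge[\mathcal{Y}^{[m]}]=d^{-mkn}\int\widehat{T}_F^{mk}\wedge(F^n)_*[\mathcal{Y}^{[m]}].
\]
The local branch $W$ of $F^N(\mathcal{Y}^{[m]})$ passes through $z$; since the $z_j$ are repelling periodic points, the iterates $F^{np}(W)$ intersected with a small ball $B_\varepsilon$ around $z$ give a sequence of ``vertical-like'' analytic sets $S_n$ whose normalized integration currents $[S_n]/\|[S_n]\|$ converge to $M\cdot[X_{\lambda_0}^m\cap B_\varepsilon]$, the vertical fiber. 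Because $\widehat{T}_F$ has continuous potentials, the wedge $\widehat{T}_F^{mk}\wedge[S_n]/\|[S_n]\|$ converges to $M\cdot\widehat{T}_F^{mk}\wedge[X_{\lambda_0}^m\cap B_\varepsilon]=M\cdot\mu_{f_{\lambda_0}}^{\otimes m}|_{B_\varepsilon}$, which is strictly positive precisely because each $z_j\in J_k(f_{\lambda_0})=\mathrm{supp}(\mu_{f_{\lambda_0}})$. This yields the required positivity of the mass near $z$, without any false pointwise domination.
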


The proof of this result is an adaptation of the strategy of Buff and Epstein~\cite{buffepstein} and the strategy of Berteloot, Bianchi and Dupont~\cite{BBD}, see also~\cite{Article1,AGMV,gauthier-abscont,GV_Northcott}.

\begin{proof}[Proof of Proposition~\ref{tm:densitypart1}]
As the statement is purely local, we let $\mathbb{B}\subset S^0$ be a ball in a local coordinate centered at $\lambda_0$. Since $\widehat{T}_f$ has continuous potentials, for any analytic submanifold $\Lambda\subset\mathbb{B}$ of dimension $m$ with $\lambda_0\in \Lambda$, we have
\[\mathrm{supp}\left(\widehat{T}_{f^{[m]}}^{m(\dim Y_\eta+1)}|_{\pi_{[m]}^{-1}(\Lambda)}\wedge [\mathcal{Y}^{[m]}\cap\pi_{[m]}^{-1}(\Lambda)]\right)\subset \mathrm{supp}\left(\widehat{T}_{f^{[m]}}^{m(\dim Y_\eta+1)}\wedge [\mathcal{Y}^{[m]}]\right)\]
by e.g.~\cite[Lemma~6.3]{Article1}. In particular, we can replace $\mathbb{B}$ with the intersection between $\mathbb{B}$ with a subspace $\mathbb{B}\cap V$ where $V$ is a linear subspace of dimension $m$ such that the intersection between the image of the local section $Z:\lambda\in U\cap V\mapsto(z_1(\lambda),\ldots,z_{m}(\lambda))\in (\mathcal{X}^{[m]})^0$ of $\pi_{[m]}$ and a local branch of $(f^{[m]})^{N}(\mathcal{Y}^{[m]})$ at $z$ is isolated in $(\mathcal{X}^{[m]})^0\cap\pi_{[m]}^{-1}(\mathbb{B}\cap V)$. In the rest of the proof, we thus can assume $m=\dim S$ and let $k$ be the relative dimension of $\mathcal{X}$ over $S$ so that $\dim Y_\eta +1=k$.  To simplify notations, write $F:=f^{[m]}:(\mathcal{X}^{[m]})^0\to(\mathcal{X}^{[m]})^0$ and we let $\mu:=T_{f,\mathcal{Y}}^{(m)}|_{\mathbb{B}}$.

\bigskip

Our aim here is to exhibit a basis of neighborhood $\{\Omega_n\}_n$ of $\lambda_0$ in $\B$ with $\mu(\Omega_n)>0$ for all $n$.
For a Borel subset $B\subset\mathbb{B}$, let $(\mathcal{X}^{[m]})_B:=\pi_{[m]}^{-1}(B)$, where $\pi_{[m]}:\mathcal{X}^{[m]}\to S$ is the map induced by $\pi:\mathcal{X}\to S$. Then, since $F^*\widehat{T}_F=d\widehat{T}_F$, we have
\begin{eqnarray*}
(F^n)^*\left(\widehat{T}^{km}_F\right)=d^{mkn}\widehat{T}^{km}_F \quad \text{and} \quad \mu(B) & = & d^{-kmn}\int_{(\mathcal{X}^{[m]})_B}\widehat{T}_F^{mk}\wedge(F^n)_*\left[\mathcal{Y}^{[m]}\right].
\end{eqnarray*}

Since $\mathcal{Y}$ is properly $J_k$-prerepelling at $\lambda_0$, there are $z_1,\ldots,z_m\in X_{\lambda_0}$, $J_k(f_{\lambda_0})$-repelling periodic points and $N\geq1$ such that $(z_1,\ldots,z_m)\in F^N(\mathcal{Y}^{[m]})^0$.
Let $p\geq1$ be such that $f^p_{\lambda_0}(z_i)=z_i$ for all $i$. We let $\mathcal{Y}_0$ be the local branch of $(F^{N})(\mathcal{Y}^{[m]})^0$ satisfying the hypothesis of the Proposition. For any integer $n\geq1$, we let $\mathcal{Y}_n:=(F^{np})(\mathcal{Y}_0)$, so that $\dim(\mathcal{Y}_n)= mk$ and
\[I_n:=\int_{(\mathcal{X}^{[m]})_\mathbb{B}}\widehat{T}_F^{mk}\wedge\left[\mathcal{Y}_n\right]\leq d^{knm}\mu(\mathbb{B}).\]
By our choice of $\mathbb{B}$, we have that $z_i(\lambda)$ is $J_k(f_{\lambda})$-repelling for all $\lambda\in \mathbb{B}$ and that there is $K>1$ such that
\[d(f_\lambda^{p}(z),f_\lambda^{p}(w))\geq K\cdot d(z,w)\]
for all $z,w\in D(z_j,\epsilon)\subset\mathcal{X}$ and all $\lambda\in \mathbb{B}$ for some given $\epsilon>0$ with $\pi(D(z_j,\epsilon))\subset\mathbb{B}$ \cite{BBD}, where $D(z_j,\epsilon)$ is a polydisk of polyradius $(\epsilon,\ldots,\epsilon)$.
Thus, if we denote $\mathrm{z}:=(z_1,\ldots,z_m)\in\mathcal{X}^{[m]}$ and by $S_n$ the connected component of $\mathcal{Y}_n\cap D_\epsilon$ containing $z$ where $D_\epsilon:=D_{\mathcal{X}^{[m]}}(\mathrm{z},\epsilon)$, the current $[S_n]$ is vertical-like in $B_\epsilon$ (i.e. $\pi_{[m]}(\mathrm{supp}([S_n])\cap D_\epsilon)$ is relatively compact in $\pi_{[m]}(D_\epsilon)$), and there exist $n_0\geq1$ and a basis of neighborhood $\Omega_n$ of $\lambda_0$ in $\mathbb{B}$ such that for all $n\geq n_0$
\[\supp([S_n])=S_n\subset\mathcal{X}^{[m]}_{\Omega_{n}}\cap D_\epsilon.\]

 Let $S$ be any weak limit of the sequence $[S_n]/\|[S_n]\|$, where the mass $\|[S_n]\|$ is computed with respect to some K\"ahler form $\alpha$ on $\mathcal{X}^{[m]}_{\mathbb{B}}$. Then $S$ is a closed positive $(k,k)$-current of mass $1$ in $D_\epsilon$ whose support is contained in the fiber  $X_{\lambda_0}^m$ of $\pi_{[m]}$. Hence $S=M\cdot[X_{\lambda_0}^m\cap D_\epsilon]$, where $M^{-1}>0$ is the volume of $D_\epsilon$ for the volume form $\alpha|_{X_{\lambda_0}^m}$.

As a consequence, $[S_n]/\|[S_n]\|$ converges weakly to $S$ as $n\to\infty$ and, since the $(mk,mk)$-current $\widehat{T}_F^{km}$ is the $mk$-times wedge product of a closed positive $(1,1)$-current with continuous potential (since $\widehat{T}_f$ has continuous potential),
\[\widehat{T}_F^{km}\wedge\frac{[S_n]}{\|[S_n]\|}\longrightarrow \widehat{T}_F^{km}\wedge S\]
as $n\to+\infty$. Hence the above gives
\begin{align*}
\liminf_{n\to\infty}\left(\|[S_n]\|^{-1}\cdot I_n\right)& \geq\liminf_{n\to\infty}\int\widehat{T}_F^{km}\wedge\frac{[S_n]}{\|[S_n]\|}\geq \int\widehat{T}_F^{km}\wedge S\\
& \geq  \ M\cdot \int \widehat{T}_F^{km}\wedge[X_{\lambda_0}^m\cap D_\epsilon].
\end{align*}
In particular, there exists $n_1\geq n_0$ such that for all $n\geq n_1$,
\[2\|[S_n]\|^{-1}\cdot I_n\geq M\int \widehat{T}_F^{km}\wedge[X_{\lambda_0}^m\cap D_\epsilon].\]
Finally, by construction of $S_n$, we have $\liminf_{n\to\infty}\|[S_n]\|\geq \mathrm{Vol}(D_\varepsilon)>0$, where the volume is computed with respect to the K\"ahler form $\alpha|_{X_{\lambda_0}^m}$ on $X_{\lambda_0}^m$.
Up to increasing $n_0$, we may assume $\|[S_n]\|\geq c>0$ for all $n\geq n_0$. Letting $\gamma=Mc/4>0$, we find
\[\mu(\mathbb{B})=d^{-km(np+N)}I_n\geq d^{-km(np+N)}\gamma\int\widehat{T}_F^{km}\wedge[X_{\lambda_0}^m\cap D_\epsilon],\]
for all $n\geq n_1$. To conclude, we need to prove the last integral is non-zero. 
By construction, the set $X_{\lambda_0}^m\cap D_\epsilon$ is an open neighborhood of $\mathrm{z}$ in $X_{\lambda_0}^m$ hence it contains $B(z_1,\delta)\times\cdots\times B(z_m,\delta)\subset X_{\lambda_0}^m$ for some $\delta>0$ (with a slight abuse of notations since here the balls are meant in $X_{\lambda_0}$). Moreover, the current $\widehat{T}_F^{km}$ restricts to $X_{\lambda_0}^{km}$ as the measure 
\begin{center}
$\widehat{T}_F^{km}|_{X_{\lambda_0}^m}=\mu_{F_{\lambda_0}}=\mu_{f_{\lambda_0}}^{\otimes m}$.
\end{center}
In particular, we can apply Fubini Theorem to find
\begin{align*}
\int\widehat{T}_F^{km}\wedge[X_{\lambda_0}^m\cap D_\epsilon] \geq & \int_{X_{\lambda_0}}\widehat{T}_F^{km}\wedge[B(z_1,\delta)\times\cdots\times B(z_m,\delta)]\\
&=\prod_{j=1}^m\mu_{f_{\lambda_0}}(B(z_j,\delta))>0,
\end{align*}
where we used that $z_j\in \mathrm{supp}(\mu_{f_{\lambda_0}})$ by assumption.
\end{proof}

\section{Rigidity of some stable families}\label{sec-rigidity}
\subsection{Spaces of endomorphisms, moduli spaces, stable families}\label{sec:moduli-space}
\subsubsection{The spaces $\mathrm{End}_d^k$ and $\mathrm{Poly}_d^k$}
As an endomorphism $f$ of $\Pb^k$ of degree $d$ is given by $k+1$ homogeneous polynomials of degree $d$, the coefficients of these polynomials allow us to see $f$ as a point in $\Pb^{N_d^k}$ where $N_d^k:= (k+1)\binom{k+d}{d}-1$. The condition on the coefficients to ensure that the associated map is an endomorphism of $\Pb^k$ is algebraic so there exists a Zariski open set $\mathrm{End}_d^k\subset\Pb^{N_d^k}$ corresponding to degree $d$ endomorphisms. More precisely, the variety $\mathrm{End}_d^k$ is the complement of the hypersurface in $\mathbb{P}^{N_d^k}$ defined by the vanishing
of the Macaulay resultant. Indeed, there is a unique homogeneous polynomial $\mathrm{Res}:\mathbb{P}^{N_d^k}\to\p^1$ defined over $\mathbb{Q}$ such that
\begin{itemize}
\item if $f=[F_0:\cdots:F_k]$, then $\mathrm{Res}(f):=\mathrm{Res}(F_0,\ldots,F_k)=0$ if and only if the polynomial map $(F_0,\ldots,F_k)$ is degenerate,
\item $\mathrm{Res}(z_0^d,\ldots,z_k^d)=1$.
\end{itemize}
See, e.g.,~\cite[Proposition~1.1]{BB1} for more details, see also \cite[\S~3]{szpiro}. In particular, the variety $\mathrm{End}_d^k$ is an irreducible smooth quasi-projective variety defined over $\mathbb{Q}$. 
Moreover, the map
\[f:\mathbb{P}^k_{\mathrm{End}_d^k}\longrightarrow\mathbb{P}^k_{\mathrm{End}_d^k}\]
 is a family $(\mathbb{P}^k_{\mathbb{P}^{N_d^k}},f,\mathcal{O}_{\mathbb{P}^k}(1))$ of degree $d$ endomorphisms of $\mathbb{P}^k$ parametrized by $\mathbb{P}^{N_d^k}$ -- if we follow the notations introduced above -- which is defined over $\mathbb{Q}$.

\medskip

A \emph{regular polynomial endomorphism} $f:\mathbb{A}^k\to\mathbb{A}^k$ of degree $d\geq2$ is a polynomial map which extends to a degree $d$ endomorphism $f:\mathbb{P}^k\to\mathbb{P}^k$.
 For such a morphism, if $H_\infty$ is the hyperplane at infinity of $\mathbb{A}^k$ in $\mathbb{P}^k$, we have $f^{-1}(H_\infty)=H_\infty$, see, e.g.,~\cite{bedford-jonsson}.
  The space $\mathrm{Poly}_d^k$ of regular polynomial endomorphisms of degree $d$ of $\mathbb{A}^k$ is a smooth closed subvariety of $\mathrm{End}_d^k$ of dimension $k\binom{k+d}{d}$ -- which is the intersection of $\mathrm{End}_d^k$ with a linear subspace of $\mathbb{P}^{N_d^k}$ defined over $\mathbb{Q}$. In particular, $\mathrm{Poly}_d^k$ is also a smooth quasi-projective variety defined over $\mathbb{Q}$ and the map \[f:\mathbb{P}^k_{\mathrm{Poly}_d^k}\longrightarrow\mathbb{P}^k_{\mathrm{Poly}_d^k}\]
 is a family $(\mathbb{P}^k_{S},f,\mathcal{O}_{\mathbb{P}^k}(1))$ of degree $d$ endomorphisms of $\mathbb{P}^k$ parametrized by the closure $S$ of $\mathrm{Poly}_d^k$ in $\mathbb{P}^{N_d^k}$ -- if we follow the notations introduced above -- which is defined over $\mathbb{Q}$.

\subsubsection{The moduli spaces $\mathscr{M}_d^k$ and $\mathscr{P}_d^k$ and good families}
The space which is really adapted to our investigations is the moduli space $\mathscr{M}_d^k$ of degree $d$ endomorphisms of the projective space $\mathbb{P}^k$ of dimension $k$: it is the quotient space of the space $\mathrm{End}_d^k$ of endomorphisms of degree $d$ of $\mathbb{P}^k$ by the action by conjugacy of $\mathrm{PGL}(k+1)$. It is known to be an irreducible affine variety of dimension
\[\mathcal{N}_d^k:=\dim\mathscr{M}_d^k=(k+1)\binom{k+d}{d}-(k+1)^2\] defined over $\mathbb{Q}$, see \cite{Silverman} when $k=1$ and \cite[\S~3.2 $\&$ 3.3]{szpiro} when $k>1$, hence there is a proper closed subvariety $V$ defined over $\mathbb{Q}$ such that the canonical projection
\[\Pi:\mathrm{End}_d^k\setminus V\to \mathscr{M}_d^k\setminus\Pi(V)\]
is a locally trivial $\mathrm{PGL}(k+1)$-principal bundle. As this is merely a coarse moduli space, there is no universal family. However, we can cook up a good family which can play a sufficiently similar role.

\begin{lemma}\label{good-family-end}
There is family $(\mathbb{P}^k_{S},f,\mathcal{O}_{\mathbb{P}^k}(1))$ defined over $\mathbb{Q}$, with $\dim S=\mathcal{N}_d^k$ whose maximal regular part $\mathcal{U}_d^k$ satisfies the following properties:
\begin{enumerate}
\item the set $\mathscr{U}_d^k:=\Pi(\mathcal{U}_d^k)$ contains a dense Zariski open subset of $\mathscr{M}_d^k$,
\item the map $\Pi|_{\mathcal{U}_d^k}:\mathcal{U}_d^k\to\mathscr{U}_d^k$ is finite.
\end{enumerate}
\end{lemma}
\begin{proof}
Let $D_k:=d^k+d^{k-1}+\cdots+d+1$. Let $\mathrm{End}_d^{k,\mathrm{fix}}$ be the space of all fixed marked degree $d$ endomorphisms of $\mathbb{P}^k$, i.e. the space of all couples $(f,\{x_1,\ldots,x_{D_k}\})$ where $f\in\mathrm{End}_d^k$ is an endomorphism and $\{x_1,\ldots,x_{D_k}\}$ is an unordered $D_k$-tuple of points in $\mathbb{P}^k$ which identifies with the collection of all fixed points of $f$ counted with multiplicities. This is a quasi-projective variety since it is closed in $\mathrm{End}_d^k\times \mathrm{Sym}^{D_k}(\mathbb{P}^k)$. Remark that, since an endomorphism has $D_k$ fixed points, the canonical projection $\rho:\mathrm{End}_d^{k,\mathrm{fix}}\to\mathrm{End}_d^k$ has finite fibers and is proper. Hence $\dim\mathrm{End}_d^{k,\mathrm{fix}}=N_d^k$.

Let now $U\subset \mathrm{End}_d^{k,\mathrm{fix}}$ be the Zariski open subset consisting of couples $(f,\{x_1,\ldots,x_{D_k}\})$ where $x_i\neq x_j$ for all $i\neq j$.
Let $e_1:=[1:0:\cdots:0]$, $e_2:=[0:1:0:\cdots:0]$, $\ldots$, $e_{k+1}:=[0:\cdots:0:1]$ and $e_{k+2}:=[1:\cdots:1]$. As these points do not lie in the same hyperplane in $\mathbb{P}^k$, any $\varphi\in\mathrm{PGL}(k+1)$ is uniquely determined by the values it takes on the set $\{e_1,\ldots,e_{k+1}\}$. Define $U_1\subset U$ as the subset of $U$ consisting of those couples $(f,\{x_1,\ldots,x_{D_k}\})\in U$ with $e_i\in\{x_1,\ldots,x_{D_k}\}$ for $1\leq i\leq k+2$. Note that this condition is closed in $U$ and that it is not vacuous since $D_k\geq d^k+2\geq k+2$, as $d\geq2$. The quasi-projective variety $U_1\subset\mathrm{End}_d^{k,\mathrm{fix}}$ has dimension $\mathcal{N}_d^k$  and the map $(\Pi\circ\rho)|_{U_1}:U_1\to\mathscr{M}_d^k$ is finite onto its image. Let $S$ be the Zariski closure of $\rho(U_1)$ in $\mathbb{P}^{N_d^k}\supsetneq \mathrm{End}_d^k$. The family $(\mathbb{P}^k_{S},f,\mathcal{O}_{\mathbb{P}^k}(1))$ has the expected properties.
\end{proof}

\medskip

The second family we will be interested in is the moduli space $\mathscr{P}_d^k$ of degree $d$ regular polynomial endomorphisms of the affine space $\mathbb{A}^k$: it is the quotient of the space $\mathrm{Poly}_d^k$ of regular polynomial endomorphisms of degree $d$ of the affine space $\mathbb{A}^k$ by the action by conjugacy of the group of affine transformations $\mathrm{Aut}(\mathbb{A}^k)=\mathrm{GL}(k)\ltimes \mathbb{A}^k$. 
 The same proof as those given in \cite{silverman-spacerat, szpiro,Levy} ensures that the moduli space $\mathscr{P}_d^k$ is also a coarse moduli space and is an irreducible affine variety defined over $\mathbb{Q}$ of dimension
\[\mathcal{P}_d^k:=\dim\mathscr{P}_d^k=k\binom{k+d}{d}-(k^2+k)
>\mathcal{N}_d^{k-1}=\dim\mathscr{M}_d^{k-1}.\]
As before, there is a proper closed subvariety $V$ such that the canonical projection
\[\Pi:\mathrm{Poly}_d^k\setminus V\to \mathscr{V}_d^k:=\mathscr{P}_d^k\setminus\Pi(V)\]
is a locally trivial $\mathrm{Aut}(\mathbb{A}^k)$-principal bundle. Proceeding as above, we have
\begin{lemma}\label{good-family-poly}
There is family $(\mathbb{P}^k_{S},f,\mathcal{O}_{\mathbb{P}^k}(1))$ of regular polynomial endomorphisms defined over $\mathbb{Q}$, with $\dim S=\mathcal{P}_d^k$ whose maximal regular part $\mathcal{V}_d^k$ satisfies the following properties:
\begin{enumerate}
\item the set $\mathscr{V}_d^k:=\Pi(\mathcal{V}_d^k)$ contains a dense Zariski open subset of $\mathscr{P}_d^k$,
\item the map $\Pi|_{\mathcal{V}_d^k}:\mathcal{V}_d^k\to\mathscr{V}_d^k$ is finite.
\end{enumerate}
\end{lemma}
%
\subsubsection{Stable families of endomorphisms of $\mathbb{P}^k$ following Berteloot-Bianchi-Dupont}\label{BBD}
Let $M$ be a connected complex manifold. An analytic family of endomorphisms of $\mathbb{P}^k$ parametrized by $M$ can be described as a surjective holomorphic map $f:(z,t)\in \mathbb{P}^k\times M\longmapsto(f_t(z),t)\in\mathbb{P}^k\times M$. In particular, for any $t\in M$, the induced map $f_t:\mathbb{P}^k\to\mathbb{P}^k$ is an endomorphism of degree $d$ (independent of $t\in M$). 

Following Berteloot, Bianchi and Dupont~\cite{BBD}, we say that such an analytic family of endomorphisms of $\mathbb{P}^k$ is $J_k$-\emph{stable} if the function
\[t\in M\longmapsto L(f_t):=\int_{\mathbb{P}^k(\C)}\log|\det(Df_t)|\mu_{f_t},\]
is a pluriharmonic function on $M$, i.e. $dd^c_tL(f_t)\equiv0$, where $L(f_t)$ is the sum of Lyapunov exponents of the unique maximal entropy measure $\mu_{f_t}$ of $f_t$. Berteloot-Bianchi-Dupont gave several equivalent description of this notion of stability and showed it is the higher-dimensional equivalent to the notion of stability introduced by Ma\~n\'e, Sad and Sullivan~\cite{MSS} for families of rational maps of $\mathbb{P}^1$. 

\medskip

When $M$ is a quasi-projective variety, and $f$ is a morphism (i.e. $f$ defines an algebraic family) and, if $S$ is a projective model of $M$, then $(\mathbb{P}^k\times S,f,\mathcal{O}_{\mathbb{P}^k}(1))$ is a family of endomorphisms as above with regular part $M$. In this case, one can show that the family $M$ is $J_k$-stable if and only if the function $t\mapsto L(f_t)$ is constant on $M$. The next section implies that $t\mapsto L(f_t)$ is constant on $M$ if and only if the multipliers (i.e., the eigenvalues associated to periodic points) are constant on $S$. By \cite{BB1}, one also has
\[dd^cL=\pi_*\left(\widehat{T}_f^k\wedge[\mathrm{Crit}(f)]\right)=T_{f,\mathrm{Crit}},\]
as currents on $M$, so that the family is $J_k$-stable if and only if $T_{f,\mathrm{Crit}(f)}=0$ on $M$. Here $\mathrm{Crit}(f)=\{(z,t)\in \mathbb{P}^k\times M, \ \det(Df_t)(z)=0\}$.

\medskip

In the proof of Theorem \ref{tm:mu-interior}, we will make crucial use of the fact that the multipliers  are generically finite-to-one on $\mathscr{M}_d^k$. This is established in Corollary \ref{cor-finite}, whose key step is the observation that, if this were not the case, then $\mathscr{M}_d^k$ would be covered by positive-dimensional algebraic families on which all multipliers are constant. Such families must be stable in the sense of Berteloot-Bianchi-Dupont, and we rule out this possibility by exhibiting rigid Latt\`es maps in $\mathscr{M}_d^k$ (see Lemma \ref{le-rigid}). The case of $\mathscr{P}_d^2$ is addressed separately in Section \ref{sec_c2}.

Note also that for Theorem \ref{tm:mu-interior}, we actually require a slightly stronger statement: instead of using the multipliers of all periodic points, it suffices to consider almost all of them. This motivates the introduction of the sequence of periodic points $(x_n)_{n\geq1}$ below.

\subsection{Families with many constant multipliers}\label{sec-finite}
Let $d\geq2$ and $S$ be an irreducible complex projective variety. Let $(\mathbb{P}^k_S,f,\mathcal{O}_{\mathbb{P}^k}(1))$ be a family of endomorphisms of $\Pb^k$ of degree $d$, with regular part $S^0\subseteq S$. Let $t_0\in S^0$ be an arbitrary parameter in this family. We consider a non-decreasing sequence $(m_n)_{n\geq1}$ of positive integers and a sequence of distinct points $(x_n)_{n\geq1}$ in $\Pb^k$ such that
\begin{itemize}
\item for each $n\geq1$, $x_n$ is a repelling periodic point for $f_{t_0}$ of exact period $m_n$,
\item if for $s\geq1$ we set $M_s:=\#\{n\geq1\,;\,m_n| s\}$ then $M_s/d^{sk}$ converges to $1$ when $s$ goes to $\infty$.
\end{itemize}
In other words, the last point says that most of the periodic points of $f_{t_0}$ are in $(x_n)_{n\geq1}$. Note that the existence of such a sequence follows from the equidistribution theorem of Briend-Duval \cite{briendduval}, simply by listing all the periodic points obtained in their construction.

From these data, for each $n\geq1$ we consider the analytic set
\[\tilde{X_n}:=\left\{(t,z_1,\ldots,z_n)\in S^0\times(\mathbb{P}^k)^n\,; \ f_t^{m_s}(z_s)=z_s\ \text{for all} \ 1\leq s\leq n\right\}.\]
Observe that, since the points in the sequence $(x_n)_{n\geq1}$ are repelling, the point $(t_0,x_1,\ldots,x_n)$ is regular in $\tilde X_n$ and we denote by $X_n$ the irreducible component of $\tilde X_n$ which contains it. The natural projection $\pi_n\colon X_n\to S^0$ is surjective and finite. We also have a family of \emph{multiplier maps} $\Lambda_n\colon X_n\to\Cb^n$ defined by $\Lambda_n(t,z_1,\ldots,z_n)=(\det D_{z_s}f_t^{m_s})_{1\leq s\leq n}$.
\begin{proposition}\label{prop-finite}
Assume that there exists $t_1\in S^0$ such that there is no algebraic curve $Z\subset S^0$ passing through $t_1$ such that $Z$ is $J_k$-stable.
Then for $n\geq1$ large enough the multiplier map $\Lambda_n$ is generically finite-to-one.
\end{proposition}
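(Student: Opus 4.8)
\textbf{Proof strategy for Proposition~\ref{prop-finite}.} The plan is to argue by contradiction: suppose that for every $n\geq1$ the multiplier map $\Lambda_n\colon X_n\to\Cb^n$ fails to be generically finite-to-one. Since $\pi_n\colon X_n\to S^0$ is finite and surjective, $\dim X_n=\dim S$, so the failure of generic finiteness means the generic fiber of $\Lambda_n$ has positive dimension; equivalently, the Zariski closure $\Lambda_n(X_n)$ has dimension strictly less than $\dim S$, so $\Lambda_n$ factors (generically) through a variety of dimension $\leq \dim S-1$. The first step is to transport this to the base: because $\pi_n$ is finite, through the point $t_1\in S^0$ (or through a point close to it chosen generically) one can find an algebraic curve $C\subset S^0$ along which the first $n$ multipliers $t\mapsto \det D_{z_s(t)}f_t^{m_s}$, $1\leq s\leq n$, are all constant, where $z_s(t)$ denotes the local continuation of the repelling cycle $x_s$. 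Here one must be slightly careful: to pass through $t_1$ one should perturb within a fiber of $\Lambda_n$, using that the fibers dominate the low-dimensional image and slicing by generic hyperplanes to cut down to a curve; alternatively one runs the whole argument at a generic $t_1'$ and then invokes the hypothesis, which forbids a $J_k$-stable curve through $t_1$ specifically — so some care in the choice of the base point, or a density/irreducibility argument replacing $t_1$ by a nearby generic point while keeping the curve through $t_1$, is needed.

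\textbf{Main body of the argument.} Granting such a curve $C$, the key point is that constancy of multipliers along a family of repelling cycles is a very strong rigidity condition. The second and central step is to show that, as $n\to\infty$, having the first $n$ multipliers constant along $C$ forces the sum of Lyapunov exponents $L(f_t)$ to be constant on $C$, hence $C$ to be $J_k$-stable, contradicting the hypothesis. For this I would use the hypothesis on the density of the cycles $(x_n)_n$: by the equidistribution of repelling periodic points towards the maximal entropy measure $\mu_{f_t}$ (Briend--Duval), and more precisely the convergence
\[
\frac{1}{sd^{sk}}\sum_{\substack{p:\,f_t^s(p)=p}} \log\bigl|\det D_p f_t^{s}\bigr| \;\longrightarrow\; L(f_t)
\]
weighted correctly, the hypothesis $M_s/(sd^{sk})\to1$ says that the cycles in our list account for asymptotically all of this sum. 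If infinitely many of the multipliers $\det D_{z_s(t)}f_t^{m_s}$ are constant on $C$, then, writing $L(f_t)$ as the above limit and splitting the sum into the cycles coming from our list (whose $\log$-multiplier contributions are constant in $t$) and the remaining cycles (whose total weight is $o(d^{sk}/s)$ and whose individual contributions are controlled by the uniform subexponential bound $\log|\det D_pf_t^s| = O(s)$ valid on repelling cycles), one gets that $L(f_t)$ is a uniform limit of constants on $C$, hence constant. Therefore $dd^c_t L|_C\equiv0$, i.e. $C$ is $J_k$-stable by the Berteloot--Bianchi--Dupont characterization recalled above, contradicting the assumption that no such curve passes through $t_1$. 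To make this clean I would first pass to a curve $C$ through $t_1$ on which \emph{all} of the countably many multipliers $\det D_{z_s(\cdot)}f_{\cdot}^{m_s}$ are simultaneously constant: the locus in $S^0$ where $\Lambda_n$ is not generically finite is, for each $n$, a nonempty subvariety, these are nested, and (using finiteness of $\pi_n$ again) one extracts a single curve working for all $n$ — this is where one invokes the contradiction hypothesis for \emph{every} $n$, not just one $n$.

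\textbf{Where the difficulty lies.} The main obstacle is the passage from ``$\Lambda_n$ not generically finite for all $n$'' to ``there is a single algebraic curve $C\ni t_1$ along which all multipliers of the listed cycles are constant,'' and in particular ensuring $C$ passes through the \emph{prescribed} point $t_1$ rather than a generic one. The finiteness of $\pi_n$ is the crucial lever here: it lets one push the positive-dimensional fibers of $\Lambda_n$ down to $S^0$ and control dimensions, and it lets one intersect the nested non-finiteness loci and slice down to a curve without losing the base point. A secondary technical point is justifying that the ``remaining'' cycles contribute negligibly to $L(f_t)$ uniformly in $t\in C$; this needs the uniform bound $\log^+\|D_pf_t^s\|=O(s)$ on the whole Julia set (which holds since $f$ is a polarized endomorphism of fixed degree, so $\|Df_t\|$ is uniformly bounded on the compact $\mathbb P^k$), together with the hypothesis $M_s/(sd^{sk})\to 1$ controlling the count of the missing cycles. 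Once these two ingredients are in place, the pluriharmonicity-equals-stability dictionary of Berteloot--Bianchi--Dupont delivers the contradiction immediately.
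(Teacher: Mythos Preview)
Your overall strategy --- contradiction, stabilize to a subvariety through $t_1$ on which all listed multipliers are constant, then use equidistribution of repelling cycles to force $L$ constant, hence $J_k$-stability --- is exactly the paper's. The equidistribution step is fine; the paper cites the convergence
\[
\frac{1}{sd^{sk}}\sum_{p\in\mathrm{Per}_s(f_t)}\log^+|\det D_pf_t^s|\;\longrightarrow\;L(f_t)
\]
from \cite{BDM} (see also \cite{BD_distortion}), and your sketch of why the unlisted cycles are negligible is essentially this.

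The gap is precisely where you flagged it: producing a positive-dimensional subvariety \emph{through $t_1$} on which all multipliers are constant. Your proposed mechanism --- ``the locus in $S^0$ where $\Lambda_n$ is not generically finite'' --- is not well-posed (non-generic-finiteness is a global property of $\Lambda_n$, not a locus), and the talk of perturbing to a generic $t_1'$ undermines the hypothesis, which concerns the specific point $t_1$. The fix is much simpler than slicing: pick a lift $\tilde t_1\in\pi_n^{-1}(t_1)\subset X_n$ and set $Y_n:=\Lambda_n^{-1}(\Lambda_n(\tilde t_1))$. Since $\Lambda_n$ is not generically finite, the generic fiber has positive dimension, and by upper semicontinuity of fiber dimension \emph{every} fiber does --- in particular $Y_n$. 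Now $Z_n:=\pi_n(Y_n)\subset S^0$ contains $t_1$, has positive dimension (as $\pi_n$ is finite), and the $Z_n$ are decreasing (the natural forgetful map $X_{n+1}\to X_n$ sends $Y_{n+1}$ into $Y_n$). Hence $Z_n=Z_N$ for all $n\geq N$, and on $Z_N$ all the $\Lambda_n$ take the constant value $\Lambda_n(\tilde t_1)$. This is the curve (or higher-dimensional $J_k$-stable subvariety, from which you extract a curve) you wanted, with no genericity juggling.
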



\begin{proof}
Observe first that, the maps $\Lambda_n$ contain more and more information, if the result holds for one $n_0\geq1$ then it is also the case for all $n\geq n_0.$
Assume by contradiction that for each $n\geq1$ the map $\Lambda_n$ is not generically finite. In particular, for each $n\geq1$ the set $Y_n=\Lambda_n^{-1}(\Lambda_n(t_1))$ has positive dimension. The sequence of algebraic set $(Z_n)_{n\geq1}$ defined by $Z_n:=\pi_n(Y_n)$ is decreasing so there exists $N\geq1$ such that $Z_n=Z_N$ for all $n\geq N$. From this, the key observation is that, relying on the equidistribution of repelling orbits \cite{briendduval}, we have by \cite[Theorem 1.5]{BDM}  (see also \cite[Theorem 4.1]{BD_distortion}) for all $t\in S^0$, 
	\[ \lim_{n \to + \infty}  {1 \over d^{kn}} \sum_{p \in \mathrm{RPer}_n(f_t)}  \log  |\det(D {f_t})(p)|  = L(f_t),\]
	where $\mathrm{RPer}_n(f_t)$ is the set  of $n$-periodic repelling points of $f_t$ and $L(f_t)$ the sum of the Lyapunov exponents of its equilibrium measure. This implies by the chain rules that
\[ \lim_{n \to + \infty}  {1 \over n d^{kn}} \sum_{p \in \mathrm{RPer}_n(f_t)}  \log  |\det(D {f_t^n})(p)|  = L(f_t),\]
or equivalently
\[ \lim_{n \to + \infty}  {1 \over n d^{kn}} \sum_{p \in \mathrm{Per}_n(f_t)}  \log^+  |\det(D {f_t^n})(p)|  = L(f_t),\]
where $\log^+x =\max (\log x ,0)$ and $\mathrm{Per}_n(f_t)$ is the set of all $n$-periodic points of $f_t$. In particular, as we have assume that $M_s/d^{sk}\to 1$ with $s$ where $M_s:=\# \{n\geq1\,;\,m_n| s\}$, the fact that all the functions $\Lambda_n$ are constant on $Y_n$ implies that $t\mapsto L(f_t)$ is also constant on $Z_N$. In particular $Z_N$ is a $J_k$-stable family containing $t_1$. Contradiction.
\end{proof}
Using rigid Latt\`es maps, we have the following result which answers by the positive to the first part of \cite[Question 19.4]{doyle-silverman}. A description of the set $\Gamma$ in the next corollary, however, remains a much more difficult question. Note that in dimension 1, much sharper results are known. On one hand, when $(x_n)_{n\geq1}$ corresponds to all periodic cycles, McMullen \cite{McMullen-families} proved this corollary with the optimal $\Gamma$, i.e., the set of flexible Latt\`es maps. A stronger statement using only the modulus of the multipliers has been recently given in \cite{JX}. On the other hand, Gorbovickis \cite{gorbovickis-rat} obtained Corollary \ref{cor-finite} when $k=1$ using only the multipliers of an (almost) arbitrary set of $2d-2$ cycles.

\medskip

Consider now the family $(\Pb^k_S,f,\mathcal{O}_{\Pb^k}(1))$ with regular part $\mathcal{U}_d^k$ given by Lemma~\ref{good-family-end}. For this family, the map $\Lambda_n$ is defined on the corresponding variety $X_n$ as above. Recall that the canonical projection $\pi_n:X_n\to \mathcal{U}_d^k$ is surjective and finite.


\begin{corollary}\label{cor-finite}
Let $d\geq2$ and $k\geq1$. If $(m_n)_{n\geq1}$, $(x_n)_{n\geq1}$ and $(\Lambda_n)_{n\geq1}$ are as above with $X=\mathcal{U}_d^k$, then there exist $N\geq1$ and a Zariski closed proper subset $\Gamma$ of $\mathcal{U}_d^k$ such that $\Lambda_n$ is finite-to-one on $X_n\setminus \pi_n^{-1}(\Gamma)$ for all $n\geq N$.
\end{corollary}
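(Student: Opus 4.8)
The plan is to deduce Corollary~\ref{cor-finite} from Proposition~\ref{prop-finite} by verifying its hypothesis for $S = \mathscr M_d^k$: namely, that there exists a parameter $t_1 \in \mathscr M_d^k$ through which no $J_k$-stable algebraic curve passes. The main point is therefore to exhibit a single ``rigid'' conjugacy class with this property. First I would recall that an endomorphism conjugate to an isolated (i.e.\ non-flexible) Latt\`es map $L$ has the property that its conjugacy class $[L]$ is an \emph{isolated point} of the set of Latt\`es conjugacy classes, and more importantly that the stability locus behaves rigidly near it. Concretely, if $C \subset \mathscr M_d^k$ were an algebraic curve through $t_1 = [L]$ on which the family is $J_k$-stable, then along $C$ the sum of Lyapunov exponents $t \mapsto L(f_t)$ would be constant (by the characterization recalled in \S\ref{sec:moduli-space}, since $C$ is quasi-projective), equal to its value at the Latt\`es map. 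But Latt\`es maps are exactly the maps achieving the minimal possible value of the Lyapunov sum $\frac12 \log d$ (the lower bound of Briend--Duval, with equality characterizing Latt\`es maps in the work of Berteloot--Dupont and others), so $L(f_t) \equiv \frac12\log d$ on $C$ would force every $f_t$ on $C$ to be a Latt\`es map; since the Latt\`es conjugacy classes in $\mathscr M_d^k$ form (for isolated Latt\`es maps, away from the flexible family which does not exist for general $k$) a finite, hence $0$-dimensional, set, this contradicts $\dim C = 1$. Hence no such curve exists and Proposition~\ref{prop-finite} applies, giving some $N\geq 1$ and a proper Zariski closed $\Gamma_0 \subsetneq \mathscr M_d^k$ (the non-finiteness locus of $\Lambda_N$, together with the branch locus) off which $\Lambda_n$ is finite-to-one for $n \geq N$.

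The second, bookkeeping, step is to upgrade ``generically finite-to-one'' (the conclusion of Proposition~\ref{prop-finite}) to ``finite-to-one on the complement of a proper Zariski closed subset.'' A generically finite dominant morphism of irreducible varieties $\Lambda_n \colon X_n \to \overline{\Lambda_n(X_n)}$ restricts to a finite morphism over a Zariski dense open subset of the target, by generic flatness / Stein factorization; pulling this back and taking $\Gamma$ to be the union over $N \leq n \leq N'$ (finitely many, since the $\Lambda_n$ carry increasing information, so finiteness for $\Lambda_N$ already propagates) of the images of the loci where fibers are positive-dimensional yields the desired proper Zariski closed $\Gamma \subsetneq \mathscr M_d^k$. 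One must check $\Gamma \neq \mathscr M_d^k$, which is immediate since $\mathscr M_d^k$ is irreducible and the non-finite locus is a proper closed subset. I would also note that one should record that the relevant $X_n$ and $\Lambda_n$ are defined over $\overline{\mathbb Q}$ (they are cut out by the periodicity and multiplier equations), though for the corollary as stated this is not strictly needed.

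The main obstacle I anticipate is purely one of \emph{input}: one needs the existence of an isolated Latt\`es map in $\mathscr M_d^k$ for the relevant $d$ and $k$, or more precisely of \emph{any} conjugacy class through which no stable algebraic curve passes, together with the characterization of the minimal Lyapunov sum. For $k=1$ this is classical (McMullen); for $k \geq 2$ one should invoke the construction of isolated Latt\`es maps on $\mathbb P^k$ (e.g.\ from products of elliptic curves or from the work on Latt\`es maps in higher dimension) and the rigidity statement that a $J_k$-stable family of Latt\`es maps with fixed Lyapunov spectrum cannot move — i.e.\ the higher-dimensional analogue of McMullen's rigidity, in the weak form that the isolated Latt\`es locus is $0$-dimensional. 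An alternative that sidesteps Latt\`es maps entirely, which I would mention as a fallback, is to take for $t_1$ a \emph{strictly postcritically finite} parameter that is known to be isolated in its stable component, or to use that the bifurcation locus has nonempty interior (Theorem~\ref{tm:mu-interior}) to find $t_1$ in the interior of $\mathrm{supp}(\mu_{f,\mathrm{Crit}})$, through which no stable curve can pass since on a stable curve $T_{f,\mathrm{Crit}} = 0$; this last route makes the corollary essentially a formal consequence of the existence statement in Section~\ref{sec-existence}, at the cost of logical ordering within the paper.
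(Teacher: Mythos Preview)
Your approach is essentially the same as the paper's: apply Proposition~\ref{prop-finite} with $t_1$ an isolated Latt\`es class, and use the Berteloot--Dupont characterization of Latt\`es maps as the minimizers of the Lyapunov sum to rule out any $J_k$-stable curve through $t_1$. The paper resolves the obstacle you flag by constructing the isolated Latt\`es map explicitly as the $k$-th symmetric power of an isolated degree-$d$ Latt\`es map on $\mathbb{P}^1$ (Milnor), checking isolation via the structure theorem that every Latt\`es map on $\mathbb{P}^k$ arises from an abelian variety isogenous to $E^k$; note also that your fallback via Theorem~\ref{tm:mu-interior} would be circular, since the proof of that theorem invokes Corollary~\ref{cor-finite}.
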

\begin{proof}
We simply apply Proposition \ref{prop-finite} with $f_{t_1}$ equal to a rigid Latt\`es map in the family given by Lemma~\ref{good-family-end}. By Berteloot and Dupont~\cite{BertelootDupont}, Latt\`es maps are the only minimum of the Lyapunov function $L$ so if $f_{t_1}$ is a rigid Latt\`es map, there is no stable family in $\mathcal{U}_d^k$ containing it. The next two lemmas conclude the proof.
\end{proof}

Our proof of this result follows Berteloot-Loeb \cite{BL}, although there might be a quicker argument using properties of Abelian varieties of CM-type. The existence of rigid Latt\`es maps (i.e., not contained in a holomorphic family of such maps) arise from the next lemma together with Lemma \ref{le-it}, which removes the technical difficulty related to the iterate $f^p_t$.

\begin{lemma}\label{le-rigid}
Let $(f_t)_{t\in\Db}$ be a holomorphic family of Latt\`es maps of $\Pb^k$ such that $f_0$ is the symmetric product of a rigid Latt\`es map of $\Pb^1$. Then, there exist $p\geq1$ and a continuous deformation $(\psi_t)_{t\in\Db}$ of $\id$ in $\mathrm{Aut}(\Pb^k)$ such that, for all $t\in\Db$, $f^p_t$ is conjugate to $f^p_0$ by $\psi_t.$
\end{lemma}
\begin{proof}
Let $g_0$ the rigid Latt\`es map of $\Pb^1$ such that $f_0$ is the symmetric product of $g_0.$ There exist an elliptic curve $E_0$, a finite branched cover $\rho\colon E_0\to\Pb^1$ and a complex number $a$ such that $\rho$ semi-conjugates $g_0$ to the multiplication by $a.$ As $g_0$ is rigid, $a$ has to be an imaginary quadratic integer (see \cite{Milnor}).

Let $\pi\colon(\Pb^1)^k\to\Pb^k$ be the symmetrization map, i.e. the quotient map for the action by permutation of coordinates of the symmetric group $\mathfrak{S}_k$, and $m\colon E_0^k\to E_0^k$ be the multiplication by $a.$ As the periodic points of $m$ are dense, we can choose a periodic point $x_0,$ of period  denoted by $p$, such that $z_0:=\pi\circ\rho^{(k)}(x_0)$ is not in the ramification values of $\pi\circ\rho^{(k)}$ and such that $a^p$ is still not real (i.e., an imaginary quadratic integer). In particular, $f_0^p(z_0)=z_0$ and, using the map $\pi\circ\rho^{(k)},$ we obtain that  $D_{z_0}f_0^p=a^p\id$ and that the Green current of $f_0$ is smooth and strictly positive in a neighborhood of $z_0.$ From now on, we replace the family $(f_t)_{t\in\Db}$ by $(f_t^p)_{t\in\Db}$ and assume that $p=1.$

A family of Latt\`es maps has constant sum of Lyapunov exponents so it is stable in the sense of \cite{BBD}. Hence, $z_0$ can be followed as a repelling point $z_t$ which stay outside of the postcritical set. Therefore, by a result of Berteloot--Loeb \cite[Proposition 4.2]{BL}, for each $t\in \Db$ there exists a holomorphic map $\phi_t\colon \Cb^k \to \Pb^k,$ locally injective at $0,$ and a linear self-map $D_t\colon \Cb^k \to \Cb^k$ of the form $D_t(x)=\sqrt d\, U_t(x)$, where $U_t$ is a diagonal unitary matrix, such that $\phi_t(0)=z_t$ and
$$
\phi_t \circ D_t = f_t \circ \phi_t.
$$
As explained in the proof of \cite{BL}, the map $\phi_t$ corresponds to the Poincar\'e linearization map, precomposed with a linear change of coordinates, in an  basis of eigenvectors of $D_{x_t}f_t.$ Both can be chosen to depend holomorphically on $t$. Moreover, the eigenvalues are of modulus $\sqrt d$ and depends holomorphically on $t,$ thus they are constant, i.e., the map $D_t$ is independent of $t.$ As for $t=0$ the map $D_0$ corresponds to the multiplication by an imaginary quadratic integer $a \in \Cb \setminus \Rb,$ this holds for all $t \in \Db.$

On the other hand, Berteloot--Loeb also proved in \cite{BL} that
\[
G_t = \{ (U,b) \in \mathbb{U}_k \ltimes \mathbb{C}^k \;\; ;\;\; \phi_t(U \cdot z + b) = \phi_t(z) \}
\]
is a crystallographic group of $\Cb^k,$ for each $t \in \Db.$ By Bieberbach's theorem, the translation part $L_t$ is a lattice, and thus $A_t = \Cb^k / L_t$ is a complex torus. Moreover, $aL_t \subset L_t,$ and multiplication by $a$ on $A_t$ induces the endomorphism above $f_t.$ Observe that the injectivity of $\phi_t$ in a neighborhood (locally uniform in $t$) of $0$ prevents collisions when following the elements of $G_t$. Hence, they can be followed holomorphically for $t \in \Db$.

In suitable coordinates of $\Cb^k,$ after a change of variables given by Siegel normal form, there exists $\tau(t)$ in the Siegel upper half-space such that
\[
L_t = \Zb^k + \tau(t) \Zb^k.
\]
Since $a$ is an imaginary quadratic integer and $aL_t \subset L_t$ for all $t \in \Db,$ the continuity of $\tau(t)$ implies that it must be constant.
Hence, there exists $P_t\in\mathrm{GL}_k(\Cb)$, depending holomorphically on $t$ since $L_t$ does, such that $P_t(L_t) = L_0$.

Furthermore, $P_tG_tP_t^{-1} / L_0$ is a finite subgroup of $\Aut(A_0,0)$, which is a discrete group, thus it must also be independent of $t,$ always equal to $H_0:=G_0/L_0.$ To summarize, we obtain the following commutative diagram:
\[
\begin{tikzcd}[column sep=huge, row sep=large]
{\mathbb{C}^k}
 \arrow[rr,"\phi_t"]
  \arrow[d, shift left=.6ex, "P_t"]
&& \mathbb{P}^{k}
\\
{\mathbb{C}^k}
 \arrow[rr,bend right,"\phi_0"] \arrow[r,]
& A_{0}:={\mathbb{C}^k/L_0}
  \arrow[r,]
& A_{0}/H_0=\mathbb{P}^{k}
  \arrow[u, swap, "\psi_t"]
\end{tikzcd}
\]
where the map $\psi_t$ comes from the fact that $\phi_t \circ P_t^{-1}$ passes to the quotient. As the group $G_t$ acts transitively on the fibers of $\phi_t$ (see \cite[Proposition 5.1]{BL}), this map $\psi_t$ is an automorphism. In particular, if $z\in\Pb^k$ and $x\in\Cb^k$ are such that $\phi_t(x)=z$ then, on one hand,
$$f_t(z)=\phi_t(ax)=\psi_t\circ\phi_0\circ P_t(ax)=\psi_t\circ\phi_0(aP_t(x))=\psi_t\circ f_0(\phi_0(P_t(x))),$$
and, on the other hand,
$$\phi_0(P_t(x))=\psi_t^{-1}(z).$$
This gives $f_t=\psi_t\circ f_0\circ\psi_t^{-1}.$
\end{proof}

\begin{lemma}\label{le-it}
Let $(f_t)_{t\in\Db}$ be a holomorphic family in $\mathrm{End}_d^k$.  
Assume there exist $p \geq 1$ and a continuous deformation $(\psi_t)_{t\in\Db}$ of $\id$ in $\mathrm{Aut}(\Pb^k)$ such that, for all $t \in \Db$, the iterate $f_t^p$ is conjugate to $f_0^p$ by $\psi_t$.  
Then $f_t = \psi_t \circ f_0 \circ \psi_t^{-1}$ for all $t \in \Db$.
\end{lemma}
\begin{proof}
For each $t \in \Db$, define $g_t := \psi_t^{-1} \circ f_t \circ \psi_t,$ so that $g_t^p = f_0^p$. This equality implies that $g_t$ and $f_0$ have the same set of periodic points. In particular, if $x$ is a periodic point of $f_0$, then $y_t := g_t(x)$ is again a periodic point of $f_0$. Since this set is discrete and $y_t$ depends continuously on $t$, we must have $y_t \equiv y_0 = f_0(x).$ Thus $f_0 = g_t$ on the set of periodic points, which is Zariski dense; hence $f_0 = g_t$ on $\Pb^k$ for all  $t \in \Db.$
\end{proof}

\begin{proposition}[From rigid to isolated Latt\`es maps]\label{prop-lattes-isole}
For all $k \geq 2$ and $d \geq 2$, there exists a Latt\`es map whose class in $\mathscr{M}_d^k$ is isolated among all classes of Latt\`es maps.
\end{proposition}
\begin{proof}
Let $[f_0]$ be the class of the symmetric product of a rigid Latt\`es map on $\Pb^1$. By Lemma \ref{le-rigid}, no stable algebraic family passes through $[f_0]$. Hence, by the proof of Corollary \ref{cor-finite}, the multiplier map $\Lambda_n$ is finite-to-one above a neighborhood of $[f_0]$ for sufficiently large $n$. It follows that no stable algebraic family intersects this neighborhood. Therefore, for a sufficiently small compact neighborhood $B$ of $[f_0]$, the set $K$ of Latt\`es classes in $B$ is compact (since the Lyapunov function $L$ is continuous) and disjoint from any stable algebraic family.

Suppose, for contradiction, that $K$ contains no isolated class. As $K$ is compact, it must be uncountable. Since there are only countably many PCF relations of the form ${f^p(\crit_f)=f^q(\crit_f)}$, one such relation must contain infinitely many classes in $K$. Consequently, a positive-dimensional component of classes satisfying this relation must intersect $B$. This gives a contradiction since such a component is a stable algebraic family.
\end{proof}

\begin{proposition}[Isolated Latt\`es maps imply non-vanishing of the bifurcation measure]\label{Joe'suggestion} The bifurcation measure $\mu_{f,\mathrm{Crit}}$ is non-zero on $\mathscr{M}_{d}^{k}(\mathbb{C})$.
\end{proposition}
\begin{proof} Recall, from §~\ref{BBD} that $T_{f,\mathrm{Crit}}=dd^cL$ so by Proposition~\ref{prop-higher-currents}, $\mu_{f,\mathrm{Crit}} \geq (dd^cL)^{\wedge\mathcal{N}_d^k}$. Since at an isolated Latt\`es map, the Lyapunov function $L$ admits a strict minimum, its Monge Amp\`ere $(dd^cL)^{\wedge\mathcal{N}_d^k}$ does not vanish (see \cite{BedfordTaylorbis}).
\end{proof}
The argument goes back to Bassanelli and Berteloot (see \cite[Proposition 6.3]{BB1} when $k=1$). To the best of our knowledge, no such simple arguments hold in the polynomial case and we only know that $\mu_{f,\mathrm{Crit}}\neq0$ on $\mathscr P_d^k$ when $k=2$ thanks to Theorem \ref{tm:mu-interior}.


\subsection{Families of regular polynomial endomorphisms of the affine plane}\label{sec_c2}
In order to apply Proposition \ref{prop-finite} to the moduli space $\mathscr P_d^2$, we prove the following rigidity result.
\begin{theorem}\label{th-rigidity}
Let $d\geq2$. Let $g$ be a rational map of $\Pb^1$ of degree $d$ which is not a flexible Latt\`es map. Moreover, assume that
\begin{itemize}
\item[\textit{i)}] $g$ possesses at least $3$ postcritical repelling periodic points (possibly in the same critical orbit),
\item[\textit{ii)}] for one of these postcritical repelling periodic points $y_1$ we have that
\[\{a\in\Pb^1\, ; \text{ there exists} \ n\geq1,\ g^n(a)=y_1\ \text{ and }\ a\ \text{ is not in the critical set of }\ g^n\}\]
is dense in the Julia set $J_g$ of $g$.
\end{itemize}
Let $(\mathbb{P}^2\times Z,f,\mathcal{O}_{\mathbb{P}^2}(1))$ be a stable family of regular polynomial endomorphisms of $\Cb^2$ of degree $d$, parametrized by an irreducible algebraic curve $Z$. If there exists $\lambda_0\in Z$ such that $f_{\lambda_0}$ is equal to the lift of $g$ to $\Cb^2$ then the family $(\mathbb{P}^2\times Z,f,\mathcal{O}_{\mathbb{P}^2}(1))$ is isotrivial.
\end{theorem}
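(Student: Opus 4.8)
The plan is to deduce the statement from McMullen's rigidity theorem for stable algebraic families of rational maps of $\mathbb{P}^1$ \cite{McMullen-families}, applied to the restriction of $f$ to the line at infinity, once one knows that $J_2$-stability forces every member of the family to be a homogeneous lift. Since $\lambda\mapsto[f_\lambda]$ is a morphism from the irreducible curve $Z$ to $\mathscr{P}_d^2$, it is enough to show it is constant in a neighbourhood of $\lambda_0$.

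First I would record the geometry at $\lambda_0$. Let $H_\infty$ be the line at infinity, $q_0=[0:0:1]$, and $p\colon\mathbb{P}^2\dashrightarrow\mathbb{P}^1$ the projection from $q_0$. Saying $f_{\lambda_0}$ is the lift of $g$ means $f_{\lambda_0}$ is homogeneous of degree $d$: $H_\infty$ and $q_0$ are totally invariant, $q_0$ is a totally superattracting fixed point, $f_{\lambda_0}|_{H_\infty}=g$, and $p\circ f_{\lambda_0}=g\circ p$. Each postcritical repelling periodic point $y_i$ ($i=1,2,3$), of period $m_i$, lifts to an $f_{\lambda_0}^{m_i}$-invariant line $\Delta_i=\overline{p^{-1}(y_i)}$ through $q_0$, which is a component of the postcritical set; on $\Delta_i$ the map $f_{\lambda_0}^{m_i}$ is conformally conjugate to $z\mapsto z^{d^{m_i}}$, and the invariant Julia circle $\gamma_i:=\Delta_i\cap J_2(f_{\lambda_0})$ is a repelling $f_{\lambda_0}^{m_i}$-invariant circle contained in $J_2(f_{\lambda_0})$ (this uses that the maximal entropy measure of a homogeneous lift fibres over $\mu_g$ with fibrewise maximal conditional measures). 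By hypothesis \textit{ii)}, the non-critical $f_{\lambda_0}$-preimages of $\gamma_1$ are dense in $J_2(f_{\lambda_0})$.

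Next, by $J_2$-stability and Berteloot--Bianchi--Dupont \cite{BBD}, over a neighbourhood of $\lambda_0$ (and a finite cover if needed) there is a holomorphic motion $h_\lambda\colon J_2(f_{\lambda_0})\to J_2(f_\lambda)$ conjugating $f_{\lambda_0}$ to $f_\lambda$ on the small Julia sets, along which repelling cycles move holomorphically inside $J_2$ and no parameter is a Misiurewicz parameter. The crucial step --- and the one I expect to be the main obstacle --- is to turn this analytic conjugacy into the algebraic statement that $f_\lambda$ is the homogeneous lift of $g_\lambda:=f_\lambda|_{H_\infty}$. The idea is that, since $\gamma_i$ stays repelling, postcritical, and inside $J_2$ while postcritical components deform with constant degree, $h_\lambda(\gamma_i)$ must lie on an $f_\lambda^{m_i}$-invariant line $\Delta_i(\lambda)$ carrying the same power-map dynamics; a higher-dimensional analogue of Buff--Epstein's extension of local holomorphic conjugacies \cite{buff-epstein} then forces $h_\lambda$, near $\gamma_1$, to respect the fibrewise-linear normal form of $\Delta_1(\lambda)$, and, propagating this along the non-critical $g$-preimages of $y_1$ --- which are dense in $J_2(f_{\lambda_0})$ by \textit{ii)} --- one obtains that $f_\lambda$ preserves a whole pencil of lines through a common point $q(\lambda)$, i.e. $p_\lambda\circ f_\lambda=g_\lambda\circ p_\lambda$ for the projection $p_\lambda$ from $q(\lambda)$; hypothesis \textit{i)} (three moving invariant lines) is what provides enough rigid data to pin down $q(\lambda)$ and this pencil. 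After conjugating by a holomorphic family in $\mathrm{Aut}(\mathbb{A}^2)$ bringing $q(\lambda)$ to the origin, $f_\lambda$ is then the homogeneous lift of $g_\lambda$.

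Finally I would run the one-dimensional argument. The family $(g_\lambda)$ is an algebraic family of degree-$d$ rational maps with $g_{\lambda_0}=g$, and for a homogeneous lift $L(f_\lambda)=\log d+L(g_\lambda)$ (the vertical Lyapunov exponent being $\log d$ by homogeneity); since $L(f_\lambda)$ is constant on $Z$ by stability, $L(g_\lambda)$ is constant, so $(g_\lambda)$ is $J_1$-stable. As $g$ is not a flexible Latt\`es map, McMullen's theorem \cite{McMullen-families} makes $(g_\lambda)$ isotrivial, say $g_\lambda=\phi_\lambda\, g\,\phi_\lambda^{-1}$ with $\phi_\lambda\in\mathrm{PGL}(2)$ holomorphic; lifting $\phi_\lambda$ to $\mathrm{Aut}(\mathbb{A}^2)$ (which acts on $H_\infty$ as $\mathrm{PGL}(2)$) and absorbing the scaling ambiguity of the homogeneous lift by a holomorphic family of dilations, one gets $f_\lambda=\Psi_\lambda\, f_{\lambda_0}\,\Psi_\lambda^{-1}$ with $\Psi_\lambda\in\mathrm{Aut}(\mathbb{A}^2)$ holomorphic near $\lambda_0$, and hence --- by the reduction of the first paragraph --- on all of $Z$.
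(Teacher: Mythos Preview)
Your overall architecture is close to the paper's, but the order of the two main ingredients is reversed, and this reversal is not cosmetic: it hides the real engine of the proof, which you never invoke.

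The paper does \emph{not} first show that each $f_\lambda$ preserves a pencil and only then apply McMullen. It starts from Bedford--Jonsson's formula for regular polynomial endomorphisms,
\[
L(f_\lambda)=\log d+\ell(\lambda)+\int G_\lambda\,\mu_{c,\lambda},
\]
where $\ell(\lambda)$ is the Lyapunov exponent of $f_\lambda|_{L_\infty}$ and $\mu_{c,\lambda}=T_\lambda\wedge[C_\lambda]$. Both $\ell$ and $\tilde\ell:=\int G_\lambda\mu_{c,\lambda}$ are plurisubharmonic and non-negative, and their sum is harmonic on $Z$ by stability; hence each is separately constant. This immediately gives two things you are missing at the moment you attack your ``crucial step'': (a) $\ell$ is constant, so $(f_\lambda|_{L_\infty})$ is stable and McMullen applies \emph{now}, letting one normalize so that $f_\lambda|_{L_\infty}\equiv g$ and the points $y_i\in L_\infty$ do not move; and (b) $\tilde\ell\equiv\tilde\ell(\lambda_0)=0$, so $\mu_{c,\lambda}$ is supported in $K_\lambda$. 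Fact (b) is what drives everything afterwards: it gives normality of iterates on the relevant critical branches (forcing them to be preperiodic, your Step about $\Delta_i(\lambda)$ being a line) and, crucially, it is the hypothesis of Bedford--Jonsson's lamination theorem for the basin of $L_\infty$, which is the actual tool used to propagate the pencil structure along preimages of $y_1$. Your sketch replaces this by an unspecified ``higher-dimensional Buff--Epstein extension'' of the $J_2$-motion; no such device is used (or, as far as I can see, available) here, and without (b) there is no reason the postcritical components through $y_i(\lambda)$ are lines, nor that anything propagates.

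Two smaller points. First, what BBD gives you in a stable family is an equilibrium lamination and holomorphic motion of repelling $J$-cycles, not a global unbranched holomorphic motion $h_\lambda$ of $J_2$; you cannot simply write $h_\lambda(\gamma_i)$. Second, hypothesis \textit{i)} is not used to ``pin down $q(\lambda)$'': it guarantees $|Y|\geq 3$ so that $\mathbb{P}^1\setminus Y$ is hyperbolic, which forces certain holomorphic maps $Z_n\to\mathbb{P}^1\setminus Y$ arising from the monodromy of preimages to be constant; this is how the propagation argument in Step~(3) is closed. Finally, once the pencil is preserved your last paragraph is essentially redundant: a regular polynomial endomorphism of $\mathbb{C}^2$ preserving the pencil through $0$ and inducing $g$ on $L_\infty$ is already $[x:y:z]\mapsto[P(x,y):Q(x,y):c_\lambda z^d]$, so isotriviality is immediate without a second appeal to McMullen.
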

\begin{remark*}\normalfont
Observe that to find such a rational map $g$, it suffices to take a polynomial map with a postcritical repelling point of period $5$.

To the best of our knowledge, this is the first rigidity result in higher dimensions that is not a direct consequence of one-dimensional results.
\end{remark*}

\begin{proof}
The plan of the proof is first to show that the family $(f_\lambda|_{L_\infty})_{\lambda\in Z}$ is constant - up to conjugacy - using McMullen's rigidity theorem~\cite{McMullen-families}. In a second time, we show that for all $\lambda\in Z$, the map $f_\lambda$ is a lift of the rational map $f_\lambda|_{L_\infty}$.

\medskip

We will use several results of Bedford-Jonsson on regular polynomial endomorphisms of $\Cb^2$ obtained in \cite{bedford-jonsson}.

Let $\lambda$ be in $Z$. If $\crit_{f_\lambda}$ denotes the critical set of $f_\lambda$ in $\Pb^2$, we set $C_{\lambda}:=\overline{\crit_{f_\lambda}\setminus L_\infty}$, where $L_\infty$ is the line at infinity in $\Pb^2$. The critical measure of $f_\lambda$ is $\mu_{c,\lambda}:=T_\lambda\wedge [C_\lambda]$, where $T_\lambda$ is the Green current of $f_\lambda$. In $\Cb^2$, $T_\lambda=\widehat{T}|_{\mathbb{P}^2\times\{\lambda\}}$ is equal to the $\ddc$ of the Green function $G_\lambda$ of $f_\lambda$, which is non-negative on $\Cb^2$ and positive precisely outside the set $K_\lambda$ of points of $\Cb^2$ with bounded orbit. Bedford-Jonsson proved in particular that the sum $L(\lambda)$ of the Lyapunov exponents of the equilibrium measure $\mu_\lambda:=T_\lambda\wedge T_\lambda$ verifies
\[L(\lambda)=\log d+\ell(\lambda)+\int G_\lambda\mu_{c,\lambda},\]
where $\ell(\lambda)$ is the Lyapunov exponent associated to $f_{\lambda|L_\infty}.$

As the family is stable, $\lambda\mapsto L(\lambda)$ is a harmonic function on $Z$. Since it is positive, it must be constant. Actually, both maps $\lambda\mapsto\ell(\lambda)$ and $\tilde\ell\colon\lambda\mapsto \int G_\lambda\mu_{c,\lambda}$ are also constant. To see this, first observe that $\ell$ is subharmonic, since it is the Lyapunov exponent of the family $(f_{\lambda|L_\infty})_{\lambda \in Z}$. Moreover, by definition, $\tilde\ell$ is non-negative, and the constancy of $L$ implies that $\ell$ is bounded from above. It follows that $\ell$, which extends as a subharmonic function to an algebraic compactification of $Z$, must be constant.

This has two consequences. First, at the parameter $\lambda_0$ the map $f_{\lambda_0}$ is the lift of $g$ to $\Pb^2$ so $L(\lambda_0)=\log d+\ell(\lambda_0)$, i.e. $\tilde\ell(\lambda_0):=\int G_{\lambda_0}\mu_{c,\lambda_0}=0$. Hence, $\tilde\ell(\lambda)=0$ for all $\lambda\in Z$. In other words, the critical measure $\mu_{c,\lambda}$ is supported in $K_\lambda$. On the other hand, $(f_{\lambda|L_\infty})_{\lambda\in Z}$ is an algebraic stable family of rational maps on $\Pb^1$ so by \cite{McMullen-families} it must be isotrivial since $f_{\lambda_0|L_\infty}$ is not a flexible Latt\`es map. Up to a finite cover of $Z$, we can then perform a family of affine conjugations in order to have for each $\lambda\in Z$
\begin{itemize}
\item $f_{\lambda|L_\infty}=g,$
\item $0\in\Cb^2$ is a fixed point of $f_\lambda$, which is the continuation of the center of the pencil of curves preserved by $f_{\lambda_0}.$
\end{itemize}
Note that there remains one degree of freedom, corresponding to the homothety of center $0$, which will be used later.

We denote by $X'$ the set of preperiodic critical points of $g$ and by $Y'$ its set of postcritical periodic points. The subset $Y\subset Y'$ corresponds to repelling postcritical periodic points and $X\subset X'$ to points eventually mapped into $Y$. We also choose two integers $N\geq1$ and $m\geq1$ such that $g^N(X')\subset Y'$ and $g^m(y)=y$ for each $y\in Y'$. Observe that by i) the set $Y$ has at least $3$ points. Based on this, the proof proceeds in four main steps. Notice that in what follows, we identify $\Pb^1$ with $L_\infty$. 
\begin{itemize}
\item[\textbf{(1)}] For each $\lambda$, each irreducible component of the critical set of $f_\lambda$ containing a point of $X$ has to be preperiodic.
\item[\textbf{(2)}] The periodic irreducible components of the postcritical set of $f_\lambda$ passing through points of $Y$ are lines containing $0.$
 In other words, there exists a set of at least $3$ lines $\mathcal L=\{L_y\, ;\, y\in Y\}$ where each $L_y$ is $f_\lambda$-periodic for all $\lambda\in Z.$
\item[\textbf{(3)}] The pencil of lines $\mathcal P$ passing through $0$ has to be preserved by each $f_\lambda.$
\item[\textbf{(4)}] Up to homothety, there is a unique regular polynomial endomorphism of $\Cb^2$ preserving $\mathcal P$ acting as $g$ on $L_\infty.$
\end{itemize}
Let us now prove these four claims. Note that the delicate one is \textbf{(3)}, and that our proof is strongly inspired by \cite{McMullen-families}, where the difficulties coming from \emph{unlabelled} holomorphic motion are highlighted. In our very special situation, we use the lamination coming from \cite[Theorem 8.8]{bedford-jonsson} to overcome possible monodromy problems.

\medskip

\noindent\textbf{Proof of (1).} Let $x$ be in $X$, i.e, a critical point of $g$ whose image under $g^N$ is a repelling $m$-periodic point $y$. Let $\lambda$ be in $Z$ and let $C$ be an irreducible component of $C_\lambda$ passing through $x$. The point $y$ must be of saddle type for $f_\lambda$, repelling in the direction of $L_\infty$ and super-attracting in the transverse direction. In particular, it admits a local stable manifold $W_{y,loc}^s$. On the other hand, $\mu_{c,\lambda}=T_\lambda\wedge[C_\lambda]$ vanishes near $x$ so $(f^n_{\lambda|C})_{n\geq0}$ is a normal family near $x$ (see e.g. \cite[Theorem 1]{ueda}). The saddle nature of $y$ gives that the only possible limit value of $(f^{N+nm)}_{\lambda|C})_{n\geq0}$ near $x$ is the constant function equal to $y$. Indeed, if $v$ is such a limit value, then $L_\infty$ contains its image, $v(x) = y$, and the sequence $(f_\lambda^{nm} \circ v)_{n \geq 0}$ is also normal. Since $f^m_{\lambda|L_\infty}$ is repelling at $y$, the map $v$ must be constant. Therefore, $(f^{N+nm)}_{\lambda|C})_{n\geq0}$ converges to the constant function equal to $y$ on a neighborhood $V$ of $x$ in $C$. This implies that $f^N_\lambda(V)\cap W^s_{y,loc}$ is a neighborhood of $y$ in $W^s_{y,loc}$ and thus, $f_\lambda^N(C)$ is $m$-periodic.

\medskip

\noindent\textbf{Proof of (2).}  Let $\mathcal L'=\{L_y\,;\, y\in Y'\}$ be the periodic postcritical lines for $f_{\lambda_0}$ in the pencil $\mathcal P$. Observe that $f_{\lambda_0|L_y}^m$ is conjugate to $z^{d^m}$ with a Julia set $S_y$ which is uniformly hyperbolic.

Let $\lambda$ be sufficiently close to $\lambda_0$. Let $y \in Y$ and let $D$ be an irreducible component of the postcritical set of $f_\lambda$ that contains $y$. As previously noted, $D$ is $m$-periodic and locally coincides with the stable manifold of $y$ near this point. In particular, $D$ intersects transversely $L_\infty$ at $y$. Moreover, by \cite[Lemma 6.2]{dinhsibony}, $f_{\lambda|D}^m$ has topological degree $d^m$ and thus, using a normalization, it has a unique measure of maximal entropy $m\log d$.

On the other hand, since $\lambda$ is close to $\lambda_0$, the curve $D$ is close to a union $\cup_{y' \in I} L_{y'}$, with $I \subset Y'$. For each $y'\in I$, the holomorphic motion of $S_{y'}$ gives a hyperbolic set of entropy $m\log d$ for $f_{\lambda|D}^m$ close to $S_{y'}.$ Since these sets are pairwise disjoint, the uniqueness of the measure of maximal entropy implies that $I$ consists of a single point and thus $I=\{y\}$. In particular, the intersection points of $D$ with $L_\infty$ are postcritical periodic points close to $y$. As this set is discrete in $L_\infty$ and independent of $\lambda$, $D\cap L_\infty$ is reduced to $\{y\}$ for $\lambda$ sufficiently close to $\lambda_0$. Furthermore, we have seen that this intersection is transverse. Thus, $D$ has degree $1$.

Finally, as $\lambda$ is close to $\lambda_0$, $0$ is attracting and $D$ intersects its basin of attraction. By invariance of $D$, $0$ is in $D$. 

This proves the result in a Euclidean neighborhood of $\lambda_0$. Since it is a closed property in the Zariski topology, it holds for all $\lambda\in Z$.

\medskip

\noindent\textbf{Proof of (3).} Let $L_1$ denote the line in $\mathcal P$ that contains $y_1\in Y$, the postcritical periodic point of $g$ given by \textit{ii)}. As we have seen, $L_1$ is $m$-periodic for every $f_\lambda$ with $\lambda \in Z$. Since the Julia set of $f^m_{\lambda_0|L_1}$ is contained in the small Julia set of $f_{\lambda_0}$, by \cite[Theorem 1.1]{BBD} the family $(f^m_{\lambda|L_1})_{\lambda\in Z}$ is stable. Indeed, in dynamics in dimension $2$ $J$-stability is equivalent to the fact that $J$-repelling periodic points move holomorphically and remain repelling. In particular, all repelling periodic points of $f^m_{\lambda|L_1}$ move holomorphically and remain repelling. By \cite{McMullen-families}, it has to be isotrivial. Hence, each $f^m_{\lambda|L_1}$ is holomorphically conjugate to $f^m_{\lambda_0|L_1}$, i.e. to $z\mapsto z^{d^m}$. In particular, up to a finite cover of $Z$ and using a family of homotheties of $\Cb^2$, we can assume that $f^m_{\lambda|L_1}$ is independent of $\lambda$.

Fix $\lambda\in Z$ for a moment and denote by $A_\lambda$ the basin of $L_\infty$, i.e. $A_\lambda:=\Pb^2\setminus K_\lambda$. As $\int G_\lambda\mu_{c,\lambda}=0$, by \cite[Theorem 8.8]{bedford-jonsson}, there exists a $f_\lambda$-invariant lamination by holomorphic discs $\{W_{a,\lambda}\,|\, a\in J_g\}$ in $A_\lambda$ parametrized by the Julia set $J_g$ of $g$, such that $W_{a,\lambda}\setminus\{a\}$ is either contained in the critical set of $f_\lambda$ or disjoint from it. Moreover, $W_{a,\lambda}$ is contained in the stable manifold of $a$ for a generic $a\in J_g$. Here, genericity is with respect to the equilibrium measure of $f_{\lambda|L_\infty}=g$. For the point $y_1$ defined above, $W_{y_1,\lambda}$ corresponds to the basin of attraction of $y_1$ for $f^m_{\lambda|L_1}$. As we have seen, this set is independent of $\lambda$ and we denote it by $W_{y_1}$.
 
Let $n\geq1$ and let $a_n\in g^{-n}(y_1)$ be such that $a_n$ does not lie in the critical set of $g^n$ and $a_n\notin Y$. Observe that the set $W_{a_n,\lambda}$ satisfies $f_\lambda^n(W_{a_n,\lambda})=W_{y_1}$ (and thus is contained in the algebraic set $f_\lambda^{-n}(L_1)$) and that $W_{a_n,\lambda}\setminus\{a_n\}$ is disjoint from the critical set of $f^n_\lambda$. Let $w\in W_{y_1}\setminus\{y_1\}$. The set $P_w:=\{z\in W_{a_n,\lambda_0}\,|\, f^n_{\lambda_0}(z)=w\}$ has exactly $d^n$ points. Moreover, if $\gamma$ is a loop in $Z$ and $z\in P_w$ then the fact that $W_{a_n,\lambda}\setminus\{a_n\}$ is disjoint from the critical set of $f^n_\lambda$ ensures that we can follow $z$ along $\gamma$ as a point in $f_\lambda^{-n}(w)\cap W_{a_n,\lambda}$. This gives an action of $\pi_1(Z,\lambda_0)$ on $P_w$ by permutations, whose kernel $H_n$ has finite index ($\leq d^n!$) in $\pi_1(Z,\lambda_0).$ On the finite branched cover $Z_n$ associated to $H_n$, the points in $P_w$ can be followed holomorphically, i.e., there exists a family $(\phi_z)_{z\in P_w}$ of holomorphic maps from $Z_n$ to $\Pb^2$ such that $\phi_z(\lambda_0)=z$ and $f^n_\lambda(\phi_z(\lambda))=w$ for all $z\in P_w$ and $\lambda\in Z_n$.

\smallskip

From this, there are two key observations. First, $W_{a_n,\lambda}$ is disjoint from $L_y$ for $y\in Y$ since $a_n\notin Y$. Hence, if $\pi\colon\Pb^2\setminus\{0\}\to L_\infty$ denotes the linear projection, $\psi_z:=\pi\circ\phi_z$ defines maps from $Z_n$ to $\Pb^1\setminus Y$. The other important observation is that the kernel $H_n$ defined above is independent of the choice of $w\in W_{y_1}\setminus\{y_1\}$. Actually, if $V$ is a small neighborhood of $w$ then the set $P_{w'}$ can be followed holomorphically for $w'\in V$ and the action of $\pi_1(Z,\lambda_0)$ is compatible with this motion. Hence, the kernel is the same for all $w'\in V$ and thus, by connectedness, for all $w'\in W_{y_1}\setminus\{y_1\}.$ Thus, for each $z\in W_{a_n,\lambda_0}$ we can associate $\phi_z\colon Z_n\to\Pb^2\setminus\{L_y\,;\, y\in Y\}$ and $\psi_z:=\pi\circ\phi_z\colon Z_n\to\Pb^1\setminus Y$. As the set of non-constant holomorphic maps from $Z_n$ to $\Pb^1\setminus\{y_1,\ldots,y_k\}$ is finite and since $z\mapsto\psi_z(\lambda)$ is continuous for each $\lambda\in Z_n$, the maps $\psi_z$ are either all constant or all equal. In both cases, the fact that $\phi_z(\lambda)$ converges to $a_n$ when $z\to a_n$ implies that each $\phi_z$ is identically equal to $a_n$. In other words, $W_{a_n,\lambda}$ is contained in $L_{a_n}:=\overline{\pi^{-1}(a_n)}$. Since the set of all possible $a_n$ for all possible $n\geq1$ is dense in $J_g$, each map $f_\lambda$ satisfies $\pi\circ f_\lambda=g\circ\pi$ on $\pi^{-1}(J_g)$. This set is not pluripolar in $\Pb^2\setminus\{0\}$ so $\pi\circ f_\lambda=g\circ\pi$ on $\Pb^2\setminus\{0\}$, i.e. $f_\lambda$ must preserve the pencil of lines $\mathcal P$ defined by $\pi$.

\medskip

\noindent\textbf{Proof of (4).} Let $\lambda\in Z$. Since $f_\lambda$ preserves $\mathcal P$, it must be of the form
\[f_\lambda[x:y:z]=[P(x,y):Q(x,y):R_\lambda(x,y,z)],\]
where $g[x:y]=[P(x,y):Q(x,y)]$. But $f_\lambda$ is also a regular polynomial endomorphism of $\Cb^2$ so $R_\lambda(x,y,z)=c_\lambda z^d$.
\end{proof}

In particular, Corollary \ref{cor-finite} also holds on the good family $(\Pb^2_S,f,\mathcal{O}_{\Pb^2}(1))$ with regular part $\mathcal{V}_d^2$ given by Lemma~\ref{good-family-poly}:
\begin{corollary}\label{cor-finite-poly}
Let $d\geq2$. Let $(m_n)_{n\geq1}$, $(x_n)_{n\geq1}$ and $(\Lambda_n)_{n\geq1}$ be as in Section \ref{sec-finite} with $X=\mathcal{V}_d^2$. Then, there exist $N\geq1$ and a Zariski closed proper subset $\Gamma$ of $\mathcal{V}_d^2$ such that $\Lambda_n$ is finite-to-one on $X_n\setminus\pi_n^{-1}(\Gamma)$ for all $n\geq N$.
\end{corollary}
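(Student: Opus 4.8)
The plan is to deduce this exactly as Corollary~\ref{cor-finite} is deduced from Proposition~\ref{prop-finite} on $\mathscr{M}_d^k$: it suffices to exhibit a conjugacy class $[f_0]\in\mathscr{P}_d^2$ through which no $J_2$-stable algebraic curve passes, and then to apply Proposition~\ref{prop-finite} with $t_1=[f_0]$. The passage from ``$\Lambda_n$ generically finite-to-one for $n$ large'' to ``$\Lambda_n$ finite-to-one on $\mathscr{P}_d^2\setminus\Gamma$ for all $n\geq N$'' is then the same soft argument as in Corollary~\ref{cor-finite}: generic finiteness of a morphism of varieties gives finiteness over a Zariski open subset of the target, and the $\Lambda_n$ carry increasing information as $n$ grows, so one $N$ works.

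To build $[f_0]$, I would first fix a rational map $g$ of $\mathbb{P}^1$ of degree $d$ satisfying all the hypotheses of Theorem~\ref{th-rigidity}. Take $g(z)=z^d+c$ with $c$ a Misiurewicz parameter whose critical orbit is strictly preperiodic and lands on a repelling cycle of period $p\geq3$; such parameters exist in every degree $d\geq2$. Then the postcritical set of $g$ is a finite strictly preperiodic tail together with that repelling $p$-cycle (the landed-upon cycle of a postcritically finite map that avoids the unique critical point $0$ is automatically repelling), so $g$ has exactly $p\geq3$ postcritical repelling periodic points and \textit{i)} holds; and $g$, being a polynomial, has a superattracting fixed point at $\infty$, hence it is not a flexible Latt\`es map. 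For \textit{ii)}, pick $y_1$ in the $p$-cycle and let $m_0\geq1$ be such that $g^{m_0}(0)=y_1$. The normalized counting measures on $g^{-n}(y_1)$ converge to the equilibrium measure $\mu_g$ of $g$, while a point of $g^{-n}(y_1)$ lies in $\mathrm{Crit}(g^n)$ precisely when its orbit meets the critical point $0$, i.e.\ the critical preimages inside $g^{-n}(y_1)$ form $\bigcup_j g^{-j}(0)$ over the finitely many $j\leq n-m_0$ with $j\equiv n-m_0\ (\mathrm{mod}\ p)$; their normalized counting measures converge to $c_0\,\mu_g$ with $c_0=d^{-m_0}/(1-d^{-p})<1$ (using $m_0\geq1$ and $p\geq3$). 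Subtracting, the normalized counting measure on $g^{-n}(y_1)\setminus\mathrm{Crit}(g^n)$ converges to $(1-c_0)\mu_g$, a non-zero multiple of $\mu_g$; in particular $\bigcup_n\big(g^{-n}(y_1)\setminus\mathrm{Crit}(g^n)\big)$ is dense in $J_g=\mathrm{supp}(\mu_g)$, which is \textit{ii)}. Finally let $f_0$ be the lift of $g$ to $\mathbb{C}^2$, a regular polynomial endomorphism of degree $d$ with $f_0|_{L_\infty}=g$, and let $[f_0]$ be its class in $\mathscr{P}_d^2$.

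Now suppose $Z\subset\mathscr{P}_d^2$ is an irreducible $J_2$-stable algebraic curve containing $[f_0]$. Restrict the universal family to $Z$, pull it back along the normalization $\nu\colon\widetilde Z\to Z$, and conjugate it by a fixed element of $\mathrm{Aut}(\mathbb{A}^2)$ so that over a chosen point $\lambda_0\in\widetilde Z$ above $[f_0]$ the fiber is exactly $f_0$. This produces an algebraic family $(\mathbb{P}^2\times\widetilde Z,f,\mathcal{O}_{\mathbb{P}^2}(1))$ of regular polynomial endomorphisms of $\mathbb{C}^2$ which is stable, since $J_2$-stability of $Z$ means the Lyapunov function $L$ is constant on $Z$ and this persists after pulling back along $\nu$. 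Applying Theorem~\ref{th-rigidity} with this $g$ forces the family over $\widetilde Z$ to be isotrivial, so $\lambda\mapsto[f_\lambda]$ is constant on $\widetilde Z$; but that map surjects onto $Z$, forcing $Z$ to be a point, a contradiction. Hence no such $Z$ exists, and Proposition~\ref{prop-finite} applies, as desired.

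The main obstacle is supplying the input to Theorem~\ref{th-rigidity}: exhibiting, in every degree $d\geq2$, a single $g$ verifying \textit{i)} and \textit{ii)} --- in particular the existence of a Misiurewicz parameter of $z^d+c$ landing on a repelling cycle of period at least $3$, and the equidistribution bookkeeping behind \textit{ii)}. The latter is genuinely needed, since the critical preimages of $y_1$ form a \emph{positive} proportion of all preimages and cannot be dismissed as negligible; one must compare the two limiting measures and use the strict inequality $c_0<1$. Everything else --- the reduction to Proposition~\ref{prop-finite}, the passage to the normalization, and the final contradiction --- is routine.
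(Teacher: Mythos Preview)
Your proof is correct and follows essentially the same approach as the paper: apply Proposition~\ref{prop-finite} with $t_1$ the class of the lift of a polynomial $g$ satisfying the hypotheses of Theorem~\ref{th-rigidity}, so that no $J_2$-stable curve passes through $t_1$. The paper's proof is much terser---it simply asserts that a polynomial with a postcritical repelling point of period $5$ works---whereas you carefully verify conditions \textit{i)} and \textit{ii)} via an explicit Misiurewicz parameter and an equidistribution argument; your period bound $p\geq 3$ is sharp for \textit{i)}, and your quantitative check $c_0=d^{-m_0}/(1-d^{-p})<1$ for \textit{ii)} is a detail the paper leaves implicit.
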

\begin{proof}
The proof is the same than Corollary \ref{cor-finite} except that for the map $f_{t_1}$ we take the lift $f_{\lambda_0}$ from Theorem \ref{th-rigidity}. As mentioned after this theorem, it suffices to take for $g$ a polynomial map with a postcritical repelling point of period $5$.
\end{proof}

\section{Blenders and the bifurcation measures}\label{sec-blender}
Our goal here is to prove that an open set $\Omega$ of $\mathscr{M}_d^k$ or $\mathscr{P}_d^2$, satisfying a large set of assumptions (see \S~\ref{sec-ass}), must be contained in the support of the bifurcation measure. More precisely,  if this is not the case, then by Theorem \ref{th-isotrivial} below, $\Omega$ contains in a dense way positive dimensional subvarieties where the eigenvalues of all the periodic points on the small Julia set are constant. This contradicts either Corollary \ref{cor-finite} or Corollary \ref{cor-finite-poly}. Note that, unlike in the rest of the article, the families considered here may be transcendental and are not necessarily closed.

Apart from Theorem \ref{th-isotrivial} which only holds for $k=2$ in the polynomial case, all the proofs in this section are the same for both $\mathscr{M}_d^k$ and for $\mathscr{P}_d^k$. We therefore focus on the case of $\mathscr{M}_d^k$. The only difference for $\mathscr{P}_d^k$ is that the dimensions below, $N_d^k$ and $\mathcal N_d^k$, should be replaced by the dimension of $\mathrm{Poly}_d^k$ (i.e. $k\binom{k+d}{d}$) and the dimension of $\mathscr{P}_d^k$ (i.e. $k\left(\binom{k+d}{d}-(k+1)\right)$) respectively. 

In fact, throughout this section and in Section~\ref{sec-existence}, we do not work directly on $\mathscr{M}_d^k$ or $\mathscr{P}_d^k$. One key reason is that in Section~\ref{sec-existence} we consider degenerations outside the space of endomorphisms of $\Pb^k$. In order to obtain a non-empty open subset in the support of the bifurcation measure on $\mathscr M_d^k$, we provide a non-empty open subset $\Omega\subset \mathrm{End}_d^k$ in the support of the current  $T_{f,\mathrm{Crit}}^{\mathcal N_d^k}$ and use the fact that this current is the pullback of $\mu_{f,\mathrm{Crit}}$ under the canonical projection $\Pi\colon\mathrm{End}_d^k\to\mathscr{M}_d^k$.

\medskip

At the beginning of the section, after introducing some basic notations, we present in \S~\ref{sec-ass} a long list of assumptions that will be required in what follows. Then, in \S~\ref{sec-state} we state the main results of the whole section, Theorem \ref{th-constant} and Theorem \ref{th-isotrivial}, and explain how, combined with Corollary \ref{cor-finite} or Corollary \ref{cor-finite-poly} and Theorem \ref{th-existence}, they imply Theorem  \ref{tm:mu-interior}. In \S~\ref{sec-sketch}, we outline the proof strategy for these two theorems and describe the structure of the remainder of the section.

%
%

\subsection{Notations}\label{sec-not}
If $a>0$ then $\Db_a$ is the disc of center $0$ and radius $a$ in $\Cb$ and we set $\Db:=\Db_1.$

If $A\subset\Cb$ and $B\subset\Cb^{k-1}$ are two connected open subsets then $\Gamma\subset A\times B$ is a \emph{vertical graph} if there exists a holomorphic function $g\colon B\to A$ with $\overline{g(B)}\Subset A$ and such that $\Gamma=\{(g(w),w)\ ;\ w\in B\}$. One way to measure the verticality of a graph is to consider cone fields. As we will only work on $\Cb^k$ where the tangent bundle is trivial, for $\rho>0$ we say that a vertical graph $\Gamma$ as above is \emph{tangent} to the cone field
$$C_\rho:=\left\{(u_1,\ldots,u_k)\in\Cb^k\ ;\ \rho|u_1|\leq\max_{2\leq i\leq k}|u_i|\right\}$$
if the tangent bundle $T\Gamma$ is contained in $\Gamma\times C_\rho$. If $\Gamma=\{(g(w),w)\ ;\ w\in B\}$ as above, then this is equivalent to the fact that the partial derivatives of $g$ are uniformly bounded by $1/\rho$. Observe that the larger $\rho$ is, the more vertical $\Gamma$ becomes. The case $\rho=+\infty$ corresponds to vertical hyperplanes. We say that a map $f$ \emph{contracts} the cone field $C_\rho$ if there exists $\rho'>\rho$ such that the image of $C_\rho$ under the differential of $f$ at each point is contained in $C_{\rho'}.$
%
%
%
%

\subsection{Assumptions}\label{sec-ass}

Let $\Omega$ be a non-empty open subset of $\mathrm{End}_d^k$ or of $\mathrm{Poly}_d^k$ such that each $f\in\Omega$ fulfills the properties described below. Observe that most of these objects (all except $J_k$) are assumed to depend holomorphically on $f\in\Omega$ and our notation reflects this dependence. For example, $p\colon f\mapsto p(f)$ is the holomorphic motion of the saddle point given in Assumption \ref{saddle} and $\Lambda$ corresponds to the holomorphic motion of the hyperbolic set $\Lambda(f)$ from Assumption \ref{hyperbolic}, i.e. $x\in\Lambda$ is a function $f\mapsto x(f)$ given by this holomorphic motion. Another observation is that the most important case is $k=2$. When $k\geq3$, the dynamics on the last $k-2$ coordinates is not very important. However, item iii) in Assumption \ref{10} prevents us from taking product maps. The reader may refer to Figure \ref{fig:1} for an illustration of some of the assumptions.
\begin{enumerate}
\item\label{verti} There exist two disjoint holomorphic discs $U_+,U_-\subset\Cb$ and two constants $R>2$, and $\rho>10$ such that $f$ contracts the cone field $C_\rho$ on $\mathcal U:=\mathcal U_+\cup\mathcal U_-$ where
$$\mathcal U_\pm:=\Db_R\times U_\pm\times\Db^{k-2}.$$
\item\label{hyperbolic} There exist two disjoint holomorphic discs $V_+,V_-\subset\Cb$ such that, if we set
$$\mathcal V_+:=\Db_R\times V_+\times\Db^{k-2},\ \ \mathcal V_-:=\Db_R\times V_-\times\Db^{k-2}\ \text{ and }\ \mathcal V:=\mathcal V_+\cup\mathcal V_-$$
then
\begin{itemize}
\item $\overline V_\pm\subset U_\pm,$
\item $f^2$ contracts the cone field $C_\rho$ on $\mathcal V,$
\item $f^2$ is injective on $\mathcal V_\pm$ and $\overline{\mathcal V}\subset\overline{\mathcal U}\subset f^2(\mathcal V_\pm).$
\end{itemize}
Moreover,
$$\Lambda(f):=\cap_{n\geq0}f^{-2n}(\mathcal V)$$
is a repelling hyperbolic set for $f^2$, contained in $J_k(f).$
\item\label{saddle} $f$ has a non-critical saddle fixed point $p(f)\in \Db\times U_-\times\Db^{k-2}$ with one stable direction and $k-1$ unstable directions. We ask that its unstable manifold contains a vertical graph through $p(f)$ (denoted $W^u_{p(f),loc}$) in $\Db\times V_-\times\Db^{k-2}$ tangent to $C_\rho$. In what follows, $W^s_{p(f),loc}$ stands for a holomorphic disc in the stable manifold where $f$ is conjugate to a contraction. Finally, we assume that $f$ is $C^1$-linearizable near $p(f)$, with a linearization map depending continuously on $f$ in the $C^1$-topology.
\item\label{blender} Each vertical graph in $\Db\times V_+\times\Db^{k-2}$ (resp. $\Db\times V_-\times\Db^{k-2}$)  tangent to $C_\rho$ intersects $\Lambda(f)$. 
\item\label{free} The intersections between $W^u_{p,loc}$ and $\Lambda$ are not persistent in $\Omega$ (i.e. if $x\in\Lambda$ and $f\in\Omega$ satisfy $x(f)\in W^u_{p(f),loc}$ then there exists $g\in\Omega$ close to $f$ such that $x(g)$ is not in $W^u_{p(g),loc}$).
\item\label{repelling} There exists a repelling $2$-periodic point $r(f)\in\Db\times V_-\times\Db^{k-2}$ such that the eigenvalues of $D_{r(f)}f^2$ are all simple with no resonance. In particular, $f^2$ is holomorphically linearizable near $r(f)$ and we assume that the domain of linearization contains $\overline{\mathcal U_-}.$
\item\label{critical} There exists $n_0\geq1$ such that $f^{n_0}(\crit(f))$ has a transverse intersection with $W^s_{p(f),loc}\setminus\{p(f)\}.$
\item\label{homoclinic} There exist $K\in\Nb$ and $\tilde q(f)\in W^u_{p(f),loc}$ which is not a critical point for $f^K$ and such that $q(f):=f^K(\tilde q(f))\neq p(f)$ is a transverse homoclinic intersection in $W^s_{p(f),loc}.$
\item\label{exceptional} The exceptional set of $f$ is disjoint from its small Julia set.
\end{enumerate}
We now introduce one last, slightly more technical assumption. It will serve as the starting point for the induction on the dimension. Let $\chi_{p(f)}$ (resp. $\chi_{r(f)}$) be the eigenvalue of $D_{p(f)}f$ (resp. $D_{r(f)}f^2$) with the smallest modulus.
\begin{enumerate}[resume]
\item\label{10} For every non-empty open subset $\Omega'\subset\Omega$, there exists $f\in\Omega'$, $m\in\Nb$ and $x\in\Lambda$ such that
\begin{itemize}
\item[i)] $f^m(x(f))=r(f),$
\item[ii)] $x(f)\in W^u_{p(f),loc},$
\item[iii)] $D_{x(f)}f^m(T_{x(f)}W^u_{p(f),loc})$ is a ``generic'' hyperplane for $D_{r(f)}f^2$, i.e. contains no eigenvector of $D_{r(f)}f^2,$
\item[iv)] the subgroup $\langle\chi_{p(f)},\chi_{r(f)}\rangle$ of $\Cb^*$ generated by $\chi_{p(f)}$ and $\chi_{r(f)}$ is dense,
\end{itemize}
\end{enumerate}

\bigskip

From a non-technical point of view, the main ingredients to prove that $\Omega\subset\supp(T_{f,\mathrm{Crit}}^{\mathcal N_d^k})$ are Assumptions \ref{hyperbolic} to \ref{blender}. They should be sufficient for the proof. Assumption \ref{blender} says that $\Lambda(f)$ satisfies a blender property and by Assumption \ref{saddle}, there exists a connection between this blender $\Lambda(f)$ and the saddle point $p(f)$. If the critical set has a transverse intersection with the stable manifold of $p(f)$, this gives rise, by the inclination lemma, to infinitely many intersections between the postcritical set and $\Lambda(f)$. Very likely, all these intersections should provide as many independent bifurcations as possible. Most of the remaining assumptions aim to ensure several transversality properties which eventually give the existence of these independent bifurcations. In particular, a transverse intersection between $W^s_{p(f),loc}$ and the postcritical set is given by Assumption \ref{critical}. Observe that, in addition, this assumption also implies that $\Omega$ contains no PCF maps (see the end of the proof of Theorem \ref{tm:mu-interior} or \cite[Corollary 2.5]{le-PCA} for a more precise result).

In the example we construct, all these assumptions are easy to check, except Assumption \ref{10}. This last assumption is the key technical point in proving that the support of the bifurcation measure has non-empty interior. Establishing it on $\Omega$ takes a large part of Section \ref{sec-existence} where we need to consider degenerations outside $\mathrm{End}_d^k.$

\smallskip

In order to give more explanations on this assumption, item iv) will be used to ensure that the postcritical set of $f$ can approximate any leaf of a foliation by hypersurfaces $\mathcal F_{f}$, defined in a neighborhood of $r(f)$ as the vertical fibration associated with the linearization map (i.e., the strong unstable foliation of $r(f)$). Point iii) implies in particular that the strong unstable hyperplane $T_{r(f)}W_{r(f)}^{uu}$ together with $(D_{x(f)}f^{m+i}(T_{x(f)}W^u_{p(f),loc}))_{1\leq i\leq k-1}$ form a basis of hyperplanes. Each of them is actually the tangent space of a dynamical foliation, $\mathcal F_f$ and $(\mathcal G_f^i)_{1\leq i\leq k-1}$ respectively, which thus define local coordinates near $r(f)$. A key point will be that, under suitable conditions labeled as $(\star)$ in what follows, these coordinates provided local conjugacies which turn out to extend to a neighborhood of the small Julia set. The results of Section \ref{sec-rigidity} ensure then that the conjugacies are generically global.

\smallskip

Finally, notice that it would be easier to work with a fixed point in Assumption \ref{repelling}. However, we were not able to obtain the open set $\Omega$ when $d=2$ with this additional constraint.

\subsection{Statements}\label{sec-state}
Here, we assume that $\Omega$ satisfies all the assumptions of \S~\ref{sec-ass}. The purpose of Assumption \ref{10} is to construct families with the following properties.
\begin{definition}\label{def-dag}
A subvariety $M\subset\Omega$ satisfies the condition $(\dag)$ if
\begin{itemize}
\item[\textbf{(1)}] $M$ is connected,
\item[\textbf{(2)}] there exists $x\in\Lambda$ and $m\in\Nb$ such that for all $f\in M$, $x(f)\in W_{p(f),loc}^u$, $f^m(x(f))=r(f)$ and $D_{x(f)}f^m(T_{x(f)}W^u_{p(f),loc})$ is a generic hyperplane for $D_{r(f)}f^2,$
\item[\textbf{(3)}] each intersection point in $W^u_{p(f)}\cap\Lambda(f)$ can be locally followed holomorphically,
\item[\textbf{(4)}] there exists $f_0\in M$ such that the subgroup $\langle\chi_r(f_0),\chi_p(f_0)\rangle$ is dense in $\Cb^*.$
\end{itemize}
\end{definition}
We also consider a stronger condition.
\begin{definition}\label{def-star}
A subvariety $M\subset\Omega$ satisfies the condition $(\star)$ if it is simply connected, fulfills $(\dag)$ and is a stable family in the sense of Berteloot-Bianchi-Dupont.
\end{definition}
The main purpose of this whole section is to show that these conditions combined with the assumptions on $\Omega$ lead to the following two results.
\begin{theorem}\label{th-constant}
If $M\subset\Omega$ satisfies $(\dag)$ then the functions $f\mapsto\chi_{p(f)}$ and $f\mapsto\chi_{r(f)}$ are constant on $M$. In particular, any connected analytic subset $M'\subset M$ also satisfies $(\dag).$
\end{theorem}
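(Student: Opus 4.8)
The plan is to exploit the blender property (Assumption \ref{blender}) together with the dynamical foliations near $r(f)$ to show that if $\chi_{r(f)}$ varied, we could produce a leaf of the linearizing fibration $\mathcal{F}_f$ near $r(f)$ that is approximated by the postcritical set but whose ``multiplier data'' is forced to be locally constant, creating a contradiction. More precisely, suppose $M$ satisfies $(\dag)$. Using point iv) of $(\dag)$ at the base point $f_0$, the subgroup $\langle\chi_{p(f_0)},\chi_{r(f_0)}\rangle$ is dense in $\Cb^*$; this density is what allows the orbit of the intersection point $x(f)\in W^u_{p(f),loc}\cap\Lambda(f)$ (with $f^m(x(f))=r(f)$), iterated under $f^2$ near $r(f)$ and under the contraction/expansion near $p(f)$, to accumulate on every leaf of the vertical fibration $\mathcal{F}_f$ attached to the linearization at $r(f)$. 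The genericity condition in point (2) of $(\dag)$ — that $D_{x(f)}f^m(T_{x(f)}W^u_{p(f),loc})$ is a generic hyperplane for $D_{r(f)}f^2$ — guarantees that this hyperplane, transported by the strong-unstable dynamics, sweeps out a full neighborhood, so one really gets an open family of vertical graphs tangent to $C_\rho$ landing in the blender $\Lambda(f)$.

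The key steps, in order, would be: \textbf{(i)} Fix $f\in M$ and set up the local linearizing coordinates at $r(f)$ so that $f^2$ becomes $\mathrm{diag}(\chi_{r(f)},\ldots)$ near $r(f)$; identify the foliation $\mathcal{F}_f$ by the eigenspace of $\chi_{r(f)}$ and its translates, depending holomorphically on $f\in M$ by Assumption \ref{repelling} and \ref{10}. \textbf{(ii)} Follow the point $x(f)$: by (3) of $(\dag)$ every intersection point of $W^u_{p(f)}\cap\Lambda(f)$ moves holomorphically on $M$, and by (2) its $f^m$-image is $r(f)$; transport $W^u_{p(f),loc}$ forward by $f^m$ to get a hypersurface through $r(f)$ tangent to a generic hyperplane, then iterate by $f^{-2}$ inside the linearizing domain. \textbf{(iii)} Use point iv) of Assumption \ref{10} / $(\dag)$: the density of $\langle\chi_{p(f_0)},\chi_{r(f_0)}\rangle$ combined with the blender property (every vertical graph tangent to $C_\rho$ in $\Db\times V_\pm\times\Db^{k-2}$ meets $\Lambda(f)$, Assumption \ref{blender}) forces an \emph{open} set of leaves of $\mathcal{F}_f$ to persistently intersect $\Lambda(f)$. \textbf{(iv)} Now run the rigidity argument: the multiplier $\chi_{r(f)}$ of the $2$-periodic point $r(f)$ and the multiplier $\chi_{p(f)}$ of $p(f)$ are holomorphic functions on $M$ (indeed algebraic on the Zariski closure); by Assumption \ref{free} the intersections of $W^u_{p,loc}$ with $\Lambda$ are non-persistent, so one can use a normal-families / Montel-type argument on the transported leaves, exactly as in the one-dimensional extension-of-conjugacy arguments of Buff–Epstein \cite{buffepstein} and McMullen \cite{McMullen-families}, to show that the holonomy of $\mathcal{F}_f$ along these robust intersections is a local conjugacy whose linear part at $r(f)$ is multiplication by $\chi_{r(f)}$; if $\chi_{r(f)}$ were non-constant this would contradict either the discreteness of non-constant maps into a hyperbolic target or the non-persistence in Assumption \ref{free}. \textbf{(v)} Conclude that $f\mapsto\chi_{r(f)}$ is constant on $M$, then deduce constancy of $f\mapsto\chi_{p(f)}$ from the homoclinic relation (Assumption \ref{homoclinic}) linking $p(f)$ to $r(f)$ through $\Lambda(f)$, or by the same holonomy argument applied at $p(f)$. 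Finally, since conditions \textbf{(1)}–\textbf{(4)} of $(\dag)$ restrict to any connected analytic $M'\subset M$ (connectedness is hypothesized, (2)–(3) are inherited, and (4) holds at $f_0$ if $f_0\in M'$, or else at any other point once we know the subgroup is generated by the now-constant $\chi_p,\chi_r$ which remain dense), the last sentence follows.

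\textbf{The main obstacle.} The hard part will be step \textbf{(iv)}: showing that the holonomy/conjugacy constructed from the blender intersections is genuinely well-defined and rigid, i.e. converting ``infinitely many, and in fact an open family of, robust intersections between the postcritical set and $\Lambda(f)$'' into ``$\chi_{r(f)}$ is locally constant''. This is where the higher-dimensional analogues of Buff–Epstein and McMullen must be invoked carefully — one must control monodromy (there is no canonical labelling of the intersection points a priori, which is why $(\dag)$ explicitly demands in (3) that they can be followed holomorphically), and one must ensure the limiting object of the renormalized leaves is a non-constant holomorphic map into a hyperbolic complement (the role of Assumption \ref{exceptional} and the $\geq 3$ postcritical repelling points from the rigidity section), so that discreteness forces constancy of the multiplier. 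The density condition iv) is precisely what makes the target ``large enough'' to apply this rigidity, and checking that the genericity of the hyperplane in (2)/iii) survives all the transports is the delicate book-keeping.
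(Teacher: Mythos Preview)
Your plan conflates the proof of Theorem~\ref{th-constant} with that of Theorem~\ref{th-isotrivial}. The Buff--Epstein/McMullen extension-of-conjugacy machinery, holonomy of $\mathcal F_f$ giving a local conjugacy, and ``discreteness of non-constant maps into a hyperbolic target'' are the ingredients of \S\ref{sec-iso} (the construction of the conjugacy for Theorem~\ref{th-isotrivial}) and of the rigidity result Theorem~\ref{th-rigidity}; they play no role in showing that $\chi_p$ and $\chi_r$ are constant. In particular, step~(iv) of your plan does not produce a concrete contradiction: there is no hyperbolic target here, and Assumption~\ref{free} alone cannot force constancy of a multiplier.

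The paper's actual argument has a different two-step structure that your outline misses. \emph{First}, using the families of vertical graphs $\Gamma_{k,l}(f)$ from Definition~\ref{def-gammakl}, the inclination-lemma estimates of Lemmas~\ref{le-Hf} and~\ref{le-coordonees}, and the blender property, one shows (Lemma~\ref{le-normale}) that the family of monomials $\{f\mapsto\chi_{p(f)}^{k_n}\chi_{r(f)}^{l_n}\}$ is normal whenever the sequence converges at $f_0$; extracting a non-zero limit forces a persistent multiplicative relation $\chi_{r(f)}=\zeta\chi_{p(f)}^{-\omega}$ on all of $M$ (Proposition~\ref{prop-relation}). \emph{Second}, and this is the step your plan lacks entirely, one uses the transverse homoclinic intersection $q(f)$ from Assumption~\ref{homoclinic}: applying the stereotyped-motion result (Proposition~\ref{prop-stereo}) to two nearby homoclinic points $b_0(f)$ and $b_n(f)=b_0(f)(1+\beta(f)\chi_{p(f)}^n+o(\chi_{p(f)}^n))$ yields the identity $1+\chi_{p(f)}^n\beta(f)+\delta_n(f)=\xi_n\chi_{p(f)}^{t_n}$ with $\xi_n\to 1$, $t_n\to 0$. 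If $\chi_p$ were non-constant, along a small arc in $M$ the left side would wind roughly $n$ times around $1$ while the right side has bounded argument variation --- a contradiction (Proposition~\ref{prop-constant}). Thus $\chi_p$ is constant first, and $\chi_r$ is constant by the relation; your order in steps~(iv)--(v) is reversed and the winding mechanism is absent.
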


\begin{theorem}\label{th-isotrivial}
Let $M\subset\Omega$ be an analytic subset which satisfies $(\star)$. Let $f_0$ and $f_1$ be in $M$. Then, $f_0$ and $f_1$ are holomorphically conjugate in a neighborhood of their respective small Julia sets.

Furthermore, these conjugacies are compatible with the holomorphic motion of periodic points, i.e., every $n$-periodic point $x(f_0)$ of $J_k(f_0)$ can be followed along $M$ as a $n$-periodic point $x(f)$ in $J_k(f)$ and all eigenvalues of $D_{x(f)}f^n$ are constant as functions of $f$. In particular, the map $f\mapsto\det D_{x(f)}f^n$ is constant on $M.$
\end{theorem}

Anticipating the existence of the open set $\Omega$, established in Theorem \ref{th-existence}, we can conclude the proof of Theorem \ref{tm:mu-interior}.
\begin{proof}[Proof of Theorem \ref{tm:mu-interior}]
We only consider the case of $\mathscr M_d^k$. As we already said, the proof for $\mathscr P_d^2$ is exactly the same except that $\mathcal N_d^k$ has to be replaced by $2\binom{d+2}{d}-6$. Observe that we cannot conclude the proof on $\mathscr P_d^k$ when $k\geq3$ since Corollary \ref{cor-finite-poly} only holds on $\mathscr P_d^2.$

Let $k\geq2$ and $d\geq2$. Let $\Omega$ be the open subset of $\mathrm{End}_d^k$ given by Theorem \ref{th-existence}. Our goal is to show that $\Omega\subset\supp(T_{f,\mathrm{Crit}}^{\mathcal N_d^k})$. To that end, we consider a non-empty connected open subset $\Omega'\subset\Omega$ and we will prove that $\Omega'\cap\supp(T_{f,\mathrm{Crit}}^{\mathcal N_d^k})$ is not empty, proceeding through the following steps.

\bigskip

\noindent\emph{\textbf{Step 1: Reducing $\Omega'$ with respect to multipliers.}} First, fix an arbitrary element $f'\in\Omega'$. If we apply Corollary \ref{cor-finite} to the sequence $(x_n)_{n\geq1}$ of all repelling periodic points of $f'$ in $J_k(f')$ then there exists $N_0\geq1$ such that the corresponding multiplier map $\Lambda_{N_0}$ is generically finite on a branched cover of $\mathscr M_d^k$. As the periodic points $(x_n)_{1\leq n\leq N_0}$ are repelling and in $J_k(f')$, they can be followed holomorphically as repelling points in $J_k(f)$ in a small neighborhood of $f'$ in $\mathrm{End}_d^k$. Since $\Lambda_{N_0}$ is generically finite, the fibers of the corresponding map on a small open subset $\Omega''\subset\Omega'$ close to $f'$ have codimension $\mathcal N_d^k$. Hence, by Theorem \ref{th-isotrivial}, any analytic subset of $\Omega''$ satisfying $(\star)$ must have codimension at least $\mathcal N_d^k.$

\smallskip

Now, by Assumption \ref{10}, there exists $f_0\in\Omega''$, $m\in\Nb$ and $x_1(f_0)\in\Lambda(f_0)$ such that
\begin{itemize}
\item[i)] $f_0^m(x_1(f_0))=r(f_0),$
\item[ii)] $x_1(f_0)\in W^u_{p(f_0),loc},$
\item[iii)] $D_{x_1(f_0)}f_0^m(T_{x_1(f_0)}W^u_{p(f_0),loc})$ is a generic hyperplane for $D_{r(f_0)}f_0^2,$
\item[iv)] the subgroup $\langle\chi_{p(f_0)},\chi_{r(f_0)}\rangle$ of $\Cb^*$ is dense.
\end{itemize}
In particular, $f_0$ belongs to
$$A_1:=\{f\in\Omega''\ ;\ x_1(f)\in W^u_{p(f),loc}\},$$
which is a hypersurface by Assumption \ref{free}.

\bigskip

\noindent\emph{\textbf{Step 2: Move to a smooth point.}} Our goal now is to fix new relations between $\Lambda(f_0)$ and $W^u_{p(f_0)}$ until condition $(\dag)$ holds and Theorem 3.3 can be applied. A minor technical issue arises from the fact that $A_1$ may be non-irreducible at $f_0$, with no control over the number of components. Hence, a relation between $\Lambda(f_0)$ and $W^u_{p(f_0)}$ might persist on some components but not on others, potentially disrupting the properness of the intersections. Although it is possible to prove that such a situation cannot occur, we will, for simplicity, instead pass to a smooth point of $A_1$. More precisely, the parameters $f$ in $A_1$ such that  $\langle\chi_{p(f)},\chi_{r(f)}\rangle$ is dense in $\Cb^*$ is itself dense in any irreducible component of $A_1$ containing $f_0.$ Actually, if $P(f)$ and $R(f)$ denote logarithms of $\chi_{p(f)}$ and $\chi_{r(f)}$ and if we write $R(f)=t(f)P(f)+\theta(f)2i\pi$ with $t(f),\theta(f)\in\Rb$ then this condition on the subgroup is equivalent to the fact that $1,$ $t(f)$ and $\theta(f)$ are independent over $\Qb.$ This holds outside a countable union of real analytic subsets of $A_1.$ Hence, we can take a smooth point $f_1$ of $A_1$ such that this condition is satisfied and such that $f_1$ is close enough to $f_0$ to ensure that item iii) above also holds for $f_1.$ Let $X_1$ be the irreducible component of $A_1$ containing $f_1$ and let $\Omega_1\subset\Omega''$ be a small open neighborhood of $f_1$ such that $A_1\cap\Omega_1=X_1\cap\Omega_1$ and iii) is satisfied on $\Omega_1.$

\bigskip

\noindent\emph{\textbf{Step 3: Building a sequence of relations.}} The set $W^u_{p(f_1)}\cap\Lambda(f_1)$ is infinite and we use it to define a sequence $(f_i)_{1\leq i\leq N}$ in $\Omega''$, a decreasing sequence $(\Omega_{i})_{1\leq i\leq N}$ of open subsets of $\Omega''$, and a decreasing sequence $(X_i)_{1\leq i\leq N}$ of smooth irreducible analytic sets such that $\mathrm{codim}(X_i)=i$ and $f_i\in X_i.$ The construction goes as follows. Assume that $(f_i)_{1\leq i\leq i_0},$ $(\Omega_i)_{1\leq i\leq i_0}$ and $(X_i)_{1\leq i\leq i_0}$ are defined. If all the intersection points in $W^u_{p(f_i)}\cap\Lambda(f_i)$ can be followed holomorphically on $X_{i_0}$ then we set $N:=i_0$ and the construction ends. Otherwise, there exist $n_{i_0+1}\geq0$ and $x_{i_0+1}\in\Lambda$ such that $x_{i_0+1}(f_{i_0})\in f_{i_0}^{n_{i_0+1}}(W^u_{p(f_{i_0}),loc})$ and this relation does not persist on $X_{i_0}$. Here, since $f_{i_0}^{n_{i_0+1}}(W^u_{p(f_{i_0}),loc})$ is not a closed analytic set, we mean that there exists a small neighborhood $\tilde\Omega_{i_0+1}\subset\Omega_{i_0}$ of $f_{i_0}$ such that the set
$$A_{i_0+1}:=\{f\in X_{i_0}\cap\tilde\Omega_{i_0+1}\ ;\ x_{i_0+1}(f)\in f^{n_{i_0+1}}(W^u_{p(f),loc})\},$$
is a closed hypersurface in $X_{i_0}\cap\tilde\Omega_{i_0+1}.$ As in Step 2, the set $A_{i_0+1}$ might be non-irreducible but we can choose a smooth point $f_{i_0+1}$ on it such that $\langle\chi_{p(f_{i_0+1})},\chi_{r(f_{i_0+1})}\rangle$ is dense in $\Cb^*.$ We then choose $X_{i_0+1}$ to be the irreducible component of $A_{i_0+1}$ which contains $f_{i_0+1}$ and take $\Omega_{i_0+1}\subset\tilde\Omega_{i_0+1}$ to be a small enough neighborhood of $f_{i_0+1}$ to have $A_{i_0+1}\cap\Omega_{i_0+1}=X_{i_0+1}\cap\Omega_{i_0+1}.$ Observe that we always have $N\leq\mathcal N_d^k$ and that $X_N$ satisfies $(\dag).$
 
\bigskip
 
\noindent\emph{\textbf{Interlude: From the unstable manifold to the postcritical set.}} Another important observation is that $X_N$ corresponds to $N$ independent intersections between $W^u_{p(f)}$ and $\Lambda(f)$ and, since the $J_k$-repelling periodic points are dense in $\Lambda$ and since by Assumption \ref{critical} some parts of the postcritical set approximate $W^u_{p(f)}$, a small perturbation of $X_N$ gives rise to an analytic set which corresponds to $N$-properly $J_k$-prerepelling parameters.
Since this point is important, we now provide more details. Let $\Omega_N,$ $(x_i)_{1\leq i\leq N}$ and $(n_i)_{1\leq i\leq N}$ be as above with the convention that $n_1=0.$ 
 We now consider the sets
 $$W_N:=\{(f,z_1,\ldots,z_N)\in\Omega_N\times(\Pb^k)^N\ ;\ z_i\in f^{n_i}(W^u_{p(f),loc})\text{ for }1\leq i\leq N\}$$
 and
 $$Y_N:=\{(f,z_1,\ldots,z_N)\in\Omega_N\times(\Pb^k)^N\ ;\ z_i=x_i(f)\text{ for }1\leq i\leq N\}.$$
 What we have proved so far is that the projection of $W_N\cap Y_N$ on $\Omega_N$, which is equal to $X_N\cap\Omega_N$, has codimension $N$. Since the projection of $Y_N$ on $\Omega_N$ is a biholomorphism, we have that $W_N\cap Y_N$ has pure dimension $N_d^k-N$, where $N_d^k:=\dim(\mathrm{End}_d^k)$. On the other hand, $\dim(Y_N)=N_d^k$ and $\dim(W_N)=N_d^k+(k-1)N$ so we have
 $$\dim(W_N\cap Y_N)=\dim(W_N)+\dim(Y_N)-\dim(\Omega_N\times(\Pb^k)^N),$$
which shows that the intersection is proper. As the repelling periodic points are dense in $\Lambda$, the set $Y_N$ is approximated by sets $Y'_{N,n}$ defined in the same way replacing each $x_i$ by repelling periodic points $x'_{i,n}$ converging to $x_i$. Moreover, the inclination lemma and Assumption \ref{critical} also give that $W_N$ is approximated by sets $W'_{N,n}$ defined as $W_N$ but using a local branch of some iterate of the critical set instead of $W_{p(f),loc}^u$. The persistence of proper intersections (see e.g. \cite[\textsection 12.3]{Chirka}) gives that $W'_{N,n}\cap Y'_{N,n}$ is proper when $n$ is large enough, i.e., $W'_{N,n}\cap Y'_{N,n}$ corresponds to $N$-properly $J_k$-prerepelling points in $\Omega_N\times(\Pb^k)^N$. 

\bigskip
 
\noindent\emph{\textbf{Step 4: Finishing the induction.}} Now, we continue the construction and define by induction $(f_i)_{N+1\leq i\leq N'},$ $(\Omega_i)_{N+1\leq i\leq N'}$ and  $(X_i)_{N+1\leq i\leq N'}$ in the following way. Assume the construction has been carried out for $N\leq i\leq i_0$. If the family defined by $X_{i_0}$ is stable then we set $N':=i_0$. Otherwise, there exists a non-persistent Misiurewicz relation on $X_{i_0}$ and we define $A_{i_0+1}$ to be the analytic hypersurface of $X_{i_0}$ where this relation persists. Then, we choose a smooth point $f_{i_0+1}$ on $A_{i_0+1}$ and a small neighborhood $\Omega_{i_0+1}\subset\Omega_{i_0}$ such that $X_{i_0+1}:=\Omega_{i_0+1}\cap A_{i_0+1}$ is smooth, connected and simply connected.

As above, at the end we have
\begin{itemize}
\item $\codim(X_{N'})=N'\leq\mathcal N_d^k,$
\item all the Misiurewicz relations in $X_{N'}$ are persistent, i.e., this family is stable,
\item by Theorem \ref{th-constant} $X_{N'}$ satisfies $(\dag)$ and thus $(\star).$
\end{itemize}
The construction of $\Omega''$ and Theorem \ref{th-isotrivial} then ensure that $N'\geq\mathcal N_d^k$ and thus $N'=\mathcal N_d^k$. On the other hand, exactly as in the interlude above, the points of $X_{N'}$ are approximated by $N'$-properly $J_k$-prerepelling parameters in $\mathrm{End}_d^k\times(\Pb^k)^{N'}$. By Proposition \ref{tm:densitypart1}, $X_{N'}$ is contained in the support of the current $T_{f,\mathrm{Crit}}^{\mathcal N_d^k}$. Moreover, the bifurcation measure $\mu_{f,\mathrm{Crit}}$ of the moduli space $\mathscr{M}_d^k$ satisfies $\Pi^*(\mu_{f,\mathrm{Crit}})=T_{f,\mathrm{Crit}}^{\mathcal{N}_d^k}$, where $\Pi:\mathrm{End}_d^k\to\mathscr{M}_d^k$ is the natural projection, see~\cite{BB1}. This implies $\hat\Omega:=\Pi(\Omega'')\subset\mathrm{supp}(\mu_{f,\mathrm{Crit}})$ and $\hat\Omega$ is open since $\Pi$ is an open map.

\bigskip

\noindent\emph{\textbf{Step 5: Absence of PCF maps.}} Finally, notice that Assumption \ref{critical} gives that the open set $\Omega$ obtained by Theorem \ref{th-existence} possesses no PCF maps. More precisely, let $f\in\Omega$. The inclination lemma applied to the portion of $f^{n_0}(\crit_f)$ that is transverse to $W^s_{p(f),loc}$ given by Assumption \ref{critical} implies that the postcritical set contains infinitely many disjoint (local) hypersurfaces converging to $W^u_{p(f),loc}$. Therefore, the postcritical set is not algebraic.
\end{proof}
\begin{remark}
For the absence of PCF maps in $\Omega$ we could have used a result of Le \cite[Corollary 2.5]{le-PCA}, which states  that a PCF map of $\Pb^k$ cannot have a non-critical saddle periodic point.
\end{remark}
\subsection{Sketch of the proofs of Theorems \ref{th-constant} and \ref{th-isotrivial}}\label{sec-sketch}
Let $M\subset\Omega$ be a subvariety satisfying $(\star)$. The assumptions of \S~\ref{sec-ass} are used in the following way.
\begin{enumerate}
\item\label{pt-1} As the family is stable, by \cite[Theorem C]{bianchi-poly-like} there exists an equilibrium lamination $\mathcal L$ for the family $(f)_{f\in M}$ (see Definition \ref{def-lami}).
\item\label{pt-2} Points \textbf{(2)} and \textbf{(3)} in Definition \ref{def-dag}, which come from Assumption \ref{10}, ensure that $r(f)\in W^u_{p(f)}$ persistently in the family.
\item\label{pt-3} Since the postcritical set intersects transversely $W^s_{p(f)}$ (Assumption \ref{critical}), the inclination lemma and the assumption $\overline{\langle\chi_{p(f_0)},\chi_{r(f_0)}\rangle}=\Cb^*$ imply that the postcritical set of $f_0$ can approximate any leaf of a foliation by hypersurfaces $\mathcal F_{f_0}$, defined in a neighborhood of $r(f_0)$ as its strong unstable foliation.
\item\label{pt-4} The previous point, the stability of $(f)_{f\in M}$ and the blender property from Assumption \ref{blender} imply, at first, a persistent identity $\chi_{r(f)}=\zeta\chi_{p(f)}^{-\omega}$ on $M$, which actually gives, combined with Assumption \ref{homoclinic}, that both these functions are constant.
\item\label{pt-5} The genericity part of \textbf{(2)} in Definition \ref{def-dag} allows us to construct $k-1$ other local foliations, $\mathcal G^1_f,\ldots,\mathcal G^{k-1}_f$ whose leaves are also approximated by the postcritical set and such that $(\mathcal F_f,\mathcal G^1_f,\ldots,\mathcal G^{k-1}_f)$ provides local holomorphic coordinates (depending holomorphically on $f$) near $r(f).$
\item\label{pt-6} The fact that the equilibrium lamination $\mathcal L$ is acritical implies that if $\gamma\in\mathcal L$ then the coordinates of $\gamma(f)$ with respect to $(\mathcal F_f,\mathcal G^1_f,\ldots,\mathcal G^{k-1}_f)$ are independent of $f.$
\item\label{pt-7} Since $\{\gamma(f)\ |\ \gamma\in\mathcal L\}$ is not contained in a proper analytic set, these local coordinates respecting $\mathcal L$ give a local conjugacy near $r(f).$
\item\label{pt-8} This local conjugacy extends to a neighborhood of the small Julia set, forcing the multipliers to be constant in the family.
\end{enumerate}
Now, in \S~\ref{sec-loc} we set notations and basic results for the family $(f)_{f\in\Omega}$. \S~\ref{sec-rela} and \S~\ref{sec-stereo} are devoted to obtain the points \eqref{pt-3} and \eqref{pt-4} which actually imply Theorem \ref{th-constant}. The conjugacy, which corresponds to points \eqref{pt-5} to \eqref{pt-8}, is constructed in \S~\ref{sec-iso}.

\subsection{Semi-local dynamics}\label{sec-loc}
First, we fix an arbitrary $f_0\in \Omega$.  As Theorem \ref{th-isotrivial} is essentially a local result, we will, when necessary, replace $\Omega$ by a smaller connected open neighborhood of $f_0$ in $\Omega.$

Since the fibration of $\Cb^k$ by vertical hypersurfaces will play an important role in what follows, we denote by $\pi \colon \Cb^k \to \Cb$ the projection onto the first coordinate, and we write points as $(z, w) \in \Cb \times \Cb^{k-1}$ to indicate the corresponding coordinates.

Let $r(f)$ be the repelling $2$-periodic point given by Assumption \ref{repelling}. We denote by  $\chi_{r(f)}$ the eigenvalue of $D_{r(f)}f^2$ with the smallest modulus. Since the eigenvalues of $D_{r(f)}f^2$ have no resonance, there exist (see e.g.  \cite{berger-reinke}) a holomorphic family of holomorphic maps $(\phi_f)_{f\in \Omega}$ from $\Cb^k$ to $\Pb^k$ and a holomorphic family $(\tilde L_f)_{f\in\Omega}$ of linear self-maps of $\Cb^{k-1}$ such that $\phi_f(0)=r(f)$ and
$$\phi_f^{-1}\circ f^2\circ\phi_f(z,w)=(\chi_{r(f)}z,\tilde L_f(w))=:L_f(z,w)$$
for every $(z,w)\in\Cb\times\Cb^{k-1}$ near $0$. In particular, the vertical linear fibration of $\Cb^k$ defined by $\pi$ is sent on the strong unstable fibration of $r(f)$ and $\phi_{f|\pi^{-1}(0)}$ provides a parametrization of the strongly unstable manifold of $r(f)$. Moreover, Assumption \ref{repelling} implies that there exists a neighborhood $A$ of $0$ in $\Cb^k$ such that $\phi_{f_0}$ is injective on $A$ with $\overline{\mathcal U_-}\subset\phi_{f_0}(A)$. The cone condition in Assumption \ref{hyperbolic} ensures that there is an open set $\tilde A\subset\Cb^{k-1}$ such that $\{0\}\times \tilde A\subset A$ and $\phi_{f_0}(\{0\}\times \tilde A)$ is a vertical graph passing through $\Db\times V_-\times\Db^{k-2}$, i.e., is a closed analytic subset of $\Db\times V_-\times\Db^{k-2}$ whose closure in $\mathcal U_-$ is disjoint from $(\partial\Db)\times V_-\times\Db^{k-2}.$  As these properties are stable under small perturbations, there exists $\nu>0$ such that, possibly by reducing $\Omega$ and slightly $A,\tilde A$, for each $f\in\Omega$
\begin{itemize}
\item $\phi_f$ is injective on $A$ with $\overline{\mathcal U_-}\subset\phi_f(A),$
\item for all $c\in\Db_\nu$, $\{c\}\times\tilde A\subset A$ and $\phi_f(\{c\}\times\tilde A)$ is a vertical graph passing through $\Db\times V_-\times\Db^{k-2}.$
\end{itemize}
We denote by $\delta_f\colon\phi_f(A)\to A$ the associated inverse map. Observe that the second point above combined with Assumption \ref{blender} implies that each $\phi_f(\{c\}\times\tilde A)$ intersects $\Lambda(f)$. In what follows, it will be convenient to normalize the family $(\phi_f)_{f\in\Omega}$ in the following way. Consider a family $(u_f)_{f\in\Omega}$ of self-maps of $\Cb\times\Cb^{k-1}$ of the form $u_f(z,w)=(\tau(f)z,w)$ where $\tau(f)\in\Cb^*$ depends holomorphically on $f$ and is chosen so that
\begin{itemize}
\item $|\tau(f)|<\nu$ ensuring that $u_f(\Db\times\tilde A)\subset\Db_\nu\times\tilde A,$
\item there exists $r'\in\Lambda$ close enough to $r$ such that for all $f$ in $\Omega$, $\pi\circ u_f^{-1}\circ\delta_f(r'(f))\equiv 1.$
\end{itemize}
Hence, if we replace $\phi_f$ by $\phi_f\circ u_f$, we may assume that $\nu = 1$, and we then have $\pi \circ \delta_f(r'(f)) \equiv 1$. This normalization will only appear in Corollary \ref{cor-constant}, which is nevertheless a key ingredient in \S~\ref{sec-iso}.

We then set $B:=\Db\times\tilde A$ and $D_f:=\phi_f(B)$. The latter possesses a natural foliation $\mathcal F_f$ where $\mathcal F_f(c)=\phi_f(\{c\}\times\tilde A)$ for $c\in\Db$. As we have already seen, each leaf is a vertical graph intersecting $\Lambda(f)$. In particular, $\mathcal F_f(0)$ corresponds to the local strong unstable manifold $W^{uu}_{r(f),loc}$ of $r(f)$. We also denote by $W^{cu}_{r(f),loc}=\phi_f(\Db\times\{0\})$, the local weak unstable manifold of $r(f)$.

\begin{figure}[h]
    \centering
    \includegraphics[width=0.65\textwidth]{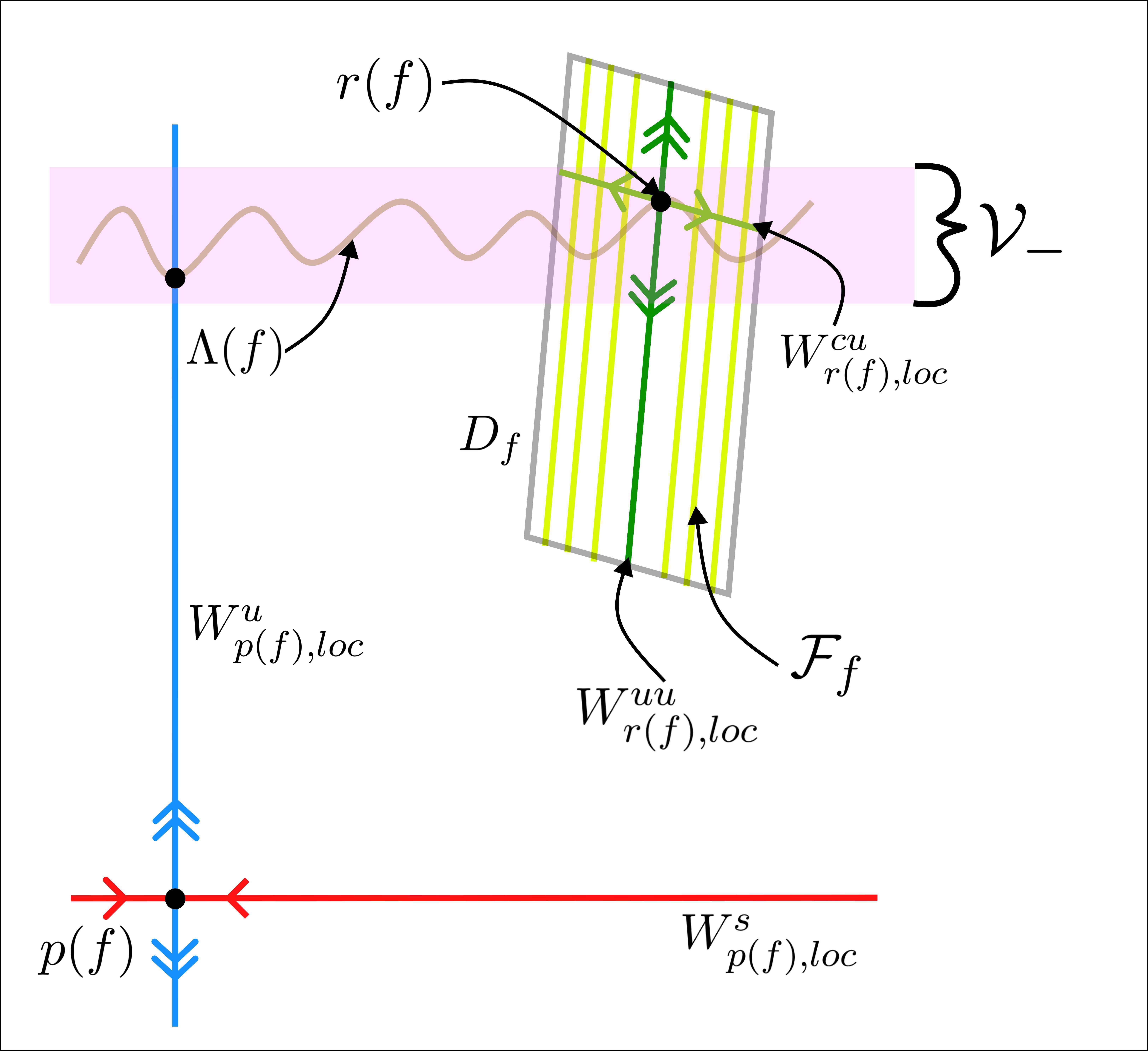}
    \caption{Summary of the notations. The whole picture is contained in $\mathcal U_-.$ In the example obtained in Section \ref{sec-existence}, the hyperbolic set $\Lambda(f)$ is a Cantor set but it intersects any sufficiently vertical graph in $\mathcal V_-.$}
    \label{fig:1}
\end{figure}

Finally, we also introduce some notation for the dynamics near the saddle fixed point $p(f)$ given by Assumption \ref{saddle}. Let $\chi_{p(f)}$ denote the eigenvalue of $D_{p(f)}f$ with the smallest modulus. Using holomorphic conjugacies separately on the stable and unstable directions, we first choose holomorphic local coordinates $v_f\colon V_f\to\Db^k$  on a neighborhood $V_f$ of $p(f)$ such that 
\begin{itemize}
\item $v_f^{-1}(\Db\times\{0\})$ is contained in the stable manifold of $p(f)$ and $v_f\circ f\circ v_f^{-1}(z,0)=(\chi_{p(f)}z,0),$
\item $v_f^{-1}(\{0\}\times\Db^{k-1})$ is contained in the unstable manifold of $p(f)$ and, whenever it is defined, $v_f\circ f\circ v_f^{-1}(0,w)=(0,A_f(w))$ where $A_f$ is an expanding square matrix of size $k-1.$
\end{itemize}
In what follows, the local stable manifold of $p(f)$ will be defined as $W^s_{p(f),loc}:=v_f^{-1}(\Db\times\{0\})$. For the local unstable manifold $W_{p(f),loc}^u$ of $p(f)$, we take the vertical graph in $\Db\times V_-\times\Db^{k-2}$ given by Assumption \ref{saddle}.

Moreover, this assumption also implies that there exists a $C^1$-family $(\theta_f)_{f\in M}$ of local $C^1$-diffeomorphisms such that $\theta_f\circ f\circ\theta_f^{-1}$ is the linear map $K_f(z,w)=(\chi_{p(f)}z,A_f(w))$. Observe that we can assume the domain of definition of $\theta_f$ contains $V_f$ and that $D_0(
\theta_f\circ v_f^{-1})=\id.$

\vspace{0.2cm}
\noindent\textbf{Consequences of the inclination lemma.} We will make extensive use of the inclination lemma for families of hypersurfaces transverse to $W^s_{p(f),loc}$ parametrized by a subset $M$ of $\Omega$. We will gradually strengthen the assumptions on $M$ until reaching condition $(\dag)$ in \S~\ref{sec-stereo} and $(\star)$ in \S~\ref{sec-iso}. For now, we just assume that $M$ is a connected analytic subset of $\Omega.$
\begin{definition}
We say that $\Gamma=(\Gamma(f))_{f\in M}$ is a family of polydiscs intersecting transversely $W^s_{p(f),loc}$ at $b(f)$ if
\begin{itemize}
\item each $\Gamma(f)$ is biholomorphic to $\Db^{k-1}$ and $f\mapsto\Gamma(f)$ is holomorphic,
\item each $\Gamma(f)$ intersects $W^s_{p(f),loc}$ in a unique point and this intersection is transverse,
\item the image by $v_f$ of this intersection point with $\Gamma(f)$ is $(b(f),0)\in\Db\times\{0\}.$
\end{itemize}
\end{definition}
From now on, we also assume that there exists $x\in\Lambda$ and $m\in\Nb$ such that for all $f\in M$, $x(f)\in W_{p(f),loc}^u$, $f^m(x(f))=r(f)$ and $D_{x(f)}f^m(T_{x(f)}W^u_{p(f),loc})$ is a generic hyperplane for $D_{r(f)}f^2$. By increasing $m$ if necessary, we can assume that $f^m$ maps biholomorphically a neighborhood of $x(f)$ in $W_{p(f),loc}^u$ to a vertical graph $W_m(f)$ in $D_f$, $C^1$-close to $W^{uu}_{r(f),loc}$ and thus tangent to $C_\rho.$

Let $\Gamma$ be a family of polydiscs intersecting transversely $W^s_{p(f),loc}$. By the inclination lemma, there exists $j_0\geq0$ such that for all $f\in M$ and all $j\geq j_0$, $f^{j}(\Gamma(f))$ contains a subset which is $C^1$-close to $W^u_{p(f),loc}$. In particular, $f^{j+m}(\Gamma(f))$ contains a subset $\Gamma_j(f)$ which is a vertical graph in $D_f$, tangent to $C_\rho$ and $C^1$-close to $W_m(f)$. From this, using again the inclination lemma but near $r(f)$, we can construct families of vertical graphs which turns out to be key objects to prove that $(\dag)$ implies a persistent resonance between $\chi_{p(f)}$ and $\chi_{r(f)}.$

\begin{definition}\label{def-gammakl}
Let $\Gamma$  and $j_0$ be as above. Let $f\in M$, $l_0\geq0$ and $j\geq j_0$. Let $c_j(f)$ denote the point of intersection between $\Gamma_j(f)$ and $W^{cu}_{r(f),loc}$. If for all $0\leq l\leq l_0$, $|\pi\circ\delta_f(f^{2l}(c_j(f)))|<1/2$ then we define $\Gamma_{j,l}(f)$ inductively by setting
\begin{itemize}
\item $\Gamma_{j,l}(f)$ is the vertical graph in $f^2(\Gamma_{j,l-1}(f))\cap D_f$ which contains $f^{2l}(c_j(f)).$
\end{itemize}
In this situation, we say that $\Gamma_{j,l}(f)$ is well-defined for all $l\leq l_0.$
\end{definition}

\begin{remark}\normalfont
\textbf{(1)} Observe that the injectivity in Assumption \ref{hyperbolic} implies that there is no ambiguity in the definition of $\Gamma_{j,l}(f).$\\
\textbf{(2)} A priori, it could happen that $\Gamma_{j,l}(f)$ is well defined for some $f=f_1$ and not for $f=f_2$, even if $|\pi\circ\delta_{f_1}(f_1^{2l}(c_j(f_1)))|$ is much smaller than $1/2$. However, we will see in Lemma \ref{le-normale} that under condition $(\dag)$, this doesn't happen, and thus $(\Gamma_{j,l}(f))_{f\in M}$ define holomorphic families of vertical graphs.
\end{remark}

Since $D_{x(f)}f^m(T_{x(f)}W^u_{p(f),loc})$ is a generic hyperplane for $D_{r(f)}f^2$, in particular $W^{cu}_{r(f),loc}$ is transverse to $W_m(f)$. Hence, by the inclination lemma there exist an integer $a>m$ and a holomorphic injective map $h_f\colon\Db\to V_f$ (where $V_f$ is the neighborhood of $p(f)$ defined above) such that
\begin{itemize}
\item $\Delta_f:=h_f(\Db)$ is transverse to $W^u_{p(f),loc},$
\item $\Delta_f$ is a graph above $W^s_{p(f),loc}$, more precisely the projection on the first coordinate of $v_f\circ h_f$ is the identity,
\item $f^a_{|\Delta_f}$ is injective and $f^a(\Delta_f)$ is a neighborhood of $r(f)$ in $W^{cu}_{r(f),loc}.$
\end{itemize}
We define $H_f\colon\Db\to\Db$ by $H_f=\pi\circ\delta_f\circ f^a\circ h_f$, which can be seen as a transition map between parametrizations of $\Delta_f$ and $W^{cu}_{r(f),loc}$ respectively. Observe that $H_f$ is injective with $H_f(0)=0$. Hence, there exists $\tilde\alpha(f)\neq0$, which depends holomorphically on $f$, such that
$$H_f(s)=\tilde\alpha(f)s+o(s),$$
where $o(s)$ is uniform in $f$. 

\begin{remark}\label{rk-homoclinic}\normalfont
Observe that a similar construction can be done where $W^{cu}_{r(f),loc}$ is replaced by a holomorphic disc $\Sigma_f$ transverse to $f^n(W^u_{p(f),loc})$ as long as the point in $W^u_{p(f),loc}$ sent to $\Sigma_f\cap f^n(W^u_{p(f),loc})$ is not critical for $f^n$. We will use such construction in Proposition \ref{prop-constant} for the homoclinic intersection given by Assumption \ref{homoclinic}, i.e., $\Sigma_f$ will be an open subset of $W^s_{p(f),loc}.$
\end{remark}

\begin{figure}[h]
    \centering
    \includegraphics[width=0.65\textwidth]{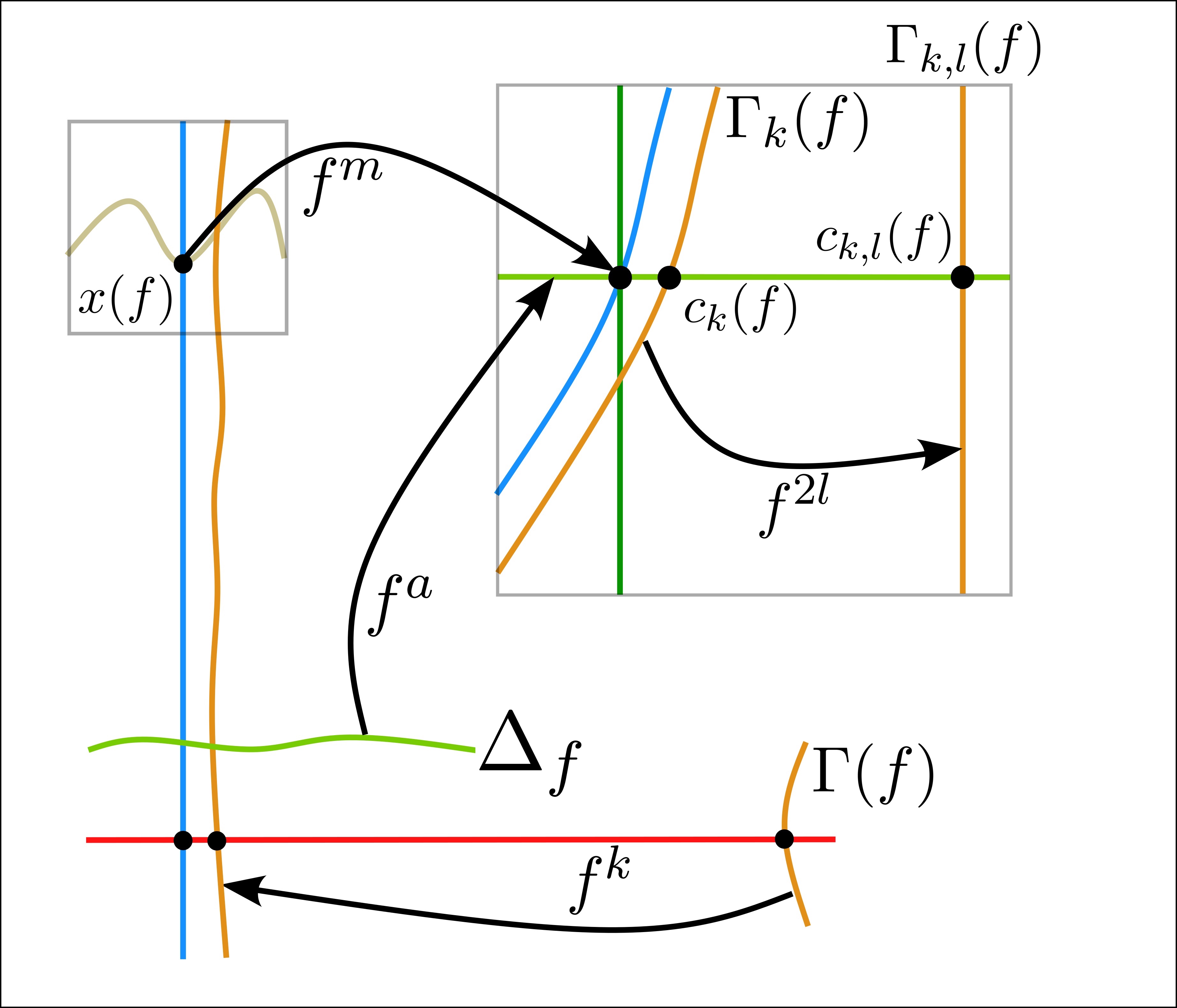}
    \caption{Definition of $\Gamma_{j,l}(f)$ where $x(f)\in W^u_{p(f),loc}$ is a preimage of $r(f).$ The integers $m$ and $a$ are constant but $j$ and $l$ can be large. The next two lemmas show that $c_j(f)$ and $c_{j,l}(f)$ are essentially equal to $\chi_{p}(f)^j$ and $\chi_p(f)^j\chi_r(f)^l$ respectively in the coordinates on $W^{cu}_{r(f),loc}$ given by $\phi_f.$}
    \label{fig:2}
\end{figure}

The two following lemmas can be seen as consequences of the inclination lemma or linearization results. Their purpose is to show that the vertical graphs $\Gamma_{j,l}(f)$ are close to leaves $\mathcal F_f(c_{j,l}(f))$ of the strong unstable foliation of $r(f)$, where $c_{j,l}(f)$ is essentially equal to $\chi_p(f)^j\chi_r(f)^l$. The first lemma focus on $\Gamma_j(f)$. It should be possible to prove it using distorsion estimates. Instead, we use $C^1$-linearization and this part of Assumption \ref{saddle} only appears here.
\begin{lemma}\label{le-Hf}
There exists a holomorphic function $\beta\colon M\to\Cb^*$ with the following property. Let $(\Gamma(f))_{f\in M}$ be a family of polydiscs intersecting transversely $W^s_{p(f),loc}$ at $b(f)$. For each $n\geq 0$ large enough there exists a holomorphic function $s_n\colon M\to\Cb$ such that for each $f\in M$ 
\begin{itemize}
\item $\Delta_f\cap f^{n}(\Gamma(f))=h_f(s_n(f)),$
\item $s_n(f)=\beta(f)b(f)\chi_{p(f)}^{n}+o(\chi_{p(f)}^n).$
\end{itemize}
In other words, for $j\geq0$ large enough, $\Gamma_j(f)\cap W^{cu}_{r(f),loc}\in\mathcal F_f(H_f(s_{j+m-a}(f)))$ with
$$H_f(s_{j+m-a}(f))=\alpha(f)b(f)\chi^j_{p(f)}+u_j(f),$$
with $\alpha(f):=\chi_{p(f)}^{m-a}\tilde\alpha(f)\beta(f)$ and such that $u_j(f)/\chi_{p(f)}^j$ converges to $0$, locally uniformly on $M.$
\end{lemma}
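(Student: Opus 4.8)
The plan is to track a point of the transverse disc $\Gamma(f)$ under forward iteration by $f$, using the $C^1$-linearization $\theta_f$ near $p(f)$ and the biholomorphic normal form $\phi_f$ near $r(f)$. First I would work in the linearizing coordinates $\theta_f$ for $p(f)$: by Assumption \ref{critical}'s transversality setup and Assumption \ref{saddle}, after finitely many iterates the disc $\Gamma(f)$ lands $C^1$-close to $W^s_{p(f),loc}$ near $p(f)$, and since the intersection with $W^s_{p(f),loc}$ occurs at the point whose $v_f$-image is $(b(f),0)$, and since $D_0(\theta_f\circ v_f^{-1})=\mathrm{id}$, the image of this intersection point in the $\theta_f$-coordinates is $(b(f)+o(b(f)),0)$ to first order. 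The inclination lemma (applied to the $C^1$-linearized map $K_f(x,y)=(\chi_{p(f)}x,A_f(y))$) then says that $f^n(\Gamma(f))$, for $n$ large, contains a graph over a neighborhood of $p(f)$ in the unstable direction which, when read in the $\theta_f$-coordinates, is $C^1$-close to $\{\chi_{p(f)}^n b(f)\}\times(\text{expanding orbit})$; the crucial point is that its "horizontal position'' (the first coordinate) is $\chi_{p(f)}^n b(f)(1+o(1))$, with the error uniform in $f\in M$ because the $C^1$-linearization depends continuously on $f$ in the $C^1$-topology and $M$ is relatively compact (or we shrink $M$). Converting back to the $v_f$-coordinates, where the stable direction parametrization is the true holomorphic coordinate, introduces only a holomorphic cocycle factor; this produces the holomorphic function $\beta\colon M\to\Cb^*$ (the derivative at $0$ of the change of coordinates between the $C^1$-linearization and the holomorphic local stable coordinate, composed with whatever holomorphic distortion accumulates along the finitely many initial iterates that are independent of $n$). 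So the first bullet holds with $\Delta_f\cap f^n(\Gamma(f))=h_f(s_n(f))$ — recall $\Delta_f=h_f(\Db)$ is a graph over $W^s_{p(f),loc}$ with $\pi_1\circ v_f\circ h_f=\mathrm{id}$, so the intersection with the incoming piece of $f^n(\Gamma(f))$ that accumulates on $W^u_{p(f),loc}$ is a single point $h_f(s_n(f))$ with $s_n(f)=\beta(f)b(f)\chi_{p(f)}^n+o(\chi_{p(f)}^n)$, and $s_n$ is holomorphic in $f$ because all the maps involved ($\theta_f$ aside, which enters only through estimates, not through the holomorphic structure) are holomorphic.

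Next I would push this forward through the transition to $r(f)$. By the construction of $H_f=\pi\circ\delta_f\circ f^a\circ h_f$ just before the lemma, with $H_f(s)=\tilde\alpha(f)s+o(s)$ uniformly in $f$, we get that $f^{a}$ maps $\Delta_f$ into $W^{cu}_{r(f),loc}$, and the graph $f^{k+m}(\Gamma(f))$ — which is $f^{a}$ applied to $f^{k+m-a}(\Gamma(f))$ — meets $W^{cu}_{r(f),loc}$ at the point of $W^{cu}_{r(f),loc}$ whose $\pi\circ\delta_f$-coordinate is $H_f(s_{k+m-a}(f))$. (Here I am using that $\Gamma_k(f)\subset f^{k+m}(\Gamma(f))$ is the vertical graph in $D_f$ near $W_m(f)$ obtained from the inclination lemma near $p(f)$, so its intersection with $W^{cu}_{r(f),loc}$ lies on the leaf $\mathcal F_f(c)$ of the strong unstable foliation with $c$ equal to that coordinate.) Plugging in $s_{k+m-a}(f)=\beta(f)b(f)\chi_{p(f)}^{k+m-a}+o(\chi_{p(f)}^{k+m-a})$ and $H_f(s)=\tilde\alpha(f)s+o(s)$ gives
\[
H_f(s_{k+m-a}(f))=\tilde\alpha(f)\beta(f)b(f)\chi_{p(f)}^{k+m-a}+o(\chi_{p(f)}^k)=\alpha(f)b(f)\chi_{p(f)}^k+u_k(f),
\]
with $\alpha(f):=\chi_{p(f)}^{m-a}\tilde\alpha(f)\beta(f)$ and $u_k(f)/\chi_{p(f)}^k\to 0$. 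The local uniformity of the convergence of $u_k(f)/\chi_{p(f)}^k$ on $M$ follows from the uniformity of all the $o(\cdot)$ terms: the $o(\chi_{p(f)}^n)$ in $s_n$ is uniform (from the continuity of the $C^1$-linearization), and the $o(s)$ in $H_f$ is stated to be uniform in $f$; combining and noting $|\chi_{p(f)}|<1$ is bounded away from $0$ and $1$ on a relatively compact $M$, we are done.

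The main obstacle I anticipate is making the $C^1$-linearization estimate genuinely \emph{uniform} and \emph{quantitative} enough to produce a clean leading term $\beta(f)b(f)\chi_{p(f)}^n$ with an honest $o(\chi_{p(f)}^n)$ error — in particular, reconciling the merely-$C^1$ (not holomorphic) conjugacy $\theta_f$ with the requirement that $s_n$ be holomorphic in $f$ and that the error be controlled as $n\to\infty$. The resolution is that holomorphy of $s_n$ comes for free from the holomorphic dependence of $\Delta_f$, $h_f$ and $f$ on $f$ (the intersection point of two holomorphic families of analytic sets depends holomorphically on parameters), so the $C^1$-linearization is used \emph{only} to extract the asymptotic $\chi_{p(f)}^n$-scaling and the uniformity of the remainder — this is exactly the point the remark before the lemma ("this part of Assumption \ref{saddle} only appears here'') is flagging. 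A secondary, more bookkeeping-level obstacle is keeping track of the constant shift $m-a$ in the exponent and absorbing the finitely-many-iterates distortion and the $v_f$-versus-$\theta_f$ coordinate change into the single factor $\beta(f)$, which I would handle by defining $\beta$ precisely as the composite derivative at the origin and checking it is nowhere zero (it is a product of derivatives of biholomorphisms and of the $C^1$-conjugacy, the latter having nonzero derivative at $p(f)$ by $D_0(\theta_f\circ v_f^{-1})=\mathrm{id}$).
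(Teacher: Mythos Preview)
Your plan follows the paper's approach closely: use the $C^1$-linearization $\theta_f$ to extract the $\chi_{p(f)}^n$-scaling, and rely on the holomorphic dependence of $\Delta_f$, $h_f$, $f$ to get holomorphy of $s_n$. You have also correctly isolated the main tension (a $C^1$ conjugacy feeding into a holomorphic conclusion). Two points, however, are not merely bookkeeping and are where the actual work lies.

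First, the claim that the leading coefficient is a \emph{complex} number needs justification. You write that the first $\theta_f$-coordinate of the intersection point is ``$(b(f)+o(b(f)),0)$ to first order'' using $D_0(\theta_f\circ v_f^{-1})=\mathrm{id}$; but $b(f)$ is not small, so this Taylor estimate says nothing. What one actually needs is that the $C^1$-germ $F_f(s):=\pi\circ v_f\circ\theta_f^{-1}\circ\tilde\pi_0\circ\theta_f\circ h_f(s)$ (projection onto the stable direction, read back in the holomorphic stable coordinate) has $\mathbb{C}$-linear differential at $0$, i.e.\ $F_f(s)=\gamma(f)s+o(s)$ with $\gamma(f)\in\mathbb{C}^*$. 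Since $\theta_f$ is only $C^1$, a priori $D_0F_f$ is merely $\mathbb{R}$-linear, and without $\mathbb{C}$-linearity one cannot pull $\chi_{p(f)}^n$ out of $(D_0F_f)^{-1}(\chi_{p(f)}^n b(f))$. The paper handles this by approximating the $\theta_f$-projection $\theta_f^{-1}\circ\tilde\pi_0\circ\theta_f$ along $W^u_{p(f),loc}$ by the \emph{holomorphic} projections $\pi_n:=f^n\circ v_f^{-1}\circ\tilde\pi_0\circ v_f\circ f^{-n}$ and showing $D\pi_n\to D(\theta_f^{-1}\circ\tilde\pi_0\circ\theta_f)$ there; the limit of $\mathbb{C}$-linear maps is $\mathbb{C}$-linear. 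Once this is in hand, the exact identities $\theta_f^{-1}K_f^n\theta_f=f^n$ and $v_f\circ f\circ v_f^{-1}(x,0)=(\chi_{p(f)}x,0)$ give $F_f(s_n(f))=\chi_{p(f)}^n b(f)+o(\chi_{p(f)}^n)$ with no spurious factor, and inversion through $F_f$ yields $s_n(f)=\beta(f)b(f)\chi_{p(f)}^n+o(\chi_{p(f)}^n)$ with $\beta(f)=\gamma(f)^{-1}$.

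Second, the holomorphy of $\beta$ in $f$ does not follow from your description of $\beta$ as ``the composite derivative at the origin'': that derivative involves $\theta_f$, which depends only continuously on $f$. The correct argument (which the paper gives in one line) is that $s_n(f)/\bigl(b(f)\chi_{p(f)}^n\bigr)$ is holomorphic in $f$ for each $n$ and converges locally uniformly to $\beta(f)$ by the asymptotic just established; hence $\beta$ is holomorphic. Your second paragraph (pushing through $H_f$ to get $\alpha(f)=\chi_{p(f)}^{m-a}\tilde\alpha(f)\beta(f)$ and the locally uniform $u_k/\chi_{p(f)}^k\to 0$) is then exactly right.
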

\begin{proof}
As $\theta_f$ is well defined on $V_f$, we can define a $C^1$-germ of $(\Cb,0)$ by
$$F_f(s):=\pi\circ v_f\circ\theta_f^{-1}\circ\tilde\pi_0\circ\theta_f\circ h_f(s),$$
where $\pi\colon\Cb^k\to\Cb$ is the first projection as above and $\tilde\pi_0\colon\Cb^k\to\Cb^k$ is defined by $\tilde\pi_0(z,w)=(z,0)$. The key point in the proof is that the differential of $F_f$ at $0$ is $\Cb$-linear. The maps $h_f$ and $\pi\circ v_f$ are holomorphic so we focus on $\theta_f^{-1}\circ\tilde\pi_0\circ\theta_f$ which corresponds to the projection in the unstable direction given by the linearization $\theta_f$. This maps is a priori not holomorphic but there exists a sequence $(U_n)_{n\geq0}$ of open neighborhoods of $W^u_{p(f),loc}\cap V_f$ such that
$$\pi_n := f^n\circ v_f^{-1}\circ\tilde\pi_0\circ v_f\circ f^{-n}$$
is defined on $U_n$. We claim that, for $u\in W^u_{p(f),loc}\cap V_f$, we have
$$\pi_n(u)=0=\theta_f^{-1}\circ\tilde\pi_0\circ\theta_f(u)\ \text{ and }\ D_u\pi_n\xrightarrow[n\to\infty]{}D_u(\theta_f^{-1}\circ\tilde\pi_0\circ\theta_f).$$
Indeed, the equality is obvious and for the convergence, if we set $\psi:=\theta_f\circ v_f^{-1}$ and $F:=v_f\circ f\circ v_f^{-1}$ then
$$\theta_f\circ\pi_n\circ\theta_f^{-1}=\psi\circ F^n\circ\tilde\pi_0\circ F^{-n}\circ\psi^{-1}=K_f^n\circ\psi\circ\tilde\pi_0\circ\psi^{-1}\circ K_f^{-n}.$$
It follows that, if $y$ satisfies $\theta_f(u)=(0,y)$ then, using that $D_0\psi=\id$ and writing $D_{(0,A^{-n}_f(y))}\psi^{-1}=\id+E_n$, we have
$$D_{(0,y)}(\theta_f\circ\pi_n\circ\theta_f^{-1})=K_f^n\circ\tilde\pi_0\circ(\id+E_n)\circ K_f^{-n}=K_f^n\circ\tilde\pi_0\circ K_f^{-n}+K_f^n\circ\tilde\pi_0\circ E_n\circ K_f^{-n}.$$
$K_f$ commutes with $\tilde\pi_0$ thus the first term is equal to $\tilde\pi_0$. The second one converges to $0$ since $\|K_f^{-n}\|\simeq\chi^{-n}_{p(f)}$, $\|K^n_f\circ\tilde\pi_0\|\simeq\chi_{p(f)}^n$ and $\|E_n\|$ converges to $0.$

This gives that $\theta_f^{-1}\circ\tilde\pi_0\circ\theta_f$ is $\Cb$-differentiable on $W^u_{p(f),loc}$ and so at $h_f(0)$. Hence, there exists $\gamma(f)\in\Cb$, which is non-zero since $\Delta_f$ and $W^s_{p(f),loc}$ are transverse to $W^u_{p(f),loc}$, such that $F_f(s)=\gamma(f)s+o(s)$. 

On the other hand, let $(\Gamma(f))_{f\in M}$ be a family of polydiscs intersecting transversely $W^s_{p(f),loc}$ at $b(f)$. For each $n\in\Nb$ large enough, there exists $s_n(f)\in\Db$ which depends holomorphically on $f\in M$ and such that $h_f(s_n(f))\in f^n(\Gamma(f))\cap\Delta_f$.   The set $\tilde\Gamma(f):=\theta_f(\Gamma(f))$ is locally a vertical graph $\{(g_f(y),y)\}$ where $g_f\colon(\Cb^{k-1},0)\to\Cb$ is a $C^1$-germ. Its image $\tilde\Gamma^n(f)$ by $K_f^n$ is given locally by $\{(g_f^n(y),y)\}$ where $g_f^n(y)=\chi_{p(f)}^ng_f(A^{-n}_f(y))$. Hence, since $A^{-n}_f$ contracts at an exponential speed, we have $g_f^n(y)=\chi_{p(f)}^ng_f(0)+o(\chi_{p(f)}^n)$ where the error term is uniform in $f$. Moreover, there exists $y_n(f)$ such that $\theta_f^{-1}(g_f^n(y_n(f)),y_n(f))=h_f(s_n(f))$. Therefore, the definitions of $v_f$ and $K_f$ give
\begin{align*}
F_f(s_n(f)) & =\pi\circ v_f\circ\theta_f^{-1}\circ\tilde\pi_0(g_f^n(y_n(f)),y_n(f)))\\
&=\pi\circ v_f\circ\theta_f^{-1}(g_f^n(y_n(f)),0)=\pi\circ v_f\circ\theta_f^{-1}(\chi_{p(f)}^ng_f(0),0)+o(\chi_{p(f)}^n)\\
&=\pi\circ v_f\circ\theta_f^{-1}(K_f^n(\theta_f(v_f^{-1}(b(f),0))))+o(\chi_{p(f)}^n)\\
&=\pi\circ v_f\circ f^n(v_f^{-1}(b(f),0))))+o(\chi_{p(f)}^n)\\
&=\chi_{p(f)}^nb(f)+o(\chi_{p(f)}^n).
\end{align*}
Hence, $s_n(f)=\beta(f)b(f)\chi_{p(f)}^{n}+o(\chi_{p(f)}^n)$ where $\beta(f):=\gamma(f)^{-1}$. To conclude, the sequence $s_n(f)/(b(f)\chi_{p(f)}^n)$ depends holomorphically on $f$ and this sequence converge locally uniformly to $\beta(f)$ which is then also holomorphic.
\end{proof}
The next lemma can be seen as a consequence of the inclination lemma in the presence of a dominated splitting. It can be proved using the linearization near $r(f)$ and the proof is left to the reader.
\begin{lemma}\label{le-coordonees}
Let $(\Gamma(f))_{f\in M}$ be a family of polydiscs  intersecting transversely $W^s_{p(f),loc}$ at $b(f)$. There exists a sequence $(\epsilon_n)_{n\geq0}$ converging to $0$ with the following property. If $f\in M$, $j\geq n$ and $l\geq n$ are such that $\Gamma_{j,l}(f)$ is well-defined then
$$d(\Gamma_{j,l}(f),\mathcal F_f(c_{j,l}(f)))\leq\epsilon_n,$$
where $c_{j,l}(f)=\alpha(f)b(f)\chi_{p(f)}^j\chi_{r(f)}^l.$
\end{lemma}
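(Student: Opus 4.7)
The plan is to work in the linearization coordinates $\phi_f$ provided by Assumption \ref{repelling}, in which $f^2$ becomes the linear map $L_f(x,y)=(\chi_{r(f)}x,\tilde L_f y)$ on $\Cb\times\Cb^{k-1}$ and the foliation $\mathcal F_f$ of $D_f$ becomes the trivial vertical foliation $\{x=\mathrm{const}\}$ on $B=\Db\times\tilde A$. Since $\phi_f$ has uniformly bounded first derivatives on $B$ over compacts of $M$, estimating $d(\Gamma_{k,l}(f),\mathcal F_f(c_{k,l}(f)))$ reduces to estimating, in these coordinates, the horizontal deviation of $\phi_f^{-1}(\Gamma_{k,l}(f))$ from the vertical hyperplane $\{x=c_{k,l}(f)\}$.

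First I would translate the setup: $\phi_f^{-1}(\Gamma_k(f))\cap B$ is a vertical graph $\{x=g_0(y):y\in\tilde A\}$; the tangency of $\Gamma_k(f)$ to $C_\rho$ translates, via the bounded Jacobian of $\phi_f$, into a uniform bound $\|Dg_0\|\le C$ depending only on $\Omega$; and by Lemma~\ref{le-Hf}, $g_0(0)=c_{k,0}(f):=\alpha(f)b(f)\chi_{p(f)}^k+u_k(f)$ with $u_k(f)/\chi_{p(f)}^k\to 0$ locally uniformly on $M$. The inductive definition of $\Gamma_{k,l}(f)$, together with the injectivity of $f^2$ on $\mathcal V_-$ from Assumption \ref{hyperbolic}, identifies $\phi_f^{-1}(\Gamma_{k,l}(f))$ with the restriction to $B$ of $L_f^l(\{x=g_0(y)\})$, which is the vertical graph $\{x=g_l(y)\}$ with $g_l(y)=\chi_{r(f)}^l g_0(\tilde L_f^{-l}y)$; in particular $g_l(0)=\chi_{r(f)}^l c_{k,0}(f)$ is the point $\phi_f^{-1}(f^{2l}(c_k(f)))$.

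The heart of the argument is the oscillation estimate
\[
|g_l(y)-g_l(0)|=|\chi_{r(f)}|^l\,|g_0(\tilde L_f^{-l}y)-g_0(0)|\le C\,|\chi_{r(f)}|^l\,\|\tilde L_f^{-l}\|\cdot\|y\|.
\]
Since $\chi_{r(f)}$ is the smallest-modulus eigenvalue of $D_{r(f)}f^2$ and the spectrum is simple with no resonance, one may assume, after possibly shrinking $\Omega$, the strict dominated-splitting inequality $|\chi_{r(f)}|<|\mu_j(f)|$ for every other eigenvalue $\mu_j(f)$ of $\tilde L_f$; then $|\chi_{r(f)}|^l\|\tilde L_f^{-l}\|=\max_j|\chi_{r(f)}/\mu_j(f)|^l$ decays exponentially, uniformly on compacts of $M$. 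This yields a sequence $\eta_l\to 0$ with $|g_l(y)-g_l(0)|\le\eta_l$ on $\tilde A$, i.e. $\phi_f^{-1}(\Gamma_{k,l}(f))$ lies within horizontal distance $\eta_l$ of the vertical hyperplane $\{x=\chi_{r(f)}^l c_{k,0}(f)\}$.

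To conclude, the discrepancy between the two centers is $g_l(0)-c_{k,l}(f)=\chi_{r(f)}^l u_k(f)$. The well-definedness of $\Gamma_{k,l}(f)$ forces $|\chi_{r(f)}^l c_{k,0}(f)|<1/2$, hence $|\chi_{r(f)}^l\chi_{p(f)}^k|$ is bounded locally uniformly on $M$, while $|u_k(f)/\chi_{p(f)}^k|\to 0$; so $|\chi_{r(f)}^l u_k(f)|\to 0$ as $k\to\infty$, uniformly in admissible $l$. Choosing $\epsilon_n$ to dominate both $\eta_n$ and the supremum of $|\chi_{r(f)}^l u_k(f)|$ over admissible $k,l\ge n$ (times the constant controlling $\|D\phi_f\|$ on $B$) gives the desired sequence. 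The only subtle point is the strict separation of $|\chi_{r(f)}|$ from the remaining eigenvalue moduli that drives the exponential decay in the oscillation estimate; this is where I would either extract a consequence from the no-resonance hypothesis in Assumption \ref{repelling} or impose a mild additional genericity on $\Omega$.
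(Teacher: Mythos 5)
Your argument is correct and is exactly the linearization argument the paper has in mind (the text explicitly leaves this proof to the reader as a consequence of the linearization near $r(f)$): conjugate by $\phi_f$, write $\Gamma_{k,l}(f)$ as the graph of $g_l(y)=\chi_{r(f)}^l g_0(\tilde L_f^{-l}y)$, and split the error into the oscillation term and the center discrepancy $\chi_{r(f)}^l u_k(f)$, each controlled as you do. The one point to fix is your closing hedge: the strict separation $|\chi_{r(f)}|<|\mu_j(f)|$ does \emph{not} follow from the no-resonance condition of Assumption~\ref{repelling} (two eigenvalues of equal modulus is not a resonance in the multi-index sense used there), but it does follow, with no extra genericity, from the contraction of the cone field $C_\rho$ by $f^2$ on $\mathcal V_-\ni r(f)$ in Assumption~\ref{hyperbolic}, which forces a dominated splitting at $r(f)$ with the $(k-1)$-dimensional strong unstable direction $\phi_f(\{0\}\times\tilde A)$ inside the cone and the weak eigenvalue $\chi_{r(f)}$ transverse to it; this is also what guarantees $|\chi_{r(f)}|^l\|\tilde L_f^{-l}\|\to0$ locally uniformly on $M$.
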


\subsection{Strong relations between the multipliers}\label{sec-rela}
From now on, we consider a subvariety $M\subset\Omega$ which satisfies \textbf{(1)}, \textbf{(2)} and  \textbf{(3)} in Definition \ref{def-dag}. Once again, we fix $f_0\in M.$

\begin{lemma}\label{le-normale}
Let $(j_n)_{n\geq0}$ and $(l_n)_{n\geq0}$ be two increasing sequences such that $(\chi_{p(f_0)}^{j_n}\chi_{r(f_0)}^{l_n})_{n\geq0}$ is a sequence in $\Db$ which converges. Then $\{f\mapsto\chi_{p(f)}^{j_n}\chi_{r(f)}^{l_n}\}_{n\geq 0}$ is a normal family in a neighborhood $M_0\subset M$ of $f_0.$
\end{lemma}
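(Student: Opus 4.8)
The plan is to reduce the statement to local uniform boundedness and then to obtain the bound from the blender mechanism. Set $g_n(f):=\chi_{p(f)}^{k_n}\chi_{r(f)}^{l_n}$. On a simply connected neighborhood of $f_0$ in $M$ choose holomorphic branches $P:=\log\chi_{p}$ and $R:=\log\chi_{r}$; since $p(f)$ is a saddle with a single stable direction and $r(f)$ is repelling, $\mathrm{Re}\,P<0<\mathrm{Re}\,R$ there, so $\log|g_n|=k_n\,\mathrm{Re}\,P+l_n\,\mathrm{Re}\,R$ is pluriharmonic; the hypothesis gives that $\{\log|g_n(f_0)|\}_n$ is bounded above and, since each $\chi_{p(f_0)}^{k_n}\chi_{r(f_0)}^{l_n}$ lies in $\Db$, the ratio bound $l_n/k_n<\sigma:=-\mathrm{Re}\,P(f_0)/\mathrm{Re}\,R(f_0)$. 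By Montel's theorem it suffices to bound $\{g_n\}$ uniformly in $n$ on a fixed neighborhood of $f_0$.

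The geometric input is the construction of \S\ref{sec-loc} run against Assumption \ref{blender}. Fix a family $\Gamma=(\Gamma(f))$ of polydiscs meeting $W^s_{p(f),loc}$ transversely at $b(f)$, with $b(f_0)\neq 0$ but $|b(f_0)|$ as small as we wish (legitimate: the transverse polydisc can be placed anywhere along $W^s_{p(f),loc}$ and $M$ may be shrunk), which rescales $\alpha(f)$ in Lemma \ref{le-Hf}. For $k$ large, uniformly in a neighborhood of $f_0$ since $|\chi_p|<1$ there, Lemma \ref{le-Hf} makes $\Gamma_k(f)$ a $C_\rho$-vertical graph in $D_f$ with leaf parameter $\alpha(f)b(f)\chi_{p(f)}^k+u_k(f)$, where $u_k(f)/\chi_{p(f)}^k\to 0$ locally uniformly, and by Lemma \ref{le-coordonees}, wherever $\Gamma_{k,l}(f)$ is well defined it is $\epsilon$-close to $\mathcal F_f(c_{k,l}(f))$ with $c_{k,l}(f)=\alpha(f)b(f)\chi_{p(f)}^k\chi_{r(f)}^l$. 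Taking $(k,l)=(k_n,l_n)$: choosing $b(f_0)$ so that $|\alpha(f_0)b(f_0)|\sup_n|g_n(f_0)|<\tfrac14$, and using that $l\mapsto|c_{k_n,l}(f_0)|$ is increasing (because $|\chi_r|>1$), the orbit of $c_{k_n}(f_0)$ stays well inside the linearization domain of $r(f_0)$ for all $l\le l_n$; hence for large $n$ the graph $\Gamma_{k_n,l_n}(f_0)$ is a $C_\rho$-graph in $D_{f_0}$ and, by Assumption \ref{blender}, meets the hyperbolic set $\Lambda(f_0)$. Since $\Lambda(f)$ moves holomorphically over all of $\Omega$ (it is the maximal invariant set of $f^2$ in the cone-invariant region $\mathcal V$) and is contained in the fixed bounded region $\mathcal V\subset\overline{\mathcal U_-}$, the intersection point $z_n(f):=\Gamma_{k_n,l_n}(f)\cap\Lambda(f)$ — carried by the holonomy of $\Lambda$ and by condition \textbf{(3)} of $(\dag)$ — extends to a bounded holomorphic section over a neighborhood of $f_0$, and its leaf coordinate $\pi\circ\delta_f(z_n(f))$ is bounded (it takes values in $\pi(A)$) and, on the open set where $\Gamma_{k_n,l_n}(f)$ stays well defined, agrees with $c_{k_n,l_n}(f)=\alpha(f)b(f)g_n(f)$ up to an error $O(\epsilon)$. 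This bounds $g_n$ uniformly in $n$ on that set, and at the first escape step one gets the uniform bound $|g_n|\le|\chi_r|/(2|\alpha b|)$ on its boundary.

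The crux — and the step I expect to be the main obstacle — is to promote this to a bound on a neighborhood of $f_0$ whose size does not depend on $n$; equivalently, to rule out that the set on which $\Gamma_{k_n,l_n}(f)$ remains well defined (on which the orbit of $c_{k_n}(f)$ does not leave the linearization domain of $r(f)$ before step $l_n$) shrinks to $\{f_0\}$ as $n\to\infty$. There is a genuine circularity here: ``$g_n$ bounded near $f_0$'' and ``$\Gamma_{k_n,l_n}(f)$ well defined near $f_0$'' are essentially equivalent, so the formal linearization estimates of \S\ref{sec-loc} do not suffice by themselves, and one must use the robustness encoded in the assumptions — the blender property of $\Lambda(f)$ throughout $\Omega$, the holomorphic motion of $\Lambda(f)$ over the whole parameter set, and the holomorphic following of intersections from $(\dag)$ — to push the pointwise control at $f_0$ given by the convergence hypothesis to control on a neighborhood of fixed size. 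Concretely, this amounts to forcing the pluriharmonic function $\mathrm{Re}\,P+\sigma\,\mathrm{Re}\,R$, which vanishes at $f_0$, to be $\le 0$ near $f_0$, hence $\equiv 0$ there by the maximum principle — a modulus relation between $\chi_{p(f)}$ and $\chi_{r(f)}$. Granting it, the proof closes at once: for $f$ in that neighborhood and $n$ large, $\log|g_n(f)|=k_n\big(\mathrm{Re}\,P(f)+\tfrac{l_n}{k_n}\mathrm{Re}\,R(f)\big)=-k_n\big(\sigma-\tfrac{l_n}{k_n}\big)\mathrm{Re}\,R(f)<0$ because $l_n/k_n<\sigma$ and $\mathrm{Re}\,R>0$, so $|g_n|<1$; the finitely many remaining small $n$ are bounded trivially, and normality follows from Montel.
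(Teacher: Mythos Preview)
You have correctly identified the architecture of the argument and the real difficulty: obtaining a neighborhood $M_0$ of $f_0$ whose size is independent of $n$ on which $\Gamma_{k_n,l_n}(f)$ is well-defined. But your proposed resolution is circular. You try to close the argument by forcing the pluriharmonic function $\mathrm{Re}\,P+\sigma\,\mathrm{Re}\,R$ to vanish via the maximum principle; this modulus relation is precisely the content of Proposition~\ref{prop-relation}, which in the paper is \emph{deduced from} Lemma~\ref{le-normale}, not used to prove it. So as written your argument assumes what the lemma is meant to establish.

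The step you are missing is how the paper actually breaks the circularity, and it does so in the opposite order from what you attempt. First, the fixed neighborhood $M_0$ is obtained \emph{before} any reference to the graphs $\Gamma_{k_n,l_n}$: since $\Lambda(f)$ is a compact hyperbolic set moving holomorphically over $\Omega$, the family $\{f\mapsto x(f)\}_{x\in\Lambda}$ is automatically normal (equicontinuous), so there is a single $M_0$ such that $|\pi\circ\delta_{f_0}(x(f_0))|<\tfrac{1}{20}$ implies $|\pi\circ\delta_f(x(f))|<\tfrac{1}{10}$ for all $f\in M_0$ and all $x\in\Lambda$. Second, one takes $\Gamma(f)\subset W^u_{p(f)}$ using the homoclinic intersection of Assumption~\ref{homoclinic}; this is essential, not incidental, because then each $\Gamma_{k_n,l_n}(f)$ is a local branch of $W^u_{p(f)}$, and the blender intersection $x_n(f_0)\in\Gamma_{k_n,l_n}(f_0)\cap\Lambda(f_0)$ is an intersection of $W^u_{p(f_0)}$ with $\Lambda(f_0)$. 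Condition~\textbf{(3)} of $(\dag)$ then says this intersection persists holomorphically on all of $M$, and persistence of proper intersections forces the continuation $x_n(f)\in\Lambda(f)$ to remain on the same branch $\Gamma_{k_n,l_n}(f)$, which is therefore well-defined on $M_0$. At this point the normality of $\{f\mapsto\pi\circ\delta_f(x_n(f))/(\alpha(f)b(f))\}_n$ is immediate from step one, and Lemmas~\ref{le-Hf}--\ref{le-coordonees} transfer it to $\{g_n\}$. You invoke both the holomorphic motion and condition~\textbf{(3)} in passing, but you never extract the uniform $M_0$ from the former, and you never specify $\Gamma\subset W^u_p$ so that the latter applies.
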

\begin{proof}
A preliminary observation is that since $\Lambda$ moves with respect to a holomorphic motion, the family of functions on $M$, $\{f\to x(f)\}_{x\in\Lambda}$ is a normal family. In particular, there exists a neighborhood $M_0\subset M$ of $f_0$ such that if $x(f_0)$ is in $D_{f_0}$ with $|\pi\circ\delta_{f_0}(x(f_0))|<\frac{1}{20}$ then for all $f\in M_0$, $x(f)\in D_f$ with $|\pi\circ\delta_{f}(x(f))|<\frac{1}{10}.$

By Assumption \ref{homoclinic}, there exists a family of polydiscs $\Gamma$ intersecting transversely  $W^s_{p(f),loc}$ at some $b(f)\neq0$ and such that $\Gamma(f)\subset W^u_{p(f)}$. By exchanging each $\Gamma(f)$ by an appropriate subset of $f^N(\Gamma(f))$, we can assume that, for all $f\in M_0$, $|\alpha(f)b(f)|<\frac{1}{20}.$

By Lemma \ref{le-Hf}, the first coordinate (with respect to $\delta_{f_0}$) of $c_{j_n}(f_0):=\Gamma_{j_n}(f_0)\cap W_{r(f_0),loc}^{cu}$ is $H_{f_0}(s_{j_n+m-a}(f_0))=\alpha(f_0)b(f_0)\chi^{j_n}_{p(f_0)}+u_{j_n}(f_0)$. Hence, 
$$\pi\circ\delta_{f_0}(f_0^{2l}(c_{j_n}(f_0)))=\chi_{r(f_0)}^{l}\left(\alpha(f_0)b(f_0)\chi^{j_n}_{p(f_0)}+u_{j_n}(f_0)\right).$$
Since $(\chi_{p(f_0)}^{j_n}\chi_{r(f_0)}^{l_n})_{n\geq0}$ converges to some $\chi(f_0)\in\Db$ and since $u_{j_n}(f_0)/\chi^{j_n}_{p(f_0)}$ converges to $0$, the sets $\Gamma_{j_n,l_n}(f_0)$ are well-defined for $n\geq n_0$ for some $n_0$ large enough.

By Assumption \ref{blender}, for $n\geq n_0$ there exists a point $x_n(f_0)\in\Lambda(f_0)$ which belongs to $\Gamma_{j_n,l_n}(f_0)$. On the other hand, by Lemma \ref{le-coordonees}, the sequence of analytic sets $(\Gamma_{j_n,l_n}(f_0))_{n\geq n_0}$ converges to $\mathcal F_{f_0}(\alpha(f_0)b(f_0)\chi(f_0))$. Since $|\alpha(f_0)b(f_0)|<\frac{1}{20}$, this implies that $|\pi\circ\delta_{f_0}(x_n(f_0))|<\frac{1}{20}$ for $n\geq n_1$ large enough and thus $|\pi\circ\delta_{f}(x_n(f))|<\frac{1}{10}$ for all $f\in M_0.$

As $M$ satisfies $\textbf{(3)}$ in Definition \ref{def-dag}, the persistence of proper intersections (see e.g. \cite[\textsection 12.3]{Chirka}) implies that the continuation $x_n(f)$ of $x_n(f_0)$ in $\Lambda(f)$ also lies on $\Gamma_{j_n,l_n}(f)$, which is thus well-defined. As observed above, all these functions $\{f\mapsto x_n(f)\}_{n\geq n_1}$ form a normal family. Hence, the same holds for the family
$$\left\{f\mapsto \frac{\pi\circ\delta_f(x_n(f))}{b(f)\alpha(f)}\right\}_{n\geq n_1}.$$
The result follows since, by Lemma \ref{le-Hf} and Lemma \ref{le-coordonees}, these functions are, locally on $M_0$, arbitrarily close to $\{f\mapsto\chi_{p(f)}^{j_n}\chi_{r(f)}^{l_n}\}_{n\geq 0}.$
\end{proof}

\begin{proposition}\label{prop-relation}
 There exists $\zeta\in\Sb^1$ and $\omega\in\Rb^*_+$ such that for all $f\in M$
$$\chi_{r(f)}=\zeta\chi_{p(f)}^{-\omega}.$$
\end{proposition}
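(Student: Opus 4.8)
The plan is to exploit the blender property together with the normality statement of Lemma \ref{le-normale} to show that the family of holomorphic functions $\{f\mapsto \chi_{p(f)}^{k}\chi_{r(f)}^{l}\}$, restricted to the multi-indices $(k,l)$ for which the value at $f_0$ is small, is not only normal but in fact carries enough information to force a multiplicative relation. Concretely, I would first fix $f_0\in M$ and write $P(f):=\log\chi_{p(f)}$ and $R(f):=\log\chi_{r(f)}$ as holomorphic branches on a simply-connected neighborhood $M_0\subset M$ of $f_0$ (possible since $\chi_{p},\chi_r$ are non-vanishing holomorphic functions by Assumptions \ref{saddle} and \ref{repelling}); the claim $\chi_{r(f)}=\zeta\chi_{p(f)}^{-\omega}$ is equivalent to $R(f)+\omega P(f)$ being constant (equal to $\log\zeta$) on $M_0$, with $\omega\in\Rb_+^*$. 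So it suffices to prove $d(R+\omega P)=0$ on $M_0$ for some positive real $\omega$, after which analytic continuation along $M$ (which is connected) gives the global statement, and the fact that $|\chi_{p(f)}|<1<|\chi_{r(f)}|$ forces the constant $\omega$ to be positive and $\zeta$ to have modulus $1$.

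The heart of the argument is the following mechanism. Choose two increasing sequences $(k_n),(l_n)$ with $k_nP(f_0)+l_nR(f_0)$ converging to some value in the left half-plane (i.e. $\chi_{p(f_0)}^{k_n}\chi_{r(f_0)}^{l_n}\to\chi(f_0)\in\Db$), which is possible as soon as the additive subgroup $\Zb P(f_0)+\Zb R(f_0)+2i\pi\Zb$ is not discrete — and here I would invoke point \textbf{(4)} of Definition \ref{def-dag}, i.e. the density of $\langle\chi_{p(f_0)},\chi_{r(f_0)}\rangle$, at the base point $f_0$. By Lemma \ref{le-normale} the functions $g_n:f\mapsto\chi_{p(f)}^{k_n}\chi_{r(f)}^{l_n}$ form a normal family on $M_0$; extract a subsequence converging locally uniformly to a holomorphic $g_\infty$ with $g_\infty(f_0)=\chi(f_0)$. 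Now I would use the blender: by Assumption \ref{blender} and the analysis of $\Gamma_{k_n,l_n}(f)$ in Lemmas \ref{le-Hf} and \ref{le-coordonees}, for each large $n$ there is a point $x_n(f)\in\Lambda(f)$ lying on $\Gamma_{k_n,l_n}(f)$, moving holomorphically with $f\in M_0$ by point \textbf{(3)}, whose $\delta_f$-coordinate $\pi\circ\delta_f(x_n(f))$ is, up to the error terms controlled in those lemmas and the factor $\alpha(f)b(f)$, equal to $g_n(f)$. Taking limits, $g_\infty(f)=\lim \pi\circ\delta_f(x_n(f))/(\alpha(f)b(f))$ is the $\delta_f$-coordinate of a point of $\Lambda(f)$ — more precisely it parametrizes a leaf $\mathcal F_f(\alpha(f)b(f)g_\infty(f))$ of the strong unstable foliation that meets $\Lambda(f)$. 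The key rigidity input is that $\Lambda(f)$ is a hyperbolic Cantor set moving by a holomorphic motion, so the $\delta_f$-coordinates of its points, which are locally constant in $f$ along the motion, constrain $g_\infty$: since $x_n(f_0)$ is a specific point of the Cantor set $\Lambda(f_0)$ and its continuation $x_n(f)$ has $\delta_f$-coordinate independent of the particular realization, the limit function $g_\infty$ must itself take values in (the closure of) a fixed countable set, hence be constant; thus $g_\infty(f)\equiv\chi(f_0)$.

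The conclusion is then extracted by varying the sequences. From $g_n(f)\to\chi(f_0)$ locally uniformly and $g_n(f)=\exp(k_nP(f)+l_nR(f))$, taking logarithms (on $M_0$, after restricting so the values stay in a slit plane) gives $k_nP(f)+l_nR(f)\to\log\chi(f_0)$ locally uniformly, hence $k_n\,dP+l_n\,dR\to 0$ as holomorphic $1$-forms on $M_0$. Dividing by $l_n$ and passing to a subsequence with $k_n/l_n\to\omega\in[0,+\infty]$, we get $\omega\,dP+dR=0$ (with $\omega$ finite and positive because $|\chi_p|<1$, $|\chi_r|>1$ rule out $\omega\in\{0,\infty\}$ — otherwise one of $dP$, $dR$ alone would vanish, forcing $\chi_p$ or $\chi_r$ constant, which is fine and even simpler but then the relation holds trivially with the roles adjusted). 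Therefore $R+\omega P$ is constant on $M_0$, i.e. $\chi_{r(f)}=\zeta\chi_{p(f)}^{-\omega}$ there, and $|\zeta|=1$ follows by taking moduli together with the normalization that $\chi_r,\chi_p$ have the expected moduli on the blender. The main obstacle I anticipate is the middle step — making rigorous the claim that the limit $g_\infty$ is forced to be constant: one must argue carefully that the holomorphic motion of the Cantor set $\Lambda(f)$, composed with $\delta_f$, produces functions whose values along the motion are genuinely rigid (locally constant), and that the blender intersections $x_n(f)\in\Gamma_{k_n,l_n}(f)\cap\Lambda(f)$ really do converge, after the coordinate change, to $g_\infty$ rather than merely being $\epsilon_n$-close, which is where Lemma \ref{le-coordonees}'s uniform error bound and the persistence of proper intersections (Chirka) must be combined cleanly. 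Everything else is bookkeeping with the inclination-lemma estimates already established.
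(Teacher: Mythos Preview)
Your overall architecture is right — use Lemma~\ref{le-normale} to get a normal family $\{f\mapsto\chi_{p(f)}^{k_n}\chi_{r(f)}^{l_n}\}$, pass to a limit, and extract the relation $R+\omega P=\mathrm{const}$. But the middle step, where you argue that the limit $g_\infty$ is \emph{constant}, is both unnecessary and unjustified as written.

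The gap is in your claim that ``the $\delta_f$-coordinates of [points of $\Lambda(f)$], which are locally constant in $f$ along the motion, constrain $g_\infty$.'' The holomorphic motion of $\Lambda$ tells you $f\mapsto x_\omega(f)$ is holomorphic, but $f\mapsto\pi\circ\delta_f(x_\omega(f))$ is simply a holomorphic function of $f$; there is no reason at this stage for it to be locally constant. That constancy is precisely the content of Corollary~\ref{cor-constant}, which is proved \emph{after} Proposition~\ref{prop-relation} (and uses it, via Proposition~\ref{prop-constant}). So invoking it here is circular. Your fallback — that $g_\infty$ takes values in a fixed totally disconnected set and is hence constant by the open mapping theorem — fails for the same reason: the relevant Cantor set in the $\delta_f$-coordinate moves with $f$, so there is no single fixed set.

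The paper's fix is much simpler: you do not need $g_\infty$ to be constant, only bounded and nonvanishing near $f_0$, which normality plus the choice $\chi(f_0)\neq 0$ already gives. Write $P,R,Q$ for holomorphic logarithms of $\chi_p,\chi_r,g_\infty$ on a simply connected $M_1$. Then $k_n\,\mathrm{Re}\,P(f)+l_n\,\mathrm{Re}\,R(f)\to\mathrm{Re}\,Q(f)$, a bounded function; dividing by $l_n$ gives $(k_n/l_n)\,\mathrm{Re}\,P+\mathrm{Re}\,R\to 0$, hence $\mathrm{Re}\,R=-\omega\,\mathrm{Re}\,P$ for any subsequential limit $\omega$ of $k_n/l_n$. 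Two holomorphic functions with equal real part differ by a purely imaginary constant, so $R=-\omega P+it$, and $|\chi_p|<1<|\chi_r|$ forces $\omega>0$. Taking real parts also sidesteps the $2\pi i$ branch ambiguity that your logarithm-of-$g_n$ step would have to address. (You also do not need to invoke condition \textbf{(4)} of $(\dag)$: only \textbf{(1)}--\textbf{(3)} are assumed here, and the existence of suitable $(k_n,l_n)$ follows just from $|\chi_p|<1<|\chi_r|$ and compactness of $\Sb^1$.)
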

\begin{proof}
Let $(j_n)_{n\geq0}$ and $(l_n)_{n\geq0}$ be two sequences as in Lemma \ref{le-normale} which we choose so that
$$\chi(f_0):=\lim_{n\to\infty}\chi_{p(f_0)}^{j_n}\chi_{r(f_0)}^{l_n}$$
is non-zero. By analytic continuation, it suffices to prove the result in a neighborhood of $f_0$. Let $M_0$ be the neighborhood of $f_0$ obtained by Lemma \ref{le-normale} where the family $\{f\mapsto\chi_{p(f)}^{j_n}\chi_{r(f)}^{l_n}\}_{n\geq 0}$ is normal. Let $\chi\colon M_0\to\Cb$ be a limit value and we can assume, up to take a subsequence, that for each $f\in M_0$, 
\begin{equation}\label{eq-conv}
\chi_{p(f)}^{j_n}\chi_{r(f)}^{l_n}\to\chi(f).
\end{equation}
Let $M_1$ be a simply connected neighborhood of $f_0$ on which $\chi$ does not vanish. Let $P(f)$ (resp. $R(f)$, resp. $Q(f)$) be a logarithm of $\chi_{p(f)}$ (resp. $\chi_{r(f)}$, resp. $\chi(f)$) on $M_1$. By \eqref{eq-conv}, the real parts of these functions satisfy on $M_1$
$$\lim_{n\to\infty}j_n\mathrm{Re}P(f)+l_n\mathrm{Re}R(f)=\mathrm{Re}Q(f)$$
and thus
$$\lim_{n\to\infty}\frac{j_n}{l_n}\mathrm{Re}P(f)+\mathrm{Re}R(f)=0.$$
Hence, if $\omega$ denotes a limit value of $(j_n/l_n)_{n\geq0}$ then $\mathrm{Re}R(f)=-\omega\mathrm{Re}P(f)$. This implies that there exists $t\in\Rb$ such that $R=-\omega P+it$ and so $\chi_{r(f)}=\zeta\chi_{p(f)}^{-\omega}$ with $\zeta:=e^{it}.$
\end{proof}
This gives precise information on the possible limit values for families of the form $(\Gamma_{j,l})$ obtained by Definition \ref{def-gammakl}.

\begin{lemma}\label{le-stereo}
There exist $t_0>0$ and a neighborhood $M_0\subset M$ of $f_0$ with the following property.
Let $(j_n)_{n\geq0}$ and $(l_n)_{n\geq0}$ be two increasing sequences such that $(\chi_{p(f_0)}^{j_n}\chi_{r(f_0)}^{l_n})_{n\geq0}$ converges to $\xi\chi_{p(f_0)}^t$ for some $t\in[t_0,+\infty[$ and $\xi\in\Sb^1$. Let $\Gamma$ be a family of polydiscs cutting $W^s_{p(f),loc}$ transversely at $b(f)$. Let $(\Gamma_{j,l})$ be the associated sequence of families of polydiscs obtained by Definition \ref{def-gammakl}. Then, there exists $n_0\in\Nb$ such that for $f\in M_0$, $(\Gamma_{j_n,l_n}(f))_{n\geq n_0}$ is well defined and converges to $\mathcal F(\alpha(f)b(f)\xi\chi_{p(f)}^t)$, uniformly on $M_0.$
\end{lemma}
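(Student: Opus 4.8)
The plan is to combine the two previous lemmas of this subsection, Lemma~\ref{le-Hf} and Lemma~\ref{le-coordonees}, with the strong multiplier relation of Proposition~\ref{prop-relation}, in order to pin down the (\emph{a priori} many) limit values of the families $(\Gamma_{k_n,l_n}(f))$ and to rule out the pathology mentioned in Remark~(2) after Definition~\ref{def-gammakl} (namely that $\Gamma_{k,l}(f)$ could be well-defined at $f_0$ but not nearby). First I would let $M_0\subset M$ be the neighborhood of $f_0$ produced by Lemma~\ref{le-normale}, shrunk so that the holomorphic motion of $\Lambda$ and the $C^1$-linearization data from \S~\ref{sec-loc} all behave uniformly on $M_0$, and so that $|\alpha(f)b(f)|$ is as small as we like (after replacing $\Gamma(f)$ by a suitable subset of an iterate $f^N(\Gamma(f))$, exactly as in the proof of Lemma~\ref{le-normale}). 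I would then choose $t_0>0$ large enough that $|\xi\chi_{p(f)}^t|<1/2$ (in fact $<1/20$ after absorbing $\alpha b$) for all $t\ge t_0$, all $\xi\in\mathbb S^1$, and all $f\in M_0$; this is possible since $|\chi_{p(f)}|<1$ uniformly.

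Next I would show the $\Gamma_{k_n,l_n}(f)$ are well-defined for $n\ge n_0$ and all $f\in M_0$. The key computation is that, by Lemma~\ref{le-Hf}, the relevant coordinate of $c_{k_n}(f)$ is $\alpha(f)b(f)\chi_{p(f)}^{k_n}+u_{k_n}(f)$ with $u_{k_n}(f)/\chi_{p(f)}^{k_n}\to0$ locally uniformly, so that
$$\pi\circ\delta_f\big(f^{2l}(c_{k_n}(f))\big)=\chi_{r(f)}^{l}\big(\alpha(f)b(f)\chi_{p(f)}^{k_n}+u_{k_n}(f)\big),$$
and for $0\le l\le l_n$ this is controlled in modulus by $|\alpha(f)b(f)|\cdot|\chi_{p(f)}^{k_n}\chi_{r(f)}^{l}|+o(1)$. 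Here Proposition~\ref{prop-relation} is what makes the estimate uniform over $l\le l_n$: writing $\chi_{r(f)}=\zeta\chi_{p(f)}^{-\omega}$ with $\omega>0$, the quantity $|\chi_{p(f)}^{k_n}\chi_{r(f)}^{l}|=|\chi_{p(f)}|^{k_n-\omega l}$ is, for $f$ in a neighborhood of $f_0$ and $l$ between $0$ and $l_n$, bounded by a constant times $\max(1,|\chi_{p(f_0)}^{k_n}\chi_{r(f_0)}^{l_n}|)\to|\xi\chi_{p(f_0)}^{t}|$ up to a uniform factor, hence $<1/2$ once $t\ge t_0$ and $n\ge n_0$. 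Combined with the injectivity statement in Assumption~\ref{hyperbolic} (Remark~(1) after Definition~\ref{def-gammakl}), this gives that $\Gamma_{k_n,l_n}(f)$ is well-defined for all $f\in M_0$ and all $n\ge n_0$, which is the first assertion.

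Finally, for the convergence: apply Lemma~\ref{le-coordonees} to the family $\Gamma$, which gives a sequence $\epsilon_n\to0$ with $d\big(\Gamma_{k_n,l_n}(f),\mathcal F_f(c_{k_n,l_n}(f))\big)\le\epsilon_{\min(k_n,l_n)}$ where $c_{k_n,l_n}(f)=\alpha(f)b(f)\chi_{p(f)}^{k_n}\chi_{r(f)}^{l_n}$. Since $k_n,l_n\to\infty$ the error tends to $0$ uniformly on $M_0$. It remains to identify $\lim_n c_{k_n,l_n}(f)$: by hypothesis $\chi_{p(f_0)}^{k_n}\chi_{r(f_0)}^{l_n}\to\xi\chi_{p(f_0)}^{t}$, and the normality provided by Lemma~\ref{le-normale} together with Proposition~\ref{prop-relation} forces $\chi_{p(f)}^{k_n}\chi_{r(f)}^{l_n}$ to converge, locally uniformly on $M_0$, to $\xi\chi_{p(f)}^{t}$ (run the argument of Proposition~\ref{prop-relation} with $Q=\lim$: the real-part identity $k_n\operatorname{Re}P+l_n\operatorname{Re}R\to t\operatorname{Re}P$ together with $R=-\omega P+it$ determines the limit to be $\xi\chi_{p(f)}^t$, the argument $\xi$ being constant by connectedness). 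Hence $c_{k_n,l_n}(f)\to\alpha(f)b(f)\xi\chi_{p(f)}^t$ uniformly on $M_0$, and by continuity of $c\mapsto\mathcal F_f(c)$ we get $\Gamma_{k_n,l_n}(f)\to\mathcal F_f(\alpha(f)b(f)\xi\chi_{p(f)}^t)$ uniformly on $M_0$, as claimed. The main obstacle I anticipate is the uniformity in $f$ (not just at $f_0$) of the well-definedness claim, i.e. controlling $\pi\circ\delta_f(f^{2l}(c_{k_n}(f)))$ for \emph{all} intermediate $l$ simultaneously; this is exactly where the exact relation $\chi_{r(f)}=\zeta\chi_{p(f)}^{-\omega}$ of Proposition~\ref{prop-relation}, rather than a mere asymptotic comparison, is essential, since it converts the two-parameter family of moduli into the single monotone quantity $|\chi_{p(f)}|^{k_n-\omega l}$.
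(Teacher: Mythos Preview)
Your proposal is correct and follows essentially the same route as the paper: use Proposition~\ref{prop-relation} to propagate the convergence of $\chi_{p}^{k_n}\chi_{r}^{l_n}$ from $f_0$ to all $f\in M_0$, then conclude with Lemma~\ref{le-Hf} and Lemma~\ref{le-coordonees}. Two minor points where the paper is cleaner. First, you cannot replace $\Gamma$ by an iterate since $\Gamma$ is given in the statement; but this is also unnecessary, because $b(f)\in\Db$ already gives $|b(f)|<1$, so it suffices to pick $t_0$ with $|\alpha(f)\chi_{p(f)}^{t_0}|$ small on $M_0$. Second, the detour through Lemma~\ref{le-normale} and ``re-running'' the argument of Proposition~\ref{prop-relation} is superfluous: once $\chi_{r(f)}=\zeta\chi_{p(f)}^{-\omega}$ with $\zeta,\omega$ constant on $M$, one has $\chi_{p(f)}^{k_n}\chi_{r(f)}^{l_n}=\zeta^{l_n}\chi_{p(f)}^{k_n-\omega l_n}$, and since $(k_n-\omega l_n)$ and $(\zeta^{l_n})$ are \emph{scalar} sequences, their convergence at $f_0$ (to $t$ and $\xi$ respectively) immediately gives $\chi_{p(f)}^{k_n}\chi_{r(f)}^{l_n}\to\xi\chi_{p(f)}^{t}$ for every $f$, uniformly on compacta. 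Your handling of the intermediate $l$ via the monotonicity of $|\chi_{p(f)}|^{k_n-\omega l}$ is correct and in fact more explicit than the paper, which tacitly uses that $|\chi_{r(f)}|>1$ makes $l\mapsto|\chi_{r(f)}^{l}H_f(s_{k_n+m-a}(f))|$ increasing.
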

\begin{proof}
Let $t_0>0$ be such that $|\alpha(f_0)\chi_{p(f_0)}^{t_0}|<\frac{1}{20}$ and let $M_0\subset M$ be a relatively compact neighborhood of $M$ such that $|\alpha(f)\chi_{p(f)}^{t_0}|<\frac{1}{10}$ for all $f\in M_0.$
By Proposition \ref{prop-relation}, there exist $\omega\in\Rb$ and $\zeta$ in $\Sb^1$ such that for all $f\in M$, $\chi_{r(f)}=\zeta\chi_{p(f)}^{-\omega}$. Hence, $\lim_{n\to\infty}\chi_{p(f_0)}^{j_n}\chi_{r(f_0)}^{l_n}=\xi\chi_{p(f_0)}^t$ implies that $(j_n-\omega l_n)_{n\geq0}$ and $(\zeta^{l_n})_{n\geq0}$ converge to $t$ and $\xi$ respectively. Therefore, for all $f\in M$, $(\chi_{p(f)}^{j_n}\chi_{r(f)}^{l_n})_{n\geq0}$ converges to $\xi\chi_{p(f)}^t$. As we assumed that $t\geq t_0$, we thus have that $|\alpha(f)b(f)\chi_{p(f)}^{j_n}\chi_{r(f)}^{l_n}|<\frac{1}{5}$ for all $f\in M_0$ and $n\geq n_1$ where $n_1\in\Nb$ is large enough. 

By Lemma \ref{le-Hf}, for all $f\in M$, $\Gamma_{j_n}(f)$ passes through $W^{cu}_{r(f),loc}\cap\mathcal F(H_f(s_{j_n+m-a}(f)))$,  where $H_f(s_{j_n+m-a}(f))=\alpha(f)b(f)\chi^{j_n}_{p(f)}+u_{j_n}(f)$ with $u_{j_n}(f)/\chi_{p(f)}^{j_n}$ converging to $0$, uniformly on $f\in M_0$. Thus, for $n_0\geq n_1$ large enough we have $|u_{j_n}(f)\chi_{r(f)}^{l_n}|<\frac{1}{5}$ on $M_0$ for all $n\geq n_0$. Hence, $|\chi_{r(f)}^{l_n}H_f(s_{j_n+m-a}(f))|<\frac{1}{2}$ and $(\Gamma_{j_n,l_n}(f))_{n\geq n_0}$ are well-defined. On the other hand, the convergence above implies that
$$\lim_{n\to\infty}\chi_{r(f)}^{l_n}H_f(s_{j_n+m-a}(f))=\alpha(f)b(f)\xi\chi_{p(f)}^t.$$
This, combined with Lemma \ref{le-coordonees}, implies that the sequence $(\Gamma_{j_n,l_n}(f))_{n\geq n_0}$ converges to $\mathcal F(\alpha(f)b(f)\xi\chi_{p(f)}^t)$.
\end{proof}

\subsection{Special holomorphic motion and constant multipliers}\label{sec-stereo}
From now on, we assume that $M\subset\Omega$ satisfies condition $(\dag)$ and we choose an element $f_0\in M$ such that $\overline{\langle\chi_{p(f_0)},\chi_{r(f_0)}\rangle}=\Cb^*$.

\begin{remark}\label{rk-dense}\normalfont
Observe that this last condition is equivalent to saying that $t$, $\omega$ and $1$ are linearly independent over $\Qb$, where $\chi_{r(f_0)}=e^{2i\pi t}\chi_{p(f_0)}^{-\omega}$. Hence, Proposition \ref{prop-relation} implies that  $\overline{\langle\chi_{p(f)},\chi_{r(f)}\rangle}=\Cb^*$ for all $f\in M$. This actually gives the last point in Theorem \ref{th-constant}.
\end{remark}

We now prove that this additional assumption on $M$ constrains the holomorphic motion of $\Lambda(f)$ to be very special.
\begin{proposition}\label{prop-stereo}
Let $(\Gamma(f))_{f\in M}$ be a family of polydiscs cutting $W^s_{p(f),loc}$ transversely at $b(f)$ such that $\Gamma(f)\subset W^u_{p(f)}$. Pick $x(f_0)\in\Lambda(f_0)\cap D_{f_0}$ such that $\pi(\delta_{f_0}(x(f_0)))=\alpha(f_0)b(f_0)\xi\chi_{p(f_0)}^t$ for some $t\in\Rb$ and $\xi\in\Sb^1$. Then, for all $f\in M$, the holomorphic continuation $x(f)$ in $\Lambda(f)$ of $x(f_0)$ lies in $D_f$ and satisfies $\pi(\delta_f(x(f)))=b(f)\alpha(f)\xi\chi_{p(f)}^{t}$.
\end{proposition}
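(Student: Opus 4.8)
The plan is to follow the "stereotyped holomorphic motion" philosophy: one shows that the specific parametrization of the repelling point $x(f)$ dictated by the coordinate $\alpha(f)b(f)\xi\chi_{p(f)}^t$ is forced, because $x(f_0)$ can be approximated, inside $\Lambda(f_0)$, by a sequence of intersection points $x_n(f_0)\in\Gamma_{k_n,l_n}(f_0)\cap\Lambda(f_0)$ whose holomorphic continuations $x_n(f)$ must remain on $\Gamma_{k_n,l_n}(f)$, and the latter families are explicitly controlled by Lemma \ref{le-stereo}. First I would use the density $\overline{\langle\chi_{p(f_0)},\chi_{r(f_0)}\rangle}=\Cb^*$ (equivalently $1,\omega,t$ rationally independent, see Remark \ref{rk-dense}) to choose, for the prescribed value $\xi\chi_{p(f_0)}^{t}$, two increasing sequences $(k_n)_{n\geq0}$, $(l_n)_{n\geq0}$ with $k_n-\omega l_n\to t$ (with $t\geq t_0$ after discarding finitely many terms, enlarging $t$ by an integer multiple of the period if necessary) and $\zeta^{l_n}\to\xi$, so that $\chi_{p(f_0)}^{k_n}\chi_{r(f_0)}^{l_n}\to\xi\chi_{p(f_0)}^{t}$. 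Then, after possibly replacing each $\Gamma(f)$ by an appropriate subset of $f^{N}(\Gamma(f))$ to make $|\alpha(f)b(f)|$ small as in the proof of Lemma \ref{le-normale}, Lemma \ref{le-stereo} applies: for $n\geq n_0$ the families $(\Gamma_{k_n,l_n}(f))_{f\in M_0}$ are well-defined and converge uniformly on $M_0$ to $\mathcal F_f(\alpha(f)b(f)\xi\chi_{p(f)}^{t})$.

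Next I would exploit the blender property (Assumption \ref{blender}) exactly as in Lemma \ref{le-normale}: since $\Gamma_{k_n,l_n}(f_0)$ is a vertical graph in $\Db\times V_-\times\Db^{k-2}$ tangent to $C_\rho$, it meets $\Lambda(f_0)$ in a point $x_n(f_0)$, and by Lemma \ref{le-coordonees} its coordinate $\pi\circ\delta_{f_0}(x_n(f_0))$ tends to $\alpha(f_0)b(f_0)\xi\chi_{p(f_0)}^{t}=\pi(\delta_{f_0}(x(f_0)))$. The point $x(f_0)$ itself lies in $\mathcal F_{f_0}(\alpha(f_0)b(f_0)\xi\chi_{p(f_0)}^{t})$, and since each leaf $\mathcal F_{f_0}(c)$ meets $\Lambda(f_0)$ and the hyperbolic set is expanding with the cone/injectivity conditions of Assumption \ref{hyperbolic}, the local structure forces $\Lambda(f_0)\cap\mathcal F_{f_0}(c)$ to depend continuously on $c$; hence, after reindexing, we may take $x_n(f_0)\to x(f_0)$ inside $\Lambda(f_0)$ (using that repelling points of $f^2$ in $\Lambda$ are organized by a full holomorphic motion over $\Omega$, and the leafwise intersection points vary continuously). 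Now continue holomorphically: condition $(\dag)$\textbf{(3)} together with the persistence of proper intersections (\cite[\textsection 12.3]{Chirka}) gives that $x_n(f)\in\Gamma_{k_n,l_n}(f)$ for all $f\in M_0$, so $\pi\circ\delta_f(x_n(f))$ is within $\epsilon_n$ of $\alpha(f)b(f)\xi\chi_{p(f)}^{t}$ by Lemma \ref{le-coordonees}. Since $\{f\mapsto x_n(f)\}_{n}$ is a normal family and $x_n(f_0)\to x(f_0)$, passing to a limit yields $\pi(\delta_f(x(f)))=\alpha(f)b(f)\xi\chi_{p(f)}^{t}$ on $M_0$, and analytic continuation extends this to all of $M$ (the identity being between two functions holomorphic on the connected $M$, once one checks $x(f)$ stays in $D_f$, which follows since the coordinate stays in $\Db$ by the bound $|\alpha(f)\chi_{p(f)}^{t}|<\tfrac15$ used in Lemma \ref{le-stereo}).

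The main obstacle I anticipate is the continuity/identification step linking $x(f_0)$ to the approximating sequence $x_n(f_0)$ \emph{within $\Lambda(f_0)$}: a priori $\Gamma_{k_n,l_n}(f_0)$ could meet $\Lambda(f_0)$ in several points, and one must argue that a well-chosen branch converges precisely to the prescribed $x(f_0)$ rather than to some other point of $\Lambda(f_0)$ on a nearby leaf. This is handled by the uniform verticality (tangency to $C_\rho$ with $\rho>10$), the expansion estimate $d(f^p_\lambda(z),f^p_\lambda(w))\geq K d(z,w)$ and the injectivity of $f^2$ on $\mathcal V_\pm$ from Assumption \ref{hyperbolic}, which together show that $\Lambda(f_0)\cap\mathcal F_{f_0}(\cdot)$ is a graph over the leaf-parameter and hence that the intersection point is a single-valued continuous function of $c=\pi\circ\delta_{f_0}$; combined with the global holomorphic motion of $\Lambda$ this pins down the continuation uniquely. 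The remaining verifications — that the $\Gamma_{k_n,l_n}(f)$ stay well-defined over all of $M_0$ (Lemma \ref{le-normale}, Remark after Definition \ref{def-gammakl}), that the coordinate stays in $\Db$, and that normality of $\{f\mapsto x_n(f)\}$ lets us take limits — are routine given the lemmas already established in \S\ref{sec-loc} and \S\ref{sec-rela}.
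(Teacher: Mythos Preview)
Your proof has the right ingredients but applies them in a roundabout way that creates a genuine gap at precisely the step you flagged as the ``main obstacle''. The claim that $\Lambda(f_0)\cap\mathcal F_{f_0}(c)$ is single-valued in $c$ is not justified: $\Lambda(f_0)$ is a Cantor-type repelling set and a given leaf $\mathcal F_{f_0}(c)$ can meet it in many points (the blender property guarantees at least one intersection, not exactly one). So even though $\Gamma_{k_n,l_n}(f_0)\to\mathcal F_{f_0}(c(f_0))$ and each $\Gamma_{k_n,l_n}(f_0)$ meets $\Lambda(f_0)$ somewhere, there is no reason the chosen intersection points $x_n(f_0)$ should converge to the prescribed point $x(f_0)$ rather than to some other point of $\Lambda(f_0)\cap\mathcal F_{f_0}(c(f_0))$. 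Without $x_n(f_0)\to x(f_0)$ the limiting step gives no information about $x(f)$.

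The paper bypasses this entirely by a direct contradiction argument on the motion $f\mapsto x(f)$ itself, with no auxiliary points $x_n$. After replacing $x(f_0)$ by a preimage so that $t>t_0$ (your ``enlarging $t$'' maneuver), one knows $x(f_0)\in\mathcal F_{f_0}(\alpha(f_0)b(f_0)\xi\chi_{p(f_0)}^t)$. If the section $f\mapsto x(f)$ did not lie in the leaf family $f\mapsto\mathcal F_f(\alpha(f)b(f)\xi\chi_{p(f)}^t)$ identically on $M_0$, then it would intersect it properly. Since by Lemma~\ref{le-stereo} the families $\Gamma_{k_n,l_n}$ converge uniformly on $M_0$ to this leaf family, persistence of proper intersections gives, for $n$ large, a proper (hence non-persistent) intersection between $f\mapsto x(f)$ and $\Gamma_{k_n,l_n}$. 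But $\Gamma_{k_n,l_n}(f)\subset W^u_{p(f)}$ because $\Gamma(f)\subset W^u_{p(f)}$, so this yields a non-persistent point of $W^u_{p(f)}\cap\Lambda(f)$, contradicting condition $(\dag)$\textbf{(3)}. Thus $x(f)$ lies on the leaf for all $f\in M_0$, i.e.\ $\pi(\delta_f(x(f)))=\alpha(f)b(f)\xi\chi_{p(f)}^t$, and analytic continuation extends this to all of $M$.
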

\begin{proof}
Let $t_0>0$ and $M_0\subset M$ be as in Lemma \ref{le-stereo}. Let $x(f_0)\in\Lambda(f_0)\cap D_{f_0}$. By exchanging $x(f_0)$ by a preimage, we can assume that $\pi(\delta_{f_0}(x(f_0)))=\alpha(f_0)b(f_0)\xi\chi_{p(f_0)}^t$ with $t>t_0.$
 Since $\overline{\langle\chi_{p(f_0)},\chi_{r(f_0)}\rangle}=\Cb^*$, there exist $(j_n)_{n\geq0}$ and $(l_n)_{n\geq0}$ two increasing sequences such that $(\chi_{p(f_0)}^{j_n}\chi_{r(f_0)}^{l_n})_{n\geq0}$ converges to $\xi\chi_{p(f_0)}^t$. Let $\Gamma_{j_n,l_n}$ be the families of analytic sets associated to $\Gamma(f)\subset W^u_{p(f)}$. Lemma \ref{le-stereo} implies that $(\Gamma_{j_n,l_n}(f))_{\geq0}$ converges to $\mathcal F(\alpha(f)b(f)\xi\chi_{p(f)}^t)$, uniformly on $M_0$. Hence, if $x(f)$ intersects properly $\mathcal F_f(\alpha(f)b(f)\xi\chi_{p(f)}^t)$ then $x(f)$ would intersect properly $\Gamma_{j_n,l_n}(f)$ for $n\geq0$ large enough. This contradicts condition $(\dag)$ and thus $x(f)\in \mathcal F_f(\alpha(f)b(f)\xi\chi_{p(f)}^t)$ for all $f\in M_0$, i.e., $\pi(\delta_f(x(f)))=b(f)\alpha(f)\xi\chi_{p(f)}^{t}$. By analytic continuation, this equality holds on the whole space $M.$
\end{proof}

Using several homoclinic intersections, we obtain the following strong restriction on $\chi_p$ which, combined with Proposition \ref{prop-relation}, implies Theorem \ref{th-constant}
\begin{proposition}\label{prop-constant}
The function $\chi_p$ is constant on $M.$
\end{proposition}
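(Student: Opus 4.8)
The plan is to leverage the homoclinic intersection furnished by Assumption~\ref{homoclinic} in the same way that the preimage $x(f)$ of $r(f)$ was used in the previous results, but now applied to a \emph{self-intersection} of the unstable manifold $W^u_{p(f),loc}$ with itself along $W^s_{p(f),loc}$. Recall from Assumption~\ref{homoclinic} that there is $K\in\Nb$ and $\tilde q(f)\in W^u_{p(f),loc}$, not critical for $f^K$, with $q(f)=f^K(\tilde q(f))\in W^s_{p(f),loc}\setminus\{p(f)\}$ a transverse homoclinic point. As noted in Remark~\ref{rk-homoclinic}, the whole machinery of Lemma~\ref{le-Hf} and Lemma~\ref{le-coordonees} applies with $W^{cu}_{r(f),loc}$ replaced by a disc $\Sigma_f\subset W^s_{p(f),loc}$ transverse to $f^K(W^u_{p(f),loc})$: one gets a holomorphic germ, linear at the origin, measuring how the dynamics near $p(f)$ pushes $\Gamma_k(f)$ back toward $W^s_{p(f),loc}$, with a multiplicative constant depending holomorphically on $f$.

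First I would set up, for a family of polydiscs $\Gamma(f)\subset W^u_{p(f)}$ cutting $W^s_{p(f),loc}$ transversely at some $b(f)\neq 0$, the sequence of graphs $\Gamma_k(f)$ as in \S~\ref{sec-loc}; these approach $\mathcal F_f$-type leaves whose $\phi_f$-coordinate is $\approx\alpha(f)b(f)\chi_{p(f)}^k$. Iterating by $f$ (rather than $f^2$ near $r$) and using the homoclinic return through $\Sigma_f\subset W^s_{p(f),loc}$, the point $f^K(\tilde q(f))\in W^s_{p(f),loc}$ has $v_f$-first-coordinate some $b'(f)\neq 0$; applying $f^{j}$ contracts this coordinate by $\chi_{p(f)}^{j}$. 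So the relevant quantity governing well-definedness and limits of the doubly-indexed families is $\chi_{p(f)}^{k}\chi_{p(f)}^{l}=\chi_{p(f)}^{k+l}$. Since $\overline{\langle\chi_{p(f_0)},\chi_{r(f_0)}\rangle}=\Cb^*$, in particular $\overline{\langle\chi_{p(f_0)}\rangle}$ contains $\Sb^1$ (and the modulus $|\chi_{p(f_0)}|>1$ lets us reach any radius $<1$ after a contraction), so we can find increasing sequences $(k_n),(l_n)$ with $\chi_{p(f_0)}^{k_n+l_n}\to\xi\chi_{p(f_0)}^{t}$ for prescribed $\xi\in\Sb^1$, $t\geq t_0$. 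Running the normality argument of Lemma~\ref{le-normale} and the convergence statement of Lemma~\ref{le-stereo} in this homoclinic setting — where the blender Assumption~\ref{blender} again produces points $x_n(f_0)\in\Lambda(f_0)$ on the graphs $\Gamma_{k_n,l_n}(f_0)$, which by condition $(\dag)$ part \textbf{(3)} continue holomorphically and remain on the corresponding graphs for all $f\in M$ — forces, exactly as in Proposition~\ref{prop-stereo}, the identity $\pi(\delta_f(x(f)))=b(f)\alpha(f)\xi\chi_{p(f)}^{t}$ for the continuation of each such point. But here the relation read off from the homoclinic loop is \emph{homogeneous in $\chi_{p(f)}$ alone}: comparing the limit locus obtained along $(k_n,l_n)$ with that obtained along a shifted pair $(k_n+1,l_n)$, or along $(k_n,l_n)$ versus a second homoclinic disc at a different point $q'(f)$, yields a persistent equation of the form $\chi_{p(f)}^{t}=\text{(constant)}$ with $t\neq 0$ real, forcing $|\chi_{p(f)}|$ constant; and the $\Sb^1$-valued data forces the argument of $\chi_{p(f)}$ constant as well. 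Hence $\chi_p$ is constant on $M$.

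More concretely, the cleanest route is: take two homoclinic returns (Assumption~\ref{homoclinic} gives one, but translating $\tilde q(f)$ slightly inside $W^u_{p(f),loc}$ and adjusting $K$ gives a family of them, with return points filling an open subset of $W^s_{p(f),loc}$, hence with continuously varying — and non-proportional, since they differ by a nonconstant map — constants $b'_1(f),b'_2(f)$). Each produces, via Proposition~\ref{prop-stereo} applied in the homoclinic setting, a constraint of the form: the $\phi_f$-coordinate of the holomorphic continuation of a fixed $\Lambda(f)$-point equals $c_i(f)\chi_{p(f)}^{t}$ with $c_i(f)$ an explicit nonvanishing holomorphic function. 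For this same $\Lambda(f)$-point both constraints hold, so $c_1(f)\chi_{p(f)}^{t_1}=c_2(f)\chi_{p(f)}^{t_2}$, i.e. $\chi_{p(f)}^{t_1-t_2}=c_2(f)/c_1(f)$. If $t_1\neq t_2$ this exhibits a nonconstant holomorphic function ($c_2/c_1$) as a real power of $\chi_{p(f)}$, which pins down both $|\chi_{p(f)}|$ and $\arg\chi_{p(f)}$ up to discreteness, hence constant by connectedness of $M$; and one can always arrange $t_1\neq t_2$ by choosing the approximating sequences appropriately. If instead all the available exponents coincide, one uses the shift trick ($(k_n+1,l_n)$ versus $(k_n,l_n)$, which replaces $t$ by $t + \log\chi_{p(f)}/(2i\pi)\cdot$const, effectively changing $t$ by a nonzero amount relative to a fixed constraint) to generate a genuine relation. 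Either way $\chi_p$ is constant on $M$.

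The main obstacle I anticipate is the careful bookkeeping needed to make the homoclinic analogue of Lemma~\ref{le-Hf} genuinely yield a relation in $\chi_p$ \emph{alone} and not a tautology — one must check that the homoclinic return constant $b'(f)$ depends on $f$ in a way genuinely different from $\alpha(f)b(f)$, so that the two constraints are not automatically compatible; this is where the transversality of the homoclinic intersection and the fact that $\tilde q(f)$ is non-critical for $f^K$ (so the return map is a local biholomorphism with nonvanishing derivative) are essential. A secondary technical point is ensuring that the blender-produced points $x_n(f)\in\Lambda(f)$ on $\Gamma_{k_n,l_n}(f)$ have $\phi_f$-coordinates staying uniformly away from the boundary of $\Db$ so that condition $(\dag)$'s holomorphic-continuation clause genuinely applies — this is handled exactly as in the proof of Lemma~\ref{le-normale} by shrinking $M$ to a neighborhood $M_0$ of $f_0$ and choosing $|\alpha(f)b'(f)\chi_{p(f)}^{t_0}|$ small, then invoking analytic continuation to propagate the conclusion from $M_0$ to all of $M$.
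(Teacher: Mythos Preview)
Your proposal has a genuine gap: neither of the two routes you sketch actually forces $\chi_p$ to be constant.

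In the ``two homoclinic returns'' approach, you arrive at an identity of the form $\chi_{p(f)}^{t_1-t_2}=c_2(f)/c_1(f)$ with $t_1-t_2\in\Rb\setminus\{0\}$ and $c_1,c_2$ nonvanishing holomorphic functions on $M$. You then claim this ``pins down $|\chi_{p(f)}|$ and $\arg\chi_{p(f)}$ up to discreteness''. It does not: on a simply connected set, $\chi_{p(f)}^{s}=\exp(s\log\chi_{p(f)})$ is a perfectly good nonvanishing holomorphic function for any real $s$, so the identity merely says $c_2/c_1$ is a specific holomorphic function of $\chi_p$. There is no rigidity. The shift trick you propose (replacing $\Gamma$ by $f(\Gamma)$) changes $b(f)$ to $\chi_{p(f)}b(f)$, and a direct computation shows the two constraints from Proposition~\ref{prop-stereo} become literally identical --- a tautology, as you suspected.

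In your first route (a ``homoclinic analogue'' of Lemmas~\ref{le-normale}--\ref{le-stereo}), the difficulty is geometric: the blender $\Lambda(f)$ lives in $D_f$ near $r(f)$, and the foliation $\mathcal F_f$ whose leaves all meet $\Lambda(f)$ is the strong unstable foliation of $r(f)$. If you try to run the argument with graphs converging instead to pieces of $W^u_{p(f),loc}$, there is no family of limit leaves indexed by a continuum parameter each of which is guaranteed to meet $\Lambda(f)$, so the persistence-of-intersection step that drives Lemma~\ref{le-normale} has nothing to bite on.

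The paper's proof stays with the original Proposition~\ref{prop-stereo} (so the blender at $r(f)$ is still doing the work) but uses the homoclinic loop to manufacture a \emph{sequence} of transverse homoclinic intersections at points $b_n(f)=b_0(f)\bigl(1+\beta(f)\chi_{p(f)}^n+o(\chi_{p(f)}^n)\bigr)$, obtained by pushing a polydisc $\Gamma\subset W^u_{p(f)}$ through $n$ iterates near $p(f)$ and then through the homoclinic return. Applying Proposition~\ref{prop-stereo} once with $b_0(f)$ and once with $b_n(f)$ and equating gives, for all $f\in M$,
\[
1+\beta(f)\chi_{p(f)}^n+o(\chi_{p(f)}^n)=\xi_n\,\chi_{p(f)}^{t_n},
\]
where $\xi_n\in\Sb^1$ and $t_n\in\Rb$ are constants (determined at $f_0$) with $\xi_n\to1$, $t_n\to0$. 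The contradiction is then a \emph{winding argument}: if $\chi_p$ were nonconstant one could find an arc $\gamma$ in $M$ along which $\arg\chi_{p(\gamma(s))}$ varies linearly, so the left-hand side minus $1$ winds roughly $n$ times around $0$ as $s$ runs over $[0,1]$; but the right-hand side minus $1$ is $\approx t_n\log\chi_{p(f)}+i\theta_n$, whose argument stays in a half-plane since $\mathrm{Re}\log\chi_{p(f)}<0$. This asymmetry between a high \emph{integer} power $\chi_p^n$ and a small \emph{real} power $\chi_p^{t_n}$ is the missing idea in your approach.
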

\begin{proof}
Assume, by contradiction, that $\chi_p$ is not constant. This implies the existence of a small arc $\gamma\colon[0,1]\to M$ such that $\chi_{p(\gamma(s))}=re^{isa}$ for all $s\in[0,1]$, where $r\in\Cb^*$ and $a>0$ are two constants. In particular, for $n\in\Nb$ large, $s\mapsto\chi_{p(\gamma(s))}^n$ winds about $n/a$ times around $0$ on a small circle. We will use this fast variation in the argument together with Proposition \ref{prop-stereo} in order to obtain a contradiction.

By Assumption \ref{homoclinic}, there exist $K\geq1$ and $\tilde q(f)\in W^u_{p(f),loc}$ which is not a critical point for $f^K$ and such that $q(f):=f^K(\tilde q(f))\neq p(f)$ is a transverse homoclinic intersection in $W^s_{p(f),loc}$. We denote by $b_0(f)$ the point in $\Db$ such that $v_f(q(f),0)=b_0(f)$ and we have $b_0(f)\neq0$. As the homoclinic intersection is transverse, there exists a family of polydiscs $(\Gamma(f))_{f\in M}$ intersecting transversely $W^s_{p(f),loc}$ at $b_0(f)$ such that $\Gamma(f)\subset W^u_{p(f)}$. Moreover, as observed in Remark \ref{rk-homoclinic}, since $\tilde q(f)$ is not critical for $f^K$, 
 there exists a holomorphic injective map $g_f\colon\Db\to V_f$ such that
\begin{itemize}
\item $\Delta'_f:=g_f(\Db)$ is transverse to $W^u_{p(f),loc},$
\item $\Delta'_f$ is a graph above $W^s_{p(f),loc}$, i.e., the projection on the first coordinate of $v_f\circ g_f$ is the identity,
\item $f^K_{|\Delta'_f}$ is injective and $f^K(\Delta'_f)$ is a neighborhood of $q(f)$ in $W^{s}_{p(f),loc}.$
\end{itemize}
We also define $G_f\colon\Db\to\Db$ by $G_f=\pi\circ v_f\circ f^K\circ g_f$ where $\pi\colon\Cb^k\to\Cb$ is the first projection. Observe that $G_f$ is injective with $G_f(0)=b_0(f)$. Hence, there exists $\tilde\beta(f)\neq0$, which depends holomorphically on $f$, such that
$$G_f(s)=b_0(f)+\tilde\beta(f)s+o(s),$$
where $o(s)$ is uniform in $f$. Just as in Lemma \ref{le-Hf}, for $n\geq K$ large enough, there exist holomorphic functions $s'_n$ and $\delta_n$ such that
\begin{itemize}
\item $g_f(s'_n(f))\in f^{n-K}(\Gamma(f)),$
\item $s'_n(f)=b_0(f)\chi_{p(f)}^{n-K}+o(\chi_{p(f)}^{n}),$
\item $G_f(s'_n(f))=b_0(f)(1+\beta(f)\chi_{p(f)}^n+\delta_n(f))$, where $\delta_n(f)=o(\chi_{p(f)}^n)$ and $\beta(f):=\tilde\beta(f)\chi_{p(f)}^{-K}.$
\end{itemize}
We set $b_n(f):=G_f(s'_n(f))$ which corresponds to a transverse homoclinic intersection $f^n(\Gamma(f))\cap W^s_{p(f),loc}$ very close to $b_0(f)$. 

Now, if we apply Proposition \ref{prop-stereo} first to $b(f)=b_n(f)$ with $\xi=1$, $t=0$ and a second time to $b(f)=b_0(f)$, where $\xi_n$ and $t_n$ are chosen such that $\xi_n\chi_{p(f_0)}^{t_n}=1+\chi_{p(f_0)}^n\beta(f_0)+\delta_n(f_0)$, then we obtain for all $f\in M$
$$(b_0(f)+b_0(f)\chi_{p(f)}^n\beta(f)+b_0(f)\delta_n(f))\alpha(f)=b_0(f)\alpha(f)\xi_n\chi_{p(f)}^{t_n},$$
and thus
\begin{equation}\label{eq-tourne}
1+\chi_{p(f)}^n\beta(f)+\delta_n(f)=\xi_n\chi_{p(f)}^{t_n}.
\end{equation}
Observe that $\xi_n$ converges to $1$ and $t_n$ converges to $0$ since $1+\chi_{p(f_0)}^n\beta(f_0)+\delta_n(f_0)$ goes to $1.$

We choose an arc $\gamma\colon[0,1]\to M$ as above, small enough to insure that the argument of $s\mapsto\beta(\gamma(s))$ is almost constant and such that $\chi_{p(\gamma(s))}=re^{isa}$ for all $s\in[0,1]$, where $r\in\Cb^*$ and $a>0$ are two constants. In particular, when $n$ is large then $s\mapsto\chi_{p(\gamma(s))}^n\beta(\gamma(s))$ winds about $n/a$ times around $0$ on a small circle.

On the other hand, let $P\colon\Omega\to\Cb$ be a logarithm of $\chi_p$ on $\Omega$ and let $\theta_n\in\Rb$ converging to $0$ such that $\xi_n=e^{i\theta_n}$. Then $\xi_n\chi_{p(f)}^{t_n}-1=(t_nP(f)+i\theta_n)+o(t_nP(f)+i\theta_n)$ whose argument is essentially that of $t_nP(f)+i\theta_n$ which is never purely imaginary since $|\chi_{p(f)}|<1$ on $\Omega$. Hence, the equality \eqref{eq-tourne} cannot hold for $n\geq1$ large enough. This gives the desired contradiction. 
\end{proof}

The combination of Proposition \ref{prop-constant} with Proposition \ref{prop-stereo} says that the first coordinate (with respect to $\phi_f$) of the holomorphic motion of points in $\Lambda(f)\cap D_f$ is not only holomorphic in $f\in M$ but also in the starting point. In fact, in the coordinates given by $\phi_f$, this dependence is linear and our choice of normalization of $\phi_f$ implies that it is constant.
\begin{corollary}\label{cor-constant}
If $x(f_0)\in\Lambda(f_0)\cap D_f$ then, for all $f\in M$
$$\pi(\delta_f(x(f)))=\pi(\delta_{f_0}(x(f_0))).$$
\end{corollary}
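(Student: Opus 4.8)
\textbf{Proof strategy for Corollary \ref{cor-constant}.}

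The plan is to combine the rigidity of the holomorphic motion of $\Lambda$, now strengthened by Proposition \ref{prop-constant} (which gives $\chi_{p}\equiv\chi_p(f_0)=:\chi_p$ and, via Proposition \ref{prop-relation}, $\chi_r\equiv\chi_r(f_0)=:\chi_r$ on $M$), with the information on the motion of a dense subset of points of $\Lambda(f)\cap D_f$ furnished by Proposition \ref{prop-stereo}. First I would fix the homoclinic family of polydiscs $(\Gamma(f))_{f\in M}$ cutting $W^s_{p(f),loc}$ transversely at $b(f)\neq0$ with $\Gamma(f)\subset W^u_{p(f)}$, as in Assumption \ref{homoclinic}; by replacing each $\Gamma(f)$ by an appropriate subset of $f^N(\Gamma(f))$ we may assume $|\alpha(f)b(f)|$ is as small as we wish on $M_0$, a fixed neighborhood of $f_0$. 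Proposition \ref{prop-stereo} then says: for every $x(f_0)\in\Lambda(f_0)\cap D_{f_0}$ whose $\pi\circ\delta_{f_0}$-coordinate has the special form $\alpha(f_0)b(f_0)\xi\chi_{p}^{t}$ with $\xi\in\Sb^1$, $t\in\Rb$, the holomorphic continuation satisfies $\pi(\delta_f(x(f)))=\alpha(f)b(f)\xi\chi_{p}^{t}$ for all $f\in M$.

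The key point is that since $\overline{\langle\chi_p,\chi_r\rangle}=\Cb^*$ (Remark \ref{rk-dense}), the set of values $\{\xi\chi_p^t\ ;\ \xi\in\Sb^1,\ t\in\Rb\}$ is all of $\Cb^*$, so in fact \emph{every} point of $\Lambda(f_0)\cap D_{f_0}$ (after composing with a suitable iterate of $f^2$, i.e. multiplying its coordinate by $\chi_r^{l}$ to land it in a region where Proposition \ref{prop-stereo} applies) has a $\pi\circ\delta_{f_0}$-coordinate of the required form up to the universal constant $\alpha(f_0)b(f_0)$; the subtlety is only that Proposition \ref{prop-stereo} was stated with the prefactor $\alpha(f)b(f)$ attached, so what it really gives is that the ratio $\pi(\delta_f(x(f)))/(\alpha(f)b(f))$ is \emph{independent of $f$} for each such $x$. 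Since $\Lambda$ is a repelling hyperbolic set and points of $\Lambda(f)\cap D_f$ that lie on the leaves $\mathcal F_f(c)$ are dense (each leaf meets $\Lambda(f)$ by Assumption \ref{blender}, and the preimages under $f^2$ spread them densely), the map $c\mapsto$ (continuation of the point with coordinate $c$) is defined on a dense subset of $\Db$, and on it the function $f\mapsto \pi(\delta_f(x(f)))/(\alpha(f)b(f))$ is constant. Next I would argue that the dependence of $\pi(\delta_f(x(f)))$ on the \emph{starting coordinate} $\pi(\delta_{f_0}(x(f_0)))$, for $f$ fixed, is holomorphic (the holomorphic motion of $\Lambda$ is jointly continuous, and the coordinates are holomorphic), hence by the density just obtained it is $\Cb$-linear: $\pi(\delta_f(x(f)))=L(f)\cdot\pi(\delta_{f_0}(x(f_0)))$ for a holomorphic $L\colon M\to\Cb^*$ with $L(f_0)=1$. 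Finally the normalization built into $\phi_f$ in \S\ref{sec-loc} — namely $\pi\circ\delta_f(r'(f))\equiv1$ for a fixed $r'\in\Lambda$ — forces $L(f)\equiv1$, which is exactly the claim.

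The main obstacle I expect is the $\Cb$-linearity step: one has to be careful that the ``coordinates of the continuation'' genuinely depend holomorphically, and linearly, on the initial coordinate. The cleanest route is to avoid pointwise linearity altogether and instead observe that $x\mapsto\pi(\delta_{f_0}(x))$ conjugates the action of $f_{f_0}^2$ on $\Lambda(f_0)\cap W^{cu}_{r(f_0),loc}$ to multiplication by $\chi_r$, that the same holds at every $f\in M$ with the \emph{same} multiplier $\chi_r$ (Propositions \ref{prop-constant} and \ref{prop-relation}), and that Proposition \ref{prop-stereo} identifies the two parametrizations on the dense orbit of the homoclinic coordinate; since two holomorphic conjugacies of $z\mapsto\chi_r z$ (on a Cantor set containing a dense forward orbit accumulating at $0$) that agree on one orbit and fix $0$ must be equal, one gets $\pi(\delta_f(x(f)))=\pi(\delta_{f_0}(x(f_0)))$ directly, with the normalization of $\phi_f$ pinning down the scaling ambiguity. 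Either way the content is soft once Propositions \ref{prop-constant}, \ref{prop-stereo} and the normalization are in hand; the only real work is bookkeeping the universal constant $\alpha(f)b(f)$ and the role of $r'$.
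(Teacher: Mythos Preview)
Your proposal is correct and follows essentially the same route as the paper, but you make it harder than it needs to be. The paper's argument is three lines: since $\chi_p$ is constant on $M$ (Proposition~\ref{prop-constant}), Proposition~\ref{prop-stereo} applied with the homoclinic $b_0(f)$ says directly that if $\pi(\delta_{f_0}(x(f_0)))=\alpha(f_0)b_0(f_0)s$ then $\pi(\delta_f(x(f)))=\alpha(f)b_0(f)s$ for the \emph{same} $s$ (because $\xi\chi_{p(f)}^t=\xi\chi_{p(f_0)}^t$ now). This already gives
\[
\pi(\delta_f(x(f)))=\pi(\delta_{f_0}(x(f_0)))\cdot\frac{\alpha(f)b_0(f)}{\alpha(f_0)b_0(f_0)},
\]
a ratio independent of $x$; plugging in $x=r'$ and the normalization $\pi(\delta_f(r'(f)))\equiv1$ forces the ratio to be $1$.

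Your density and linearity discussion is an unnecessary detour. The identity $\{\xi\chi_p^t:\xi\in\Sb^1,\,t\in\Rb\}=\Cb^*$ is trivial (polar coordinates, since $|\chi_p|\neq1$) and has nothing to do with $\overline{\langle\chi_p,\chi_r\rangle}=\Cb^*$, which you invoke here by mistake; that density hypothesis was already consumed inside the proof of Proposition~\ref{prop-stereo}. Hence Proposition~\ref{prop-stereo} applies to \emph{every} point with nonzero first coordinate, and no continuity extension or conjugacy-uniqueness argument is needed.
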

\begin{proof}
As in the proof of Proposition \ref{prop-constant}, let $q(f)$ be the homoclinic intersection and $b_0(f)$ the corresponding point in $\Db$. By Proposition \ref{prop-constant} and Proposition \ref{prop-stereo}, if 
$$\pi(\delta_{f_0}(x(f_0)))=\alpha(f_0)b_0(f_0)s,$$
for some $s\in\Cb$, then
$$\pi(\delta_{f}(x(f)))=\alpha(f)b_0(f)s.$$
In other words
$$\pi(\delta_{f}(x(f)))=\pi(\delta_{f_0}(x(f_0)))\frac{b_0(f)\alpha(f)}{b_0(f_0)\alpha(f_0)}.$$
On the other hand, in order to normalize $\phi_f$ we had chosen $r'\in\Lambda$ in \S~\ref{sec-loc} close enough to $r$ such that
 $$\pi(\delta_{f}(r'(f)))=\pi(\delta_{f_0}(r'(f_0))).$$
 Hence, $\frac{b_0(f)\alpha(f)}{b_0(f_0)\alpha(f_0)}$ is constantly equal to $1.$
\end{proof}

\subsection{Construction of the conjugacy}\label{sec-iso}
We will first construct local conjugacies between elements of $M$ and then extend them in a neighborhood of the small Julia set $J_k$. 
This type of problem is classical in one variable complex dynamics. See in particular \cite{buff-epstein} where Buff-Epstein obtained at the end a global conjugacy outside the exceptional sets. In our context we have much less information on the dynamics outside the small Julia set and, even if the counterpart of \cite{buff-epstein} probably holds in higher dimension, our final argument relies strongly on the fact that we are working with a family.

Let $M\subset\Omega$ be a subvariety which satisfies $(\star)$. The difference with condition $(\dag)$ is that $(f)_{f\in M}$ is supposed to be simply connected and stable in the sense of Berteloot-Bianchi-Dupont \cite{BBD}. Observe that in \cite[Theorem 1.1]{BBD}, the parameter space has to be an open subset of $\mathrm{End}_d^k$. However, this restriction has been overcome by Bianchi in the broader setting of polynomial-like maps with large topological degree \cite{bianchi-poly-like}. The key notion in \cite{BBD,bianchi-poly-like} for what follows is the \emph{equilibrium lamination}. To introduce it, we first consider the set
$$\mathcal J:=\left\{\gamma\colon M\to\Pb^k\ \vrule\ \gamma\ \text{is holomorphic and}\ \gamma(f)\in J_k(f)\ \text{for every}\ f\in M\right\}.$$
The family $(f)_{f\in M}$ induces naturally a self-map $F$ of $\mathcal J$ by setting $F(\gamma)(f):=f(\gamma(f)).$
\begin{definition}\label{def-lami}
An \emph{equilibrium lamination} is a relatively compact subset $\mathcal L$ of $\mathcal J$ such that
\begin{itemize}
\item[\textbf{(1)}] $\gamma(f)\neq\gamma'(f)$ for all $f\in M$ if $\gamma,\gamma'\in\mathcal L$ with $\gamma\neq\gamma',$
\item[\textbf{(2)}] for every $f\in M$, the equilibrium measure of $f$ gives full mass to $\{\gamma(f)\ |\ \gamma\in\mathcal L\},$
\item[\textbf{(3)}] for every $f\in M$ and $\gamma\in\mathcal L$, $\gamma(f)$ is not a critical point of $f,$
\item[\textbf{(4)}] $\mathcal L$ is $F$-invariant and $F\colon\mathcal L\to\mathcal L$ is $d^k$ to $1.$
\end{itemize}
\end{definition}

One characterization of the stability of the family $(f)_{f\in M}$ given by \cite[Theorem C]{bianchi-poly-like} is that this family admits an equilibrium lamination.
A key step in proving that all elements of \( M \) are conjugate near their small Julia sets is to first construct local conjugacies near the repelling point \( r(f) \) that respect the equilibrium lamination.

\begin{lemma}\label{le-localconj}
Assume that \( M \) satisfies \((\star)\), and let \( \mathcal{L} \) denote the associated equilibrium lamination. Let \( f_0 \) and \( f_1 \) be two points in \( M \). For each \( i \in \{0,1\} \), there exist connected neighborhoods \( \tilde{U}_i \Subset U_i \) of \( r(f_i) \) with the following properties.
\begin{itemize}
\item $f_i(\tilde U_i)\cap U_i=\varnothing$ and $f_i^2$ is a biholomorphism between $\tilde U_i$ and $U_i,$
\item there exists a biholomorphism $\psi\colon U_0\cup f_0(\tilde U_0)\to U_1\cup f_1(\tilde U_1)$ such that
\begin{equation}\label{eq-conj}
f_0=\psi^{-1}\circ f_1\circ\psi \ \text{ and } \ f_0^2=\psi^{-1}\circ f_1^2\circ\psi \ \text{ on } \ \tilde U_0,
\end{equation}
\item if $\gamma\in\mathcal L$ verifies $\gamma(f_0)\in U_0$ then  $\gamma(f_1)=\psi(\gamma(f_0)),$
\end{itemize}
\end{lemma}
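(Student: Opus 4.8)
The plan is to construct $\psi$ from the linearizations $\phi_{f_0},\phi_{f_1}$ at $r(f_0),r(f_1)$ set up in \S\ref{sec-loc}, rigidified by the equilibrium lamination $\mathcal L$. Recall that $\phi_{f_i}^{-1}\circ f_i^2\circ\phi_{f_i}=L_{f_i}=(\chi_{r(f_i)}x,\tilde L_{f_i}y)$ near $0=\phi_{f_i}^{-1}(r(f_i))$, that by Theorem \ref{th-constant} $\chi_{r(f_0)}=\chi_{r(f_1)}=:\chi_r$, and that $r(f_i)$ is exactly $2$-periodic with $D_{r(f_i)}f_i$ invertible. First I would fix, for $i\in\{0,1\}$, a small polydisc $B_i\ni 0$, set $U_i:=\phi_{f_i}(B_i)$ and $\tilde U_i:=\{z\in U_i\ ;\ f_i^2(z)\in U_i\}$; reading everything through $\phi_{f_i}$, the map $f_i^2$ becomes the expanding linear map $L_{f_i}$, so $f_i^2\colon\tilde U_i\to U_i$ is a biholomorphism and $\tilde U_i\Subset U_i$. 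Shrinking $B_i$ one also gets $f_i(\tilde U_i)\cap U_i=\varnothing$, because $f_i(\tilde U_i)$ is then a small neighbourhood of $r'(f_i):=f_i(r(f_i))\neq r(f_i)$.

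The technical heart is to prove that, in these coordinates, the holomorphic motion of $\Lambda(f)$ near $r(f)$ is the restriction of a linear map: for $x(f_0)\in\Lambda(f_0)\cap D_{f_0}$ with $\delta_{f_0}(x(f_0))=(c,w)$, the continuation satisfies $\delta_{f_1}(x(f_1))=(c,Aw)$ for a fixed $A\in\mathrm{GL}_{k-1}(\Cb)$ with $A\tilde L_{f_0}=\tilde L_{f_1}A$. That the first coordinate $c$ is preserved is exactly Corollary \ref{cor-constant} together with the normalisation of $\phi_f$. For the second coordinate I would use that the motion commutes with the dynamics, so in coordinates it conjugates $L_{f_0}$ to $L_{f_1}$ on the invariant set $\delta_{f_0}(\Lambda(f_0)\cap D_{f_0})$; restricting to the leaf $\{x=0\}=\delta_{f_0}(W^{uu}_{r(f_0)})$ — which meets $\Lambda(f_0)$ by Assumption \ref{blender}, is invariant under the linear local inverse branch of $f_0^2$ on $W^{uu}_{r(f_0)}$, and whose $\Lambda$-points accumulate at $0=r(f_0)$ after pulling them back by that branch — one obtains a conjugacy $h_0$ from $\tilde L_{f_0}$ to $\tilde L_{f_1}$ on a set $\Sigma_0\ni 0$. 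Pulling a point back by $\tilde L_{f_0}^{-n}$ toward $0$ and using $h_0\circ\tilde L_{f_0}^{-n}=\tilde L_{f_1}^{-n}\circ h_0$ then forces, in the limit, $h_0$ to be linear, the limiting linear map $A$ automatically intertwining $\tilde L_{f_0}$ and $\tilde L_{f_1}$; the no-resonance hypothesis of Assumption \ref{repelling} and the genericity clause of condition $(\dag)$ (which, through the blender, make the $\Lambda$-points span every leaf direction) guarantee $A$ is well defined and invertible. Propagating the same relation to the nearby leaves $\{x=c\}$ and using continuity of the motion at $c=0$ gives linearity on all of $D_{f_0}$.

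Granting this, I would simply \emph{define} $\psi:=\phi_{f_1}\circ\Theta\circ\phi_{f_0}^{-1}$ on $U_0$, where $\Theta(x,y):=(x,Ay)$; this is a biholomorphism $U_0\to U_1$, and since $\Theta\circ L_{f_0}=L_{f_1}\circ\Theta$ it satisfies $f_0^2=\psi^{-1}\circ f_1^2\circ\psi$ on $U_0$. I then extend $\psi$ to $f_0(\tilde U_0)$ by $\psi:=f_1\circ\psi\circ f_0^{-1}$ (legitimate because $f_0(\tilde U_0)\cap U_0=\varnothing$ and $f_0|_{\tilde U_0}$ is injective): this yields a biholomorphism $U_0\cup f_0(\tilde U_0)\to U_1\cup f_1(\tilde U_1)$ (note $\psi(\tilde U_0)=\tilde U_1$ and $U_1\cap f_1(\tilde U_1)=\varnothing$), and on $\tilde U_0$ it now intertwines $f_0$ with $f_1$, hence also $f_0^2$ with $f_1^2$, which is \eqref{eq-conj}. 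For the last bullet, $\psi(\gamma(f_0))=\gamma(f_1)$ holds by construction whenever $\gamma(f_0)\in\Lambda(f_0)$; for a general $\gamma\in\mathcal L$ with $\gamma(f_0)\in U_0$ I would observe that composing $\psi^{-1}$ with the holomorphic motion from $f_0$ to $f_1$ yields a continuous self-conjugacy of $f_1$ near $r(f_1)$ fixing every $\Lambda$-point, and that, since every leaf of $\mathcal F_{f_1}$ carries $\Lambda$-points (Assumption \ref{blender}), the argument of Proposition \ref{prop-stereo} applied to $\mathcal L$-points in place of $\Lambda$-points forces this self-conjugacy to be the identity.

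I expect the main obstacle to be the second paragraph: turning the a priori only continuous holomorphic motion of the Cantor set $\Lambda(f)$ into a genuinely linear object in the linearizing coordinates, and in particular checking, via the blender (Assumption \ref{blender}) and the genericity in $(\dag)$, that the $\Lambda$-points spread transversally enough in the leaves of $\mathcal F_f$ for $A$ to be uniquely determined and invertible. Propagating the matching of $\psi$ with $\mathcal L$ from $\Lambda$-points to all of $\mathcal L$ is a second, less severe, difficulty.
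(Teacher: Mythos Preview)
Your approach has a genuine gap at its technical heart. You want to show that the holomorphic motion of $\Lambda$ is, in the linearizing coordinates $\delta_{f_i}$, the restriction of a linear map $\Theta(x,y)=(x,Ay)$; and you try to read off $A$ from the restriction of the motion to the strong unstable leaf $\{x=0\}=\delta_{f_0}(W^{uu}_{r(f_0)})$. But in the setting of the paper $\Lambda(f)$ is a Cantor set which, in the model construction of Section~\ref{sec-existence}, is \emph{homeomorphic via the second projection} to the Cantor set $E(a)$: each leaf of $\mathcal F_{f_0}$ meets $\Lambda(f_0)$ in a single point. In particular $\Lambda(f_0)\cap W^{uu}_{r(f_0)}=\{r(f_0)\}$, so your set $\Sigma_0$ reduces to the origin and carries no information whatsoever about $A$. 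The blender property only says each vertical graph meets $\Lambda$, not that it meets it in many points; the ``spanning'' you invoke in the leaf directions simply does not hold. Nothing in the assumptions of \S\ref{sec-ass} forces the motion of $\Lambda$ to be linear on the $(k-1)$-dimensional leaves, and your limiting argument $h_0\circ\tilde L_{f_0}^{-n}=\tilde L_{f_1}^{-n}\circ h_0$ cannot even get started.

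The paper does something quite different and avoids this obstruction entirely. It never claims the motion is linear. Instead it builds, for each $i$, a system of $k$ foliations $(\mathcal F_{f_i},\mathcal G^1_{f_i},\ldots,\mathcal G^{k-1}_{f_i})$ near $r(f_i)$, where $\mathcal F_{f_i}$ is the strong unstable foliation and the $\mathcal G^j_{f_i}$ are obtained by transporting $\mathcal F_{f_i}$ through the heteroclinic connection (first back to a neighborhood of $p(f_i)$ by an inverse branch, then forward along $W^u_{p(f_i)}$ to $r(f_i)$, then iterating by $f_i^2$). The genericity clause in $(\dag)$ is what makes the resulting tangent hyperplanes at $r(f_i)$ independent, so these foliations give local coordinates $\psi_i$, and $\psi:=\psi_1^{-1}\circ\psi_0$. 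The invariance $\psi(\gamma(f_0))=\gamma(f_1)$ for $\gamma\in\mathcal L$ is \emph{not} deduced from the motion of $\Lambda$: it comes from the fact that the postcritical set approximates each leaf (Assumption~\ref{critical} plus the inclination lemma and the density condition in $(\dag)$), while points of $\mathcal L$ are by definition never postcritical, so a proper-intersection argument pins down the leaf of $\gamma(f_1)$ from that of $\gamma(f_0)$. Only at the very end is the conjugacy for $f_0^2$ passed from $\mathcal L$ to all of $\tilde U_0$ using that $J_k$ is not pluripolar. Your final paragraph (extending $\psi$ to $f_0(\tilde U_0)$ by $f_1\circ\psi\circ f_0^{-1}$) matches the paper; the problem is entirely in how $\psi$ is produced on $U_0$.
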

Observe that, since $f_0(\tilde U_0)\cap U_0=\varnothing$, the first equality in \eqref{eq-conj} is a consequence of the definition of $\psi$ on these sets. However, it will guarantee that $\psi$ gives a conjugacy between $f_0$ and $f_1$ on a neighborhood of the small Julia sets as soon as the same holds between $f_0^2$ and $f_1^2.$
\begin{proof}
For $i\in\{0,1\}$, let $\mathcal F_{f_i}$ be the foliation of $D_{f_i}$ defined in \S~\ref{sec-loc}. Observe that by Corollary \ref{cor-constant}, if $x(f_0)\in\Lambda(f_0)\cap D_{f_0}$ lies on the leaf $\mathcal F_{f_0}(c):=\phi_{f_0}(\pi^{-1}(c))$ then its continuation $x(f_1)$ lies on $\mathcal F_{f_1}(c):=\phi_{f_1}(\pi^{-1}(c))$. Replacing the family of polydisk $\Gamma(f)\subset W^u_{p(f)}$ by a similar family with $\Gamma(f)\subset\crit(f)$ given by Assumption \ref{critical}, we can extend this result to points in $J_k(f_0)\cap D_{f_0}$ coming from the equilibrium lamination. To be more precise, let $\gamma\in\mathcal L$ such that $\gamma(f_0)\in D_{f_0}$. Since sets of the form $(\Gamma_{j_n,l_n}(f_0))$ with $\Gamma(f_0)\subset\crit(f_0)$ can approximate every leaf $\mathcal F_{f_0}(c)$, and since $\gamma(f)$ is never in the postcritical set of $f$, a proper intersection argument shows that $\pi\circ\delta_f(\gamma(f))$ is independent of $f$.

In order to define $\psi$, it suffices to find, for $i\in\{0,1\}$, $k-1$ foliations $(\mathcal G^j_{f_i})_{1\leq j\leq k-1}$ near $r(f_i)$ which satisfy the same invariance property and such that $(\mathcal F_{f_i},\mathcal G^1_{f_i},\ldots,\mathcal G^{k-1}_{f_i})$ defines local coordinates near $r(f_i)$. For this last condition, it is sufficient to check that the $k$ tangent spaces at $r(f_i)$ of these $k$ foliations form a family of $k$ linearly independent hyperplanes.

To this aim, observe first that by Assumption \ref{repelling}, $p(f_i)$ is in the domain of linearization of $r(f_i)$ and thus, there exists $n_0\geq1$ such that $f_i^{n_0}$ sends biholomorphically an open subset $V_i\subset D_{f_i}$ to a neighborhood $V'_i$ of $p(f_i)$. We denote by $v_i\colon V'_i\to V_i$ the associated inverse branch of $f_i^{n_0}$. Moreover, the cone condition in Assumption \ref{hyperbolic} ensures that the leaves of $\mathcal G^0_{f_i}:=f_i^{n_0}(\mathcal F_{f_i|V_i})$ are transverse to $W^s_{p(f_i),loc}$. On the other hand, recall that $(\dag)$ holds on $M$, so there exist $m\in\Nb$ and $x(f_i)\in W_{p(f_i),loc}^u\cap\Lambda(f_i)$ such that $f_i^{m}(x(f_i))=r(f_i)$ and that, by increasing $m$ if necessary, we can assume that $f_i^m$ sends biholomorphically a neighborhood of $x(f_i)$ in $W_{p(f_i),loc}^u$ to a vertical graph $W_m(f_i)$ in $D_{f_i}$. Hence, by the inclination lemma, there exist $n_1\geq1$, a neighborhood $U_i\subset D_{f_i}$ of $r(f_i)$ and a small open set $V_i''\subset V_i'$ close to $p(f_i)$ 
 such that
\begin{itemize}
\item $U_i\subset f_i^2(U_i),$
\item $f^{n_1}_i\colon V_i''\to U_i$ is a biholomorphism whose inverse is denoted by $u_i,$
\item the leaves of $\mathcal G^1_{f_i}:=f_i^{n_1}(\mathcal G^0_{f_i|V''_i})$ are all $C^1$-close to $W_m(f_i)$.  In particular, the point \textbf{(2)} in Definition \ref{def-dag} implies that the tangent space of the leaf of $\mathcal G^1_{f_i}$ containing $r(f_i)$ is a generic hyperplane for $D_{r(f_i)}f^2_i$.
\end{itemize}

This last point, together with our choice of $m$, ensures that each leaf of $\mathcal G^1_{f_i}$ intersects $\Lambda(f_i)$. Moreover, $\mathcal G^1_{f_i}$ has the same invariance property as $\mathcal F_{f_i}$, i.e., if, for some $\gamma\in\mathcal L$, $\gamma(f_0)$ is in $U_0$ and lies on a certain leaf of $\mathcal G^1_{f_0}$ then $\gamma(f_1)$ lies on the corresponding leaf of $\mathcal G^1_{f_1}$. To be more precise, first observe that, by possibly reducing each $U_i$, we can assume that $U_1=\phi_{f_1}\circ\delta_{f_0}(U_0)$. Moreover, as in the beginning of this proof, the fact that each leaf of $\mathcal F_{f_i}$ can be approximated by $\Gamma_{j,l}(f_i)$ in the postcritical set of $f_i$ and properties \textbf{(3)} and  \textbf{(4)} in Definition \ref{def-lami} imply that if $\gamma(f_0)\in U_0$ then
\[\pi\circ\delta_{f_0}\circ v_0\circ u_0(\gamma(f_0))=\pi\circ\delta_{f_1}\circ v_1\circ u_1(\gamma(f_1)).\]

The other foliations are simply defined as $\mathcal G_{f_i}^{j}:=\left(f_i^{2(j-1)}(\mathcal G^1_{f_i})\right)_{|U_i}$. They also have the above invariance property since the same arguments imply
\[\pi\circ\delta_{f_0}\circ v_0\circ u_0\circ g_0^{j-1}(\gamma(f_0))=\pi\circ\delta_{f_1}\circ v_1\circ u_1\circ g_1^{j-1}(\gamma(f_1)),\]
if $\gamma\in\mathcal L$ with $\gamma(f_0)\in U_0$, where $g_i$ is the local inverse of $f_i^2$ near $r(f_i).$
Furthermore, the fact that the leaf of $\mathcal G^1_{f_i}$ containing $r(f_i)$ is a generic hyperplane for $D_{r(f_i)}f^2_i$ ensures that the tangent spaces of $\mathcal F_{f_i}$, $\mathcal G^1_{f_i},\ldots,\mathcal G^{k-1}_{f_i}$ at $r(f_i)$ are $k$ linearly independent hyperplanes. Hence, possibly by reducing $U_i$, these foliations define coordinates on $U_i$, i.e., since $\mathcal F_{f_i}$ (resp. $\mathcal G^j_{f_i}$) corresponds to the fibration defined by $\pi\circ\delta_{f_i}$ (resp. $\pi\circ\delta_{f_i}\circ v_i\circ u_i\circ g_i^{j-1}$), there exists an open subset $\hat U_i\subset\Cb^k$ such that the holomorphic map $\psi_i\colon U_i\to\hat U_i$ defined by
\[\psi_i(y)=\left(\pi\circ\delta_{f_i}(y),\pi\circ\delta_{f_i}\circ v_i\circ u_i(y),\cdots,\pi\circ\delta_{f_i}\circ v_i\circ u_i\circ g_i^{k-2}(y)\right)\]
is biholomorphic. 

Possibly by reducing again these sets, $\psi:=\psi_1^{-1}\circ\psi_0$ is a biholomorphism between $U_0$ and $U_1$. Furthermore, the discussion above and Corollary \ref{cor-constant} imply that if $\gamma\in\mathcal L$ satisfies $\gamma(f_0)\in U_0$ then $\gamma(f_1)\in U_1$ and $\gamma(f_1)=\psi(\gamma(f_0))$. In particular, $\psi(r(f_0))=r(f_1)$. Thus, using that these points are $2$-periodic and possibly by changing one last time $U_0$ and $U_1$, we can assume that, for $i\in\{0,1\}$, there exists a connected neighborhood $\tilde U_i\Subset U_i$ of $r(f_i)$ such that $f_i(\tilde U_i)\cap U_i=\varnothing$ and $f_i^2$ defines a biholomorphism between $\tilde U_i$ and $U_i$. This allows us to extend $\psi$ to $f_0(\tilde U_0)$ by $\psi(y):=f_1\circ\psi(f_0^{-1}(y))$ which artificially gives
\[f_0=\psi^{-1}\circ f_1\circ\psi \ \text{ on } \ \tilde U_0.\]
On the other hand, coming back to $f_0^2$ and $f_1^2$, the fact that $\psi(\gamma(f_0))=\gamma(f_1)$ for each $\gamma\in\mathcal L$ with $\gamma(f_0)\in U_0$ implies that
\begin{equation}\label{eq-conj2}
f_0^2=\psi^{-1}\circ f_1^2\circ\psi \ \text{ on } \ \tilde U_0\cap\Lambda(f_0).
\end{equation}
Since Assumption \ref{hyperbolic} guarantees that the points of the blender lie in the small Julia set which is not contained in an analytic subset of $\tilde U_0$, \cite{fs-cdhd2}, then $\tilde U_0\cap\{\gamma(f_0)\ |\ \gamma\in\mathcal L\}$ is not contained in an analytic subset and the equality \eqref{eq-conj2} holds throughout $\tilde U_0.$
\end{proof}
To emphasize the dependency of $\psi$ on $f_1$, in what follows we will denote by $\psi_f$ the corresponding map for $f\in M$ where $f_0$ stays fixed.

\begin{lemma}\label{le-unbranche}
The closure $\overline{\mathcal L}$ of $\mathcal L$ is an unbranched lamination.
 In particular, $\psi_f$ extends to a conjugacy between $J_k(f_0)$ and $J_k(f).$
\end{lemma}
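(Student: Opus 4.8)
The plan is to establish the two assertions of Lemma \ref{le-unbranche} in sequence: first that $\overline{\mathcal L}$ is an unbranched lamination, then that this forces $\psi_f$ to extend to a conjugacy on the whole small Julia set.

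\textbf{Unbranchedness of $\overline{\mathcal L}$.} First I would recall what it means for the lamination $\overline{\mathcal L}$ to be \emph{branched} at a point: there exist $\gamma,\gamma'\in\overline{\mathcal L}$ and $f\in M$ with $\gamma(f)=\gamma'(f)$ but $\gamma\neq\gamma'$. Suppose such a branching occurs. Since $\mathcal L$ is $F$-invariant with $F$ being $d^k$-to-$1$ on $\mathcal L$, a branching point of $\overline{\mathcal L}$ is either eventually mapped into the grand orbit of the critical set, or it persists under iteration. Here the key input is Lemma \ref{le-localconj}: near the repelling $2$-periodic point $r(f)$ we have constructed, for every pair $f_0,f_1\in M$, a biholomorphism $\psi=\psi_{f_1}$ between neighborhoods $U_0$ and $U_1$ of $r(f_0)$ and $r(f_1)$ which satisfies $\gamma(f_1)=\psi_{f_1}(\gamma(f_0))$ whenever $\gamma\in\mathcal L$ and $\gamma(f_0)\in U_0$. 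In particular, for $\gamma,\gamma'\in\mathcal L$ with both $\gamma(f_0),\gamma'(f_0)\in U_0$, the map $\psi_{f_1}$ being injective gives $\gamma(f_0)\neq\gamma'(f_0)\Rightarrow\gamma(f_1)\neq\gamma'(f_1)$; so no branching can occur between leaves that pass through $U_0$ at the parameter $f_0$. To propagate this to \emph{all} leaves and all parameters, I would use that $r(f)$ lies in the small Julia set $J_k(f)$ (Assumption \ref{hyperbolic}, since $\Lambda(f)\subset J_k(f)$ and $r(f)$ is a repelling periodic point of the blender; in any case $r(f)$ is repelling so it is in $J_k(f)$), hence by the minimality-type property of the dynamics on $J_k$ — more precisely, since the backward orbit of any point of $J_k(f)$ avoiding the exceptional set is dense in $J_k(f)$, and Assumption \ref{exceptional} guarantees the exceptional set is disjoint from $J_k(f)$ — every $\gamma\in\mathcal L$ has some forward iterate $F^n(\gamma)$ with $F^n(\gamma)(f_0)\in U_0$. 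Combining this with property \textbf{(4)} of Definition \ref{def-lami} (that $F\colon\mathcal L\to\mathcal L$ is $d^k$-to-one and, by property \textbf{(3)}, avoids the critical set so $F$ is a local biholomorphism along leaves), a branching $\gamma(f_1)=\gamma'(f_1)$ would produce, after iterating forward to bring both leaves into $U_1$ at parameter $f_1$ (which can be arranged since $f_1$ was arbitrary), a branching inside $U_1$, contradicting injectivity of $\psi_{f_1}$. Finally, passing to the closure $\overline{\mathcal L}$: if $\gamma_j\to\gamma$, $\gamma_j'\to\gamma'$ in $\mathcal L$ with $\gamma(f)=\gamma'(f)$ and $\gamma\neq\gamma'$, then for large $j$ the leaves $\gamma_j,\gamma_j'$ are distinct and close to $\gamma,\gamma'$ near $f$; pushing forward into the linearizing chart at $r$ and using the uniform expansion there together with the equicontinuity of $\mathcal L$ (it is relatively compact in $\mathcal J$), one sees the iterates cannot merge, so the limiting leaves are disjoint — contradiction. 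This gives unbranchedness.

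\textbf{Extension of $\psi_f$ to $J_k$.} Once $\overline{\mathcal L}$ is unbranched, the map $\gamma(f_0)\mapsto\gamma(f)$ is well-defined and injective on $\{\gamma(f_0)\,|\,\gamma\in\overline{\mathcal L}\}$, and by property \textbf{(2)} of Definition \ref{def-lami} this set carries full mass of the equilibrium measure, so its closure is all of $J_k(f_0)$; similarly on the target side. I would then argue that this map is continuous: given $\gamma_j(f_0)\to\gamma(f_0)$ in $J_k(f_0)$, relative compactness of $\overline{\mathcal L}$ in $\mathcal J$ lets us extract a subsequential limit $\tilde\gamma\in\overline{\mathcal L}$ of the $\gamma_j$, and unbranchedness forces $\tilde\gamma(f_0)=\gamma(f_0)\Rightarrow\tilde\gamma=\gamma$, so the whole sequence $\gamma_j$ converges to $\gamma$ in $\mathcal J$, hence $\gamma_j(f)\to\gamma(f)$. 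Thus we get a homeomorphism $\psi_f\colon J_k(f_0)\to J_k(f)$, agreeing on $\{\gamma(f_0)\}$ with the local map from Lemma \ref{le-localconj} near $r(f_0)$. It remains to check $\psi_f\circ f_0=f\circ\psi_f$ on $J_k(f_0)$: this holds on the dense set $\{\gamma(f_0)\,|\,\gamma\in\mathcal L\}$ because $F(\gamma)(f)=f(\gamma(f))$ by definition of $F$ and $\psi_f(\gamma(f_0))=\gamma(f)$, and then extends to $J_k(f_0)$ by continuity. So $\psi_f$ is the desired conjugacy between $f_0|_{J_k(f_0)}$ and $f|_{J_k(f)}$.

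\textbf{Main obstacle.} The delicate point is not the formal extension argument but establishing that unbranchedness really propagates from the local chart $U_0$ near $r(f_0)$ to all of $\mathcal L$ and all parameters — i.e., controlling that no two leaves of $\mathcal L$ can collide at any parameter $f\in M$. The mechanism is that the dynamics on $J_k$ is topologically transitive (backward orbits are dense, using Assumption \ref{exceptional}) so every leaf visits $U_0$ under forward iteration, and $F$ is injective-along-leaves away from the critical set (property \textbf{(3)}); but one must be careful that forward iteration of a hypothetical collision lands inside the \emph{open} set $U_1$ at parameter $f_1$, which requires that $r(f)$ is not in the exceptional set — guaranteed by Assumption \ref{exceptional} — and that the collision is not destroyed or created by passing through the critical set, which is excluded since leaves of $\mathcal L$ avoid $\mathrm{Crit}(f)$ for all $f$. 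Handling the passage to the closure $\overline{\mathcal L}$ cleanly, using the uniform hyperbolicity near $r$ and equicontinuity of $\overline{\mathcal L}$, is the part that needs the most care.
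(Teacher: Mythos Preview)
Your argument for unbranchedness has a genuine gap in its central step. You claim that ``every $\gamma\in\mathcal L$ has some forward iterate $F^n(\gamma)$ with $F^n(\gamma)(f_0)\in U_0$,'' justifying this by density of \emph{backward} orbits. But density of backward orbits of $r(f_0)$ tells you that preimages of $r(f_0)$ accumulate on $\gamma(f_0)$, not that the forward orbit of $\gamma(f_0)$ enters $U_0$. An arbitrary point of $J_k(f_0)$ --- for instance a periodic point outside $U_0$ --- need not have a forward orbit visiting $U_0$. So the forward-iteration strategy does not work. Relatedly, the first part of your argument (for leaves in $\mathcal L$ itself) is redundant: property \textbf{(1)} of Definition~\ref{def-lami} already says $\mathcal L$ is unbranched, so there is nothing to prove there; the entire content of the lemma is about the \emph{closure}. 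Your treatment of the closure (``pushing forward into the linearizing chart \ldots\ one sees the iterates cannot merge'') is then too vague to stand on its own.

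The paper's proof fixes this by going \emph{backward} rather than forward, and by working directly with sequences in $\mathcal L$ approximating the limit leaves. Given $\gamma,\rho\in\overline{\mathcal L}$ with $\gamma(f_1)=\rho(f_1)$, it picks $N$ so that some preimage $x\in f_1^{-N}(\gamma(f_1))$ lies in $U_1$ --- this exists because the backward orbit of $\gamma(f_1)\in J_k(f_1)$ is dense in $J_k(f_1)$, hence meets the open set $U_1\ni r(f_1)$. For approximating sequences $\gamma_n,\rho_n\in\mathcal L$, property \textbf{(4)} (that $F$ is $d^k$-to-$1$ on $\mathcal L$) produces lifts $\tilde\gamma_n,\tilde\rho_n\in\mathcal L$ with $F^N(\tilde\gamma_n)=\gamma_n$, $F^N(\tilde\rho_n)=\rho_n$ and $\tilde\gamma_n(f_1),\tilde\rho_n(f_1)\to x\in U_1$. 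Now Lemma~\ref{le-localconj} applies at the parameter $f_0$ (after passing to subsequential limits $\tilde\gamma,\tilde\rho$): injectivity of $\psi_{f_1}$ forces $\tilde\gamma(f_0)=\tilde\rho(f_0)$, and applying $f_0^N$ gives $\gamma(f_0)=\rho(f_0)$. Since $f_1$ was arbitrary one obtains $\gamma=\rho$. Note how applying $F^N$ forward at the end is harmless (it is deterministic), whereas your forward-first approach would require inverting $F$, which is $d^k$-to-$1$. Your extension argument in the second part is fine and matches the paper's.
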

\begin{proof}
Let $f_1\in M$ and let $U_i$, $i\in\{0,1\}$, be as in Lemma \ref{le-localconj}. Let $(\gamma_n)_{n\geq0}$ and $(\rho_n)_{n\geq0}$ be two sequences in $\mathcal L$ which converge toward two maps from $M$ to $\Pb^k$, $\gamma$ and $\rho$ respectively. Assume further that $\gamma(f_1)=\rho(f_1)$. Our first aim is to show that $\gamma=\rho$ on $M$.

Let $N\geq0$ be such that there exists $y\in f_1^{-N}(\gamma(f_1))\cap U_1$. For $n\geq0$ large enough, let $y_n\in U_1$ (resp. $z_n\in U_1$) be such that $f_1^N(y_n)=\gamma_n(f_1)$, $f_1^N(z_n)=\rho_n(f_1)$ and
$$\lim_{n\to\infty}y_n=\lim_{n\to\infty}z_n=y.$$
The point \textbf{(4)} in Definition \ref{def-lami} gives the existence of two sequences $(\tilde\gamma_n)_{n\geq0}$ and $(\tilde\rho_n)_{n\geq0}$ in $\mathcal L$ such that $F^N(\tilde\gamma_n)=\gamma_n$, $F^N(\tilde\rho_n)=\rho_n$ and $\tilde\gamma_n(f_1)=y_n$, $\tilde\rho_n(f_1)=z_n$. Up to a subsequence, we can assume that $(\tilde\gamma_n)_{n\geq0}$ and $(\tilde\rho_n)_{n\geq0}$ converge to two maps $\tilde\gamma$ and $\tilde\rho$. Since, by Lemma \ref{le-localconj} $\psi_{f_1}(\tilde\gamma_n(f_0))=\tilde\gamma_n(f_1)$ and $\psi_{f_1}(\tilde\rho_n(f_0))=\tilde\rho_n(f_1)$ we have $\psi_{f_1}(\tilde\gamma(f_0))=\psi_{f_1}(\tilde\rho(f_0))$ and thus $\tilde\gamma(f_0)=\tilde\rho(f_0)$ by injectivity of $\psi_{f_1}$. By applying $F^N$, we also have $\gamma(f_0)=\rho(f_0)$. The same arguments with an arbitrary map $f\in M$ give $\gamma=\rho$. This proves that $\overline{\mathcal L}$ is unbranched.

From this, we can define, for every $\gamma\in\overline{\mathcal L}$, $\psi_f(\gamma(f_0)):=\gamma(f)$. Since $\overline{\mathcal L}$ is unbranched, it extends $\psi_f$ as a conjugacy between $J_k(f_0)$ and $J_k(f)$.
\end{proof}
The extension of $\psi_f$ to a neighborhood of $J_k(f_0)$ comes from the following partial generalization of \cite{buff-epstein} to higher dimensions.

\begin{proposition}\label{prop-conj}
Let $f_0$ and $f_1$ be two endomorphisms of $\Pb^k$ of degree $d\geq2$. Assume there exist an open set $V_0$ and a continuous map $\psi\colon V_0\cup J_k(f_0)\to\Pb^k$ such that
\begin{itemize}
\item $\psi_{|J_k(f_0)}$ is a homeomorphism from $J_k(f_0)$ to $J_k(f_1)$ such that $\psi\circ f_0=f_1\circ\psi$ on $J_k(f_0),$
\item $V_0\cap J_k(f_0)\neq\varnothing$ and $\psi_{|V_0}$ is holomorphic.
\end{itemize}
Assume also that the exceptional set $\mathcal E(f_0)$ of $f_0$ is disjoint from $J_k(f_0)$. Then, there exist two open neighborhoods $N_1\subset N_2$ of $J_k(f_0)$ such that
\begin{itemize}
\item $f_0(N_1)\subset N_2$,
\item $\psi$ extends to a holomorphic map $\tilde\psi$ on $N_2$ such that $f_1\circ\tilde\psi=\tilde\psi\circ f_0$ on $N_1$.
\end{itemize}
\end{proposition}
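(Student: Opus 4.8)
The plan is to extend $\psi$ by repeatedly using the functional equation $f_1\circ\psi=\psi\circ f_0$ together with the expanding property of $f_0$ on its small Julia set. The key point is the following classical fact (see e.g.\ \cite{dinhsibony2,fs-cdhd2}): since the exceptional set $\mathcal E(f_0)$ is disjoint from $J_k(f_0)$, the small Julia set is \emph{totally invariant} in the sense that for every point $z\in J_k(f_0)$ and every neighborhood $N$ of $J_k(f_0)$, the iterated preimages $f_0^{-n}(z)$ become dense in $J_k(f_0)$ and, more importantly, there exists a neighborhood $N$ of $J_k(f_0)$ and an integer $n_0$ such that $f_0^{-n_0}(N)$ is a neighborhood of $J_k(f_0)$ on which $f_0^{n_0}$ is, on each connected component meeting $J_k(f_0)$, at worst a branched cover with branch locus inside $\crit(f_0^{n_0})$, but the critical orbit stays away from $J_k(f_0)$ up to a controlled set. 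More precisely, one chooses $N_2$ small enough that $\mathcal E(f_0)\cap N_2=\varnothing$, and then one inductively constructs $\tilde\psi$ on shrinking neighborhoods.

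The main steps I would carry out are: \textbf{(1)} Fix a neighborhood $V_0$ of a point of $J_k(f_0)$ where $\psi$ is already holomorphic. Using that $J_k(f_0)$ is the support of the maximal entropy measure, the preimages $\bigcup_n f_0^{-n}(V_0\cap J_k(f_0))$ are dense in $J_k(f_0)$; in fact for $n$ large $f_0^{-n}(V_0)$ covers a full neighborhood of $J_k(f_0)$ minus a neighborhood of finitely many critical-orbit issues. \textbf{(2)} On each small ball $W$ meeting $J_k(f_0)$, pick $n$ so that $f_0^n(W)\subset V_0$ and so that $f_0^n|_W$ avoids the critical set (possible away from $\crit(f_0^n)$, which is disjoint from $J_k(f_0)$ after one more backward iterate since the postcritical set cannot accumulate on all of $J_k(f_0)$ unless $f_0$ is exceptional — here one uses $\mathcal E(f_0)\cap J_k(f_0)=\varnothing$). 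Then on the branches of $f_1^{-n}$ near $\psi(W\cap J_k(f_0))$, set $\tilde\psi:=(f_1^n)^{-1}\circ\psi\circ f_0^n$, choosing the branch of $f_1^{-n}$ that agrees with $\psi$ on $W\cap J_k(f_0)$; this is well-defined because $\psi$ conjugates on the Julia sets and $J_k(f_1)$ is also non-exceptional, so the branch is uniquely pinned down by its boundary values on $W\cap J_k(f_0)$. \textbf{(3)} Check that these locally-defined holomorphic maps agree on overlaps: on overlaps they agree on a set (a relatively open subset of $J_k(f_0)$) which is not contained in any proper analytic subset, hence by the identity principle they agree everywhere on the overlap. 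This glues them to a holomorphic $\tilde\psi$ on a neighborhood $N_2$ of $J_k(f_0)$. \textbf{(4)} Shrink to $N_1\Subset N_2$ with $f_0(N_1)\subset N_2$; the functional equation $f_1\circ\tilde\psi=\tilde\psi\circ f_0$ holds on $J_k(f_0)$ by hypothesis and hence, both sides being holomorphic on $N_1$ and agreeing on a non-pluripolar set, it holds on all of $N_1$.

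The hard part will be step \textbf{(2)}: controlling the critical set of $f_0^n$ and choosing the correct inverse branch of $f_1^n$ consistently across all the little balls. The subtlety is that $\crit(f_0^n)$ grows with $n$, so one must argue that near each point $z\in J_k(f_0)$ there is a \emph{fixed} radius $\delta$ and infinitely many $n$ with $f_0^n$ injective on $B(z,\delta)$ — this is precisely where the hypothesis $\mathcal E(f_0)\cap J_k(f_0)=\varnothing$ enters, guaranteeing that $J_k(f_0)$ is not contained in the (forward) postcritical set and that backward branches of $f_0$ eventually avoid $\crit(f_0)$; this uses the structure of the Green current and the fact that $f_0^{-1}$ expands the Fubini--Study metric near $J_k(f_0)$ once one passes to a high enough iterate (Briend--Duval type estimates, \cite{briendduval}). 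For $f_1$, the corresponding branch selection is automatic once one knows $\psi$ already realizes the conjugacy on the Julia sets: the germ of $\psi$ at a point of $J_k(f_0)$ together with the conjugacy relation forces which branch of $f_1^{-n}$ to take, and the resulting germ is holomorphic by the removable-singularity/normal-families argument since $\psi$ is bounded.

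I expect that the total-invariance and non-exceptionality input (the analogue of ``Julia set not contained in a pluripolar set'' plus the backward-contraction estimate) is exactly the technical heart, and everything after that — the gluing via the identity principle and the propagation of the functional equation — is routine. In the write-up I would isolate the branch-selection as a lemma and cite \cite{dinhsibony2,fs-cdhd2,briendduval} for the dynamical facts about $J_k$.
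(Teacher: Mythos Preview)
Your overall architecture --- extend locally using the functional equation, then glue via the identity principle on the non-pluripolar set $J_k(f_0)$ --- matches the paper's, and your steps \textbf{(3)}--\textbf{(4)} are exactly how the paper finishes. But the core extension mechanism in step \textbf{(2)} is oriented the wrong way, and this is a genuine gap.

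You want, for each $z\in J_k(f_0)$, a small ball $W\ni z$ and an integer $n$ with $f_0^n(W)\subset V_0$, so that you can set $\tilde\psi=(f_1^n)^{-1}\circ\psi\circ f_0^n$. But this asks the \emph{forward} orbit of $z$ to enter $V_0$, which is false in general: any repelling periodic point of $f_0$ lying outside $V_0$ (and $J_k(f_0)$ is full of them) has forward orbit disjoint from $V_0$. Equivalently, $\bigcup_n f_0^{-n}(V_0)$ meets $J_k(f_0)$ in an open dense subset, but it is \emph{not} a neighborhood of $J_k(f_0)$. So the covering claimed in step \textbf{(1)} fails, and the formula in \textbf{(2)} cannot even be written down at such points. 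The hypothesis $\mathcal E(f_0)\cap J_k(f_0)=\varnothing$ is also not doing what you say: it has nothing to do with backward branches of $f_0$ avoiding the critical set.

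The paper (following Buff--Epstein) goes in the opposite direction: one shows that the set of $x\in J_k(f_0)$ near which $\psi$ admits a holomorphic extension is \emph{forward} $f_0$-invariant. Since $J_k(f_0)\cap\mathcal E(f_0)=\varnothing$, one has $J_k(f_0)\subset f_0^N(V_0)$ for some $N$, and hence the extension reaches every point of $J_k(f_0)$. The substantive work is to push the extension through a \emph{critical} point $x$: given two local preimages $y_1,y_2\in V_x$ of a point $y$ near $f_0(x)$, one must show $f_1(\psi(y_1))=f_1(\psi(y_2))$, so that $f_1\circ\psi$ descends through $f_0$. The paper does this by lifting a path in a small ball $W_x\setminus f_0(\crit f_0)$ from $y$ to a nearby point of $J_k(f_0)$ (using that $f_0\colon\Pb^k\setminus f_0^{-1}(A)\to\Pb^k\setminus A$, with $A=f_0(\crit f_0)$, is a covering); along the two lifted inverse branches $g_1,g_2$ one has $f_1\circ\psi\circ g_1=f_1\circ\psi\circ g_2$ on $J_k(f_0)$ by the conjugacy hypothesis, and the non-pluripolarity of $J_k(f_0)$ promotes this equality to the whole connected neighborhood. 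No inverse branches of $f_1$ are ever selected. This monodromy argument through the critical locus is the technical heart that your sketch is missing; after it, the gluing and propagation of the functional equation are exactly as you describe.
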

As a first step, we show that the set of points where $\psi$ admits a local holomorphic extension is invariant under the dynamics. To this end, we need to lift small paths via $f_0$ in a controlled way. This is done through the following two elementary lemmas, where we denote by $C$ the critical set of $f_0$, set $A := f_0(C)$ its set of critical values, and define $B := f_0^{-1}(A)$.
\begin{lemma}\label{le-covering}
Let $y\in\Pb^k\setminus B$, and let $\gamma'\colon[0,1]\to\Pb^k\setminus A$ be a path such that $\gamma'(0)=f_0(y)$. Then there exists a unique path $\gamma\colon[0,1]\to\Pb^k\setminus B$ such that $\gamma(0)=y$ and $f_0(\gamma(t))=\gamma'(t)$ for all $t\in[0,1].$
\end{lemma}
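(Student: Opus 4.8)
The plan is to use the fact that $f_0 \colon \mathbb{P}^k \setminus B \to \mathbb{P}^k \setminus A$ is an unbranched covering map, so the statement becomes the standard unique path-lifting property of covering spaces. First I would verify that $f_0$ restricts to a covering map between these two sets. Since $A = f_0(C)$ is the critical value locus and $B = f_0^{-1}(A)$, the complement $\mathbb{P}^k \setminus B$ is exactly the set of points where $Df_0$ is invertible and whose image avoids $A$; more precisely, $f_0^{-1}(\mathbb{P}^k \setminus A) = \mathbb{P}^k \setminus B$, and on this set $f_0$ is a local biholomorphism (no critical points, since $C \subset B$). A proper local homeomorphism onto its image is a covering map, and $f_0$ is proper (it is a finite morphism of projective varieties), so $f_0 \colon \mathbb{P}^k \setminus B \to \mathbb{P}^k \setminus A$ is a finite covering map of degree $d^k$.

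Granting this, the conclusion is immediate from covering space theory: given the path $\gamma' \colon [0,1] \to \mathbb{P}^k \setminus A$ with $\gamma'(0) = f_0(x)$ and the point $x \in f_0^{-1}(\gamma'(0)) \subset \mathbb{P}^k \setminus B$, the homotopy (here just path) lifting property of the covering $f_0$ provides a unique continuous lift $\gamma \colon [0,1] \to \mathbb{P}^k \setminus B$ with $\gamma(0) = x$ and $f_0 \circ \gamma = \gamma'$. Uniqueness is the standard fact that two lifts of the same path agreeing at one point agree everywhere (the set where they agree is open and closed in $[0,1]$, using that $f_0$ is a local homeomorphism).

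The only genuine point to check carefully — and the one I expect to be the main (minor) obstacle — is the properness/covering claim, in particular that $f_0^{-1}(\mathbb{P}^k \setminus A)$ is precisely $\mathbb{P}^k \setminus B$ with no critical points left over: one needs $C \subseteq B$, which holds because $C \subseteq f_0^{-1}(f_0(C)) = f_0^{-1}(A) = B$. Everything else is a formal consequence. I would therefore state this lemma as essentially \emph{``$f_0$ is an unbranched covering away from $B$, hence paths lift uniquely''} and give the two-line argument above.
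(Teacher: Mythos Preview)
Your proposal is correct and follows exactly the same approach as the paper: the paper's proof is the single sentence ``This simply comes from the fact that $f_0\colon\Pb^k\setminus B\to\Pb^k\setminus A$ is a covering map.'' You have simply unpacked the verification that it is indeed a covering map (proper local homeomorphism, with $C\subset B$) and then invoked the standard path-lifting property, which is precisely what the paper leaves implicit.
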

\begin{proof}
This comes from the fact that $f_0\colon\Pb^k\setminus B\to\Pb^k\setminus A$ is a covering map.
\end{proof}

\begin{lemma}\label{le-vois}
Let $y\in\Pb^k$ and let $V$ be a neighborhood of $y$. There exists a connected open neighborhood $V_y$ (resp. $W_y$) of $y$ (resp. of $f_0(y)$) such that
\begin{itemize}
\item $V_y\subset V$ and $W_y\subset f_0(V_y),$
\item if $\gamma'\colon[0,1]\to W_y\setminus A$ is a path and $z\in f_0^{-1}(\gamma'(0))\cap V_y$ then there exists a path $\gamma$ in $V_y$ with $\gamma(0)=z$ and $f_0(\gamma(t))=\gamma'(t)$ for all $t\in[0,1].$
\end{itemize}
\end{lemma}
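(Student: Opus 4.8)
The plan is to prove Lemma \ref{le-vois} as a routine compactness/covering argument built on Lemma \ref{le-covering}, which already gives path-lifting for the unramified covering $f_0\colon\Pb^k\setminus B\to\Pb^k\setminus A$. The point of Lemma \ref{le-vois} is a \emph{local} and \emph{uniform} version of this, valid even near points of $B$ (in particular near critical points), provided one stays inside a sufficiently small neighborhood $V$ of $x$. First I would treat the generic case $x\notin B$: then $f_0$ is a local biholomorphism at $x$, so one picks $V_x\subset V$ a small ball on which $f_0$ is injective with $f_0(V_x)$ open, sets $W_x\Subset f_0(V_x)$ to be a small ball around $f_0(x)$, and the lifting $\gamma$ of any path $\gamma'$ in $W_x$ is simply $\gamma=(f_0|_{V_x})^{-1}\circ\gamma'$, which stays in $V_x$; the constraint $\gamma(0)=y$ forces $y=(f_0|_{V_x})^{-1}(\gamma'(0))$, so there is nothing to choose.

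Next I would treat the case $x\in B$, which includes $x\in C$. Here the local model is that $f_0$ near $x$ is a finite branched cover onto a neighborhood of $f_0(x)$: there is a connected open $V_x\subset V$ containing $x$ such that $f_0(V_x)$ is open and $f_0|_{V_x}\colon V_x\to f_0(V_x)$ is proper with finite fibers, and such that $V_x\setminus f_0^{-1}(A)$ is connected and $f_0$ restricted to it is a genuine (unramified) covering of $f_0(V_x)\setminus A$ — this is the standard normal form for finite holomorphic maps, or can be extracted by taking $V_x$ to be a connected component of $f_0^{-1}(W)$ for $W$ a small enough ball around $f_0(x)$ and shrinking. Choose $W_x$ to be such a ball, so $W_x\subset f_0(V_x)$. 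Given a path $\gamma'\colon[0,1]\to W_x\setminus A$ and a point $y\in f_0^{-1}(\gamma'(0))\cap V_x$, note $y\notin f_0^{-1}(A)$ since $\gamma'(0)\notin A$, so $y$ lies in the unramified part $V_x\setminus f_0^{-1}(A)$; the path-lifting property of the covering $f_0\colon V_x\setminus f_0^{-1}(A)\to W_x\setminus A$ (its total space need not be simply connected, but unique path lifting for coverings requires no such hypothesis) produces the desired $\gamma$ in $V_x$ with $\gamma(0)=y$ and $f_0\circ\gamma=\gamma'$.

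The only genuinely delicate point is ensuring that the neighborhood $V_x$ can be chosen so that $V_x\setminus f_0^{-1}(A)$ is connected and maps onto $W_x\setminus A$ as a covering: one must avoid the situation where $V_x\setminus f_0^{-1}(A)$ breaks into several components over which $f_0$ has different behavior, or where a component fails to surject onto $W_x\setminus A$. This is handled by first shrinking to a neighborhood on which $f_0^{-1}(A)\cap V_x$ is an analytic hypersurface through $x$ (using that $A$ is an analytic hypersurface and $f_0$ is finite), then invoking that the complement of an analytic hypersurface in a connected open set is connected, so $V_x\setminus f_0^{-1}(A)$ is connected, and finally using properness of $f_0|_{V_x}$ together with openness to see that the image of $V_x\setminus f_0^{-1}(A)$ contains $W_x\setminus A$ after replacing $W_x$ by a slightly smaller ball. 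I expect this bookkeeping — reconciling ``$W_x\subset f_0(V_x)$'' with the covering property over $W_x\setminus A$ — to be the main (though routine) obstacle; everything else is immediate from Lemma \ref{le-covering} and elementary covering-space theory.
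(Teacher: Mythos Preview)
Your approach is correct, but the paper's proof takes a different and somewhat slicker route. Rather than splitting into cases and verifying that $f_0|_{V_x}\colon V_x\setminus f_0^{-1}(A)\to W_x\setminus A$ is a covering (which requires the connectedness argument and the properness check you outline), the paper works globally: it chooses any connected $V_x\subset V$ with $f_0^{-1}(f_0(x))\cap\partial V_x=\varnothing$, then uses a \L ojasiewicz-type estimate (or just that $f_0$ is finite and open) to get $W_x$ small enough that $f_0(\partial V_x)\cap W_x=\varnothing$. The lift $\gamma$ is then produced directly by Lemma~\ref{le-covering} as a path in $\Pb^k\setminus B$, and the disjointness $f_0(\partial V_x)\cap W_x=\varnothing$ forces $\gamma$ to remain in $V_x$ by a continuity argument.

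What each approach buys: the paper's argument is uniform (no case distinction on whether $x\in B$), avoids checking that the local restriction is a covering, and reduces everything to the single global covering already established in Lemma~\ref{le-covering}. Your approach, by contrast, is more structural---it makes the local branched-cover picture explicit and does not need any quantitative estimate on $f_0(\partial V_x)$---at the cost of the extra bookkeeping you flagged (connectedness of $V_x\setminus f_0^{-1}(A)$, surjectivity onto $W_x\setminus A$). Both are standard and complete.
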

Notice that in the following proof we use a Lojasiewicz type inequality but the fact that $f$ is finite and open is sufficient.
\begin{proof}
Let $V_y\subset V$ be a connected open neighborhood of $y$ such that $f_0^{-1}(f_0(y))\cap\partial V_y=\varnothing$, i.e., $\dist(\partial V_y,f_0^{-1}(f_0(y)))=a$ with $a>0$. A Lojasiewicz type inequality (\cite[Corollary 4.12]{fs-cdhd2} when $k=2$) gives that there exists a constant $c>0$, depending only on $f_0$, such that
\[\dist(f_0(\partial V_y),f_0(y))\geq ca^{d^k}.\]
Since $f_0$ is an open mapping, there exists $\epsilon>0$ such that $\epsilon<ca^{d^k}$ and $W_y:=B(f_0(y),\epsilon)\subset f_0(V_y)$. In particular, $f_0(\partial V_y)\cap W_y=\varnothing.$
Hence, if $\gamma'\colon[0,1]\to W_y\setminus A$ is a path and $z\in f_0^{-1}(\gamma'(0))\cap V_y$ then by Lemma \ref{le-covering} we can lift $\gamma'$ to a path $\gamma$ in $\Pb^k$ such that $\gamma(0)=z$. Since $\gamma'([0,1])\subset W_y$ and $f_0(\partial V_y)\cap W_y=\varnothing$, we must have $\gamma([0,1])\subset V_y.$
\end{proof}

\begin{lemma}\label{le-image}
The set of points $y$ in $J_k(f_0)$ where $\psi$ admits a holomorphic extension in a neighborhood of $y$ is $f_0$-invariant.
\end{lemma}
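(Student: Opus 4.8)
\textbf{Proof plan for Lemma \ref{le-image}.}

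The plan is to show directly that if $\psi$ extends holomorphically near a point $x\in J_k(f_0)$, then it extends holomorphically near $f_0(x)$ as well; by the semi-conjugacy relation on $J_k(f_0)$ this is the natural direction to prove, since $f_0$ is an open map and its restriction away from the critical set is a covering. So suppose $\tilde\psi$ is a holomorphic extension of $\psi$ defined on an open neighborhood $V\ni x$, which we may shrink at will. I would first treat the generic case $x\notin B=f_0^{-1}(A)$, i.e. $f_0(x)\notin A$: here $f_0$ is a local biholomorphism near $x$, so $f_0(V)$ is an open neighborhood of $f_0(x)$ and $f_0^{-1}$ has a holomorphic local inverse branch $\sigma$ with $\sigma(f_0(x))=x$. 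Then set $\eta:=f_1\circ\tilde\psi\circ\sigma$ on $f_0(V)$; this is holomorphic, and on $J_k(f_0)\cap f_0(V)$ we have, using $\psi\circ f_0=f_1\circ\psi$ on $J_k(f_0)$ and the fact that $\sigma$ maps $J_k(f_0)\cap f_0(V)$ into $J_k(f_0)$ (which holds because $f_0^{-1}(J_k(f_0))=J_k(f_0)$), that $\eta=f_1\circ\psi\circ\sigma=\psi\circ f_0\circ\sigma=\psi$. Hence $\eta$ is a holomorphic extension of $\psi$ near $f_0(x)$, so $f_0(x)$ is in the good set.

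It remains to handle $x\in B$, i.e. $f_0(x)\in A$ is a critical value. The key point is that one must still produce a holomorphic extension near $f_0(x)$, and for this I would argue as follows. Shrink $V$ to a connected neighborhood $V_x$ of $x$ with an associated ball $W_x=B(f_0(x),\epsilon)\subset f_0(V_x)$ as produced by Lemma \ref{le-vois}, so that every path in $W_x\setminus A$ lifts, staying inside $V_x$, starting from any preimage in $V_x$ of its initial point. Now define a candidate extension on $W_x\setminus A$ by $\eta(w):=f_1(\tilde\psi(y))$ where $y$ is any point of $f_0^{-1}(w)\cap V_x$. I would check this is well-defined and holomorphic on $W_x\setminus A$: well-definedness follows because any two choices $y,y'$ of preimage in $V_x$ are joined by a lift of a loop at $w$ in $W_x\setminus A$, and $\tilde\psi$ followed by $f_1$ is constant along monodromy precisely when the starting point lies in $J_k(f_0)$ — here one uses that $V_x$ can be chosen meeting $J_k(f_0)$ and that $J_k(f_0)$ is fully invariant, together with a connectedness/identity-principle argument to spread the monodromy-invariance of $f_1\circ\tilde\psi$ from $J_k(f_0)\cap V_x$ to all of $V_x$; holomorphy is local and follows by composing with a local inverse branch of $f_0$ away from $C$. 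Then $\eta$ is bounded near the analytic set $A$, so by Riemann's removable singularity theorem it extends holomorphically across $A\cap W_x$, giving a holomorphic map on a full neighborhood of $f_0(x)$ which agrees with $\psi$ on $J_k(f_0)$ there (again by the semi-conjugacy on the dense-in-$J_k$ part and continuity of $\psi$). This shows $f_0(x)$ is in the good set and completes the proof.

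The main obstacle is the well-definedness of $\eta$ at a critical value: a priori the finitely many local preimages $y$ of $w$ in $V_x$ give different values $f_1(\tilde\psi(y))$, and one must rule this out. The mechanism is that the value $f_1(\tilde\psi(\cdot))$ is invariant under the monodromy action obtained by lifting loops in $W_x\setminus A$, because on $J_k(f_0)$ the composition $f_1\circ\psi=\psi\circ f_0$ is literally a function of $f_0$; the delicate step is propagating this invariance off of $J_k(f_0)$ using the identity principle on the connected lift domains, for which one needs $V_x\cap J_k(f_0)\neq\varnothing$ and the local branches of $f_0^{-1}$ to carry $J_k(f_0)$ into $J_k(f_0)$. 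Once this invariance is in place, the removable singularity argument is routine, and the disjointness of the exceptional set from $J_k(f_0)$ (hypothesis of Proposition \ref{prop-conj}) is what will later let one iterate this extension to cover a whole neighborhood of $J_k(f_0)$.
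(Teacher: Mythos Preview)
Your approach coincides with the paper's: push $\tilde\psi$ forward via $w\mapsto f_1(\tilde\psi(y))$ for a preimage $y\in f_0^{-1}(w)\cap V_x$, check this is independent of $y$, then remove the singularity along $A$. The delicate point, as you correctly identify, is well-definedness when $f_0(x)\in A$.

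Your justification there has two weaknesses. First, the assertion that any two preimages $y,y'\in f_0^{-1}(w)\cap V_x$ are joined by the lift of a loop in $W_x\setminus A$ (transitivity of the monodromy) is not argued. It can be made true by first replacing $V_x$ with the connected component of $f_0^{-1}(W_x)\cap V_x$ containing $x$ and then noting that the complement of the analytic set $B$ in a connected complex open set remains connected, but you skip this. Second, and more substantively, your ``identity-principle argument'' to pass from $w\in J_k(f_0)$ to arbitrary $w$ requires, in dimension $k\ge 2$, that $J_k(f_0)$ be \emph{nowhere pluripolar}; you never invoke this, and it is exactly the property the paper uses (citing \cite{fs-cdhd2}). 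Without it the identity theorem does not apply.

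The paper in fact bypasses monodromy altogether. Given any two preimages $y_1,y_2$ of $y$, it picks $z\in(W_x\cap J_k(f_0))\setminus A$ (existence by non-pluripolarity), joins $y$ to $z$ by a simple path in $W_x\setminus A$, and lifts from $y_1$ and from $y_2$ to obtain two inverse branches $g_1,g_2$ on a connected tubular neighbourhood $\Omega$ of the path. On $\Omega\cap J_k(f_0)$ both $f_1\circ\psi\circ g_i$ equal $\psi$ by the conjugacy on $J_k$, and since $\Omega\cap J_k(f_0)$ is non-pluripolar the two holomorphic functions agree on all of $\Omega$, in particular at $y$. This is cleaner than routing through transitivity of the monodromy action.
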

\begin{proof}
Let $y\in J_k(f_0)$ be such a point and let $V$ be a neighborhood of $y$ where $\psi$ extends holomorphically. The interesting case is when $y$ is a critical point of $f_0$. Let $V_y$ and $W_y$ be the connected open neighborhood of $y$ and $f_0(y)$ respectively given by Lemma \ref{le-vois}. Let $z\in W_y\setminus A$ and let $z_1$ and $z_2$ be two points in $f_0^{-1}(z)\cap V_y$. The goal is to show that $f_1(\psi(z_1))=f_1(\psi(z_2))$ since in that case, we can define $\psi$ on $W_y\setminus A$ using local inverse branches of $f_0$ and the definition will extend to $W_y$.

Since $y$ is in $J_k(f_0)$, the same holds for $f_0(y)$. The fact that $J_k(f_0)$ is nowhere pluripolar \cite{fs-cdhd2} implies the existence of $w\in (W_y\cap J_k(f_0))\setminus A$. Let $\gamma'$ be a simple path in $W_y\setminus A$ between $z$ and $w$. By Lemma \ref{le-vois}, it admits two lifts $\gamma_1$ and $\gamma_2$ in $V_y$ such that $\gamma_1(0)=z_1$ and $\gamma_2(0)=z_2$. The end points $w_1:=\gamma_1(1)$ and $w_2:=\gamma_2(1)$ are preimages of $w$ and thus are in $J_k(f_0)$. Since $\gamma'$ is simple, by analytic continuation there exist a connected neighborhood $\Omega$ of $\gamma'([0,1])$ and two holomorphic maps, $g_1$ and $g_2$, from $\Omega$ to $V_y$ such that for $i\in\{1,2\}$,
\begin{itemize}
\item $g_i$ is an inverse branch of $f_0$, i.e., $f_0\circ g_i=\id_\Omega$,
\item $\gamma_i(t)=g_i(\gamma'(t))$ for all $t\in[0,1]$.
\end{itemize}
Since $\psi_{|J_k(f_0)}$ conjugates $f_0$ to $f_1$ on $J_k(f_0)$, we have $f_1\circ\psi\circ g_i=\psi$ on $\Omega\cap J_k(f_0)$ for $i\in\{1,2\}$. Hence, the fact that $\Omega\cap J_k(f_0)$ is not pluripolar and the connectedness of $\Omega$ implies that $f_1\circ\psi\circ g_1=f_1\circ\psi\circ g_2$ on $\Omega$. In particular $f_1(\psi(z_1))=f_1(\psi(g_1(z)))=f_1(\psi(g_2(z)))=f_1(\psi(z_2))$.
\end{proof}

From this, the proof of Proposition \ref{prop-conj} is identical to the one of \cite[Lemma 3]{buff-epstein} but we include it here for completeness.
\begin{proof}[Proof of Proposition \ref{prop-conj}]
Since $J_k(f_0)\cap\mathcal E(f_0)=\varnothing$, there exists $N\geq1$ such that $J_k(f_0)\subset f_0^N(U_1)$. Hence, Lemma \ref{le-image} implies that for all $y\in J_k(f_0)$ there is a holomorphic extension $\psi_y$ of $\psi$ in a neighborhood of $y$. In particular, there exists $r_y>0$ such that $\psi_y$ is defined on $B(y,3r_y)$. Observe that if $y,z\in J_k(f_0)$ are such that $r_y\leq r_z$ and $B(y,r_y)\cap B(z,r_z)\neq\varnothing$ then $B(y,r_y)\subset B(z,3r_z)$. In particular, by non-pluripolarity of $B(y,r_y)\cap J_k(f_0)$, we have that $\psi_y=\psi_z$ on $B(y,r_y)$. Hence, $\psi$ has a holomorphic extension $\tilde\psi$ on
\[N_2:=\bigcup_{y\in J_k(f_0)}B(y,r_y).\]
By continuity of $f_0$, there exists an open neighborhood $N_1\subset N_2$ of $J_k(f_0)$ such that $f_0(N_1)\subset N_2$. We can also assume that each connected component of $N_1$ intersects $J_k(f_0)$. If $N$ is such connected component then $f_1\circ\tilde\psi=\tilde\psi\circ f_0$ on $N\cap J_k(f_0)$ by definition of $\psi$ and thus by analytic continuation $f_1\circ\tilde\psi=\tilde\psi\circ f_0$ on $N$.
\end{proof}

This allows us to complete to proof of Theorem \ref{th-isotrivial}.
\begin{proof}[Proof of Theorem \ref{th-isotrivial}]
Let $M$ be an analytic subset of $\Omega$ satisfying $(\star)$. Let $f_0$ and $f_1$ be two elements of $M$. By Lemma \ref{le-localconj} and Lemma \ref{le-unbranche}, there exists a map $\psi$, given on $J_k(f_0)$ by the unbranched holomorphic motion $\overline{\mathcal L}$, which satisfies the assumptions of Proposition \ref{prop-conj}. Observe that we use Assumption \ref{exceptional} here, which ensures that $J_k(f_0)$ is disjoint from the exceptional set $\mathcal E(f_0)$. Hence, there are two neighborhoods $N_1\subset N_2$ of $J_k(f_0)$ and a holomorphic map $\tilde\psi$ on $N_2$ such that $f_1\circ\tilde\psi=\tilde\psi\circ f_0$ on $N_1$. This directly implies that all the periodic points in $J_k(f_0)$ can be followed holomorphically on $M$ and that their multipliers are constant on $M$.
\end{proof}

\section{Existence of a suitable open subset in $\mathrm{End}_d^k$}\label{sec-existence}
The aim of this section is to prove the following existence statement.
\begin{theorem}\label{th-existence}
There exist $(a,\epsilon,\sigma_3,\ldots,\sigma_k)\in(\Rb_{>0})^k$ and a small perturbation $f\in\mathrm{Poly}_d^k$ of the map $f_0:\mathbb{C}^k\to\mathbb{C}^k$ given by
\[f_0(z,w,y_3,\ldots,y_k)=(e^{i\pi/4}z+\epsilon w,a(w^2-1),\sigma_3y_3,\ldots,\sigma_ky_k)\]
such that $f$ admits a neighborhood $\Omega$ in $\mathrm{End}_d^k$ which satisfies all the assumptions described in \S~\ref{sec-ass}. 
\end{theorem}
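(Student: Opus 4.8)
The strategy is to build the desired map in two stages: first analyze the explicit ``model'' $f_0$ above, which is essentially a skew-product over a one-dimensional quadratic-like dynamics (the $w$-coordinate $w\mapsto a(w^2-1)$) with a linear fiber map on the $z$-coordinate (a rotation by $\pi/4$ plus a small coupling $\epsilon w$) and a diagonal linear action on the extra coordinates $y_3,\dots,y_k$. The point of the model is that for suitable parameters it exhibits a blender and a heterodimensional cycle coming from Dujardin's construction \cite{Dujardin_blender} (and its refinement in \cite{Taflin_blender}), which furnishes Assumptions \ref{verti}--\ref{blender} essentially by the same arguments as in those papers. Then one checks that each of the remaining assumptions \ref{saddle}--\ref{10} holds \emph{robustly}, i.e. on a whole neighborhood $\Omega$ in $\mathrm{End}_d^k$ (or, via the linear embedding, in $\mathrm{Poly}_d^k$) of a carefully chosen perturbation $f$ of $f_0$. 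Concretely I would proceed in the following order.

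\textbf{Step 1 (the one-dimensional picture and the blender).} Fix $a$ real so that $w\mapsto a(w^2-1)$ has two disjoint ``full'' branches over a disc $\Db_R$-type region in the $w$-variable (this is the classical IFS picture giving a Cantor repeller with the covering property), and so that there is a repelling fixed/periodic point of the quadratic map together with a saddle configuration after coupling. For $\epsilon$ small the cone field $C_\rho$ with $\rho>10$ is contracted by $f_0$ and by $f_0^2$ on the two pieces $\mathcal U_\pm,\mathcal V_\pm$ — this is Assumptions \ref{verti} and \ref{hyperbolic}, and the blender property \ref{blender} follows because any sufficiently vertical graph, projected to the $w$-plane, is forced by the covering/expansion to meet $\Lambda(f)=\cap_n f^{-2n}(\mathcal V)$; all of this persists under $C^1$-small perturbations, hence holds on an open $\Omega$. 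The saddle fixed point $p(f)\in\Db\times U_-\times\Db^{k-2}$ of Assumption \ref{saddle} is the continuation of a suitable fixed point of the model (one stable direction in $z$ since $|e^{i\pi/4}|=1$ must be perturbed to modulus $<1$ — so in fact one perturbs the rotation coefficient to $\lambda$ with $|\lambda|<1$; the $k-1$ unstable directions come from the $w$-direction and the expanding $\sigma_j$'s), and the $C^1$-linearizability with continuous dependence is standard hyperbolic theory.

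\textbf{Step 2 (the heterodimensional/homoclinic data).} Assumption \ref{critical}: the critical set of a polynomial endomorphism of this type is cut out by the Jacobian, and a direct computation with $f_0$ (and a small perturbation to make the intersection transverse and to land on $W^s_{p,loc}\setminus\{p\}$ after $n_0$ iterates) gives it; Assumption \ref{homoclinic} is the existence of a transverse homoclinic point for $p(f)$, produced by the $\lambda$-lemma from the fact that $W^u_{p,loc}$ (a vertical graph through $\mathcal V_-$) and $W^s_{p,loc}$ both meet the blender region and can be made to cross. Assumption \ref{repelling}: pick $r(f)$ a repelling $2$-periodic point of $f^2$ inside $\Db\times V_-\times\Db^{k-2}$ with simple, non-resonant eigenvalues and linearization domain containing $\overline{\mathcal U_-}$ — existence of such a point with prescribed spectral genericity is again a small perturbation statement (the non-resonance and simplicity conditions are open and dense). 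Assumption \ref{exceptional} holds automatically for a generic polynomial endomorphism since the exceptional set is contained in a proper algebraic subset while $J_k$ is not. Assumption \ref{free}, that the intersections $W^u_{p,loc}\cap\Lambda$ are non-persistent, follows because the blender is a Cantor set and one has enough freedom in $\Omega$ (moving $\epsilon$, $\lambda$, the $\sigma_j$) to displace any given intersection point — this is exactly the heterodimensional-cycle flexibility.

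\textbf{Step 3 (Assumption \ref{10}: the hard part).} This is where the real work is and I expect it to be the main obstacle. One must show that on every open $\Omega'\subset\Omega$ there is $f$, an $m$, and a point $x(f)\in\Lambda(f)\cap W^u_{p(f),loc}$ with $f^m(x(f))=r(f)$, such that (iii) $D_{x(f)}f^m$ sends $T_{x(f)}W^u_{p(f),loc}$ to a hyperplane containing no eigenvector of $D_{r(f)}f^2$, and (iv) $\langle\chi_{p(f)},\chi_{r(f)}\rangle$ is dense in $\Cb^*$. The density (iv) is arranged by perturbing $\lambda$ (hence $\chi_{p(f)}$) and the spectrum at $r(f)$ independently so that the appropriate logarithms are $\Qb$-independent — this is a countable-intersection-of-open-dense argument, so one gets a \emph{dense} set of such $f$, which is enough. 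For (i)--(ii) one uses the blender once more: since $W^u_{p(f),loc}$ is a vertical graph tangent to $C_\rho$ it meets $\Lambda(f)$, and since $\Lambda(f)$ is a hyperbolic repeller contained in $J_k$ whose orbits are dense in itself, some backward orbit branch of $r(f)$ (which lies in $\Db\times V_-\times\Db^{k-2}\subset$ the linearization domain, so its preimages under $f^2$ organize into a Cantor set inside $\Lambda$) can be caught on $W^u_{p(f),loc}$; concretely, $r(f)$ itself being in the closure of $\Lambda(f)$ and $W^u_{p(f),loc}\cap\Lambda(f)\neq\varnothing$, one takes a point of intersection whose forward orbit shadows its way to $r(f)$ and perturbs to land exactly on $r(f)$ — this requires a transversality/surjectivity input (the map $f\mapsto$ ``position of the chosen intersection point relative to the backward orbit of $r(f)$'' must be submersive in the parameter $\Omega'$), which is the technical heart and is presumably handled by the degeneration-outside-$\mathrm{End}_d^k$ trick the authors allude to: one lets the coupling or a branch of the map degenerate to decouple the $(z)$-direction, compute the relevant derivative in the degenerate limit where everything is explicit and linear, verify it is nonzero, and conclude by openness. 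Condition (iii) is then the statement that the differential $D_{x(f)}f^m$ restricted to the tangent plane of the unstable graph, which one can track through the linearizing coordinates at $p(f)$ and at $r(f)$, lands in generic position with respect to the (simple, non-resonant, hence spanning) eigenbasis at $r(f)$ — again a nonvanishing-of-an-explicit-determinant condition that is open and dense and can be checked in a degenerate limit.

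\textbf{Assembling.} Once all of \ref{verti}--\ref{10} are verified on an open (for \ref{verti}--\ref{free}, \ref{critical}, \ref{homoclinic}, \ref{repelling}, \ref{exceptional}) resp. ``open, and the extra density conditions hold densely'' (for \ref{10} and the spectral genericity inside \ref{repelling}) set, one takes $\Omega$ to be the intersection of the finitely many open conditions; \ref{10} is stated with a ``for every open $\Omega'\subset\Omega$ there exists $f\in\Omega'$'' quantifier precisely so that a dense condition suffices. The final map $f\in\mathrm{Poly}_d^k$ is any point of $\Omega$, obtained from $f_0$ by the small perturbation that (a) drops the rotation modulus below $1$ to create the stable direction of $p$, (b) adjusts $a,\epsilon,\sigma_3,\dots,\sigma_k$ and the quadratic coefficient to put $\mathrm{Crit}(f)$ in the right transverse position and to make the spectra non-resonant and multiplicatively generic, and (c) if necessary bumps the degree up to $d$ by adding higher-order terms supported away from the relevant dynamical regions. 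The main obstacle, to reiterate, is Step 3: making the preimage-of-$r(f)$-on-$W^u_{p(f),loc)}$ condition together with the two genericity conditions (iii)--(iv) hold by a single coherent perturbation argument, which is why it occupies ``a large part of Section \ref{sec-existence}'' and needs the controlled degeneration outside $\mathrm{End}_d^k$.
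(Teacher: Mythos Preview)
Your outline captures the general architecture (blender $+$ saddle $+$ heterodimensional cycle, then check assumptions robustly), but there is a genuine gap in Step~1/Step~2 that makes the construction collapse. You propose to create the stable direction of the saddle $p$ by perturbing the rotation $e^{i\pi/4}$ to some $\lambda$ with $|\lambda|<1$. If you do that, the $z$-direction becomes contracting \emph{everywhere}, and $\Lambda(f)=\bigcap_n f^{-2n}(\mathcal V)$ is no longer a \emph{repelling} hyperbolic set as required by Assumption~\ref{hyperbolic}; it becomes a saddle set, and the blender mechanism (which needs the IFS of inverse branches in $z$ to be contracting so that the limit set has nonempty interior projection) fails. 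The paper's actual perturbation is different and is the whole point: one adds a coupling term $\beta zw$ with $\alpha=\zeta(1+A)$, $\beta=2A\zeta$, $\zeta=e^{i\pi/4}$, $A>0$ small. At the fixed point $p$ one has $w\approx -1$, so the effective $z$-multiplier is $\alpha+\beta\tilde w\approx\alpha-\beta=\zeta(1-A)$, of modulus $<1$ (saddle). But on $V_+\cup V_-$ the $w$-coordinate alternates between neighborhoods of $+1$ and $-1$, so the $z$-multiplier of $f^2$ is $\approx(\alpha+\beta)(\alpha-\beta)=\alpha^2-\beta^2=\zeta^2(1+2A-3A^2)$, of modulus $>1$ (repelling). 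This is precisely why the paper works with $f^2$ and with the alternating pieces $V_\pm=g_\mp(U_\pm)$: the same map is contracting in $z$ at $p$ and expanding in $z$ on the Cantor set.

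For Step~3 your sketch is also too loose to succeed. The paper does not produce the preimage of $r$ on $W^u_{p,loc}$ by a shadowing-plus-perturbation argument; instead it shows (Lemma~\ref{le-doublons}) that \emph{every} parameter lies on at least two distinct hypersurfaces $\tilde X_\omega,\tilde X_{\omega'}$ (two different blender intersections), then lets $a\to\infty$ so that the first coordinate of $\Lambda$ degenerates to an explicit affine IFS with limit points $z_\omega(\hat\lambda)=\epsilon\mu\bigl(\tfrac{\beta}{1-\mu}+(1-\alpha)h_\omega(\mu)\bigr)$. The power series $h_\omega$ distinguishes codings, so two distinct $\tilde X_\omega,\tilde X_{\omega'}$ cannot coincide inside a level set $\tilde Y_{\zeta,t}=\{\chi_r=\zeta\chi_p^t\}$ (Lemma~\ref{le-ass11}); hence at least one of them is transverse to the foliation $(\tilde Y_{\zeta,t})$, which is exactly what gives (i), (ii), (iv) densely. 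Condition (iii) is handled separately by introducing the extra parameters $\tau_i$ in $w\sum\tau_i y_i$ and deforming along a real path to a parameter where an explicit sign computation rules out eigenvector containment (Lemma~\ref{le-tan}); your ``nonvanishing of a determinant, open and dense'' heuristic is in the right spirit but misses that one needs a concrete starting point where it can be checked.
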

\begin{remark}\normalfont
Since the map $f$ in Theorem \ref{th-existence} belongs to $\mathrm{Poly}_d^k$, the result also provides a non-empty open subset of $\mathrm{Poly}_d^k$ satisfying all the assumptions of \S~\ref{sec-ass}.
\end{remark}

The structure of the section is as follows. In \S~\ref{sec-q}, we recall elementary results about the dynamics of $w\mapsto a(w^2-1)$ when $|a|$ is large. In \S~\ref{sec-ifs}, we begin to study the $2$-dimensional case which is the most important one. In particular, Lemma \ref{le-ifs} and Lemma \ref{le-blender} settle the blender property for the repelling hyperbolic set $\Lambda$. Observe that for $d=2$, it is delicate to obtain a saddle point and a repelling hyperbolic set which form a heterodimensional cycle. This explains why we have to work with the second or the fourth iterates of our maps. \S~\ref{sec-ifs} is also devoted to the study of the degeneracy of these maps when the parameter $a$ goes to infinity. This is the key ingredient to check Assumption \ref{10} of \S~\ref{sec-ass}, which is by far the most difficult to obtain. The case of higher dimension is considered in \S~\ref{sec-dim} where the parameters are chosen more carefully, in particular to linearize in family the dynamics near the two periodic points $p$ and $r$. \S~\ref{sec-tan} is devoted to establishing point iii) in Assumption \ref{10}. Finally, we prove Theorem \ref{th-existence} in \S~\ref{sec-veri}.

\subsection{Dynamics of $a(w^2-1)$}\label{sec-q}
For $a\in\Cb^*$ we consider the polynomial map $q_a(w):=a(w^2-1)$. For $|a|$ large enough, $q_a$ is hyperbolic with a Cantor set as Julia set. In what follows, we will construct a blender for a map of the form $(z,w)\mapsto(g(z,w),q_a^4(w))$. To this end, we need to consider open sets with specific (but simple) combinatorics.

From now on, we fix $a\in\Cb^*$ with $|a|>10.$
\begin{lemma}\label{le-U}
There exists a neighborhood $U_+$ (resp. $U_-$) of $1$ (resp. $-1$) such that $q_{a|U_\pm}$ is a biholomorphism between $U_\pm$ and $\Db_3,$
and
\[D(\pm1,|a|^{-1})\subset U_\pm\subset D(\pm1,3|a|^{-1}).\]
In particular, $q_a$ admits a unique fixed point $\tilde w(a)\in U_-$.
\end{lemma}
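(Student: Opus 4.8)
The plan is to realise $U_\pm$ as the two sheets of $q_a$ over the disc $\Db_3$, and then to bound their size by a one-line estimate. First I would observe that $q_a$ extends to a degree-$2$ endomorphism of $\Pb^1$ whose only critical points are $0$ and $\infty$, with critical values $q_a(0)=-a$ and $q_a(\infty)=\infty$. Since $|a|>10$, both critical values lie outside $\overline{\Db_3}$, so $q_a$ restricts to an unramified degree-$2$ covering $q_a^{-1}(\Db_3)\to\Db_3$. As $\Db_3$ is simply connected, $q_a^{-1}(\Db_3)$ splits into exactly two connected components, each mapped biholomorphically onto $\Db_3$ by $q_a$. Since $q_a(\pm1)=0\in\Db_3$ and $1\neq-1$, the points $1$ and $-1$ lie in distinct components; I take $U_+$ (resp. $U_-$) to be the component containing $1$ (resp. $-1$), which immediately gives the first assertion.

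To control the size of $U_\pm$ I would substitute $w=\pm1+u$, so that $q_a(w)=a\,u\,(u\pm2)$ and $|u\pm2|\in[\,2-|u|,\,2+|u|\,]$ whenever $|u|\le 3|a|^{-1}<\tfrac{1}{2}$. If $|u|<|a|^{-1}$ then $|q_a(w)|\le |a|\,|u|\,(2+|u|)<2+|a|^{-1}<3$, so $D(\pm1,|a|^{-1})$ is a connected subset of $q_a^{-1}(\Db_3)$ meeting $U_\pm$ at $\pm1$, hence contained in $U_\pm$. If $|u|=3|a|^{-1}$ then $|q_a(w)|\ge |a|\,|u|\,(2-|u|)=3\bigl(2-3|a|^{-1}\bigr)>3$, so the circle $\{|u|=3|a|^{-1}\}$ is disjoint from $q_a^{-1}(\Db_3)$; as $U_\pm$ is connected and contains the centre $u=0$, it lies inside $\{|u|<3|a|^{-1}\}=D(\pm1,3|a|^{-1})$. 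This yields both displayed inclusions.

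For the fixed point, from $U_-\subset D(-1,3|a|^{-1})$ and $1+3|a|^{-1}<3$ one gets $\overline{U_-}\Subset\Db_3$, so the inverse branch $g_-:=(q_a|_{U_-})^{-1}\colon\Db_3\to U_-$ is a holomorphic self-map of $\Db_3$ with relatively compact image, hence a uniform contraction for the Poincaré metric of $\Db_3$; it therefore has a unique fixed point $\tilde w(a)$, which automatically lies in $U_-=g_-(\Db_3)$. Since for $w\in U_-\subset\Db_3$ one has $q_a(w)=w$ if and only if $g_-(w)=w$, the point $\tilde w(a)$ is the unique fixed point of $q_a$ in $U_-$. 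The whole argument is elementary; the only step requiring a little care is the covering-space observation isolating exactly two sheets of $q_a$ over $\Db_3$ (and the resulting biholomorphy), and one should remember to check $\overline{U_-}\Subset\Db_3$ before invoking the contraction principle — there is no genuine obstacle here.
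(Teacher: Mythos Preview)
Your proof is correct and complete. The paper states this lemma without proof, treating it as an elementary fact about the quadratic family $q_a(w)=a(w^2-1)$ for $|a|>10$; your argument via the covering-space observation over the simply connected disc $\Db_3$, the explicit substitution $w=\pm1+u$ for the radius bounds, and the Schwarz--Pick contraction for the fixed point supplies exactly what is omitted, and there is nothing to compare against.
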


This implies that the Julia set of $q_a$ is a Cantor set, equal to $J_{q_a}=\cap_{n\geq0}q_a^{-n}(U_+\cup U_-)$. Its dynamics corresponds to a (one-sided) full $2$-shift. However, our construction will not use the entire $J_{q_a}$ but two smaller hyperbolic sets.

The first one is simply the unique fixed point $\tilde w(a)\in U_-$. For the second one, let $g_+\colon\Db_3\to U_+$ and $g_-\colon\Db_3\to U_-$ be the two inverse branches of $q_a$ obtained in Lemma \ref{le-U}. From this point, we define the following open sets:
\begin{itemize}
\item $V_-:=g_-(U_+)$ and $V_+:=g_+(U_-)$,
\item $V_1:=q_a^{-2}(V_+)\cap V_+$, $V_2:=q_a^{-2}(V_-)\cap V_+$, $V_3:=q_a^{-2}(V_+)\cap V_-$ and $V_4:=q_a^{-2}(V_-)\cap V_-$.
\end{itemize}
They satisfy $q_a^2(V_-)=q_a^2(V_+)=\Db_3$ and $q_a^4(V_i)=\Db_3$ for $i\in\{1,2,3,4\}$. The definition of $V_i$ ensures that the associated maximal invariant sets are equal, i.e.,
\begin{equation}\label{eq-cantor}
E:=\bigcap_{n\geq0}q_a^{-2n}(V_+\cup V_-)=\bigcap_{n\geq0}q_a^{-4n}(\cup_{i=1}^4V_i).
\end{equation}
It is also a Cantor set where $q_a^2$ is conjugate to a full $2$-shift. Observe that if $\{+,-\}$ is the alphabet for $q_{a|J_{q_a}}$ then $q^2_{a|E}$ corresponds to the $2$-shift associated to $\{+-,-+\}$. This alternation will play an important role in the proof of Theorem \ref{th-existence}. In particular, the second coordinates of $r$ in Assumption \ref{repelling} will be the point $w_0(a)$ which is the unique $2$-periodic point of $q_a$ in $V_4\subset V_-$. We will need the following simple fact in the proof of Lemma \ref{le-tan}.
\begin{lemma}\label{le-reel}
If $a$ is real then 
 the $2$-periodic point $w_0(a)\in V_-$ is also real. Actually, the two inverse branches, $g_+\colon\Db_3\to U_+$ and $g_-\colon\Db_3\to U_-$ map real points into $\Rb$. In particular, $w_l(a):=(g_+\circ g_-)^l(w_0(a))$, are real for all $l\geq0.$
\end{lemma}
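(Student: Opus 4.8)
This is elementary; the point is simply that, for $a\in\Rb$, every map entering the construction of \S~\ref{sec-q} intertwines complex conjugation, so the distinguished periodic points, being characterized by uniqueness properties, must be fixed by conjugation. I would organize the argument in three short steps.

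First I would make the inverse branches explicit. Since $|a|>10$, for $v\in\Db_3$ we have $|v/a|<3/10$, so $1+v/a$ lies in the disc $D(1,3/|a|)$, which avoids $(-\infty,0]$; hence the principal branch of the square root yields holomorphic maps $g_\pm\colon\Db_3\to\Cb$ given by $g_\pm(v)=\pm(1+v/a)^{1/2}$, and one checks directly that $q_a\circ g_\pm=\mathrm{id}_{\Db_3}$, $g_+(0)=1$ and $g_-(0)=-1$. By uniqueness of inverse branches, these coincide with the maps $g_+\colon\Db_3\to U_+$ and $g_-\colon\Db_3\to U_-$ of Lemma~\ref{le-U}. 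When $a\in\Rb$ the principal square root commutes with complex conjugation on $\Cb\setminus(-\infty,0]$, so $g_\pm(\bar v)=\overline{g_\pm(v)}$ for every $v\in\Db_3$; in particular $g_\pm$ maps $(-3,3)=\Db_3\cap\Rb$ into $\Rb$, which is the ``real'' assertion of the lemma. This also shows that $U_\pm=g_\pm(\Db_3)$, then $V_+=g_+(U_-)$ and $V_-=g_-(U_+)$, and then each of the sets $V_i$, $i\in\{1,2,3,4\}$ (obtained from the $V_\pm$ by intersecting with preimages under the real polynomial $q_a^2$), is invariant under $w\mapsto\bar w$.

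Next I would place $w_0(a)$ on the real axis. By definition $w_0(a)$ is the \emph{unique} $2$-periodic point of $q_a$ contained in $V_4$. Since $q_a$ has real coefficients, $q_a^2$ commutes with conjugation, and $V_4$ is conjugation-invariant by the previous step; hence $\overline{w_0(a)}$ is again a $2$-periodic point of $q_a$ lying in $V_4$, and uniqueness forces $\overline{w_0(a)}=w_0(a)$, i.e. $w_0(a)\in\Rb$. (Equivalently, $w_0(a)$ is the unique fixed point in $\Db_3$ of the strict contraction $g_-\circ g_+\colon\Db_3\to V_-\Subset\Db_3$, which also commutes with conjugation.)

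Finally, $g_+\circ g_-$ is a self-map of $\Db_3$ commuting with conjugation, and $w_0(a)\in V_4\subset V_-\subset\Db_3$ is real by the previous step; so an immediate induction gives $w_l(a)=(g_+\circ g_-)^l(w_0(a))\in\Rb$ for every $l\ge0$. I do not expect any genuine obstacle here: the only point requiring a line of verification is the conjugation-symmetry of the combinatorially defined sets $V_\pm$ and $V_i$, and that follows at once from the fact that $g_\pm$ and $q_a$ intertwine complex conjugation.
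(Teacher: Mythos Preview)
Your proof is correct; the paper states this lemma as a ``simple fact'' and omits the proof entirely, so your argument supplies exactly the details the authors leave to the reader. The route you take---writing $g_\pm(v)=\pm(1+v/a)^{1/2}$ explicitly, observing that for real $a$ these branches intertwine complex conjugation, and then invoking the uniqueness characterization of $w_0(a)$ in $V_4$---is the natural one and has no gaps.
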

\begin{proof}
This simply comes from the fact that $g_\pm(w)=\pm\sqrt{1+w/a}$ and $w_0(a)=\lim_{n\to\infty}(g_-\circ g_+)^n(0).$
\end{proof}

All the subsets defined in this section depend on $a$. If necessary,  we will write $U_\pm(a)$, $V_\pm(a)$, $V_i(a)$ or $E(a)$ to emphasize on these dependencies.

\subsection{Perturbations of product maps and the IFS at infinity}\label{sec-ifs}
The construction in Theorem \ref{th-existence} starts from a skew product
\[{\mathcal F}_\lambda(z,w)=(\alpha z+\epsilon w+\beta zw,q_a(w)),\]
where $\lambda=(a,\alpha,\beta,\epsilon)$. Such a map does not extend to $\Pb^2$. The case of $\mathrm{End}_d^k$ with a general $k\geq2$ will be considered in \S~\ref{sec-dim}. Several objects denoted with stylized uppercase letters in this section (e.g. $\mathcal F_\lambda$, ${\mathcal P}(\lambda)$, $\mathcal R(\lambda)$) will corresponds to lowercase letters in \S~\ref{sec-dim} (e.g. $f_\lambda$, $p(\lambda)$, $r(\lambda)$).

A first observation is that ${\mathcal F}_{(a,\alpha,\beta,\epsilon)}$ and ${\mathcal F}_{(a,\alpha,\beta,\epsilon')}$ are globally conjugate if $\epsilon\neq0\neq\epsilon'$ by $(z,w)\mapsto(Cz,w)$ for some $C\neq0.$ The role of the parameter $\epsilon\neq0$ is just to rescale the dynamics in order to have the blender property above $\Db.$

If $\tilde w(a)$ denotes the unique fixed point of $q_a$ in $U_-(a)$ then ${\mathcal F}_\lambda$ has a fixed point
\[{\mathcal P}(\lambda)=\left(\frac{-\epsilon\tilde w(a)}{\alpha+\beta\tilde w(a)-1},\tilde w(a)\right),\]
which is repelling in a vertical direction and whose multiplier in the horizontal direction is $\alpha+\beta\tilde w(a)$, very close to $\alpha-\beta$ when $|a|$ is large. On the other hand, by the choice of the sets $V_+(a)$ and $V_-(a)$, the dynamics in the horizontal direction of the second iterate $\mathcal F^2_\lambda$ is mainly a dilatation of factor $\alpha^2-\beta^2$ on $\Cb\times(V_+(a)\cup V_-(a))$. Hence, in what follows we will choose $\alpha$ and $\beta$ in order to have $|\alpha-\beta|<1$, which implies that $\mathcal P(\lambda)$ is saddle, and $|\alpha^2-\beta^2|>1$ which ensures the existence of a repelling hyperbolic set $\Lambda(\lambda)$ for $\mathcal F_\lambda^2$. This hyperbolic set will have a blender property if $\alpha$, $\beta$ and $\epsilon$ are well chosen and it will project on $E(a)$. In order to check transversality properties, we will make $a$ goes to infinity. In this situation, the set $E(a)$ degenerates to $\{-1,1\}$ and the dynamics on $\Lambda(\lambda)$ degenerates to (the inverse of) an iterated function system (IFS) with $2$ generators.

To be more precise, the second iterate of ${\mathcal F}_\lambda$ is
\[{\mathcal F}_\lambda^2(z,w)=(z(\alpha^2+\alpha\beta(w+q_a(w))+\beta^2wq_a(w))+\epsilon(\alpha w+q_a(w)+\beta wq_a(w)),q_a^2(w)).\]
In particular, since
\[V_+(a)\subset q_a(V_-)=U_+(a)\subset D(1,3|a|^{-1})\quad  and \]
\[ V_-(a)\subset q_a(V_+)=U_-(a)\subset D(-1,3|a|^{-1}),\]
if $\hat\lambda:=(\alpha,\beta,\epsilon)\in\Cb^3$ and $R>0$ are fixed then for $|a|>10$ large enough, $\mathcal F^2_\lambda(z,w)$ is arbitrarily close to $(\phi^+_{\hat\lambda}(z),q_a^2(w))$ (resp. $(\phi^-_{\hat\lambda}(z),q_a^2(w))$) on $\Db_R\times V_+(a)$ (resp. on $\Db_R\times V_-(a)$) where
\[\phi^+_{\hat\lambda}(z)=(\alpha^2-\beta^2)z-\epsilon(\beta+1-\alpha),\quad \phi^-_{\hat\lambda}(z)=(\alpha^2-\beta^2)z-\epsilon(\beta+\alpha-1).\]
 From this, we define
\[\phi_{1,\hat\lambda}:=\phi^+_{\hat\lambda}\circ\phi^+_{\hat\lambda},\ \phi_{2,\hat\lambda}:=\phi^-_{\hat\lambda}\circ\phi^+_{\hat\lambda},\ \phi_{3,\hat\lambda}:=\phi^+_{\hat\lambda}\circ\phi^-_{\hat\lambda}\ \text{ and }\ \phi_{4,\hat\lambda}:=\phi^-_{\hat\lambda}\circ\phi^-_{\hat\lambda}\]
in order to have $\mathcal F^4_\lambda(z,w)\simeq(\phi_{j,\hat\lambda}(z),q_a^4(w))$ on $\Db_R\times V_j(a)$.

Now, we fix a small real $A>0$ and take $\alpha_0:=\zeta(1+A)$ and $\beta_0:=2A\zeta$ where $\zeta\in\Sb^1$. This gives $\alpha_0-\beta_0=\zeta(1-A)$ and thus the fixed point ${\mathcal P}(a,\alpha_0,\beta_0,\epsilon)$ is saddle for $|a|$ large enough. On the other hand, $\alpha_0^2-\beta_0^2=\zeta^2(1+2A-3A^2)$ which has modulus larger than $1$ if $A<2/3$. For the constant $\zeta\in\Sb^1$, following \cite[Lemma 4.4]{Dujardin_blender}, we will take $\zeta=e^{i\pi/4}$ in order to have a blender property for $\Lambda(\lambda)$. The following result can be seen as the counterpart in our context of this lemma using the vocabulary of \cite{Taflin_blender}.
\begin{lemma}\label{le-ifs}
Let $\zeta:=e^{i\pi/4}$ and $\epsilon_0=(20(\zeta-1))^{-1}$. Let $A\in(0,1/10]$ be small enough and let set $\hat\lambda_0:=(\alpha_0,\beta_0,\epsilon_0)$ where
\[\alpha_0=\zeta(1+A)\ \ \text{ and }\ \ \beta_0=2A\zeta.\]
Then, there exist four open sets $H_j$, $j\in\{1,2,3,4\}$ such that
\[\Db_2=\cup_{j=1}^4H_j,\ \overline{\phi_{j,\hat\lambda_0}(H_j)}\subset\Db_2\ \text{ and } \overline\Db\subset H_j.\]
\end{lemma}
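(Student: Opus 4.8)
\textbf{Plan for the proof of Lemma \ref{le-ifs}.}

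The statement asks for four open sets $H_1,\ldots,H_4$ covering $\Db_2$, each mapped strictly inside $\Db_2$ by the corresponding degree-two composition $\phi_{j,\hat\lambda_0}$, and each containing $\overline\Db$. My plan is to compute everything explicitly, since the maps $\phi^\pm_{\hat\lambda_0}$ are affine. With $\zeta=e^{i\pi/4}$, $\alpha_0=\zeta(1+A)$, $\beta_0=2A\zeta$, one has $\alpha_0^2-\beta_0^2=\zeta^2(1+2A-3A^2)=i(1+2A-3A^2)=:i\mu$, where $\mu=\mu(A)\to1$ as $A\to0$, and $\mu>1$ for $0<A<2/3$. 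Also $\beta_0+1-\alpha_0=1-A\zeta+2A\zeta-\alpha_0+\ldots$ — more carefully, $\beta_0+1-\alpha_0=1+2A\zeta-\zeta(1+A)=1+A\zeta-\zeta=1-\zeta(1-A)$ and $\beta_0+\alpha_0-1=2A\zeta+\zeta(1+A)-1=\zeta(1+3A)-1$. So writing $c^+:=-\epsilon_0(1-\zeta(1-A))$ and $c^-:=-\epsilon_0(\zeta(1+3A)-1)$, we have $\phi^+_{\hat\lambda_0}(z)=i\mu z+c^+$ and $\phi^-_{\hat\lambda_0}(z)=i\mu z+c^-$. At $A=0$ these both specialize to $z\mapsto iz+\epsilon_0(\zeta-1)=iz+\tfrac1{20}$ (using $\epsilon_0(\zeta-1)=\tfrac1{20}$), so the two branches of the ``IFS at infinity'' nearly coincide, which is precisely the feature exploited in \cite{Dujardin_blender,Taflin_blender}.

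The core of the argument is the $A=0$ model. There each $\phi^\pm$ is the rotation-translation $\psi(z)=iz+\tfrac1{20}$, so each $\phi_{j,\hat\lambda_0}$ degenerates to $\psi\circ\psi(z)=i(iz+\tfrac1{20})+\tfrac1{20}=-z+\tfrac{1+i}{20}$, a central symmetry about the fixed point $z_*=\tfrac{1+i}{40}$ composed with... in fact $z\mapsto -z+\tfrac{1+i}{20}$, which sends $\Db_2$ to $\Db_2$ (it is an isometry of $\Cb$ fixing a point well inside $\overline\Db$, so it maps $\Db_2$ bijectively onto a disc of radius $2$ centered at $\tfrac{1+i}{20}$, not onto $\Db_2$). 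This shows I must be more careful: the contraction onto $\Db_2$ is \emph{not} coming from $A=0$ but needs $\mu>1$, i.e. the map is $z\mapsto i\mu z+c$, which is an expansion, so it is the \emph{inverse branches} that contract — consistent with ``$\Lambda$ is a repelling hyperbolic set'' and ``$f^2$ maps $\mathcal V_\pm$ over $\overline{\mathcal U}$''. So the correct reading of the sought inclusion $\overline{\phi_{j,\hat\lambda_0}(H_j)}\subset\Db_2$ is that $H_j$ must be chosen as a \emph{small} piece, the preimage of $\Db_2$ under $\phi_{j,\hat\lambda_0}$ intersected with $\Db_2$; and the blender/covering property $\Db_2=\cup H_j$ together with $\overline\Db\subset H_j$ will force a geometric condition on the translation part, namely that the four images $\phi_{j,\hat\lambda_0}^{-1}(\Db_2)$ (discs of radius $2/\mu^2$) cover $\Db_2$ and each contains $\overline\Db$. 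I would therefore (i) fix the explicit affine formulas above; (ii) at $A=0$, check that all four $\phi_{j,\hat\lambda_0}$ coincide with $w\mapsto -\mu^2 w + (1-i\mu)\tfrac1{20}\cdot(\text{const})$ — recompute $c^\pm$ at $A=0$ to get a single map $w\mapsto -w\mu^2 + \gamma$ with $|\gamma|$ small; (iii) verify the single preimage disc $D(\gamma/\mu^2\cdot(-1),2/\mu^2)$... rather, solve $|\phi(w)|<2 \iff |w-w_0|<2/\mu^2$ where $w_0$ is the appropriate center, and check $\overline\Db\subset\{|w-w_0|<2/\mu^2\}$, which holds for $\mu$ close enough to $1$ and $|w_0|$ small enough, since $2/\mu^2>1+|w_0|$; (iv) take $H_j$ to be this preimage disc (same for all $j$ at $A=0$), shrink slightly to make it relatively compact inside $\Db_2$, and note trivially $\Db_2=\cup H_j$ because already $\Db_2=H_j$ is false — instead $\cup H_j=H_1$ at $A=0$, which covers $\Db_2$ only if $2/\mu^2\geq 2$, i.e. $\mu\leq1$, contradiction again.

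Given these sign/direction subtleties, the honest plan is: treat the four maps $\phi_{j,\hat\lambda_0}$ as explicit affine maps $w\mapsto i^2\mu^2 w + \gamma_j = -\mu^2 w+\gamma_j$ (the linear parts all equal $(\alpha_0^2-\beta_0^2)^2=(i\mu)^2=-\mu^2$), compute the four translation constants $\gamma_j$ from the two values $c^\pm$ via $\gamma_1=\phi^+(c^+)$, etc. (so $\gamma_j = i\mu c^{\sigma_1(j)}+c^{\sigma_2(j)}$ for the appropriate signs), and observe that as $A\to0$ all $\gamma_j\to\gamma_\infty:=i c_\infty + c_\infty=(1+i)c_\infty$ with $c_\infty=\tfrac1{20}$. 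Since $-\mu^2$ has modulus $\mu^2>1$, the \emph{inverse} maps $\phi_{j,\hat\lambda_0}^{-1}$ are contractions of ratio $\mu^{-2}<1$ with fixed points near $w_\infty:=\gamma_\infty/(1+\mu^2)$, which is small; choosing $A$ small enough that $\mu^{-2}\cdot 2 + |w_\infty| < 2$ and $|w_\infty| + \mu^{-2}\cdot 0 \ldots$. Then define $H_j:=\phi_{j,\hat\lambda_0}^{-1}(\Db_2)$, a disc of radius $2\mu^{-2}$ centered at $\gamma_j/\mu^2$ (solving $|-\mu^2 w+\gamma_j|<2$). For the covering $\Db_2=\cup_j H_j$: this cannot hold as literally stated with these $H_j$ unless one allows the $H_j$ to overlap and collectively cover, which they do \emph{not} since each has radius $<2$ and they nearly coincide. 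The resolution — and the step I expect to be the real obstacle — is that the intended $H_j$ are \emph{not} the full preimages of $\Db_2$ but a carefully arranged partition adapted to a ``covering property'' in the sense of \cite{Taflin_blender}: one subdivides $\Db_2$ into four pieces $H_1,\ldots,H_4$ (e.g. by two perpendicular lines through a point near $0$) such that $\phi_{j,\hat\lambda_0}$, being $z\mapsto -\mu^2 z+\gamma_j$ with $\gamma_j$ close to each other, maps the ``opposite'' quadrant $H_j$ into $\Db_2$ because the $-\mu^2$ multiplier flips quadrants and $\mu^2$ is only slightly bigger than $1$ while the pieces $H_j$ avoid the boundary annulus in the direction that gets expanded. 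Concretely I would: partition $\Db_2$ by the real and imaginary axes (shifted by a tiny amount to center at the common attracting point of the IFS), set $H_1,\ldots,H_4$ to be slight enlargements of these four quadrant-pieces so that $\overline\Db\subset H_j$ for each $j$ (each enlarged quadrant contains the unit disc because the enlargement is chosen $>1$ in the inward directions); then verify $\overline{\phi_{j,\hat\lambda_0}(H_j)}\subset\Db_2$ by the estimate that for $w$ in the $j$-th (enlarged) quadrant, $-\mu^2 w$ lands in the opposite quadrant scaled by $\mu^2<1+\text{small}$, and adding the small $\gamma_j$ keeps it inside $\Db_2$ — this is where one needs $A$ small (so $\mu^2$ close to $1$) and $\epsilon_0$ of the specific size (so $|\gamma_j|$ small), exactly matching the hypotheses. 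I would carry this out by: (1) writing the explicit affine maps and constants; (2) passing to the $A\to0$ limit to identify the common multiplier $-\mu_\infty^2=-1$ and common translation $\gamma_\infty=(1+i)/20$; (3) defining the four quadrant-pieces centered at $w_\infty$; (4) checking $\overline\Db\subset H_j$ and $\overline{\phi_{j}(H_j)}\subset\Db_2$ first at $A=0$ with room to spare, using that a quadrant of $\Db_2$ enlarged to contain $\overline\Db$ is mapped by $w\mapsto -w+\gamma_\infty$ into $\Db_2$ with margin; (5) invoking openness of all these strict inclusions to conclude the same for $A$ small enough, $\epsilon$ near $\epsilon_0$, and $|a|$ large. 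The main obstacle is step (4): making the quadrant decomposition precise enough that both the ``$\overline\Db\subset H_j$'' lower bound and the ``$\phi_j(H_j)\Subset\Db_2$'' upper bound hold simultaneously — i.e., the pieces must be large enough to contain $\overline\Db$ yet positioned so their $-\mu^2$-image plus a small shift stays in $\Db_2$; this is the combinatorial heart of Dujardin's blender construction \cite{Dujardin_blender,Taflin_blender} and requires choosing the separating lines and the enlargement parameters by hand.
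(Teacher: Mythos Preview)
Your overall strategy --- compute the affine maps explicitly, check everything at $A=0$ with room to spare, then perturb --- is exactly what the paper does. But you have a sign error at the critical step that derails your whole picture. You correctly compute $c^+=-\epsilon_0(1-\zeta(1-A))$ and $c^-=-\epsilon_0(\zeta(1+3A)-1)$, but then assert that at $A=0$ both specialize to $1/20$. In fact, at $A=0$ one has $c^+=\epsilon_0(\zeta-1)=1/20$ while $c^-=-\epsilon_0(\zeta-1)=-1/20$: the two branches are \emph{not} equal but differ by a sign in their translation. Hence $\phi^+(z)=iz+\tfrac1{20}$ and $\phi^-(z)=iz-\tfrac1{20}$, and the four compositions $\phi_{j}$ at $A=0$ are the four \emph{distinct} isometries
\[
z\mapsto -z+\tfrac{1+i}{20},\quad -z+\tfrac{i-1}{20},\quad -z+\tfrac{1-i}{20},\quad -z-\tfrac{1+i}{20},
\]
whose translation parts $\gamma_j=(\pm1\pm i)/20$ point in the four diagonal directions $\zeta^{2j-1}$. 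This is the geometric fact that makes the covering possible and that you were missing; all the contortions in the second half of your plan (the impossibility of covering $\Db_2$ with a single preimage disc, etc.) are artifacts of collapsing four maps into one.

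With the four distinct $\gamma_j$ in hand, the paper's choice is
\[
H_j=\Db_{4/3}\ \cup\ \bigl\{z\in\Db_2:\ |\arg(z\,\zeta^{-2j+1})|<\pi/3\bigr\},
\]
i.e.\ a disc of radius $4/3$ (which gives $\overline\Db\subset H_j$ for free) together with an angular sector of opening $2\pi/3$ pointing toward $\gamma_j/|\gamma_j|$. The four sectors, centered on the four diagonals with opening $>\pi/2$, cover all of $\Db_2$, giving $\Db_2=\cup_j H_j$. For the inclusion $\overline{\phi_j(H_j)}\subset\Db_2$ at $A=0$, the map $z\mapsto -z+\gamma_j$ sends the sector of $H_j$ (pointing toward $\gamma_j$) to a sector pointing toward $-\gamma_j$, then shifts it by $\gamma_j$ \emph{back toward the origin}; the worst-case boundary check reduces to the two elementary inequalities $|2e^{i\pi/3}-\sqrt2/20|<2$ (for the sector tips) and $\sqrt2/20<2-4/3$ (for the $\Db_{4/3}$ part). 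Your ``quadrant'' idea is close in spirit, but quadrants have opening $\pi/2$, too narrow to contain $\overline\Db$; the $\Db_{4/3}\cup(\text{sector of width }2\pi/3)$ shape resolves exactly the tension you identified as the main obstacle.
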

\begin{proof}
For $\hat\lambda_1:=(\zeta,0,\epsilon_0)$, an easy computation gives
\[\phi_{1,\hat\lambda_1}(z)=-z+\frac{1+i}{20},\ \phi_{2,\hat\lambda_1}(z)=-z+\frac{i-1}{20},\]
\[ \phi_{3,\hat\lambda_1}(z)=-z+\frac{1-i}{20} \text{ and } \phi_{4,\hat\lambda_1}(z)=-z-\frac{1+i}{20}.\]
Moreover, if we define
\[H_j=\Db_{4/3}\cup\{z\in\Db_2\ ;\ |\arg(z\zeta^{-2j+1})|<\pi/3\},\]
then $\overline{\phi_{j,\hat\lambda_1}(H_j)}\subset\Db_2$. Actually, this comes from the inequalities $|2e^{i\pi/3}-\sqrt2/20|<2$ and $|\sqrt2/20|<1$. Since this inclusion is stable under small perturbations, if $A\in(0,1/10]$ is small enough and $j\in\{1,2,3,4\}$ then $\overline{\phi_{j,\hat\lambda_0}(H_j)}\subset\Db_2$ when $\hat\lambda_0=(\zeta(1+A),2A\zeta,\epsilon_0)$. On the other hand, $\Db_2=\cup_{j=1}^4H_j$ and $\overline\Db\subset\cap_{j=1}^4H_j$ follow from the definition of $H_j.$
\end{proof}

Since the properties in Lemma \ref{le-ifs} are stable under perturbations of the $\phi_{j,\hat\lambda_0}$, they persist in a small neighborhood of $\hat\lambda_0=(\alpha_0,\beta_0,\epsilon_0)$. From now on, we denote by $\hat M$ such small neighborhood of $\hat\lambda_0$ which is connected and where, moreover, for all $\hat\lambda=(\alpha,\beta,\epsilon)\in\hat M$
\begin{equation}\label{def-M}
1/20<|\epsilon|<1/10,\ |\alpha-\beta|<1 \text{ and } |\alpha^2|-|\beta^2|>1+A
\end{equation}
In particular, if $R:=A^{-1}$, the maps $\phi_{\hat\lambda}^\pm$ satisfies $\overline{\Db_R}\subset\phi_{\hat\lambda}^\pm(\Db_R).$

The next step is to define the hyperbolic set $\Lambda$ with the blender property which appears in Assumptions \ref{hyperbolic} and \ref{blender}.
\begin{lemma}\label{le-blender}
There exist $\rho>100$ and $\delta>100$ such that, if $|a|>\delta$ then for every $(\alpha,\beta,\epsilon)\in\hat M$ the map ${\mathcal F}_{\lambda}$, with $\lambda:=(a,\alpha,\beta,\epsilon)$, satisfies the following properties.
\begin{itemize}
\item On both $\Db_R\times V_-(a)$ and $\Db_R\times V_+(a)$ the map $\mathcal F^2_\lambda$ is injective, contracts the cone field $C_\rho$ and is expanding.  Moreover
\[\overline{\Db_R\times(U_+(a)\cup U_-(a))}\subset \mathcal F^2_\lambda(\Db_R\times V_\pm(a)).\]
In particular, the set $\Lambda(\lambda):=\cap_{n\geq0}\mathcal F^{-2n}_\lambda(\Db_R\times(V_-(a)\cup V_+(a)))$ is a hyperbolic repelling invariant set for $\mathcal F^2_\lambda.$
\item For $i\in\{1,2,3,4\}$, any vertical graph in $H_i\times V_i(a)$ tangent to the cone $C_\rho$ intersects $\Lambda(\lambda).$
\end{itemize}
Moreover, both statements are stable under small $C^1$-perturbations of ${\mathcal F}_\lambda.$
\end{lemma}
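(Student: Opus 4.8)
The plan is to prove Lemma~\ref{le-blender} by combining two ingredients: first, a uniform comparison between $\mathcal F^2_\lambda$ and the limit IFS maps $\phi^\pm_{\hat\lambda}$ from \S~\ref{sec-ifs}, valid once $|a|$ is large; and second, an application of the abstract blender criterion from \cite{Taflin_blender} (in the spirit of \cite[Lemma~4.4]{Dujardin_blender}), using the covering property $\Db_2=\cup_{j=1}^4 H_j$ together with $\overline{\phi_{j,\hat\lambda}(H_j)}\subset\Db_2$ established in Lemma~\ref{le-ifs}. Concretely, I would first fix $R:=A^{-1}$ and recall from \eqref{def-M} that $\overline{\Db_R}\subset\phi^\pm_{\hat\lambda}(\Db_R)$ for $\hat\lambda\in\hat M$; then I would choose $\rho$ and $\delta$ large so that, for $|a|>\delta$, the estimate $\mathcal F^2_\lambda(z,w)=(\phi^\pm_{\hat\lambda}(z),q_a^2(w))+O(|a|^{-1})$ on $\Db_R\times V_\pm(a)$ (already noted in the text) is good enough in the $C^1$ sense.

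The main body of the argument splits into the two bullet points. For the first, I would check: (i) \emph{injectivity and expansion} of $\mathcal F^2_\lambda$ on $\Db_R\times V_\pm(a)$ — this follows because $q^2_a$ is injective and strongly expanding on $V_\pm(a)$ by Lemma~\ref{le-U} and because the horizontal action is a $C^1$-small perturbation of the affine expansion $z\mapsto(\alpha^2-\beta^2)z$ with $|\alpha^2-\beta^2|>1+A$; (ii) \emph{cone contraction}: since $q^2_a$ expands with a factor $\gtrsim|a|^2$ in the $w$-direction while the $z$-derivatives are bounded, for $|a|$ large the differential of $\mathcal F^2_\lambda$ sends $C_\rho$ into $C_{\rho'}$ with $\rho'>\rho$, which is the definition of contracting the cone field; (iii) the \emph{covering inclusion} $\overline{\Db_R\times(U_+(a)\cup U_-(a))}\subset\mathcal F^2_\lambda(\Db_R\times V_\pm(a))$, which follows from $q^2_a(V_\pm(a))=\Db_3\supset\overline{U_+(a)\cup U_-(a)}$ (Lemma~\ref{le-U} and the definitions in \S~\ref{sec-q}) together with $\overline{\Db_R}\subset\phi^\pm_{\hat\lambda}(\Db_R)$ and a perturbation argument. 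Points (i)–(iii) are exactly the hypotheses guaranteeing that $\Lambda(\lambda)=\cap_{n\ge0}\mathcal F^{-2n}_\lambda(\Db_R\times(V_+(a)\cup V_-(a)))$ is a repelling hyperbolic invariant set for $\mathcal F^2_\lambda$ conjugate (via the coding of \eqref{eq-cantor}) to a full $2$-shift.

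For the second bullet, the blender property, I would pass to the fourth iterate and use \eqref{eq-cantor}: $\Lambda(\lambda)=\cap_{n\ge0}\mathcal F^{-4n}_\lambda(\cup_{i=1}^4(\Db_R\times V_i(a)))$ and on $\Db_R\times V_i(a)$ the map $\mathcal F^4_\lambda$ is $C^1$-close to $(\phi_{i,\hat\lambda}(z),q^4_a(w))$. Given a vertical graph $\Gamma\subset H_i\times V_i(a)$ tangent to $C_\rho$, its image $\mathcal F^4_\lambda(\Gamma)$ is again a vertical graph tangent to $C_\rho$ (by the cone contraction) whose first coordinate lies, up to an $O(|a|^{-1})$ error, in $\phi_{i,\hat\lambda}(H_i)\Subset\Db_2=\cup_{j=1}^4 H_j$; hence it contains a sub-vertical-graph over some $H_{j}\times V_{j}(a)$, and one iterates. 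This produces a nested sequence of vertical graphs whose diameters shrink and whose intersection is a single point of $\Lambda(\lambda)$ lying on $\Gamma$ — precisely the blender mechanism. I would phrase this cleanly by invoking the abstract statement in \cite{Taflin_blender} with the data $(H_j,\phi_{j,\hat\lambda})$, so that the covering $\Db_2=\cup H_j$ plus $\overline{\phi_{j,\hat\lambda}(H_j)}\subset\Db_2$ from Lemma~\ref{le-ifs} directly yields the conclusion. Finally, the stability under $C^1$-perturbations of $\mathcal F_\lambda$ is automatic: all the conditions used (strict cone contraction, strict expansion, the strict inclusions $\overline{\phi_{j,\hat\lambda}(H_j)}\subset\Db_2$ and $\overline{\Db_R\times(U_+\cup U_-)}\subset\mathcal F^2_\lambda(\Db_R\times V_\pm)$) are open in the $C^1$-topology.

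The step I expect to be the main obstacle is making the ``$C^1$-close to the IFS'' comparison quantitatively rigorous while simultaneously tracking the cone field: one must choose $\rho$ (large, so that the graphs are ``vertical enough'' for Lemma~\ref{le-ifs}'s angular sets $H_j$ to be relevant) \emph{before} choosing $\delta$ (so large that the $|a|^{-1}$ errors do not spoil cone contraction or the strict inclusions), and verify that these choices are compatible — in particular that the cone $C_\rho$ is genuinely contracted by $\mathcal F^2_\lambda$ and not merely preserved. Everything else is a routine perturbation of the explicit affine computation in Lemma~\ref{le-ifs}.
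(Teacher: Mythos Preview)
Your proposal is correct and follows essentially the same approach as the paper: the paper also reduces everything to the $C^1$-closeness of $\mathcal F^2_\lambda$ (resp.\ $\mathcal F^4_\lambda$) to the product maps $(\phi^\pm_{\hat\lambda},q_a^2)$ (resp.\ $(\phi_{j,\hat\lambda},q_a^4)$), derives injectivity, expansion, cone contraction and the covering inclusion exactly as you outline, and then runs the graph-iteration argument for the blender property by choosing $\rho$ large relative to the Lebesgue number of the cover $\Db_2=\cup_j H_j$ and then $\delta$ large afterwards---precisely the order-of-quantifiers issue you flagged as the main obstacle.
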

\begin{proof}
Let $\hat\lambda=(\alpha,\beta,\epsilon)$ be in $\hat{M}$. The key ingredient is that if $|a|$ is large enough then $\mathcal F^2_\lambda$ is arbitrarily close to the product map $(\phi^\pm_{\hat\lambda},q_a^2)$ on $\Db_R\times V_\pm(a)$ and $\mathcal F^4_\lambda$ is arbitrarily close to $(\phi_{j,\hat\lambda},q_a^4)$ on $H_j\times V_j(a)$. This gives that $\mathcal F^2_\lambda$ is expanding on $\Db_R\times V_\pm(a)$ and also injective since $q_a^2$ is injective on $V_\pm(a)$. Moreover, $\mathcal F^2_\lambda$ contracts the cone field $C_\rho$ on $\Db_R\times V_\pm(a)$ for $|a|$ large since the derivative of $\mathcal F^2_\lambda$ in the vertical direction is bounded from below by $|a|^2$ while the derivative in the horizontal direction is uniformly bounded from above on $\Db_R\times V_\pm(a)$. Hence, for every $\rho>0$ there exists $\delta>0$ such that $\mathcal F^2_\lambda$ contracts the cone field $C_\rho$ on $\Db_R\times V_\pm(a)$ as soon as $|a|>\delta.$

 We also have $\overline{\Db_R\times(U_+(a)\cup U_-(a))}\subset \mathcal F^2_\lambda(\Db_R\times V_\pm(a))$ since $\overline{U_+(a)\cup U_-(a)}\subset\Db_3=q^2_a(V_\pm(a))$ and $\overline{\Db_R}\subset\phi_{\hat\lambda}^\pm(\Db_R)$. From this, it is classical that $\Lambda(\lambda):=\cap_{n\geq0}\mathcal F^{-2n}_\lambda(\Db_R\times(V_-(a)\cup V_+(a)))$ is a hyperbolic repelling set for $\mathcal F_\lambda^2$. Actually, it is easy to see that $\Lambda(\lambda)$ is homeomorphic, via the second canonical projection, to the corresponding set $E(a)$ for $q_a^2$ defined by \eqref{eq-cantor} which is a Cantor set. Observe that, as in \eqref{eq-cantor}, we also have $\Lambda(\lambda)=\cap_{n\geq0}\mathcal F_\lambda^{-4n}(\Db_R\times(\cup_{j=1}^4V_j(a))).$

The second statement is the counterpart of \cite[Lemma 4.5]{Dujardin_blender} or \cite[Proposition 3.3]{Taflin_blender} in our setting and we only sketch the proof. Let $H_j$, $j\in\{1,2,3,4\}$, be the four open subsets of $\Db_2$ defined in Lemma \ref{le-ifs}. Recall that $\Db_2=\cup_{j=1}^4H_j$, $\overline\Db\subset H_j$ and $\overline{\phi_{j,\hat\lambda}(H_j)}\subset\Db_2$, where this last inclusion comes from our choice of $\hat{M}$. In particular, there exists $r>0$ such that
$\cup_{j=1}^4\overline{\phi_{j,\hat\lambda}(H_j)}\subset\Db_{2-r}$ and thus, if $|a|$ is large enough and we set $\mathcal F^4_\lambda(z,w)=(F_\lambda(z,w),q_a^4(w))$ then
\[\bigcup_{j=1}^4\overline{F_\lambda(H_j\times V_j(a))}\subset\bigcup_{j=1}^4H_j.\]
Let $\eta>0$ denote the Lebesgue number of this open cover. If $\rho>0$ is large enough then the projection on the first coordinate of a vertical graph tangent to $C_\rho$ has diameter less than $\eta$. Hence, if $j_0\in\{1,2,3,4\}$ and $\Gamma_0\subset H_{j_0}\times V_{j_0}(a)$ is a vertical graph tangent to $C_\rho$ then $\mathcal F^4_\lambda(\Gamma_0)$ contains a vertical graph $\Gamma_1$ in $H_{j_1}\times V_{j_1}(a)$ for some $j_1\in\{1,2,3,4\}$ which is tangent to $C_\rho$. By induction, we obtain a sequence of vertical graph $\Gamma_n\subset\mathcal F_\lambda^{4n}(\Gamma_0)$ in some $H_{j_n}\times V_{j_n}(a)$ and thus $\Gamma_0$ intersects $\cap_{n\geq0}\mathcal F_\lambda^{-4n}(\Db_R\times(\cup_{j=1}^4V_j(a)))=\Lambda(\lambda).$
\end{proof}

By construction, each point in $\Lambda(\lambda)$ is associated to a unique word $\omega$ in $\Sigma:=\{-1,1\}^\Nb$. To be more precise, let $g_{\lambda,+}\colon\Db_R\times\Db_3\to\Db_R\times V_+(a)$ and $g_{\lambda,-}\colon\Db_R\times\Db_3\to\Db_R\times V_-(a)$ be the two inverse branches of $\mathcal F^2_\lambda$. If we identify the symbol $+$ with $1$ and $-$ with $-1$ then a word $\omega=(\omega_n)_{n\geq0}\in\Sigma:=\{-1,1\}^\Nb$ induces a dynamical system $(g^n_{\lambda,\omega})_{n\geq0}$ where
\[g^n_{\lambda,\omega}:=g_{\lambda,\omega_0}\circ\cdots\circ g_{\lambda,\omega_{n-1}}.\]
Since the maps $g_{\lambda,\pm}$ are contracting, the sequence $(g_{\lambda,\omega}^n(y))_{n\geq0}$ has a limit, denoted by $x_\omega(\lambda)$, which is independent of $y\in\Db_R\times\Db_3$. The hyperbolic set $\Lambda(\lambda)$ corresponds exactly to $\{x_{\omega}(\lambda)\ ;\ \omega\in\Sigma\}$ and the repelling point ${\mathcal R}(\lambda)$ in Assumption \ref{repelling} is, in this situation, $x_\omega(\lambda)$ where $\omega=(-1)_{n\geq0}$. Observe that $x_\omega(\lambda)$ depends holomorphically on $\lambda$ and continuously on $\omega$ with respect to the product topology on $\Sigma.$

In order to check Assumption \ref{10}, we are interested in parameters $\lambda=(a,\alpha,\beta,\epsilon)$ and $\omega\in\Sigma$ where $x_\omega(\lambda)$ is a preimage of ${\mathcal R}(\lambda)$ lying on $W_{{\mathcal P}(\lambda),loc}^u$ with an additional condition on the multipliers. Here, we consider the local unstable manifold in $\Db_R\times D(-1,1/2)$, which is a vertical graph, see Lemma \ref{le-unstable} below. In what follows, we will study the degeneracy of these relations $x_\omega(\lambda)\in W_{{\mathcal P}(\lambda),loc}^u$ when $|a|$ tends to infinity. The general picture is that $\Lambda(\lambda)$ degenerates to the limit set of the IFS generated by two affine maps and $W_{{\mathcal P}(\lambda),loc}^u$ converges to a vertical linear hypersurface. To study this degeneracy, we introduce the set $D(\infty,r):=\{\infty\}\cup\{a\in\Cb\ ;\ |a|>1/r\}$ and we fixe $\rho>100$ and $\delta>100$ as in Lemma \ref{le-blender}
\begin{lemma}\label{le-xomega}
For each $\omega \in \Sigma$, the map $x_\omega$ extends holomorphically to $D(\infty, 1/\rho) \times \hat{M}$. If $\hat{\lambda} = (\alpha, \beta, \epsilon)$, then this extension is given by  
\[
x_\omega(\infty, \hat{\lambda}) = (z_\omega(\hat{\lambda}), \omega_0),
\]  
where
\begin{equation}\label{eq-zomega}
z_\omega(\hat{\lambda}) = \epsilon \mu \left( \frac{\beta}{1 - \mu} + (1 - \alpha) h_\omega(\mu) \right),
\end{equation}
with $\mu:=(\alpha^2-\beta^2)^{-1}$ and  $h_\omega(\mu) := \sum_{n \geq 0} \omega_n \mu^n$. Moreover, the map $x_\omega$ depends continuously on $\omega$ with respect to the product topology on $\Sigma.$
\end{lemma}
\begin{proof}
When $a$ converges to infinity, the maps $g_{\lambda,\pm}$ converge to $(\ell_{\hat\lambda,\pm},\pm1)$ where
\begin{equation}\label{eq-ell}
\ell_{\hat\lambda,+}(z)=\mu z+\nu_+\ \text{ and }\ \ell_{\hat\lambda,-}(z)=\mu z+\nu_-,
\end{equation}
with $\hat\lambda=(\alpha,\beta,\epsilon)$, $\mu:=(\alpha^2-\beta^2)^{-1}$ and $\nu_\pm:=\mu\epsilon(\beta\pm(1-\alpha))$. Hence, since the maps $g_{\lambda,\pm}$ and $\ell_{\hat\lambda,\pm}$ are contractions, the point $x_\omega(\lambda)$ converges to $(z_\omega(\hat\lambda),\omega_0)$ where 
\[z_\omega(\hat\lambda):=\lim_{n\to\infty}\ell_{\hat\lambda,\omega_0}\circ\cdots\circ\ell_{\hat\lambda,\omega_{n-1}}(y)\]
for any $y\in\Db_R$. This gives the desired extension (which is holomorphic by the Riemann extension theorem).


Moreover, the definition of $\ell_\pm$ in \eqref{eq-ell} ensures that
\[z_\omega(\hat\lambda)=\sum_{n\geq0}\nu_{\omega_n}\mu^n.\]
Using the definition of $\nu_\pm$, this gives
\begin{equation*}
z_\omega(\hat\lambda)=\epsilon\mu\left(\frac{\beta}{1-\mu}+(1-\alpha)h_\omega(\mu)\right),
\end{equation*}
where $h_\omega(\mu):=\sum_{n\geq0}\omega_n\mu^n$.

The continuity of \( \omega \mapsto x_\omega \) follows again from the fact that \( g_{\lambda, \pm} \) are two contractions.
\end{proof}
On the other hand, $\mathcal F_\lambda$ contracts $C_\rho$ on $\Cb\times D(-1,1/2)$ thus when $a$ goes to infinity, the unstable manifold $W_{{\mathcal P}(a,\alpha,\beta,\epsilon),loc}^u$ converges to a vertical line. More precisely, we have the following result in family.
\begin{lemma}\label{le-unstable}
There exists a closed analytic subvariety $W$ of $D(\infty, 1/\rho) \times \hat{M}\times\Db_R\times D(-1,1/2)$ that is vertical (i.e., its closure in a neighborhood of $D(\infty, 1/\rho) \times \hat{M}\times\Db_R\times D(-1,1/2)$ is disjoint from $D(\infty, 1/\rho) \times \hat{M}\times(\partial\Db_R)\times D(-1,1/2)$). More precisely, $W$ corresponds to a family of analytic sets $(W_\lambda)_{\lambda\in D(\infty, 1/\rho) \times \hat{M}}$ such that $W_\lambda$ is tangent to $C_\rho$ and, denoting  $\lambda=(a,\alpha,\beta,\epsilon),$ satisfies
\begin{itemize}
\item $W_{{\mathcal P}(\lambda),loc}^u$ if $a\neq \infty,$
\item $\{z=\frac{\epsilon}{\alpha-\beta-1}\}$ if $a=\infty.$
\end{itemize}
\end{lemma}
\begin{proof}
For this proof, if $a\neq\infty$ then we denote by $G_a\colon D(-1,1/2)\to\Cb$ the inverse branch of $q_a(w)=a(w^2-1)$ defined by $G_a(w)=-\sqrt{1+w/a}$. Hence, the inverse branch $\mathcal G_\lambda\colon\Cb\times D(-1,1/2)\to\C\times U_-(a)$ of $\mathcal F_\lambda$ when $\lambda=(a,\alpha,\beta,\epsilon)$ is
$$\mathcal G_\lambda(z,w)=\left(\frac{z-\epsilon G_a(w)}{\alpha+\beta G_a(w)},G_a(w)\right).$$
Both definitions can be extended holomorphically to $a=\infty$ with $G_\infty(w):=-1,$ but $\mathcal G_\lambda$ can no longer be seen as an inverse branch there. For $n\geq1,$ we define
$$W_n:=\{(\lambda,z,w)\in D(\infty, 1/\rho)\times \hat{M}\times\Cb\times D(-1,1/2)\ ;\ \pi\circ\mathcal G^n_\lambda(z,w)=0\},$$
where $\pi$ is the projection onto the first coordinate. Outside $\{a=\infty\},$ $W_n$ is the graph transform of $W_0:=D(\infty, 1/\rho)\times \hat{M}\times\{0\}\times D(-1,1/2)$ by $\mathcal F^n(\lambda,z,w):=(\lambda,\mathcal F^n_\lambda(z,w))$ and thus, the fibers have to converge to $W_{{\mathcal P}(\lambda),loc}^u.$ Since the fibers of $W_0$ are tangent to $C_\rho$ and $\mathcal F_\lambda$ contracts this cone, the unstable manifolds are also tangent to $C_\rho.$ Thus, they have to be vertical in $\Db_R\times D(-1,1/2)$ since ${\mathcal P}(\lambda)\in \Db_{1/4}\times D(-1,1/2)$ and $\rho>100$.

Above $\{a=\infty\}$, the fibers of $W_n$ are the vertical lines $\{z=f^n_{\alpha,\beta,\epsilon}(0)\}$, where $f_{\alpha,\beta,\epsilon}(z)=(\alpha-\beta) z-\epsilon.$ As this map is a contraction whose fixed point is $\frac{\epsilon}{\alpha-\beta-1}$, these lines converge to $\{z=\frac{\epsilon}{\alpha-\beta-1}\}.$

The fact that each fiber of \( W_n \) is a graph tangent to \( C_\rho \) ensures that the sequence \( (W_n)_{n \geq 1} \) lies in a compact family of closed analytic subsets of \( D(\infty, 1/\rho) \times \hat{M} \times \mathbb{D}_R \times \mathbb{D}_3 \). Any limit value \( W \) of this sequence (which is actually unique by the discussion above) provides the desired set in the statement.
\end{proof}

These information help us to understand the relation $x_\omega(\lambda)\in W_{{\mathcal P}(\lambda),loc}^u$. The following result will in particular imply Assumption \ref{free}.
\begin{lemma}\label{le-xomega2}
Let $(\alpha_0,\beta_0,\epsilon_0)$ be as in Lemma \ref{le-ifs}. There exists a non-empty connected open neighborhood $M$ of $(\infty,\alpha_0,\beta_0,\epsilon_0)$ in $D(\infty,1/\delta)\times\hat M$ such that for each $\omega\in\Sigma$, the analytic set
\[X_\omega:=\left\{\lambda\in D(\infty,1/\delta)\times\hat M\ ;\ x_\omega(\lambda)\in W^u_{{\mathcal P}(\lambda),loc}\right\}\]
is a (possibly empty) hypersurface and each irreducible component of $X_\omega$ that intersects $M$ also intersects $\{\infty\}\times\hat M.$
\end{lemma}
\begin{proof}
It is clear that $X_\omega$ is an analytic set of codimension at most $1$ (if not empty). Let assume that for some $\omega\in\Sigma$ we have $X_\omega= D(\infty,1/\delta)\times\hat M$. In particular, with $a=\infty$ we have for all $(\alpha,\beta,\epsilon)\in\hat M$, and after simplification by $\epsilon$, that
\[\mu\left(\frac{\beta}{1-\mu}+(1-\alpha)h_\omega(\mu)\right)=\frac{1}{\alpha-\beta-1},\]
where $\mu=(\alpha^2-\beta^2)^{-1}$. As $\hat M$ is open and as the radius of convergence of $h_\omega$ is $1$, this equality should hold for all $(\alpha,\beta)\in\Cb^2$ with $|\alpha^2-\beta^2|>1$ which is impossible with $\alpha=2$ and $\beta=1$ since the right hand side diverges.

Observe that we have proved the stronger result that the intersections between each $X_\omega$ and $\{\infty\}\times\hat M$ are proper. This will allow us to prove the second statement by contradiction. Assume there exist a sequence $(\omega_n)_{n\geq0}$ in $\Sigma$ and a sequence $(\lambda_n)_{n\geq0}$ in $\hat M$ converging toward $\lambda_\infty:=(\infty,\alpha_0,\beta_0,\epsilon_0)$ such that $\lambda_n$ belongs to an irreducible component $C_{\omega_n}$ of $X_{\omega_n}$ which is disjoint from $\{\infty\}\times\hat M$. Up to a subsequence, $(\omega_n)_{n\geq0}$ converges to some $\omega_\infty\in\Sigma.$ By Lemma \ref{le-xomega}, $\omega\mapsto x_\omega$ is continuous, thus $X_{\omega_n}$ converges to $X_{\omega_\infty}$ and $C_{\omega_n}$ converges to a union of irreducible components $C_{\omega_\infty}$ of $X_{\omega_\infty}$. Since $\lambda_n\in C_{\omega_n}$, we must have $\lambda_\infty\in C_{\omega_\infty}$, i.e., $C_{\omega_\infty}\cap\{\infty\}\times\hat M\neq\varnothing$. The fact that $C_{\omega_n}$ is disjoint from $\{\infty\}\times\hat M$ then contradicts the persistence of proper intersections (see e.g. \cite[\textsection 12.3]{Chirka}).
\end{proof}
The next step is to check that we have a dense set of maps where points i), ii) and iv) in Assumption \ref{10} are simultaneously satisfy. We denote by $\chi_{{\mathcal P}(\lambda)}$ (resp. $\chi_{{\mathcal R}(\lambda)}$) the eigenvalue of $D_{{\mathcal P}(\lambda)}{\mathcal F}_\lambda$ (resp. $D_{{\mathcal R}(\lambda)}{\mathcal F}_\lambda^2$) with the smallest modulus. A first observation, already made in Remark \ref{rk-dense}, is that the condition $\overline{\langle\chi_{{\mathcal P}(\lambda)},\chi_{{\mathcal R}(\lambda)}\rangle}=\Cb^*$ is equivalent to $1$, $\theta$ and $t$ being independent over $\Qb$, where $\chi_{{\mathcal R}(\lambda)}=e^{2i\pi\theta}\chi_{{\mathcal P}(\lambda)}^{t}$. This is fulfilled by a dense subsets of $(\theta,t)\in\Rb^2$. Hence, in order to have i), ii) and iv) in Assumption \ref{10} simultaneously it is sufficient to have the ``transversality'' property described in Lemma \ref{le-ass11} below, between two families of hypersurfaces $(Y_{\zeta,t})_{\zeta\in\Sb^1, t\in\Rb}$ and  $(X_\omega)_{\omega\in\Sigma}$. If $\zeta\in\Sb^1$ and $t\in\Rb$, we set
\[Y_{\zeta,t}:=\left\{\lambda\in D(\infty,1/\delta)\times\hat M\ ;\ \chi_{{\mathcal R}(\lambda)}=\zeta\chi_{{\mathcal P}(\lambda)}^{t}\right\}.\]
Observe that $\chi_{{\mathcal R}(a,\alpha,\beta,\epsilon)}\simeq\alpha^2-\beta^2$ and $\chi_{{\mathcal P}(a,\alpha,\beta,\epsilon)}\simeq\alpha-\beta$ near $a=\infty$ so $Y_{\zeta,t}$ is actually a hypersurface. The family $(Y_{\zeta,t})_{\zeta\in\Sb^1, t\in\Rb}$ defined a (possibly singular) foliation which is not holomorphic. On the other hand, $(X_\omega)_{\omega\in\Sigma}$ is parametrized by a Cantor set and depends continuously on $\omega$. Furthermore, the blender property of $\Lambda(\lambda)$ ensures that $(X_\omega)_{\omega\in\Sigma}$ covers $D(\infty,1/\delta)\times\hat M$. Actually, many points belong to two $X_\omega$ and $X_{\omega'}$ at the same time, which will greatly simplify the verification of Assumption \ref{10}.
\begin{lemma}\label{le-doublons}
Possibly by reducing $M$, for each $\lambda\in M$
\begin{itemize}
\item $W^u_{{\mathcal P}(\lambda),loc}\subset\Db_{1/4}\times D(-1,1/2)$,
\item there exist two words $\omega,\omega'\in\Sigma$ such that $\omega'\neq\omega$ and $\lambda\in X_\omega\cap X_{\omega'}$.
\end{itemize}
\end{lemma}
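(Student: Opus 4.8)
The plan is to use the blender property from Lemma \ref{le-blender} together with the fact that the local unstable manifold of $\mathcal{P}(\lambda)$ degenerates, as $a\to\infty$, to an explicit vertical line. First I would localize: by Lemma \ref{le-xomega} we work in the connected neighborhood $M$ of $\lambda_\infty=(\infty,\alpha_0,\beta_0,\epsilon_0)$, and since $\mathcal{F}_\lambda$ contracts the cone field $C_\rho$ on $\Db_R\times D(-1,1/2)$, the local unstable manifold $W^u_{\mathcal{P}(\lambda),loc}$ is a vertical graph over $D(-1,1/2)$ that converges uniformly, as $a\to\infty$, to the vertical line $\{z=\epsilon/(\alpha-\beta-1)\}$. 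Evaluating at $(\alpha_0,\beta_0,\epsilon_0)$ with $\alpha_0=\zeta(1+A)$, $\beta_0=2A\zeta$, $\epsilon_0=(20(\zeta-1))^{-1}$ and $A$ small gives a point of modulus $|\epsilon_0|/|\zeta(1-A)-1|$, which is bounded by $1/10$ divided by a quantity close to $|\zeta-1|=\sqrt2$, hence strictly less than $1/4$; by continuity this persists on a (possibly smaller) neighborhood $M$, establishing the first bullet. Here I would reduce $M$ so that the graphs $W^u_{\mathcal{P}(\lambda),loc}$ all lie in $\Db_{1/4}\times D(-1,1/2)$.

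For the second bullet, the key point is that the blender forces $W^u_{\mathcal{P}(\lambda),loc}$ — being a vertical graph tangent to $C_\rho$ inside $\Db_{1/4}\times D(-1,1/2)\subset H_j\times V_j(a)$ for the appropriate index $j$ with $V_j(a)\subset V_-(a)$ — to intersect $\Lambda(\lambda)$ by Lemma \ref{le-blender}. So there is at least one $\omega\in\Sigma$ with $x_\omega(\lambda)\in W^u_{\mathcal{P}(\lambda),loc}$, i.e. $\lambda\in X_\omega$. To get a \emph{second} word $\omega'\neq\omega$, I would exploit the combinatorial redundancy of the coding: in the IFS limit, each point $z_\omega(\hat\lambda)$ of the limit set of $\mathcal{L}_{\hat\lambda}$ lies in the overlap of the two pieces $\ell_{\hat\lambda,+}(\Db_2)$ and $\ell_{\hat\lambda,-}(\Db_2)$ — indeed $\overline{\Db}\subset H_j$ for every $j$ and the contraction factor $|\mu|=|\alpha^2-\beta^2|^{-1}$ is close to $|\zeta^2(1+2A-3A^2)|^{-1}$, so the two images overlap substantially and any graph sufficiently vertical and close to the central region meets $\Lambda(\lambda)$ via both $g_{\lambda,+}$ and $g_{\lambda,-}$ as the first inverse step. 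Concretely: the graph $W^u_{\mathcal{P}(\lambda),loc}$, pushed forward by $\mathcal{F}^2_\lambda$, still contains a vertical graph tangent to $C_\rho$ which lies in $H_i\times V_i(a)$ for at least two distinct choices of $i$ (because its first projection, of diameter $<\eta$, sits inside $\overline{\Db}$ which is contained in every $H_j$); iterating the blender argument along each of these two choices produces two distinct symbolic itineraries $\omega\neq\omega'$ both landing in $W^u_{\mathcal{P}(\lambda),loc}\cap\Lambda(\lambda)$ — more precisely one shows the \emph{pullback} word has a free first symbol. Since these overlap/verticality properties are open and hold at $\lambda_\infty$, reducing $M$ once more makes them hold on all of $M$.

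The main obstacle I expect is the bookkeeping in the second bullet: one must check carefully that the blender mechanism can be run so as to leave genuine freedom in the first (or some) symbol of the itinerary, rather than merely producing \emph{one} intersection point. This amounts to verifying that $W^u_{\mathcal{P}(\lambda),loc}$ (or an appropriate forward iterate of it) is vertical and central enough to lie simultaneously in $H_{i}\times V_{i}(a)$ for two indices $i$ with $V_i(a)\subset V_-(a)$ — which follows from $\overline{\Db}\subset\cap_j H_j$ and the control on $\rho$ and $|a|$ in Lemma \ref{le-blender} — and then that the two resulting nested sequences of graphs shrink to distinct points of $\Lambda(\lambda)$, which is immediate from injectivity of $\mathcal{F}^2_\lambda$ on each $\mathcal{V}_\pm$ (Assumption \ref{hyperbolic}) since distinct first symbols force the points into disjoint pieces $\Db_R\times V_+(a)$ versus $\Db_R\times V_-(a)$. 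All remaining estimates are routine perturbation arguments off the explicit values at $\lambda_\infty$.
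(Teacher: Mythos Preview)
Your approach is essentially the paper's, but you have overcomplicated the second bullet and introduced an internal inconsistency at the end.

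The paper's argument for the second bullet is entirely direct: by definition $V_3(a)=q_a^{-2}(V_+)\cap V_-$ and $V_4(a)=q_a^{-2}(V_-)\cap V_-$ are both contained in $V_-(a)\subset D(-1,1/2)$. Hence, once you know $W^u_{\mathcal P(\lambda),loc}\subset\Db_{1/4}\times D(-1,1/2)$ from the first bullet, and since $\Db_{1/4}\subset\overline\Db\subset H_j$ for every $j$, the unstable manifold \emph{itself} already meets $H_3\times V_3(a)$ and $H_4\times V_4(a)$ in two vertical graphs tangent to $C_\rho$. Applying Lemma~\ref{le-blender} separately to each of these yields two points of $\Lambda(\lambda)$ on $W^u_{\mathcal P(\lambda),loc}$. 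No push-forward by $\mathcal F_\lambda^2$, no ``free first symbol'' argument, and no appeal to IFS overlap is needed.

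Your final justification for $\omega\neq\omega'$ is where the confusion shows: you write that ``distinct first symbols force the points into disjoint pieces $\Db_R\times V_+(a)$ versus $\Db_R\times V_-(a)$'', but a few lines earlier you correctly restricted to indices $i$ with $V_i(a)\subset V_-(a)$. Both intersection points lie in $V_-(a)$, so both words have $\omega_0=-1$. What distinguishes them is the \emph{second} symbol: a point in $V_3$ is sent by $\mathcal F_\lambda^2$ into $V_+$ (so $\omega_1=+1$), while a point in $V_4$ is sent into $V_-$ (so $\omega_1=-1$). Since $V_3(a)$ and $V_4(a)$ are disjoint, the two intersection points are distinct and hence so are their codings. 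Your first-bullet argument is fine and matches the paper's (you simply spell out the limiting computation).
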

\begin{proof}
The first point follows from the facts that ${\mathcal P}(\lambda)\in \Db_{1/4}\times D(-1,1/2)$ and that $W^u_{{\mathcal P}(\lambda),loc}$ is almost a straight vertical graph when $|a|$ is large.

For the second point, observe that both $V_3(a)$ and $V_4(a)$ are contained in $D(-1,1/2)$ hence, since each $H_j$ in Lemma \ref{le-ifs} contains $D_{1/4}\subset\overline\Db$, the local stable manifold $W^u_{{\mathcal P}(\lambda),loc}$ intersects $H_3\times V_3(a)$ and $H_4\times V_4(a)$ in two vertical graphs tangent to $C_\rho$. By Lemma \ref{le-blender}, there exist two intersections between $W^u_{{\mathcal P}(\lambda),loc}$ and $\Lambda(\lambda)$.
\end{proof}

\begin{lemma}\label{le-ass11}
Let $M$ be as in Lemma \ref{le-doublons}. Let $\omega,\omega'\in\Sigma$, $\lambda\in M$ and $(\zeta,t)\in\Sb^1\times\Rb$. If there exist irreducible components $Z_\omega$ and $Z_{\omega'}$ of $X_\omega$ and $X_{\omega'}$ respectively such that $\lambda\in Z_\omega=Z_{\omega'}\subset Y_{\zeta,t}$ then $\omega=\omega'$.
\end{lemma}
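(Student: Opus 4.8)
The plan is to argue by contradiction: assume $\omega\neq\omega'$ and derive a contradiction, working entirely over the locus $a=\infty$ (I will not need the hypothesis $Z_\omega\subset Y_{\zeta,t}$). First I would invoke Lemma~\ref{le-xomega}: since $Z_\omega=Z_{\omega'}$ is an irreducible component of $X_\omega$ which meets $M$, it meets $\{\infty\}\times\hat M$, and by the properness of the intersection $X_\omega\cap(\{\infty\}\times\hat M)$ established in that proof, the analytic set $W:=Z_\omega\cap(\{\infty\}\times\hat M)$ is non-empty of pure dimension $\dim Z_\omega-1=2$ (the ambient space $D(\infty,1/\delta)\times\hat M$ being $4$-dimensional and $X_\omega$ a hypersurface).

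Next I would write down the equations over $a=\infty$. Set $\mu:=(\alpha^2-\beta^2)^{-1}$ and $h_\omega(\mu):=\sum_{n\geq0}\omega_n\mu^n$, which converges for $|\mu|<1$, in particular on $\hat M$ where $|\alpha|>1$ by \eqref{def-M}. By \eqref{eq-zomega} one has $x_\omega(\infty,\hat\lambda)=(z_\omega(\hat\lambda),\omega_0)$ with $z_\omega(\hat\lambda)=\epsilon\mu\big(\tfrac{\beta}{1-\mu}+(1-\alpha)h_\omega(\mu)\big)$, while $W^u_{\mathcal P(\infty,\hat\lambda),loc}=\{z=\epsilon(\alpha-\beta-1)^{-1}\}$. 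Since $|\epsilon|>1/20$ on $\hat M$, dividing by $\epsilon$ gives, exactly as in the proof of Lemma~\ref{le-xomega},
\[X_\omega\cap(\{\infty\}\times\hat M)=\big\{(\infty,\alpha,\beta,\epsilon)\in\{\infty\}\times\hat M:\Psi_\omega(\alpha,\beta)=0\big\},\quad \Psi_\omega:=\mu\Big(\tfrac{\beta}{1-\mu}+(1-\alpha)h_\omega(\mu)\Big)-\tfrac1{\alpha-\beta-1},\]
and likewise for $\omega'$. As $W\subset Z_\omega=Z_{\omega'}\subset X_\omega\cap X_{\omega'}$, the image of $W$ under the projection $(\alpha,\beta,\epsilon)\mapsto(\alpha,\beta)$ lies in $\{\Psi_\omega=0\}\cap\{\Psi_{\omega'}=0\}$.

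The crux — and the step I expect to be the real obstacle — is to show that these two plane curves have no common irreducible component when $\omega\neq\omega'$. Here I would compute $\Psi_\omega-\Psi_{\omega'}=\mu(1-\alpha)H(\mu)$ with $H(\mu):=\sum_{n\geq0}(\omega_n-\omega'_n)\mu^n\not\equiv0$, a holomorphic function on $\{|\mu|<1\}$ with isolated zeros; since $\mu$ is finite and nonzero and $\alpha\neq1$ on $\hat M$, the set $\{\Psi_\omega=\Psi_{\omega'}\}\cap\hat M$ is contained in a countable union of conics $\{\alpha^2-\beta^2=c_j\}$ with $|c_j|>1$. A common component $C$ would therefore be an open piece of such a conic, which is smooth and irreducible, forcing (by analytic continuation) $\Psi_\omega$ to vanish identically on $\{\alpha^2-\beta^2=c\}$. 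But there $\mu\equiv c^{-1}$, so $\Psi_\omega$ restricts to an affine function of $(\alpha,\beta)$ minus $(\alpha-\beta-1)^{-1}$; clearing the denominator yields a polynomial of degree $\leq2$ which a one-line comparison of the coefficients of $\alpha^2$, $\alpha$ and the constant term shows is not a multiple of $\alpha^2-\beta^2-c$. This contradiction establishes the claim; informally, it says the family $(X_\omega)_\omega$ is transverse to the fibration $\{\mu=\mathrm{const}\}$ of parameter space.

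Finally I would conclude by a dimension count: $\{\Psi_\omega=0\}\cap\{\Psi_{\omega'}=0\}$ is then $0$-dimensional, so the projection of $W$ to the $(\alpha,\beta)$-plane is a finite set, and each fibre of this projection restricted to $W$ is contained in a copy of the $\epsilon$-disc, hence is at most $1$-dimensional; therefore $\dim W\leq1$, contradicting $\dim W=2$. Thus $\omega=\omega'$, as claimed.
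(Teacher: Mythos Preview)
Your argument is correct and in fact proves a slightly stronger statement than the paper's lemma: you never use the hypothesis $Z_\omega\subset Y_{\zeta,t}$. Both proofs begin the same way---pass to $a=\infty$ via Lemma~\ref{le-xomega}, observe that the equations $\Psi_\omega=0$ and $\Psi_{\omega'}=0$ are independent of $\epsilon$, and note that their difference forces $\mu$ to be constant on (each component of) $\hat Z_\omega$ when $\omega\neq\omega'$. The paper then invokes the extra constraint $\chi_{\mathcal R(\lambda)}=\zeta\chi_{\mathcal P(\lambda)}^t$: at $a=\infty$ this reads $\alpha^2-\beta^2=\zeta(\alpha-\beta)^t$, and since $\mu=(\alpha^2-\beta^2)^{-1}$ is already constant, $\alpha-\beta$ is constant too, hence $\alpha,\beta$ are both constant on $\hat Z_\omega$, contradicting $\dim\hat Z_\omega\geq2$. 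You instead bypass $Y_{\zeta,t}$ entirely by checking directly that $\Psi_\omega$ cannot vanish identically on a conic $\{\alpha^2-\beta^2=c\}$: on such a conic $\Psi_\omega$ becomes an affine function of $(\alpha,\beta)$ minus $(\alpha-\beta-1)^{-1}$, and clearing the denominator gives a quadratic which is visibly not a scalar multiple of $\alpha^2-\beta^2-c$ (the $\alpha^2$ and $\beta^2$ coefficients would force $A_0+B_0=0$ while the $\alpha$ coefficient forces $A_0=0$, hence $B_0=0$, impossible since $B_0=\mu_0/(1-\mu_0)\neq0$).

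Your route trades the use of a hypothesis for a short explicit computation; the payoff is a cleaner statement, while the paper's version is marginally quicker once one has the relation between the multipliers in hand.
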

\begin{proof}
In this situation, by Lemma \ref{le-xomega2}, $Z_\omega$ intersects $\{\infty\}\times\hat M$. As we have seen in the proof of Lemma \ref{le-xomega2}, a point $\lambda=(\infty,\alpha,\beta,\epsilon)\in\{\infty\}\times\hat M$ is in $X_\omega$ if and only if 
\[\epsilon\mu\left(\frac{\beta}{1-\mu}+(1-\alpha)h_\omega(\mu)\right)=\frac{\epsilon}{\alpha-\beta-1},\]
where $\mu=(\alpha^2-\beta^2)^{-1}=\chi_{{\mathcal R}(\lambda)}^{-1}$ and $h_\omega(\mu)=\sum_{n\geq0}\omega_n\mu^n$. The relation $Z_\omega=Z_{\omega'}$ implies that on $\hat Z_\omega:=Z_\omega\cap\{\infty\}\times\hat M$, which has dimension at least $2$, $h_\omega(\mu)=h_{\omega'}(\mu)$. If $\omega\neq\omega'$ then these two power series are different and $\mu$ has to be constant on $\hat Z_\omega$. On the other hand, $Z_\omega\subset Y_{\zeta,t}$ hence $\chi_{{\mathcal R}(\lambda)}=\zeta\chi_{{\mathcal P}(\lambda)}^{t}$ on $\hat Z_\omega$. Since $\chi_{{\mathcal R}(\infty,\alpha,\beta,\epsilon)}=\alpha^2-\beta^2=\mu^{-1}$ and $\chi_{{\mathcal P}(\infty,\alpha,\beta,\epsilon)}=\alpha-\beta$, both are constant on $\hat Z_\omega$ and thus $\alpha$, $\beta$ are also constant. This contradicts the fact that $\hat Z_\omega$ has dimension at least $2$. Hence, $\omega=\omega'$.
\end{proof}

\subsection{Higher dimensions and degrees}\label{sec-dim}
The next step is to move to higher dimensions and higher degrees. Let $k\geq2$ and $d\geq2$. We denote by $[y_0:\cdots:y_k]$ the homogeneous coordinates on $\Pb^k$ and we will mainly work in the affine chart $y_0=1$. Since the two first coordinates are the most important for the dynamics, we take the convention of notation that
\[z=y_1,\ \ w=y_2\ \text{ and }\ y=(y_3,\ldots,y_k).\]
Recall that $N_d^k:=(k+1)\binom{k+d}{d}$ is the dimension of the set of $k+1$ homogeneous polynomials of degree $d$. We choose coordinates in $\Cb^{N_d^k}$ such that, if $\sigma=(\sigma_3,\ldots,\sigma_k)$ and $\tau=(\tau_3,\ldots,\tau_k)$ are in $\Cb^{k-2}$ then the parameter $\lambda=(a,\alpha,\beta,\epsilon,\sigma,\tau,0)\in\Cb^{N_d^k}$ corresponds to the map
\begin{equation}\label{eq-skew}
f_\lambda(z,w,y)=\left(\alpha z+\epsilon w+\beta zw+w\sum_{i=3}^k\tau_iy_i,q_a(w),\sigma_3y_3,\ldots,\sigma_ky_k\right).
\end{equation}
Observe that, when $\tau=0$ then this map is a product map, acting by ${\mathcal F}_{(a,\alpha,\beta,\epsilon)}$ on $(z,w)$ and by a diagonal matrix on $y$. When $\tau\neq0$ then it is a skew product of $\Cb\times\Cb^{k-1}$. In what follows, we will take $\sigma$ with $|\sigma_i|>1$ very large with respect to $\alpha$, $\beta$ and $\epsilon$ in order to ensure a dominated splitting. The choice of $\sigma$ will also depend on $a$ in order to obtain non-resonance conditions for the periodic points $p$ and $r$. The parameter $\tau$ will be chosen very small at the end and its only role will be to obtain the point iii) in Assumption \ref{10}.

To be more explicit, let $(a,\alpha,\beta,\epsilon)$ be in the set $M$ given by Lemma \ref{le-xomega2}, let $\tau=0$ and let $\sigma=(\sigma_3,\ldots,\sigma_k)\in\Cb^{k-2}$ be such that each $|\sigma_i|>1$ is large. In this situation, the corresponding map $f_\lambda$ has a fixed saddle point $p(\lambda)=(\mathcal P(\lambda),0)$, a period $2$ repelling point $r(\lambda)=(\mathcal R(\lambda),0)$ and a repelling hyperbolic set 
\[\Lambda(\lambda):=\bigcap_{n\geq0}f^{-2n}_\lambda(\Db_R\times(V_-(a)\cup V_+(a))\times\Db^{k-2})\]
which is equal to the product of the hyperbolic set of $\mathcal F_{(a,\alpha,\beta,\epsilon)}$ with $\{0\}$. If each $|\sigma_i|>1$ is large enough then the cone field $C_\rho:=\{(u_1,\ldots,u_k)\in\Cb^k\ ;\ \rho|u_1|\leq\max_{2\leq i\leq k}|u_k|\}$, where $\rho$ is given by Lemma \ref{le-blender}, is contracted by $f_\lambda$ (resp. $f_\lambda^2$) on $\Db_R\times(U_-(a)\cup U_+(a))\times\Db^{k-2}$  (resp. $\Db_R\times(V_-(a)\cup V_+(a))\times\Db^{k-2}$) and thus $\Lambda(\lambda)$ has the following blender property: for each $i\in\{1,2,3,4\}$, any vertical graph in $\overline\Db\times V_i(a)\times\Db^{k-2}$ tangent to $C_\rho$ intersects $\Lambda(\lambda)$. Moreover, a simple computation gives that the critical set of $f_\lambda$ is disjoint from $\Db_R\times(U_-(a)\cup U_+(a))\times\Db^{k-2}$ and that the stable manifold of $p(\lambda)$ is equal to $\Cb\times\{(\tilde w(a),0)\}$, where $\tilde w(a)$ is the unique fixed point of $q_{a}$ in $U_-(a).$

We also need non-resonance conditions for $p(\lambda)$ and $r(\lambda)$ and for that we will choose $(a,\alpha,\beta,\epsilon)\in M$ and $\sigma$ more carefully. When $|a|$ is very large then the eigenvalues of $D_{p(\lambda)}f_\lambda$ are close to $\alpha-\beta$, $-2a$ and $\sigma_3,\ldots,\sigma_k$. Those of $D_{r(\lambda)}f^{2}_\lambda$ are close to $\alpha^2-\beta^2$, $-4a^2$ and $\sigma_3^2,\ldots,\sigma_k^2$. In both cases, only the first two ones depend on $a$. Hence, we can first fix $\alpha_1$, $\beta_1$ and $\epsilon_1$ then $a_1\in\Rb_+$ then $\sigma_1=(\sigma_i)_{3\leq i\leq k}\in(\Rb_-)^{k-2}$ in order to have $(a_1,\alpha_1,\beta_1,\epsilon_1)\in M$ and for $f_{\lambda_1}$, $\lambda_1:=(a_1,\alpha_1,\beta_1,\epsilon_1,\sigma_1,0)\in\Cb^{N_d^k}$
\begin{itemize}
\item[\textbf{(1)}] the eigenvalues of $p(\lambda_1)$ satisfy the \emph{strong Sternberg} condition of order $3,$
\item[\textbf{(2)}] $\max_{3\leq i\leq k}|\sigma_i|<|a_1|<\min_{3\leq i\leq k}|\sigma_i|^2/4,$
\item[\textbf{(3)}] there is \emph{no resonance} between the eigenvalues of $D_{r(\lambda_1)}f^2_{\lambda_1}$ and they are all different.
\end{itemize}
A first remark is that we choose $a_1$ in $\Rb_+$ and $\sigma_1$ in $(\Rb_-)^{k-2}$ only to have specific cone contractions for Lemma \ref{le-tan}. For the other properties, recall that there is a resonance between $k$ complex numbers $(\eta_1,\ldots,\eta_k)$ if there exist $j\in\{1,\ldots,k\}$ and a multi-index $N=(N_1,\ldots,N_k)$ of non-negative integers such that $\sum_{i=1}^kN_i\geq2$ and $\left|\prod_{i=1}^k\eta_i^{N_i}\right|=|\eta_j|$. Notice that for repelling or attracting periodic points, the eigenvalues have no resonance for an open and dense set of parameters, provided there are no persistence relations between them, which is the case here. In this situation, the periodic point can be holomorphically linearized, with a linearization which depends holomorphically in the parameters. This can be seen in the proof of Latt\`es in \cite{lattes-BSMF} for $k=2$ and Berger and Reinke deal with a much more general setting in \cite{berger-reinke}. Observe that we also ask for different eigenvalues in order to locally follow the associated eigenspaces.

In the saddle case, the absence of resonance is no longer an open condition and in particular, it might be not possible to holomorphically linearized in family. Thus, we use the work of Sell \cite{sell} in order to have $C^1$-linearization in family. The strong Sternberg condition of order $3$ comes from \cite{sell} and is implied by the non-existence of resonance with multi-index $N=(N_1,\ldots,N_k)$ with $\sum_{i=1}^kN_i\leq3$ which is an open and dense property. The condition \textbf{(2)} above ensures that the spectral spreads as defined in \cite{sell} satisfy $\rho^-=1$ and $\rho^+<2$. Hence, \cite[Theorem 7]{sell} implies that the dynamics near $p(\lambda)$ can be $C^1$-linearized for $\lambda$ in a small neighborhood of $\lambda_1$ and with a linearization which depends continuously in the $C^1$-topology on $\lambda.$

Since all the properties above are stable under small $C^1$-perturbations there exists a small connected open neighborhood $\tilde M$ of $\lambda_1$ in $\Cb^{N_d^k}$ such that $p(\lambda)$, $r(\lambda)$ and $\Lambda(\lambda)$ can be followed holomorphically and, for each $\lambda\in\tilde M$, in addition to the linearization properties of $p(\lambda)$ and $r(\lambda)$ we just mentioned, we also have the following properties.
\begin{itemize}
\item If we set $U_\pm:=U_\pm(a_1)$, $V_\pm:=V_\pm(a_1)$, $\mathcal U_\pm:=\Db_R\times U_\pm\times\Db^{k-2}$ and $\mathcal V_\pm:=\Db_R\times V_\pm\times\Db^{k-2}$ then $f_\lambda$ (resp. $f_\lambda^2$) contracts to cone field $C_\rho$ on $\mathcal U_+\cup\mathcal U_-$ (resp. on $\mathcal V_+\cup\mathcal V_-$) and $\overline{\mathcal U_+\cup\mathcal U_-}\subset f_\lambda^2(\mathcal V_\pm)$ with $f^2_\lambda$ injective and expanding on $\mathcal V_+$ and on $\mathcal V_-.$
\item The critical set of $f_\lambda$ is disjoint from $\mathcal U_+\cup\mathcal U_-.$
\item Using inverse branches, each point in $\Lambda(\lambda)$ corresponds to a unique coding $\omega\in\Sigma$ and $(\lambda\mapsto x_\omega(\lambda))_{\omega\in\Sigma}$ gives the holomorphic motion of $\Lambda(\lambda).$
\item For each $i\in\{1,2,3,4\}$, any vertical graph in $\overline\Db\times V_i(a_1)\times\Db^{k-2}$ tangent to $C_\rho$ intersects $\Lambda(\lambda).$
\item $p(\lambda)$ is saddle and $W^u_{p(\lambda),loc}$ is a hypersurface intersecting $\Db_{1/4}\times V_-\times\Db^{k-2}$ as a vertical graph tangent to $C_\rho$. In particular, as in Lemma \ref{le-doublons}, $W^u_{p(\lambda),loc}$ intersects $\Lambda(\lambda)$ at two different points.
\item The stable manifold $W^s_{p(\lambda)}$ contains a subset close to $\Db_R\times\{(\tilde w(a_1),0)\}$ and in particular it intersects any vertical graphs in $\mathcal U_-$ tangent to $C_\rho$. 
\item $r(\lambda)$ is a repelling $2$-periodic point in $\Lambda(\lambda)$ with $k$ different eigenvalues.
\end{itemize}
In particular, we can define for $\omega\in\Sigma$
\[\tilde X_\omega:=\left\{\lambda\in\tilde M\ ;\ x_\omega(\lambda)\in W^u_{p(\lambda),loc} \right\},\]
and for $(\zeta,t)\in\Sb^1\times\Rb$, if $\chi_{r(\lambda)}$ (resp. $\chi_{p(\lambda)}$) is the eigenvalue of $D_{r(\lambda)}f^2_\lambda$ (resp. $D_{p(\lambda)}f_\lambda$) with the smallest modulus, 
\[\tilde Y_{\zeta,t}:=\left\{\lambda\in\tilde M\ ;\ \chi_{r(\lambda)}=\zeta\chi_{p(\lambda)}^t\right\}.\]
The following result is deduced from its counterpart on $M$.
\begin{lemma}\label{le-y}
For each $\omega\in\Sigma$ the set $\tilde X_\omega$ is a (possible empty) hypersurface of $\tilde M$. Moreover, there exists a connected open neighborhood $\tilde M'\subset\tilde M$ of $\lambda_1$ such that if $(\zeta,t)$ are in $\Sb^1\times\Rb$ and $\lambda\in\tilde M'$ is a regular point of $\tilde Y_{\zeta,t}$ then there is $\omega\in\Sigma$ such that $\lambda$ belongs to an irreducible component of $\tilde X_\omega$ which is not included in $\tilde Y_{\zeta,t}$.
\end{lemma}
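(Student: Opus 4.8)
The statement is a transfer of Lemma~\ref{le-xomega} and Lemma~\ref{le-ass11} from the $4$-dimensional parameter slice $M\subset D(\infty,1/\delta)\times\hat M$ to the full parameter space $\tilde M\subset\Cb^{N_d^k}$. The key geometric point is that when $\tau=0$, all the objects involved ($p(\lambda)$, $r(\lambda)$, $\Lambda(\lambda)$, $W^u_{p(\lambda),loc}$, the multipliers $\chi_{p(\lambda)},\chi_{r(\lambda)}$, and the codings $x_\omega(\lambda)$) split as a product of the two-dimensional data $\mathcal F_{(a,\alpha,\beta,\epsilon)}$-data with the linear data coming from $\sigma$; in particular, restricting $\tilde X_\omega$, resp. $\tilde Y_{\zeta,t}$, to the slice $\{a=a_1\}\times\{\text{$(\alpha,\beta,\epsilon)$ varying}\}\times\{\sigma=\sigma_1\}\times\{\tau=0\}$ recovers (up to the obvious identification with a neighborhood of $(\infty,\alpha_0,\beta_0,\epsilon_0)$) the sets $X_\omega$, $Y_{\zeta,t}$ from \S~\ref{sec-ifs}. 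This is already recorded in the bullet list preceding the statement: each $x_\omega(\lambda)$ depends holomorphically on $\lambda$, $W^u_{p(\lambda),loc}$ is a hypersurface, and $\chi_{r(\lambda)}\simeq\alpha^2-\beta^2$, $\chi_{p(\lambda)}\simeq\alpha-\beta$ near $a=\infty$.

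First I would prove that $\tilde X_\omega$ is a hypersurface (or empty). Since $x_\omega\colon\tilde M\to\Pb^k$ and $W^u_{p(\lambda),loc}$ both depend holomorphically on $\lambda$, the set $\tilde X_\omega$ is an analytic subset of codimension at most $1$; it suffices to exclude $\tilde X_\omega=\tilde M$. If $\tilde X_\omega=\tilde M$, then in particular the relation $x_\omega(\lambda)\in W^u_{p(\lambda),loc}$ holds on the product slice $\tau=0$, $\sigma=\sigma_1$, which by the product structure forces the two-dimensional relation $x_\omega(a,\alpha,\beta,\epsilon)\in W^u_{\mathcal P(a,\alpha,\beta,\epsilon),loc}$ to hold identically; letting $a\to\infty$ and using the explicit formula \eqref{eq-zomega} for $z_\omega(\hat\lambda)$ against $W^u_{\mathcal P(\infty,\alpha,\beta,\epsilon),loc}=\{z=\epsilon/(\alpha-\beta-1)\}$ contradicts the computation in the proof of Lemma~\ref{le-xomega} (the identity fails at $\alpha=2,\beta=1$). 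Hence $\tilde X_\omega$ is a genuine hypersurface whenever it is non-empty, and $\lambda_1\in\tilde X_\omega$ for the two $\omega$ given by the blender intersections $W^u_{p(\lambda_1),loc}\cap\Lambda(\lambda_1)$ recorded above.

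Next I would address the second assertion. Choose $\tilde M'\subset\tilde M$ a small connected neighborhood of $\lambda_1$. Let $(\zeta,t)\in\Sb^1\times\Rb$ and let $\lambda\in\tilde M'$ be a regular point of $\tilde Y_{\zeta,t}$. By Lemma~\ref{le-doublons} transplanted via the product structure — the point $\lambda_1$ (hence every nearby $\lambda$, shrinking $\tilde M'$) has $W^u_{p(\lambda),loc}$ meeting $\Lambda(\lambda)$ in at least two distinct points, so $\lambda\in\tilde X_\omega\cap\tilde X_{\omega'}$ for two distinct codings $\omega\neq\omega'$. Now suppose for contradiction that for \emph{every} $\omega$ with $\lambda\in\tilde X_\omega$, every irreducible component of $\tilde X_\omega$ through $\lambda$ is contained in $\tilde Y_{\zeta,t}$. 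In particular, the components $Z_\omega\ni\lambda$ of $\tilde X_\omega$ and $Z_{\omega'}\ni\lambda$ of $\tilde X_{\omega'}$ both lie in $\tilde Y_{\zeta,t}$; since $\tilde Y_{\zeta,t}$ is (a priori only $C^1$, but) a real-analytic hypersurface near its regular point $\lambda$ and $Z_\omega, Z_{\omega'}$ are complex hypersurfaces through $\lambda$, intersecting $\tilde M'$ with a ball of large enough dimension, I would argue as in Lemma~\ref{le-ass11}: restrict to the slice $a=\infty$ (using Lemma~\ref{le-xomega}: each component of $\tilde X_\omega$ meeting $\tilde M'$ meets $\{a=\infty\}$), where $\chi_{r(\lambda)}=\mu^{-1}$, $\chi_{p(\lambda)}=\alpha-\beta$, and membership in $\tilde X_\omega$ is the equation $\epsilon\mu\big(\tfrac{\beta}{1-\mu}+(1-\alpha)h_\omega(\mu)\big)=\tfrac{\epsilon}{\alpha-\beta-1}$; the relation $Z_\omega\subset\tilde Y_{\zeta,t}$ plus $Z_{\omega'}\subset\tilde Y_{\zeta,t}$ forces $h_\omega(\mu)=h_{\omega'}(\mu)$ on a positive-dimensional set, whence either $\omega=\omega'$ (contradiction) or $\mu$ is locally constant there, which together with $\chi_{r}=\zeta\chi_p^t$ forces $\alpha,\beta$ constant, contradicting the dimension count. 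Therefore some component of some $\tilde X_\omega$ through $\lambda$ is not contained in $\tilde Y_{\zeta,t}$, which is the claim.

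\textbf{Main obstacle.} The delicate point is the product-structure reduction: I must verify that restricting $\tilde X_\omega$ and $\tilde Y_{\zeta,t}$ to the slice $\{\sigma=\sigma_1,\tau=0\}$ genuinely yields (a reparametrization of) $X_\omega$ and $Y_{\zeta,t}$ — i.e. that $W^u_{p(\lambda),loc}$, the coding $x_\omega(\lambda)$, and the small-modulus eigenvalues all respect the product decomposition when $\tau=0$ (they do, since $f_\lambda$ then splits, but one must be careful that the eigenvalue of smallest modulus at $p(\lambda)$ and $r(\lambda)$ is the ``horizontal'' one, which is exactly what conditions \textbf{(2)} and \textbf{(3)} preceding the statement guarantee: $|\sigma_i|>|a_1|$ large and $|\alpha-\beta|<1$). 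The second subtlety is handling the a priori merely $C^1$ regularity of $\tilde Y_{\zeta,t}$ (since $p(\lambda)$ is only $C^1$-linearizable), but this does not affect the argument: all that is used is that $\tilde Y_{\zeta,t}$ cannot contain a complex hypersurface on which $\alpha-\beta$ and $\alpha^2-\beta^2$ are non-constant, which follows from the defining relation $\chi_{r(\lambda)}=\zeta\chi_{p(\lambda)}^t$ together with $|\chi_{p(\lambda)}|<1$ exactly as in Lemma~\ref{le-ass11}.
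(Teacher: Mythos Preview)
Your overall strategy matches the paper's: reduce the first claim to Lemma~\ref{le-xomega} via the product slice, and for the second claim use the two distinct codings $\omega\neq\omega'$ from Lemma~\ref{le-doublons}, then derive a contradiction with Lemma~\ref{le-ass11}. However, there is a genuine misconception that creates a gap in your argument.

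You treat $\tilde Y_{\zeta,t}$ as a real-analytic (or only $C^1$) hypersurface, worrying about the $C^1$-linearization of $p(\lambda)$. This is wrong: $\chi_{p(\lambda)}$ and $\chi_{r(\lambda)}$ are simply eigenvalues of $D_{p(\lambda)}f_\lambda$ and $D_{r(\lambda)}f_\lambda^2$, hence holomorphic in $\lambda$ (the linearization plays no role here). For fixed $t\in\Rb$ and a local branch of $\log\chi_p$, the map $\lambda\mapsto\chi_{p(\lambda)}^t$ is holomorphic, so $\tilde Y_{\zeta,t}$ is a \emph{complex} analytic hypersurface. This matters because your argument never cleanly establishes $Z_\omega=Z_{\omega'}$: you jump directly to ``$h_\omega(\mu)=h_{\omega'}(\mu)$ on a positive-dimensional set'', which only follows if $Z_\omega$ and $Z_{\omega'}$ share a common set where both defining equations hold. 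Without $Z_\omega=Z_{\omega'}$, the inclusions $Z_\omega\subset\tilde Y_{\zeta,t}$ and $Z_{\omega'}\subset\tilde Y_{\zeta,t}$ give you nothing of the sort. Once you know $\tilde Y_{\zeta,t}$ is complex-analytic, the regularity of $\lambda$ makes this step immediate: near $\lambda$, $\tilde Y_{\zeta,t}$ is smooth of dimension $\dim\tilde M-1$, and $Z_\omega,Z_{\omega'}$ are irreducible analytic subsets of the same dimension contained in it, hence they coincide locally and therefore globally. This is exactly the step the paper isolates.

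A secondary point: you invoke Lemma~\ref{le-xomega} to say components of $\tilde X_\omega$ meeting $\tilde M'$ reach $\{a=\infty\}$, but that lemma lives in the $4$-dimensional space $D(\infty,1/\delta)\times\hat M$, not in $\tilde M\subset\Cb^{N_d^k}$. The paper handles this by first proving (by the same proper-intersection argument as in Lemma~\ref{le-xomega}) that components of $\tilde X_\omega$ meeting $\tilde M'$ must intersect the slice $M\times\{(\sigma_1,0)\}$; then, having $\tilde Z_\omega=\tilde Z_{\omega'}\subset\tilde Y_{\zeta,t}$, the restriction to this slice yields the situation of Lemma~\ref{le-ass11} directly, without re-running its computation.
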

\begin{proof}
The first point is a direct consequence of Lemma \ref{le-xomega2} since if $\tilde X_\omega=\tilde M$ then $X_\omega=D(\infty,1/\delta)\times\hat M$.

Exactly as in the proof of Lemma \ref{le-xomega2}, there exists a connected open neighborhood $\lambda_1$ in $\tilde M$ such that if an irreducible component of $\tilde X_\omega$ intersects $\tilde M'$ then it also intersects $M\times\{(\sigma_1,0)\}$.

Let $(\zeta,t)\in\Sb^1\times\Rb$ and $\lambda\in\tilde M'$ be a regular point of $Y_{\zeta,t}$. As we have already seen, there exists two different coding $\omega,\omega'\in\Sigma$ such that $\lambda\in\tilde X_\omega\cap\tilde X_{\omega'}$. Let $\tilde Z_\omega$ and $\tilde Z_{\omega'}$ be irreducible components of $\tilde X_\omega$ and $\tilde X_{\omega'}$ respectively containing $\lambda$. Assume by contradiction that both $\tilde X_\omega$ and $\tilde X_{\omega'}$ are contained in $\tilde Y_{\zeta,t}$. As $\lambda$ is a regular point of $\tilde Y_{\zeta,t}$ this implies that $\tilde Z_\omega=\tilde Z_{\omega'}\subset \tilde Y_{\zeta,t}$. Hence, since $\tilde Z_\omega$ intersects $M\times\{(\sigma_1,0)\}$, a similar result holds on $M$ which is not possible by Lemma \ref{le-ass11}.
\end{proof}
As a consequence, generically $p(\lambda)$ has plenty of homoclinic points.
\begin{lemma}\label{le-homoclinic}
The set of parameters in $\tilde M$ where Assumption \ref{homoclinic} holds is open and dense.
\end{lemma}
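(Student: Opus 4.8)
\textbf{Proof strategy for Lemma \ref{le-homoclinic}.}

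The plan is to show two things: openness, which is essentially formal, and density, which is the substantive part. For openness, recall that Assumption \ref{homoclinic} requires the existence of an integer $K$ and a point $\tilde q(\lambda)\in W^u_{p(\lambda),loc}$, not critical for $f_\lambda^K$, such that $q(\lambda):=f_\lambda^K(\tilde q(\lambda))\neq p(\lambda)$ is a transverse homoclinic intersection in $W^s_{p(\lambda),loc}$. A transverse intersection of two holomorphic submanifolds persists under $C^1$-small perturbations, and over $\tilde M$ the local stable and unstable manifolds of $p(\lambda)$, as well as the iterates $f_\lambda^K$, depend holomorphically (hence continuously in $C^1$) on $\lambda$; the non-criticality condition is also open. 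Thus the locus where Assumption \ref{homoclinic} holds is open in $\tilde M$.

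For density, I would fix an arbitrary parameter $\lambda_0\in\tilde M$ and an arbitrarily small neighborhood $\Omega'\subset\tilde M$ of $\lambda_0$, and produce a homoclinic point for some $\lambda\in\Omega'$. The mechanism is the blender together with the saddle connection already built into the open set: by the properties listed just before the statement (the analogue of Lemma \ref{le-doublons} on $\tilde M$), $W^u_{p(\lambda),loc}$ intersects $\Lambda(\lambda)$, and $W^s_{p(\lambda)}$ contains a piece close to $\Db_R\times\{(\tilde w(a_1),0)\}$ which cuts every sufficiently vertical graph in $\mathcal U_-$ tangent to $C_\rho$. Iterating a local branch of $W^u_{p(\lambda),loc}$ forward near the blender $\Lambda(\lambda)$ produces, via the graph-transform/inclination argument of Lemma \ref{le-blender}, a sequence of vertical graphs tangent to $C_\rho$ lying in $\mathcal U_-$; each of these meets $W^s_{p(\lambda)}$. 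Hence $W^u_{p(\lambda)}$ and $W^s_{p(\lambda)}$ already intersect. The remaining work is to arrange, after an arbitrarily small perturbation of $\lambda$, that one such intersection is \emph{transverse}, that the corresponding point on $W^u_{p(\lambda),loc}$ is not critical for the relevant iterate, and that the homoclinic point is distinct from $p(\lambda)$. The non-criticality is automatic on $\mathcal U_-$ since the critical set of $f_\lambda$ is disjoint from $\mathcal U_+\cup\mathcal U_-$, and distinctness from $p(\lambda)$ holds because the intersection point lies in $\mathcal U_-$ near $\Lambda(\lambda)$ while $p(\lambda)\in\Db\times U_-\times\Db^{k-2}$ can be separated from it. Transversality is obtained by a standard perturbation argument: if the intersection were tangent for all $\lambda$ near $\lambda_0$, the tangency would be a codimension-one analytic condition, so a generic small perturbation destroys it; concretely, one moves $\lambda$ so that $W^s_{p(\lambda)}$ and the graph $\Gamma_k(\lambda)$ (the $k$-th image of a local branch of $W^u_{p(\lambda),loc}$ near the blender) cross transversally, using that the parameter space $\tilde M$ has full dimension $N_d^k$ while these are submanifolds of complementary dimension meeting in a point of $\Pb^k$.

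The main obstacle I expect is making the transversality perturbation rigorous while controlling which branch of $W^u_{p(\lambda)}$ is used: one must ensure that the small motion in $\lambda$ that creates the transverse crossing does not simultaneously destroy the blender property or move the relevant graph out of $\mathcal U_-$. This is handled by taking the iterate $k$ large (so that $\Gamma_k(\lambda)$ is $C^1$-close to $W^{uu}_{r(\lambda),loc}$ and the blender still captures it), and by noting that the blender and cone-contraction properties are open in $C^1$, so they survive any sufficiently small perturbation. A secondary technical point is that $W^s_{p(\lambda)}$ is only a global (non-compact) analytic set, so one works with the explicit local piece near $\Db_R\times\{(\tilde w(a_1),0)\}$, which by the explicit form \eqref{eq-skew} of $f_{\lambda}$ equals $\Cb\times\{(\tilde w(a),0)\}$ when $\tau=0$ and is a small graph over it for $\tau$ small; transversality with the vertical-like graphs tangent to $C_\rho$ is then immediate from the cone condition, since a $C_\rho$-tangent graph and the ``nearly horizontal'' stable leaf meet at a definite angle. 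Putting these together gives a dense set of parameters in $\tilde M$ satisfying Assumption \ref{homoclinic}.
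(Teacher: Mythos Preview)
Your outline has the right ingredients—openness via stability of transverse intersections, density via the blender together with the cone condition—and you correctly observe at the end that transversality of a vertical graph tangent to $C_\rho$ with the nearly-horizontal piece of $W^s_{p(\lambda)}$ is automatic, so no perturbation is needed there. The paragraph proposing a genericity argument for transversality is therefore superfluous and somewhat misleading; you should drop it and lead with the cone-condition observation.

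The genuine gap is the condition $q(\lambda)\neq p(\lambda)$. Your justification (``the intersection point lies in $\mathcal U_-$ near $\Lambda(\lambda)$ while $p(\lambda)\in\Db\times U_-\times\Db^{k-2}$ can be separated from it'') does not work: the homoclinic point $q(\lambda)$ lies on $W^s_{p(\lambda),loc}$, whose second coordinate is close to the $q_a$-fixed point $\tilde w(a_1)\in U_-\setminus V_-$, so $q(\lambda)$ is \emph{not} near $\Lambda(\lambda)$ and is in the very same region as $p(\lambda)$. What distinguishes $q(\lambda)$ from $p(\lambda)$ is their first coordinate, and your argument gives no control on that. If the $\Lambda$-orbit you follow happens to stay close (in first coordinate) to the first coordinate of $p(\lambda)$, all the resulting intersections could coincide with $p(\lambda)$.

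The paper handles this differently and more cleanly. Rather than trying to produce a homoclinic at every $\lambda$, it first passes to the dense subset $\bigcup_{\omega\in\Sigma'}\tilde X_\omega\subset\tilde M$, where $\Sigma'\subset\Sigma$ is the set of codings with dense orbit in $\Lambda$ and each $\tilde X_\omega$ is a hypersurface by Lemma~\ref{le-y}. For $\lambda\in\tilde X_\omega$ with $\omega\in\Sigma'$, the point $x_\omega(\lambda)\in W^u_{p(\lambda),loc}\cap\Lambda(\lambda)$ has dense orbit in $\Lambda(\lambda)$; by the blender property the projection of $\Lambda(\lambda)$ on the first coordinate contains $\overline\Db$, so iterates of the small piece of $W^u_{p(\lambda),loc}$ near $x_\omega(\lambda)$ produce vertical graphs in $\mathcal U_-$ whose first coordinates sweep out $\overline\Db$. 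This immediately yields many transverse intersections with $W^s_{p(\lambda)}$ having pairwise distinct first coordinates, hence some $q(\lambda)\neq p(\lambda)$. Non-criticality is then exactly as you say: the whole orbit stays in $\mathcal U_+\cup\mathcal U_-$, which is disjoint from $\crit(f_\lambda)$. If you want to repair your approach without invoking $\tilde X_\omega$, you would need an independent reason why, for every $\lambda$ (or a dense set of $\lambda$), some iterate of the unstable manifold produces a vertical graph in $\mathcal U_-$ whose first coordinate is bounded away from that of $p(\lambda)$; the dense-orbit trick is by far the most economical way to secure this.
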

\begin{proof}
This set is clearly open. It remains to prove that it is dense. Notice that the set $\Sigma'$ of $\omega\in\Sigma$ coding for a point with dense orbit in $\Lambda(\lambda)$ is dense in $\Sigma$ and does not depend on $\lambda$. As each $\tilde X_\omega$ is a hypersurface, the set $\cup_{\omega\in\Sigma'}\tilde X_\omega$ is dense in $\tilde M$. Let $\omega\in\Sigma'$, $\lambda\in\tilde X_\omega$ and let $\Gamma$ be a small neighborhood of $x_\omega(\lambda)$ in $W^u_{p(\lambda),loc}$. For $n\geq1$ large enough, its image $f_\lambda^{2n}(\Gamma)$ contains a vertical graph in $\mathcal U_-$ and thus, as we have seen when $\tilde M$ was chosen, it intersects $W_{p(\lambda)}^s$. As the graph is vertical, the intersection is transverse. Moreover, we obtain in this way several different intersection points. Actually, the orbit of $x_{\omega(\lambda)}$ is dense in $\Lambda(\lambda)$ and, by the blender property, the projection on the first coordinate of this set contains $\overline\Db$. This, combined with the fact that the graphs above are tangent to $C_\rho$ with $\rho>100$, ensures that several of these intersection points are different from $p(\lambda)$.

Finally, all the dynamics above stay in $\mathcal U_-$ which is disjoint from the critical set of $f_\lambda$, by assumption on $\tilde M$.
\end{proof}

\subsection{Tangencial dynamics}\label{sec-tan}
In this part, we will prove that a property, which is robust in the $C^1$-topology and which implies iii) in Assumption \ref{10}, holds generically in the open set $\tilde M\subset\Cb^{N_d^k}$ obtained in \S~\ref{sec-dim}.

To fix some notations, let $\lambda\in\tilde M$ and $F_\lambda:=f^2_\lambda$. Since $\Lambda(\lambda)$ is a hyperbolic set for $F_\lambda$ with a dominated splitting, to each history $\hat x(\lambda)=(x_{n})_{n\leq0}$ in the natural extension $\hat\Lambda(\lambda)$ is associated a strong unstable 
subspace $E^{uu}_{\hat x(\lambda)}$. This subspace is simply obtained by
\begin{equation}\label{eq-uu}
E^{uu}_{\hat x(\lambda)}:=\lim_{n\to\infty}D_{x_{-n}}F_\lambda^{n}E^{v}=\lim_{n\to\infty}(D_{x_{-1}}F_\lambda\circ\cdots\circ D_{x_{-n}}F_\lambda)E^v.
\end{equation}
where $E^{v}=\{(u_1,\ldots,u_k)\in\Cb^k\ ;\ u_1=0\}$. The strong unstable manifold  can be constructed in a similar way using graph transform but we will not use it. These objects depend continuously on $\hat x$ and holomorphically on $\lambda\in\tilde M$. Actually, this is true as long as the hyperbolic set can be followed, a remark that will be used in Lemma \ref{le-tan}.

Observe that the natural extension $\hat\Lambda(\lambda)$ of $\Lambda(\lambda)$ corresponds to the two-sided full shift encoded by $\hat\Sigma:=\{-1,1\}^\Zb.$
For $l\in\Nb$ and $n\in\Zb$, we set $\omega_n(l)=1$ if $n<l$ and $\omega_n(l)=-1$ otherwise. If $\omega(l):=(\omega_n(l))_{n\in\Nb}$ and $\hat\omega(l):=(\omega_n(l))_{n\in\Zb}$ then $x_{\omega(l)}(\lambda)\in\Lambda(\lambda)$ is a preimage of $r(\lambda)$ by $F_\lambda^l$ and $x_{\hat\omega(l)}(\lambda)\in\hat\Lambda(\lambda)$ is a history of $x_{\omega(l)}(\lambda)$. 

\begin{lemma}\label{le-tan}
The set $T$ defined by
\[T:=\left\{\lambda\in\tilde M\ ; \ E^{uu}_{x_{\hat\omega(0)}(\lambda)} \text{ contains an eigenvector of }D_{r(\lambda)}F_\lambda\right\}\] 
 is a proper analytic subset of $\tilde M$.
\end{lemma}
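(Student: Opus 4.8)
\textbf{Proof plan for Lemma~\ref{le-tan}.} The first task is to check that $T$ is an analytic subset, and for this the key observation is that all the objects involved depend \emph{holomorphically} on $\lambda\in\tilde M$. Indeed, the repelling periodic point $r(\lambda)$ and the hyperbolic set $\Lambda(\lambda)$ move holomorphically on $\tilde M$ (this was arranged when $\tilde M$ was chosen in \S~\ref{sec-dim}), so the history $x_{\hat\omega(0)}(\lambda)$ varies holomorphically, and by the limit formula \eqref{eq-uu} the strong unstable subspace $E^{uu}_{x_{\hat\omega(0)}(\lambda)}$ is a holomorphic section of the relevant Grassmannian bundle (the convergence in \eqref{eq-uu} is uniform on compact subsets of $\tilde M$ because of the dominated splitting, so the limit is holomorphic). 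Likewise $D_{r(\lambda)}g_\lambda$ has holomorphic eigenvalues and eigenlines since, by the non-resonance condition \textbf{(3)} in \S~\ref{sec-dim}, the eigenvalues of $D_{r(\lambda)}g_\lambda$ are simple and distinct on all of $\tilde M$. Therefore the condition ``$E^{uu}_{x_{\hat\omega(0)}(\lambda)}$ contains one of the $k$ eigenvectors of $D_{r(\lambda)}g_\lambda$'' is cut out by the vanishing of a finite product of holomorphic functions on $\tilde M$ (e.g., for each eigenline $L_i(\lambda)$, the holomorphic function measuring whether $L_i(\lambda)\subset E^{uu}(\lambda)$, which one can write as a determinant built from a holomorphic frame of $E^{uu}$ together with a generator of $L_i$). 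Hence $T$ is analytic in $\tilde M$.

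The substance of the lemma is that $T$ is \emph{proper}, i.e. $T\neq\tilde M$; since $\tilde M$ is connected, it suffices to exhibit a single parameter $\lambda\in\tilde M$ with $\lambda\notin T$. Here I would exploit the degeneration $|a|\to\infty$ analyzed in \S~\ref{sec-ifs} together with the product structure at $\tau=0$, but crucially the parameter $\tau$ (the coefficient of the $w\sum\tau_i y_i$ term in \eqref{eq-skew}) has to be turned on: when $\tau=0$ the map $f_\lambda$ is a product of $\mathcal F_{(a,\alpha,\beta,\epsilon)}$ on the $(z,w)$-coordinates with a diagonal linear map on $y$, and then $E^{uu}_{x_{\hat\omega(0)}(\lambda)}$ is itself a product subspace which \emph{does} contain eigenvectors of $D_{r(\lambda)}g_\lambda$ (namely the coordinate directions $y_3,\ldots,y_k$), so $\tau=0$ lies in $T$. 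So the plan is: fix $(a_1,\alpha_1,\beta_1,\epsilon_1,\sigma_1)$ as in \S~\ref{sec-dim} and study the $1$-jet in $\tau$ at $\tau=0$ of the holomorphic map $\tau\mapsto E^{uu}_{x_{\hat\omega(0)}(\lambda)}$, showing that its derivative moves $E^{uu}$ off the finitely many ``bad'' eigen-hyperplane incidence conditions. Concretely, one computes from \eqref{eq-uu} that $\partial_\tau E^{uu}$ at $\tau=0$ is given by a convergent series $\sum_{n\ge1}(D g_\lambda)^{n-1}\cdot(\partial_\tau D g_\lambda)\cdot(\text{contraction})$ evaluated along the (now known, product) orbit, and one checks that the explicit form of the perturbation term $w\sum\tau_i y_i$ produces a nonzero component of $\partial_\tau E^{uu}$ in a direction transverse to each eigenline of $D_{r(\lambda)}g_\lambda$. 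Because there are only finitely many such incidence conditions, a generic (indeed, an arbitrarily small nonzero) choice of $\tau$ gives a parameter outside $T$. This is where the real work lies.

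\textbf{Main obstacle.} The delicate point is the explicit first-variation computation of $E^{uu}_{x_{\hat\omega(0)}(\lambda)}$ in the direction $\tau$ and checking its nonvanishing modulo the eigen-incidence conditions: one must control the infinite product/series in \eqref{eq-uu} along the backward orbit $(x_{-n})_{n\ge0}$ of the specific history $\hat\omega(0)$ (which in the limit $|a|\to\infty$ is governed by the IFS $\mathcal L_{\hat\lambda}$ of \S~\ref{sec-ifs}), and verify that the contributions do not conspire to cancel. A clean way to organize this is to note that it suffices to have the property at \emph{one} parameter, and that the property ``$E^{uu}$ contains no eigenvector of $D_{r}g$'' is $C^1$-open (as stated in the paragraph introducing Lemma~\ref{le-tan}); so one may even work at the product-at-infinity limit, perturb $\tau$ there to kill the incidence (a finite-codimension condition on the $1$-jet, hence generically satisfied), and then transport back to $\tilde M$ by openness and the fact that $\tilde M$ is a neighborhood of $\lambda_1$ chosen small relative to all the relevant estimates. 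The bookkeeping of which directions the $\tau$-perturbation excites, and confirming it is not swallowed by the strong-stable contraction, is the part requiring care.
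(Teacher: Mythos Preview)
Your treatment of analyticity is essentially the same as the paper's: holomorphic motion of $r(\lambda)$, of $\Lambda(\lambda)$, simplicity of the eigenvalues, and holomorphic dependence of $E^{uu}$ via the limit \eqref{eq-uu}.

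For properness, however, the paper takes a completely different route from your proposed first-variation computation in $\tau$. After introducing $\lambda_2$ with small nonzero $\tau_1\in(\mathbb{R}_{>0})^{k-2}$, the paper does \emph{not} compute $\partial_\tau E^{uu}$ at $\tau=0$. Instead it constructs an explicit path $\gamma\colon[0,1]\to\Cb^{N_d^k}$ with $\gamma(1)=\lambda_2$ and $\gamma(0)=(a_1,|\alpha_1|,0,|\epsilon_1|,\sigma_1,\tau_1,0)$, a parameter with \emph{all real entries}. The path may leave $\tilde M$, but the hyperbolic set $\Lambda$ (hence $E^{uu}$ and the defining equations of $T$) persists along it, so the holomorphic functions cutting out $T$ extend to a connected neighborhood of $\gamma([0,1])$; it then suffices to show they do not vanish at $\gamma(0)$. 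At $\gamma(0)$ the paper exploits Lemma~\ref{le-reel}: the relevant base points $w_0,w_l$ are real, so the matrices $D_{(z,w,0)}g_{\gamma(0)}$ along the backward orbit are real with an explicit triangular structure. A direct computation shows that the eigenvectors of $D_{r(\gamma(0))}g_{\gamma(0)}$ are proportional to $e_1$ and to $e_i+b_ie_1$ with $b_i>0$, while an invariant-cone argument (the cone $\{e_i+c_ie_1:\ -1\le c_i\le 0\}$ is mapped into $\{e_i+c_i'e_1:\ -1\le c_i'<0\}$ by each differential along the orbit) forces $E^{uu}_{x_{\hat\omega(0)}(\gamma(0))}$ to be spanned by vectors $e_i+d_ie_1$ with $d_i<0$. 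The sign mismatch ($b_i>0$ versus $d_i<0$) rules out any incidence, so $\gamma(0)\notin T$. This is why the paper chose $a_1\in\Rb_+$ and $\sigma_1\in(\Rb_-)^{k-2}$ back in \S\ref{sec-dim}.

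Your approach is in principle reasonable, but as you yourself flag, the ``main obstacle'' is exactly the computation you leave undone: verifying that the $\tau$-derivative of $E^{uu}$ is nonzero modulo each eigen-incidence condition, and that the infinite series along the backward orbit does not cancel. The sentence ``a finite-codimension condition on the $1$-jet, hence generically satisfied'' is circular here: if $T=\tilde M$, then no $\tau$ works, generic or not, so one must actually exhibit the nonvanishing. The paper's real-parameter-plus-sign argument sidesteps this entirely---no series estimates, no cancellation worries---at the cost of needing the deformation path and the earlier choice of real $a_1,\sigma_1$. Your infinitesimal approach, if completed, would be more local and would not require leaving $\tilde M$, but completing it looks at least as hard as what the paper does.
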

\begin{proof}
First, recall that the eigenvalues of $D_{r(\lambda)}F_\lambda$ are all different thus we can follow the corresponding eigenspaces holomorphically in $\lambda$. Hence, the fact that $T$ is analytic comes from the holomorphic dependency of $E^{uu}_{\hat x(\lambda)}$ on $\lambda$. It remains to prove properness, i.e., to find $\lambda\in\tilde M$ outside $T$. Observe that as soon as $k\geq3$, the parameter $\lambda_1=(a_1,\alpha_1,\beta_1,\epsilon_1,\sigma_1,0)\in\Cb^{N_d^k}$ defined in \S~\ref{sec-dim} is in $T$ since $f_{\lambda_1}$ is a product map on $\Cb^2\times\Cb^{k-2}$. We perturb it as a skew product of $\Cb\times\Cb^{k-1}$
\[f_{\lambda_2}(z,w,y)=\left(\alpha_1 z+\epsilon_1 w+\beta_1 zw+w\sum_{i=3}^k\tau_iy_i,q_{a_1}(w),\sigma_3y_3,\ldots,\sigma_ky_k\right),\]
where $\lambda_2:=(a_1,\alpha_1,\beta_1,\epsilon_1,\sigma_1,\tau_1,0)\in\Cb^{N_d^k}$, with $\tau_1=(\tau_i)_{3\leq i\leq k}\in(\Rb_{>0})^{k-2}$ is small enough to have $\lambda_2\in\tilde M$. We will deform $f_{\lambda_2}$ along a path $(f_{\gamma(t)})_{t\in[0,1]}$ of maps of the form \eqref{eq-skew} such that $f_{\gamma(1)}=f_{\lambda_2}$ and $f_{\gamma(0)}$ does not satisfies the property defining $T$. 
Observe that the path $\gamma$ may go outside $\tilde M$ but the hyperbolic set $\Lambda$ can be followed all along $\gamma$, which is sufficient.

For $t\in[0,1]$ we define $\gamma(t):=(a_1,|\alpha_1|e^{it\arg(\alpha_1)},t\beta_1,|\epsilon_1|e^{it\arg(\epsilon_1)},\sigma_1,\tau_1,0)$. This path is chosen in such a way that $\gamma(0)=(a_1,|\alpha_1|,0,|\epsilon_1|,\sigma_1,\tau_1,0)$ is in $(\Rb_+)^{N_d^k}$ and that the map $F_{\gamma(t)}$ is still expanding on $\mathcal V_+\cup\mathcal V_-$ with $\overline{\mathcal V_+\cup\mathcal V_-}\subset F_{\gamma(t)}(\mathcal V_+)\cap F_{\gamma(t)}(\mathcal V_-)$ since $(\alpha_1,\beta_1,\epsilon_1)$ satisfies \eqref{def-M}. In particular, the hyperbolic set $\Lambda$ can be followed in a neighborhood of this path.

From this, an important remark is that, by Lemma \ref{le-reel}, $r(\gamma(0))=(z_0,w_0,0)$ with $w_0$ real close to $-1$ and, for $l\geq1$, $x_{\omega(l)}(\gamma(0))=(z_l,w_l,0)$ with $w_l$ real close to $1$. Moreover, if $(z,w)\in\Db_R\times(V_-\cup V_+)$ with $w$ real then $D_{(z,w,0)}F_{\gamma(0)}=(A_{i,j})$ is a real matrix with $A_{i,j}=0$ if $i\neq j$ and $i\neq1$, and, with the notation $w':=q_{a_1}(w),$
\[A_{1,1}=|\alpha_1|^2,\ A_{2,2}=4a_1^2ww',\ A_{1,2}=|\epsilon_1|(|\alpha_1|+2a_1w),\]
\[ A_{i,i}=\sigma_i^2\ \text{and}\ A_{1,i}=\tau_i(|\alpha_1|w+\sigma_iw')\]
for $i\in\{3,\ldots,k\}$. From this, using that $r(\gamma(0))=(z_0,w_0,0)$ with $w_0$ close to $-1$ and $w_0'$ close to $1$, it is easy to see that the eigenvectors associated to $r(\gamma(0))$ are proportional to $e_1$ and to $e_i+b_ie_1$ where $b_i>0$ for each $i\in\{2,\ldots,k\}$. Here,  $(e_1,\ldots,e_k)$ is the canonical basis of $\Cb^k$. On the other hand, for each $l\geq1$, $x_{\omega(l)}(\gamma(0))=(z_l,w_l,0)$ with $w_l$ real close to $1$ and $w_l'$ close to $-1$. Hence, a vector of the form $e_i+c_ie_1$ with $-1\leq c_i\leq0$ is send by $D_{(z_l,w_l,0)}F_{\gamma(0)}$ on a vector proportional to $e_i+c_i'e_1$ with
\[\left\{\begin{array}{l}
c_i'=\displaystyle\frac{|\alpha_1|^2c_i+\tau_i(|\alpha_1|w_l+\sigma_iw_l')}{\sigma_i^2}\ \text{if}\ i\in\{3,\ldots,k\}\ \text{and}\\
 c_2'=\displaystyle\frac{|\alpha_1|^2c_2+|\epsilon_1|(|\alpha_1|+2a_1w_l)}{4a_1^2w_lw_l'}.
 \end{array}\right.\]
In particular, since $a_1$ and every $\sigma_i$ are very large and $|\alpha_1|<2$, $|\epsilon_1|>1/20$, each $c_i'$ satisfies $-1\leq c_i'<0$. This implies that the subspace $E^{uu}_{x_{\hat\omega(0)}(\gamma(0))}$, which by \eqref{eq-uu} is equal to
\[\lim_{n\to\infty}\left(D_{x_{\omega(1)}(\gamma(0))}F_{\gamma(0)}\circ\cdots\circ D_{x_{\omega(n)}(\gamma(0))}F_{\gamma(0)}\right)E^v,\]
 is generated by $(e_i+d_ie_1)_{2\leq i\leq k}$ with $d_i<0$. Hence, it contains none of the eigenvectors of $D_{r(\gamma(0))}F_{\gamma(0)}$ described above. This conclude the proof.
\end{proof}

\subsection{Verification of the assumptions}\label{sec-veri}
We have now all the ingredients to prove Theorem \ref{th-existence}. Let $\lambda_1$, $\tilde M$, $U_\pm$, $\mathcal U_\pm$, $V_\pm$ and $\mathcal V_\pm$ be as in \S~\ref{sec-dim}. 
Recall that $f_{\lambda_1}$ is of the form
\[f_{\lambda_1}(z,w,y)=\left(\alpha_1 z+\epsilon_1 w+\beta_1 zw+w\sum_{i=3}^k\tau_iy_i,q_a(w),\sigma_3y_3,\ldots,\sigma_ky_k\right).\]
As we have said in the discussion in \S~\ref{sec-dim} where $\tilde M$ was chosen, for each $\lambda\in\tilde M$ the map $f_{\lambda}$ satisfies several assumptions of \S~\ref{sec-ass}. More precisely, if $\rho>0$ and $R>0$ as in Lemma \ref{le-blender}, then we already know that Assumptions \ref{verti}, \ref{saddle} and \ref{blender} hold in $\tilde M$. This is also true for Assumption \ref{hyperbolic}  except on the point about the small Julia set $J_k$, which is not necessarily well-defined for $f_\lambda$, and for Assumption \ref{repelling} except the part about the domain of linearization. For this last point, since the map $f^2_{\lambda}$ restricted to $\mathcal V_-$ is expanding and injective with $\mathcal U_-$ in its image so, if $h_\lambda\colon\mathcal U_-\to\mathcal V_-$ denotes its inverse then we can extend the linearization $\delta_\lambda$ on $\mathcal U_-$ in an injective way using the dynamics, i.e., if $x\in\mathcal U_-$ then $\delta_\lambda(x):=(D_{r(\lambda)}f^2_\lambda)^n\circ\delta_\lambda\circ h_\lambda^n(x)$ for $n\geq1$ large enough. We will come back to the small Julia set of Assumption \ref{hyperbolic} later.

Assumption \ref{free} is just a reformulation of the fact that $\tilde X_\omega$ is a proper analytic subset of $\tilde M$ and thus, it holds on $\tilde M$ by Lemma \ref{le-y}.

For the other assumptions, we consider a perturbation $f_{\tilde \lambda}$ in $\tilde M'$ of $f_{\lambda_1}$ defined by
\[f_{\tilde\lambda}(z,w,y)=f_{\lambda_1}(z,w,y)+c(z^d,w^d,y_3^d,\ldots,y_k^d),\]
where $c\in\Cb^*$ is very close to $0$. Observe that $f_{\tilde\lambda}$ is a regular polynomial endomorphism of $\Cb^k$ which is a skew product of $\Cb\times\Cb^{k-1}$ above a product map of $\Cb^{k-1}$. This implies that the small Julia set $J_k(f_{\tilde\lambda})$ is exactly the closure of the repelling periodic points of $f_{\tilde\lambda}$. Actually, this was proved by Jonsson \cite{Jonsson99} when $k=2$ where the key ingredient is a fibered formula for the equilibrium measure. This formula has been generalized in higher dimension by \cite[Corollary 1.2]{Dupont_Taflin} and thus the result of Jonsson also holds for $f_{\tilde\lambda}$. In particular, since the repelling periodic points are dense in the hyperbolic set $\Lambda(\tilde\lambda)$, this gives that $\Lambda(\tilde\lambda)\subset J_k(f_{\tilde\lambda})$. As this property persists under small perturbations (see e.g. \cite[Lemma 2.3]{Dujardin_blender}), Assumption \ref{hyperbolic} holds in a small neighborhood of $f_{\tilde\lambda}$ in $\mathrm{End}_d^k$. Moreover, one part of the critical set of $f_{\tilde\lambda}$ comes from the dynamics on the basis $\Cb^{k-1}$. A simple computation gives that the remaining part of this critical set is
\[C_{\tilde\lambda}:=\left\{(z,w,y)\in\Cb^k\ ;\ w=-\frac{\alpha_1+cdz^{d-1}}{\beta_1}\right\},\]
which is always transverse to fibers of the form $\{w=w_0,\ y=y_0\}$ except when $d\geq3$ and $w_0=-\alpha_1/\beta_1$. The stable manifold $W^s_{p(\tilde\lambda)}$ of the saddle point $p(\tilde\lambda)$ is an attracting basin in the invariant fiber $\{w=w_0,y=0\}$ where $w_0\simeq -1$ is the unique fixed point of $w\mapsto q_{a_1}(w)+cw^d$ in $U_-$ and thus by a classical result of Fatou it has to intersect $C_{\tilde\lambda}$ in a point of infinite orbit. Furthermore, since $w_0\simeq -1$ and $-\alpha_1/\beta_1$ has large modulus, they cannot be equal. This gives a transverse intersection between $W^s_{p(\tilde\lambda)}$ and the critical set. The skew product structure of $f_{\tilde\lambda}$ and the fact that $\{w=w_0,y=0\}$ is not a critical fiber ensures that the images of this transverse intersection remain transverse. This shows Assumption \ref{critical} is satisfied in a small neighborhood of $f_{\tilde\lambda}$. 

Observe that by Lemma \ref{le-homoclinic} and Lemma \ref{le-tan}, there exists near $\tilde\lambda$ a small  non-empty open set $\Omega$ in $\mathrm{End}_d^k\cap\tilde M'$ where, in addition to all the assumptions above, Assumption \ref{homoclinic} holds and which is disjoint from the set $T$ defined in Lemma \ref{le-tan}. Moreover, as by \cite[Corollary C]{fj-brolin} when $k=2$ and \cite[Lemma 5.4.5]{DinhSibonysuper} for $k\geq2$ the exceptional set is generically empty (or reduced to the hyperplane at infinity in the case of regular polynomial endomorphisms of $\Cb^k$, with the same proof than \cite[Lemma 5.4.5]{DinhSibonysuper}), we can also assume that it is the case for maps in $\Omega$, i.e., that Assumption \ref{exceptional} holds on $\Omega.$

Hence, all assumptions of \S~\ref{sec-ass} are satisfied on $\Omega$, except possibly Assumption \ref{10}. Let $\Omega'$ a non-empty open subset of $\Omega$. Let $\lambda'\in\Omega'$ be a regular point of the foliation $(\tilde Y_{\zeta,t})_{(\zeta,t)\in\Sb^1\times\Rb}$ defined just before Lemma \ref{le-y}. This lemma implies that there exists $\omega'=(\omega'_n)_{n\geq0}\in\Sigma$ such that $\lambda'\in\tilde X_{\omega'}$ and $\tilde X_{\omega'}$ is not contained in some $\tilde Y_{\zeta,t}$. In particular, if $\omega=(\omega_n)_{n\geq0}$ is very close to $\omega'$ then $\tilde X_{\omega}$ intersects $\Omega'$ and is not contained in some $\tilde Y_{\zeta,t}$. We defined such $\omega$ by
\[\omega_n=\omega'_n \text{ if } n\leq N_1,\ \ \ \omega_n=1 \text{ if } N_1<n\leq N_1+N_2 \text{ and } \omega_n=-1 \text{ if } n> N_1+N_2,\]
where $N_1$ and $N_2$ are two very large positive integers. The first condition ensures that $\omega$ is close enough to $\omega'$. The third one that the corresponding point $x_\omega(\lambda)\in\Lambda(\lambda)$ is a preimage by $f_\lambda^{2(N_1+N_2+1)}$ of the repelling periodic point $r(\lambda)$. This gives point i) of Assumption \ref{10} with $m:=2(N_1+N_2+1)$ for every $\lambda\in\Omega'\cap\tilde X_\omega$. By definition of $\tilde X_\omega$, $x_\omega(\lambda)\in W^u_{p(\lambda),loc}$ on this set and thus ii) also holds. The fact that $\tilde X_\omega$ is not contained in some $\tilde Y_{\zeta,t}$ implies that iv) is satisfied on a dense subset of $\Omega'\cap\tilde X_\omega$. Finally, recall $\Omega'$ is disjoint from the set $T$ defined in Lemma \ref{le-tan}. Recall that this implies that if $\omega(1):=(\omega_n(1))_{n\geq0}\in\Sigma$ and the history $\hat\omega(1):=(\omega_n(1))_{n\in\Zb}\in\{-1,1\}^\Zb$ are defined by $\omega_n(1)=1$ if $n<1$ and $\omega_n(1)=-1$ otherwise then the image by $D_{x_{\omega(1)}(\lambda)}f_\lambda^2 $ of the strong unstable subspace $E^{uu}_{x_{\hat\omega(1)}(\lambda)}$ associated to $x_{\hat\omega(1)}(\lambda)$ is a generic  hyperplane for $D_{r(\lambda)}f_\lambda^2$, i.e., does not contain any eigenvector of $D_{r(\lambda)}f_\lambda^2$. On the other hand, if $\lambda\in\Omega'\cap\tilde X_\omega$ and if $E$ denotes the tangent space of $W^u_{p(\lambda),loc}$ at $x_{\omega}(\lambda)$ then $E$ is in the cone $C_\rho$ and $D_{x_\omega(\lambda)}f_\lambda^{2(N_1+N_2)}E$ is very close to $E^{uu}_{x_{\hat\omega(1)}(\lambda)}$ if $N_2$ is large enough. Hence, $D_{x_\omega(\lambda)}f_\lambda^{2(N_1+N_2+1)}E$ is also a generic  hyperplane for $D_{r(\lambda)}f_\lambda^2$. Thus, the point iii) of Assumption \ref{10} is satisfied for all $\lambda\in\Omega'\cap\tilde X_\omega.$

\medskip

In conclusion, the open set $\Omega$ verifies all the assumptions of \S~\ref{sec-ass}.

\section{The fundamental height inequalities}\label{section3}

\subsection{Adelic metrics, height functions}\label{sec:canonical-height}
Let $X$ be a projective variety, and let $L_0,\ldots,L_k$ be $\mathbb{Q}$-line bundles on $X$, all defined over a number field $\mathbb{K}$. 

For any $i$, assume $L_i$ is equipped with an adelic continuous metric $\{\|\cdot\|_{v,i}\}_{v\in M_\mathbb{K}}$ and we denote $\bar{L}_i:=(L_i,\{\|\cdot\|_v\}_{v\in M_\mathbb{K}})$. Assume also $\bar{L}_i$ is  semi-positive. 
 Fix a place $v\in M_\mathbb{K}$. Denote by $X_v^\mathrm{an}$ the Berkovich analytification of $X$ at the place $v$. We also let $c_1(\bar{L}_i)_v$ be the curvature form of the metric $\|\cdot\|_{v,i}$ on $L_{i,v}^\mathrm{an}$.

For any closed subvariety $Y$ of dimension $q$, as observed by Chambert-Loir \cite{ACL}, the arithmetic intersection number $\left(\bar{L}_0\cdots\bar{L}_q|Y\right)$ is symmetric and multilinear with respect to the $\bar{L}_i$'s and is defined inductively by
\[\left(\bar{L}_0\cdots\bar{L}_q|Y\right)=\left(\bar{L}_1\cdots\bar{L}_q|\mathrm{div}(s)\cap Y\right)+\sum_{v\in M_\mathbb{K}}n_v\int_{Y_v^{\mathrm{an}}}\log\|s\|^{-1}_{v,0} \bigwedge_{j=1}^qc_1(\bar{L}_i)_v,\]
for any global section $s\in H^0(X,L_0)$. In particular, if $L_0$ is the trivial bundle and $\|\cdot\|_{v,0}$ is the trivial metric at all places but $v_0$, this gives
\[\left(\bar{L}_0\cdots\bar{L}_k|Y\right)=n_{v_0}\int_{Y_{v_0}^{\mathrm{an}}}\log\|s\|^{-1}_{v_0,0} \bigwedge_{j=1}^qc_1(\bar{L}_i)_{v_0}.\]
When $L$ is a big and nef $\mathbb{Q}$-line bundle endowed with a semi-positive continuous adelic metric $\bar{L}$, following Zhang~\cite{Zhang-positivity}, we can define $h_{\bar{L}}(Y)$ as
\[ h_{\bar{L}}(Y):=\frac{\left(\bar{L}^{q+1}|Y\right)}{(q+1)[\mathbb{K}:\mathbb{Q}]\deg_{Y}(L)},\]
where $\deg_Y(L)=(L|_Y)^k$ is the volume of the line bundle $L$ restricted to $Y$.

Recall that a sequence $(x_i)_i$ of points of $Y(\bar{\mathbb{Q}})$ is \emph{generic} if for any closed subvariety $W\subset Y$ defined over $\mathbb{K}$, there is $i_0\geq1$ such that $\mathsf{O}(x_i)\cap W=\varnothing$ for all $i\geq i_0$. By Zhang's inequalities~\cite{Zhang-positivity}, if $h_{\bar{L}}\geq0$ on $X(\bar{\mathbb{Q}})$, if we let 
\[e_1(\bar{L}):=\sup_{Z\subsetneq Y}\inf_{x\in (Y\setminus Z)(\bar{\mathbb{Q}})}h_{\bar{L}}(x),\]
where $Z$ ranges on strict subvarieties of $Y$ defined over $\bar{\mathbb{Q}}$, then we have
\begin{align}
e_1(\bar{L})\geq h_{\bar{L}}(Y)\geq \frac{1}{q+1}e_1(\bar{L}).\label{Zhang-forte}
\end{align}
In particular, there is a generic sequence $(x_i)_i$ of closed points of $Y(\bar{\mathbb{Q}})$ such that
\begin{align}
\liminf_{i\to\infty}h_{\bar{L}}(x_i)\geq h_{\bar{L}}(Y)\geq \frac{1}{q+1}\liminf_{j\to\infty}h_{\bar{L}}(x_j).\label{Zhang-faible}
\end{align}

\medskip

Let $X$ be a projective variety defined on a number field $\mathbb{K}$ and let $\bar{L}$ be an ample line bundle on $X$ endowed with an adelic semi-positive metric. Let $m\geq1$ be an integer and, for $1\leq i\leq m$, let $p_i:X^m\to X$ be the projection onto the $i$-th factor. Let $\bar{L}_m:=p_1^*(\bar{L})+\cdots+p_m^*(\bar{L})$.

We will use the next lemma.

\begin{lemma}\label{lm:Zhang-product}
For any subvariety $Y\subset X$ defined over $\mathbb{K}$, we have 
\[h_{\bar{L}_m}(Y^m)= m\cdot h_{\bar{L}}(Y).\]
\end{lemma}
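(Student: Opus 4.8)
The plan is to reduce the statement to the additivity/multilinearity of the arithmetic intersection number under fibre products, exactly as in the geometric computation underlying Proposition~\ref{prop-higher-currents} and Lemma~\ref{lm:GV}. First I would record the two basic compatibilities: if $q:=\dim Y$, then the self-product $Y^m\subset X^m$ has dimension $mq$, and the line bundle $\bar L_m=p_1^*\bar L+\cdots+p_m^*\bar L$ restricted to $Y^m$ satisfies, by multilinearity of the (Chambert-Loir) arithmetic intersection product,
\[\left(\bar L_m^{\,mq+1}\big|Y^m\right)=\sum_{i_1+\cdots+i_m=mq+1}\binom{mq+1}{i_1,\ldots,i_m}\left(p_1^*\bar L^{\,i_1}\cdots p_m^*\bar L^{\,i_m}\big|Y^m\right).\]
By the projection formula for the factor projections $p_j\colon Y^m\to Y$, a term $\left(p_1^*\bar L^{\,i_1}\cdots p_m^*\bar L^{\,i_m}\big|Y^m\right)$ vanishes unless $i_j\le q+1$ for every $j$ (push-forward along $p_j$ of $p_j^*\bar L^{\,i_j}$ is zero on the $q$-dimensional fibre once $i_j>q+1$, and is zero unless the remaining classes cut down to a point in the complementary factors). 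Since $\sum i_j=mq+1$ and each $i_j\le q+1$, the only surviving multi-indices are permutations of $(q+1,q,q,\ldots,q)$.

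Next I would evaluate such a surviving term. For the index with $i_1=q+1$, $i_2=\cdots=i_m=q$, iterated use of the projection formula together with $\deg_Y(L)=(L|_Y)^q$ gives
\[\left(p_1^*\bar L^{\,q+1}\,p_2^*\bar L^{\,q}\cdots p_m^*\bar L^{\,q}\big|Y^m\right)=\left(\bar L^{\,q+1}\big|Y\right)\cdot\deg_Y(L)^{\,m-1}.\]
There are $m$ such permutations and $\binom{mq+1}{q+1,q,\ldots,q}=m$ — wait, more precisely the multinomial coefficient counts the arrangements, so summing over all $m$ permutations and multiplying by the corresponding multinomial weight yields a total factor which, combined with $\deg_{Y^m}(L_m)=\binom{mq}{q,\ldots,q}\deg_Y(L)^m$ and $(mq+1)$ in the denominator of the height, collapses to exactly $m$. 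Concretely,
\[h_{\bar L_m}(Y^m)=\frac{\left(\bar L_m^{\,mq+1}\big|Y^m\right)}{(mq+1)[\mathbb K:\mathbb Q]\deg_{Y^m}(L_m)}=\frac{m\cdot(\text{multinomial})\cdot\left(\bar L^{\,q+1}\big|Y\right)\deg_Y(L)^{m-1}}{(mq+1)[\mathbb K:\mathbb Q]\,(\text{same multinomial})\,\deg_Y(L)^m},\]
and after cancelling $\deg_Y(L)^{m-1}$ and the matching combinatorial factors this is $\dfrac{m\left(\bar L^{\,q+1}\big|Y\right)}{(q+1)[\mathbb K:\mathbb Q]\deg_Y(L)}=m\,h_{\bar L}(Y)$, using $(mq+1)/(q+1)$ being absorbed correctly once the multinomial bookkeeping is done carefully.

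I expect the main obstacle to be precisely this combinatorial bookkeeping: one must check that the ratio of multinomial coefficients $\binom{mq+1}{q+1,q,\ldots,q}$ over $\binom{mq}{q,\ldots,q}$ times $1/(mq+1)$ equals $1/(q+1)$, so that the final constant is $m$ and not some other multiple. This is the identity $\binom{mq+1}{q+1,q,\ldots,q}=\frac{(mq+1)!}{(q+1)!\,(q!)^{m-1}}=\frac{mq+1}{q+1}\binom{mq}{q,\ldots,q}$, which indeed gives the factor $\frac{mq+1}{q+1}$, cancelling the $(mq+1)$ and leaving $\frac{1}{q+1}$ as required; combined with the $m$ permutations this produces the clean answer $m\,h_{\bar L}(Y)$. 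The only other point requiring care is justifying the vanishing of mixed terms at the level of Chambert-Loir arithmetic intersection numbers rather than just geometric ones, but this follows from the arithmetic projection formula (a term $(p_j^*\bar L^{\,i_j}\cdot(\text{rest}))$ is computed by pushing forward $p_j^*\bar L^{\,i_j}$, which yields $\deg$ of a zero-cycle times the rest when $i_j=q$ and vanishes when $i_j>q+1$ since the generic fibre of $p_j|_{Y^m}$ has dimension $q$), exactly as used in the proof of Lemma~\ref{lm:GV}.
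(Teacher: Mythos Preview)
Your approach is essentially the paper's: expand $\bar L_m^{\,mq+1}$ multinomially, identify the surviving terms, evaluate them by a Fubini/projection argument, and match the combinatorics. The paper organizes this slightly differently—writing $\bar L_m=p_i^*\bar L+\pi_i^*\bar L_{m-1}$ and peeling off one factor via the recursive definition of the arithmetic intersection number with explicit sections $s_1,\ldots,s_{q+1}$—but the content is the same, and your combinatorial identity $\binom{mq+1}{q+1,q,\ldots,q}=\frac{mq+1}{q+1}\binom{mq}{q,\ldots,q}$ is exactly what makes the constants collapse to $m$.

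There is, however, a real gap in your vanishing step. From ``$i_j\le q+1$ for every $j$'' and ``$\sum_j i_j=mq+1$'' you \emph{cannot} conclude that only permutations of $(q+1,q,\ldots,q)$ survive: for $m\ge 3$ and $q\ge 1$ the multi-index $(q+1,q+1,q-1,q,\ldots,q)$ also satisfies both constraints. You need the further fact that a term with \emph{two} exponents equal to $q+1$ vanishes. This is true and comes from the same mechanism: unwinding the recursion with sections $p_1^*s_1,\ldots,p_1^*s_{q+1}$, every local integral at a place $v$ contains the factor $c_1(p_2^*\bar L)_v^{\,q+1}=p_2^*\bigl(c_1(\bar L)_v^{\,q+1}\bigr)$, a $(q+1,q+1)$-form pulled back from the $q$-dimensional variety $Y$, hence zero; the cycle term vanishes since $Z_{q+1}=\varnothing$. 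In other words, the arithmetic intersection carries exactly one ``extra'' dimension, so at most one factor can take exponent $q+1$. Also, your parenthetical about ``the $q$-dimensional fibre'' of $p_j$ is misstated—the fibre of $p_j\colon Y^m\to Y$ has dimension $(m-1)q$; the relevant fact is that the \emph{target} $Y$ has dimension $q$. With these two fixes your argument is complete.
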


\begin{proof}
For $m=1$, or if $q=\dim Y=0$ there is nothing to prove. We can assume that $L$ is very ample. Fix $m\geq2$ and set $q:=\dim Y\geq1$. For any $1\leq i\leq m$ and any line bundles $M_{q+2},\ldots,M_{qm}$ on $X^m$, we have
\[\left(c_1(p_i^*L)^{q+1}\cdot c_1(M_{q+2})\cdots c_1(M_{qm})\cdot\{Y^m\}\right)=0.\]
In particular,
\[\deg_{L_m}(Y^m)={{qm}\choose{q}}\left(c_1(p_1^*L)^q\cdots \left(c_1(p_2^*L)+\cdots+c_1(p_m^*L)\right)^{q(m-1)}\cdot\{Y^m\}\right),\]
 hence
\[\deg_{L_m}(Y^m)={{qm}\choose{q}}\deg_{L}(Y)\deg_{L_{m-1}}(Y^{m-1}).\]
Similarly, as the arithmetic intersection product is multilinear and symmetric, if we let $\pi_i:X^m\to X^{m-1}$ be the cancellation of the $i$th variable, we have
\begin{align*}
(\bar{L}_m^{qm+1}|Y^m) & =\sum_{i=1}^m{{qm+1}\choose{q+1}}(p_i^*\bar{L}^{q+1}\cdot \pi_i^*\bar{L}_{m-1}^{q(m-1)}|Y^m).
\end{align*}
Let $s_1,\ldots,s_{q+1}$ be sections of $L$ such that $\mathrm{div}(s_1)\cap\cdots\cap\mathrm{div}(s_{q+1})\cap Y=\varnothing$ and let $Z_0:=Y$ and for $1\leq j\leq q$, $Z_j:=Z_{j-1}\cap\mathrm{div}(s_j)$. Following \cite{ACL}, as $Y^m=p_i^{-1}(Y)\cap \pi_i^{-1}(Y^{m-1})$, we have
\begin{align*}
(p_i^*\bar{L}^{q+1}\cdot \pi_i^*\bar{L}_{m-1}^{q(m-1)}|& Y^m)   = \  (p_i^*\bar{L}^{q}\cdot \pi_i^*\bar{L}_{m-1}^{q(m-1)}|\pi_i^{-1}(Y^{m-1})\cap p_i^{-1}(Z_1))\\
& + \sum_{v\in M_\mathbb{K}}n_v\int_{(Y^m)^{\mathrm{an}}_v}\log\|p_i^*s_1\|_{p_i^*\bar{L},v}c_1(p_i^*\bar{L})_v^q\wedge c_1(\pi_i^*\bar{L}_m)_v^{q(m-1)},
\end{align*}
which rewrites as 
\begin{align*}
(p_i^*\bar{L}^{q+1}\cdot \pi_i^*\bar{L}_{m-1}^{q(m-1)}|Y^m)  = &\ \ \  (p_i^*\bar{L}^{q}\cdot \pi_i^*\bar{L}_{m-1}^{q(m-1)}|\pi_i^{-1}(Y^{m-1})\cap p_i^{-1}(Z_1))\\
& + \deg_{L_{m-1}}(Y^{m-1})\sum_{v\in M_\mathbb{K}}n_v\int_{Y^{\mathrm{an}}_v}\log\|s_1\|_{\bar{L},v}c_1(\bar{L})_v^q.
\end{align*}
Similarly, for any $1\leq j\leq q-1$ one can write 
\begin{align*}
(p_i^*\bar{L}^{q-j+2}\cdot \pi_i^*\bar{L}_{m-1}^{q(m-1)}| & \pi_i^{-1}(Y^{m-1})\cap p_i^{-1}(Z_{j-1}))= \\
& \ \ \ (p_i^*\bar{L}^{q-j+1}\cdot \pi_i^*\bar{L}_{m-1}^{q(m-1)}|\pi_i^{-1}(Y^{m-1})\cap p_i^{-1}(Z_j))\\
& +\deg_{L_{m-1}}(Y^{m-1})\sum_{v\in M_\mathbb{K}}n_v\int_{(Z_{j-1})^{\mathrm{an}}_v}\log\|s_j\|_{\bar{L},v}c_1(\bar{L})_v^{q-j+1}.
\end{align*}
Summing up over the $q+1$ terms we get
\begin{align*}
(p_i^*\bar{L}^{q+1}\cdot \pi_i^*\bar{L}_{m-1}^{q(m-1)}|Y^m)  = \deg_{L_{m-1}}(Y^{m-1})\cdot(\bar{L}^{q+1}|Y).
\end{align*}
Together with the above, this gives
\begin{center}
$(\bar{L}_m^{qm+1}|Y^m) =m{{qm+1}\choose{q+1}}\deg_{L_{m-1}}(Y^{m-1})\cdot(\bar{L}^{q+1}|Y)$.
\end{center}
Since by definition,
\begin{align*}
h_{\bar{L}}(Y)=\frac{(\bar{L}^{q+1}|Y)}{[\mathbb{K}:\mathbb{Q}](q+1)\deg_L(Y)} \ \text{and} \ h_{\bar{L}_m}(Y^m)=\frac{(\bar{L}_m^{qm+1}|Y^m)}{[\mathbb{K}:\mathbb{Q}](qm+1)\deg_{L_m}(Y^m)},
\end{align*}
the proof is complete.
\end{proof}

\subsection{Dynamics over number fields}

Let $X$ be a projective variety, $f:X\to X$ a morphism and $L$ be an ample line bundle on $X$, all defined over a number field $\mathbb{K}$. Recall that we say $(X,f,L)$ is a \emph{polarized endomorphism} of degree $d>1$ if $f^*L\simeq L^{\otimes d}$, i.e. $f^*L$ is linearly equivalent to $L^{\otimes d}$. Let $k:=\dim X$.

It is known that polarized endomorphisms defined over the field $\mathbb{K}$ admit a \emph{canonical metric}. This is an adelic semi-positive continuous metric on $L$, which can be built as follows: let $\mathscr{X}\to\mathrm{Spec}(\mathscr{O}_\mathbb{K})$ be an $\mathscr{O}_\mathbb{K}$-model of $X$ and $\bar{\mathscr{L}}$ be a model of $L$ endowed with a model metric, for example $\bar{\mathscr{L}}=\iota^*\bar{\mathcal{O}}_{\mathbb{P}^N}(1)$, where $\iota: X\hookrightarrow \mathbb{P}^N$ is an embedding inducing $L$ and $\bar{\mathcal{O}}_{\mathbb{P}^N}(1)$ is the naive metrization.
We then can define $\bar{L}$ as
\[\bar{L}:=\lim_{n\to\infty}\frac{1}{d^n}(f^n)^*\bar{\mathscr{L}}|_\mathbb{K}.\]
This metrization induces the \emph{canonical height} $\widehat{h}_f$ of $f$: for any closed point $x\in X(\bar{\mathbb{Q}})$ and any section $\sigma\in H^0(X,L)$ which does not vanish at $x$, we let
\[\widehat{h}_{f}(x):=\frac{1}{[\mathbb{K}:\mathbb{Q}]\deg(x)}\sum_{v\in M_\mathbb{K}}\sum_{y\in \mathsf{O}(x)}n_v\log\|\sigma(y)\|_v^{-1},\]
where $\mathsf{O}(x)$ is the Galois orbit of $x$ in $X$. The function $\widehat{h}_f:X(\bar{\mathbb{Q}})\to \mathbb{R}$ satisfies $\widehat{h}_f\circ f=d\cdot \widehat{h}_f$, $\widehat{h}_f\geq0$ and $\widehat{h}_f(x)=0$ if and only if $x$ is preperiodic under iteration of $f$, i.e. if there are $n>m\geq0$ such that $f^n(x)=f^m(x)$. Note that $\widehat{h}_f$ can also be defined as
\[\widehat{h}_f(x)=\lim_{n\to\infty}\frac{1}{d^n}h_{X,L}(f^n(x)),\]
where $h_{X,L}$ is any Weil height function on $X$ associated with the ample line bundle $L$. 

\medskip

If $Y$ is a subvariety of dimension $q\geq0$ defined over $\bar{\mathbb{Q}}$, we define
\[\widehat{h}_f(Y):=h_{\bar{L}}(Y)=\frac{\left(\bar{L}^{q+1}|Y\right)}{(q+1)[\mathbb{K}:\mathbb{Q}]\deg_{Y}(L)}\]
(observe that when $Y=\{x\}$ has dimension $0$, both definitions coincide i.e. both definitions of $\widehat{h}_f$ coincide).
This satisfies $\widehat{h}_f(f_*(Y))=d\widehat{h}_f(Y)$, where $f_*(Y)$ is the image of $Y$ by $f$ counted with multiplicity as a cycle on $X$. In particular, if $Y$ is preperiodic under iteration of $f$, i.e. if there are $n>m\geq0$ such that $f^n(Y)= f^m(Y)$, then $\widehat{h}_f(Y)=0$.

\subsection{Canonical height and height on the base}

We now let $(\mathcal{X},f,\mathcal{L},\mathcal{Y})$ be a dynamical pair of degree $d\geq2$ parametrized by a smooth projective variety $S$, with regular part $S^0_\mathcal{Y}$. 
Let $\mathcal{Y}^0:=\pi|_{\mathcal{Y}}^{-1}(S^0_\mathcal{Y})$.
We also assume $(\mathcal{X},f,\mathcal{L})$, $\mathcal{Y}$ and $S$ are all defined over a number field $\mathbb{K}$. In what follow, we fix an embedding $\iota:\mathbb{K} \hookrightarrow \C$ for which we define the different bifurcation currents.

\begin{definition}
Let $m\geq\dim S$. If the measure $\mu_{f,\mathcal{Y}}$ is non-zero, we define the $m$-\emph{higher order canonical height}  $\widehat{\mathcal{H}}^{(m)}_{f,\mathcal{M}}(\mathcal{Y})$ of the family $\mathcal{Y}$, relative to $\mathcal{M}$, as
\[\widehat{\mathcal{H}}^{(m)}_{f,\mathcal{M}}(\mathcal{Y}):=\frac{\mathrm{Vol}_{f}^{(m)}(\mathcal{Y})}{\dim\mathcal{Y}^{[m]}\cdot \deg_{f,\mathcal{M}}^{(m)}(\mathcal{Y})}.\]
Otherwise, we let $\widehat{\mathcal{H}}^{(m)}_{f,\mathcal{M}}(\mathcal{Y}):=0$.
\end{definition}

\begin{remark}\normalfont
\begin{enumerate}
	\item Observe that both $\mathrm{Vol}_{f}^{(m)}(\mathcal{Y})$ and $\deg_{f,\mathcal{M}}^{(m)}(\mathcal{Y})$ are geometric quantities that do not depend on the choice of a place (hence we can take another embedding $\iota:\mathbb{K} \hookrightarrow \C$).
\item If $\dim S=1$, we have $\widehat{\mathcal{H}}^{(1)}_f(\mathcal{Y})=\widehat{h}_{f_\eta}(Y_\eta)$, see~\S~\ref{sec:def-currents} for the definition of the geometric canonical height,
\item The quantity $\widehat{\mathcal{H}}^{(m)}_{f,\mathcal{M}}(\mathcal{Y})$ is well-defined by Proposition~\ref{prop-volumes} and satisfies $\widehat{\mathcal{H}}^{(m)}_{f,\mathcal{M}}(\mathcal{Y})>0$ for all $m\geq\dim S$ if and only if $\mu_{f,\mathcal{Y}}$ is non-zero.
\end{enumerate}
\end{remark}
 We prove here the following which is inspired from \cite[Theorem~1.4 and Proposition~10.1]{Gao-Habegger} and \cite[Theorem~1.6]{Dimitrov-Gao-Habegger}:
\begin{theorem}\label{tm:Gao-Habegger}
Assume that $\mu_{f,\mathcal{Y}}$ is non-zero. Let $m\geq\dim S$ and $\mathcal{M}$ be any ample $\mathbb{Q}$-line bundle on $S$ of volume $1$. Then, for any $0<\varepsilon<\widehat{\mathcal{H}}^{(m)}_{f,\mathcal{M}}(\mathcal{Y})$, there is a non-empty Zariski open subset $\mathcal{U}\subset (\mathcal{Y}^{[m]})^0$ and a constant $C\geq1$ depending only on $(\mathcal{X},f,\mathcal{L})$, $\mathcal{Y}$, $\mathcal{M}$, $m$ and $\varepsilon$ such that
\[h_{S,\mathcal{M}}(\pi_{[m]}(x))\leq \frac{1}{\widehat{\mathcal{H}}^{(m)}_{f,\mathcal{M}}(\mathcal{Y})-\varepsilon}\sum_{j=1}^{m}\widehat{h}_{f_{\pi(x_j)}}(x_j)+C,\]
for any $x=(x_1,\ldots,x_{m})\in\mathcal{U}(\bar{\mathbb{Q}})$.
\end{theorem}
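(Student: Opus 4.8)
The plan is to transfer the inequality from the moduli space to the $m$-fiber product via the dynamical volumes and the arithmetic intersection theory developed earlier, following the Gao--Habegger--Dimitrov scheme. The key point is that the quantity $\widehat{\mathcal{H}}^{(m)}_{f,\mathcal{M}}(\mathcal{Y})$ is, up to the usual Zhang inequalities, the canonical height of $\mathcal{Y}^{[m]}$ computed with respect to the adelic line bundle $\overline{\mathcal{L}^{[m]}}+\pi_{[m]}^*\overline{\mathcal{M}}$ on $\mathcal{X}^{[m]}$, where the metrics are the canonical ones attached to $f^{[m]}$ and to some fixed semi-positive metric on $\mathcal{M}$; one then plays the positivity of this height against the height on the base. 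So the first step is to set up the adelic metrized line bundle: endow $\mathcal{L}^{[m]}$ with the canonical metric of the polarized endomorphism $(\mathcal{X}^{[m]},f^{[m]},\mathcal{L}^{[m]})$, which at the archimedean place $\iota$ has curvature $\widehat{T}_{f^{[m]}}$, and endow $\pi_{[m]}^*\mathcal{M}$ with the pullback of a fixed semi-positive metric on $\mathcal{M}$; then for $t>0$ consider $\overline{N}_t:=\overline{\mathcal{L}^{[m]}}+t\,\pi_{[m]}^*\overline{\mathcal{M}}$, which is semi-positive and, for $t$ small, big and nef relative to the restriction to $\mathcal{Y}^{[m]}$.

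\textbf{Key steps.} First I would compute the leading asymptotics in $t$ of $\deg_{N_t}(\mathcal{Y}^{[m]})$ and of $h_{\overline{N}_t}(\mathcal{Y}^{[m]})$. Writing $D:=\dim\mathcal{Y}^{[m]}=m\dim Y_\eta+\dim S$, the binomial expansion of $c_1(N_t)^{D}$ and of the arithmetic top self-intersection of $\overline{N}_t$, combined with the vanishing $\widehat{T}_{f}^{\dim X_\eta+1}=0$ (hence $\widehat{T}_{f^{[m]}}^{m\dim X_\eta+1}\wedge[\mathcal{Y}^{[m]}]$ involving too many fibered directions vanishes), isolates the term of order $t^{\dim S-1}$: its geometric coefficient is (up to a combinatorial constant and a factor $\dim S$) exactly $\deg_{f,\mathcal{M}}^{(m)}(\mathcal{Y})$, and its arithmetic coefficient is essentially $\mathrm{Vol}_f^{(m)}(\mathcal{Y})$, by the integral formulas of \S\ref{Sec:dyn-volumes} together with Lemma~\ref{cor:GV}. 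Actually I would phrase this more robustly by using $\overline{N}_t$ with $t$ a \emph{fixed} small rational number rather than a limit, so that $h_{\overline{N}_t}(\mathcal{Y}^{[m]})$ is a bona fide Zhang height; the upshot is that for every $\varepsilon>0$ one can choose $t=t(\varepsilon)>0$ small enough that
\[
h_{\overline{N}_t}(\mathcal{Y}^{[m]})\geq t\,\bigl(\widehat{\mathcal{H}}^{(m)}_{f,\mathcal{M}}(\mathcal{Y})-\varepsilon\bigr)\,\frac{\deg_{N_t}(\mathcal{Y}^{[m]})}{\deg_{\pi_{[m]}^*\mathcal{M}\cdot N_t^{D-1}}(\mathcal{Y}^{[m]})}\cdot(\text{bookkeeping constants}),
\]
i.e. the positivity of the relative bifurcation height survives after adding a small multiple of the pullback of $\mathcal{M}$. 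Then I apply Zhang's inequality \eqref{Zhang-forte}--\eqref{Zhang-faible} to $\overline{N}_t$ on $\mathcal{Y}^{[m]}$: there is a proper Zariski closed $Z\subsetneq\mathcal{Y}^{[m]}$ such that every $x\in(\mathcal{Y}^{[m]}\setminus Z)(\bar{\mathbb{Q}})$ satisfies $h_{\overline{N}_t}(x)\geq \widehat{\mathcal{H}}^{(m)}_{f,\mathcal{M}}(\mathcal{Y})-\varepsilon'$ for a slightly worse $\varepsilon'$. Finally, decompose $h_{\overline{N}_t}(x)$ itself: on $\mathcal{Y}^{[m]}\subset\mathcal{X}^{[m]}$, $h_{\overline{\mathcal{L}^{[m]}}}(x)=\sum_{j=1}^m\widehat{h}_{f_{\pi(x_j)}}(x_j)+O(1)$ (the canonical height of $f^{[m]}$ splits as the sum of the canonical heights of the factors, up to a bounded error coming from the fact that $\mathcal{Y}^{[m]}$ sits in a fixed projective model), while $t\,h_{\pi_{[m]}^*\overline{\mathcal{M}}}(x)=t\,h_{S,\mathcal{M}}(\pi_{[m]}(x))+O(1)$ since the metric on $\mathcal{M}$ is fixed. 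Combining, $\sum_j\widehat{h}_{f_{\pi(x_j)}}(x_j)+t\,h_{S,\mathcal{M}}(\pi_{[m]}(x))\geq (\widehat{\mathcal{H}}^{(m)}_{f,\mathcal{M}}(\mathcal{Y})-\varepsilon')\deg(\text{normalization})-C$, which rearranges into the claimed inequality after renaming $\varepsilon'$ and absorbing $t$ and the degree normalizations into the constant $C$ and into the factor $\tfrac{1}{\widehat{\mathcal{H}}^{(m)}_{f,\mathcal{M}}(\mathcal{Y})-\varepsilon}$; the Zariski open set is $\mathcal{U}:=(\mathcal{Y}^{[m]})^0\setminus Z$.

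\textbf{Main obstacle.} The delicate part is the $\varepsilon$-management in the second step: one must show that adding $t\,\pi_{[m]}^*\overline{\mathcal{M}}$ changes the normalized height of $\mathcal{Y}^{[m]}$ by only $O(t)$, uniformly, so that the ``$\widehat{\mathcal{H}}^{(m)}-\varepsilon$'' in the statement is genuinely achievable, and simultaneously that the \emph{degree} normalization $\deg_{N_t}(\mathcal{Y}^{[m]})/\deg_{\pi^*\mathcal{M}\cdot N_t^{D-1}}(\mathcal{Y}^{[m]})$ behaves like $D/(\dim S\cdot t)$ up to lower order, so that the coefficient of $h_{S,\mathcal{M}}(\pi_{[m]}(x))$ comes out with exactly the constant $1$ claimed. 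This is precisely where Proposition~\ref{prop-volumes}(2) is needed: it guarantees $\deg_{f,\mathcal{M}}^{(m)}(\mathcal{Y})>0$ (via $T_{f,\mathcal{Y}}^{(\dim S-1)}\neq 0$), so the denominator in $\widehat{\mathcal{H}}^{(m)}_{f,\mathcal{M}}(\mathcal{Y})$ is nonzero and the division makes sense, and it is also where one uses that $\mu_{f,\mathcal{Y}}\neq 0$ to know $\mathrm{Vol}_f^{(m)}(\mathcal{Y})>0$ so that there is something positive to exploit. A secondary technical point is checking that $\overline{N}_t$ is semi-positive and that its restriction to $\mathcal{Y}^{[m]}$ is big for small $t$ — this follows from ampleness of $\mathcal{L}^{[m]}+t\pi_{[m]}^*\mathcal{M}$ relative to $S$ together with ampleness of $\mathcal{M}$ on $S$, but one must be careful that the \emph{canonical} metric on $\mathcal{L}^{[m]}$ is only semi-positive, not strictly positive, so bigness must be argued on the level of the line bundle and then matched with semi-positivity of the metric, exactly as in Zhang's framework.
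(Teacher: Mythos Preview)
Your approach has a sign error that makes it unworkable. You form $\overline{N}_t=\overline{\mathcal{L}^{[m]}}+t\,\pi_{[m]}^*\overline{\mathcal{M}}$ with a \emph{plus} sign, apply Zhang's lower bound, and obtain
\[
\sum_j\widehat{h}_{f_{\pi(x_j)}}(x_j)+t\,h_{S,\mathcal{M}}(\pi_{[m]}(x))\ \geq\ (\text{positive constant})-C
\]
on a Zariski open set. But this inequality does \emph{not} rearrange into an upper bound for $h_{S,\mathcal{M}}(\pi_{[m]}(x))$ in terms of $\sum_j\widehat{h}_{f}(x_j)$: both terms on the left are nonnegative, so the inequality is trivially satisfied whenever either is large, and you can extract no control on $h_{S,\mathcal{M}}$. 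What you actually need is a lower bound on the \emph{difference} $\widehat{h}_{f^{[m]}}(x)-(\widehat{\mathcal{H}}^{(m)}-\varepsilon)\,h_{S,\mathcal{M}}(\pi_{[m]}(x))$, i.e.\ you must work with $\mathcal{L}^{[m]}-t\,\pi_{[m]}^*\mathcal{M}$. But then your metrized line bundle is no longer semi-positive, and Zhang's inequality does not apply directly.

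The paper circumvents this by avoiding the canonical metric entirely and arguing at finite level. It passes to a resolution $\mathcal{Y}_n$ on which $(f^{[m]})^n$ becomes a morphism $F_n$, sets $\mathcal{L}_n:=F_n^*\mathcal{L}^{[m]}$ and $\mathcal{M}_n:=\psi_n^*\pi_{[m]}^*\mathcal{M}$, and then uses Lemma~\ref{cor:GV} to show that for $n$ large and suitable integers $N,M$ with $M/N\sim d^n(\widehat{\mathcal{H}}^{(m)}-\varepsilon/2)$,
\[
\bigl((N\mathcal{L}_n)^{\dim\mathcal{Y}_n}\bigr)\ >\ \dim\mathcal{Y}_n\cdot\bigl((M\mathcal{M}_n)\cdot(N\mathcal{L}_n)^{\dim\mathcal{Y}_n-1}\bigr).
\]
This is exactly the hypothesis of \emph{Siu's bigness criterion}, so $N\mathcal{L}_n-M\mathcal{M}_n$ is big on $\mathcal{Y}_n$; bigness gives a Zariski open set where the associated Weil height is bounded below, i.e.\ $Nh_{\mathcal{L}^{[m]}}((f^{[m]})^n(x))\geq Mh_{S,\mathcal{M}}(\pi_{[m]}(x))-C_2$. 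Dividing by $d^n$ and applying Call--Silverman to replace $d^{-n}h_{\mathcal{L}^{[m]}}((f^{[m]})^n(x))$ by $\widehat{h}_{f^{[m]}}(x)$ up to $O(d^{-n}h_{S,\mathcal{M}}(\pi_{[m]}(x)))$ finishes the job. The essential ingredient you are missing is Siu's criterion (or some other mechanism producing effective sections of a difference of nef bundles); without it there is no way to get the sign right.
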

As explained in the introduction, one can use the theory of adelic line bundles on quasi-projective varieties set up by Yuan and Zhang~\cite{YZ-adelic} to obtain such an inequality (see Theorem~6.2.2 therein). The hypothesis that  $\mu_{f,\mathcal{Y}}$ is non zero in Theorem~\ref{tm:Gao-Habegger} is equivalent to the non degeneracy of $\mathcal{Y}^{[m]}$ in \cite[Theorem 6.2.2]{YZ-adelic}.
 However, the proof we give here allows us to have explicit constants in the inequality.
\begin{proof}
Fix $0<\varepsilon<\widehat{\mathcal{H}}^{(m)}_{f,\mathcal{M}}(\mathcal{Y})$ and $C\geq1$. Take $n\geq1$ such that $d^n\left(\widehat{\mathcal{H}}^{(m)}_{f,\mathcal{M}}(\mathcal{Y})-\varepsilon/2\right)>1$ and $C\leq d^n\varepsilon/4$. Choose integers $M,N\geq1$ such that
\begin{center}
$N\left(d^n\widehat{\mathcal{H}}^{(m)}_{f,\mathcal{M}}(\mathcal{Y})- C\right)>M\geq N d^n\left(\widehat{\mathcal{H}}^{(m)}_{f,\mathcal{M}}(\mathcal{Y})-\varepsilon/2\right)$.
\end{center}

We use Lemma~\ref{lm:goodmodel}: increasing $n$ if necessary, if $\mathcal{Y}_{n}^{(m)}:=(F_n^{(m)})_*(\psi_n^{(m)})^*\mathcal{Y}^{[m]}$, we deduce from Lemma~\ref{cor:GV} that the quantity
\begin{align*}
\frac{\left(\{\mathcal{Y}_{n}^{(m)}\}\cdot \left(Nc_1(\mathcal{L}^{[m]})\right)^{\dim\mathcal{Y}^{[m]}}\right)}{\dim\mathcal{Y}^{[m]}\left(\{\mathcal{Y}_{n}^{(m)}\}\cdot \left(N c_1(\mathcal{L}^{[m]})\right)^{\dim\mathcal{Y}^{[m]}-1}\cdot \left(M c_1(\pi_{[m]}^*\mathcal{M})\right)\right)}
\end{align*}
is bounded from below by $\frac{N}{M}\left(d^n\widehat{\mathcal{H}}^{(m)}_{f,\mathcal{M}}(\mathcal{Y})- C\right)>1$.
Let $\mathcal{Y}_n:=(\psi_n^{(m)})^{*}(\mathcal{Y}^{[m]})$, $ \mathcal{L}_n:=\left((F_n)^*\mathcal{L}^{[m]}\right)|_{\mathcal{Y}_n}$, and $\mathcal{M}_n:=\left((\pi_{[m]}\circ\psi_n^{(m)})^*\mathcal{M}\right)|_{\mathcal{Y}_n}$. By construction, the line bundles $\mathcal{L}_n$ and $\mathcal{M}_n$ are nef on $\mathcal{Y}_n$ and the above inequality implies
\[\left((N\mathcal{L}_n)^{\dim\mathcal{Y}_n}\right)>\dim\mathcal{Y}_n\left((M\mathcal{M}_n)\cdot(N\mathcal{L}_n)^{\dim\mathcal{Y}_n-1}\right)\]
by the projection formula.
We thus can apply Siu's bigness criterion~\cite[Theorem~2.2.15]{Laz} and find that $N\mathcal{L}_n-M\mathcal{M}_n$ is a big line bundle on $\mathcal{Y}_n$. In particular, there exist $\ell\geq1$ and a non-empty Zariski open set $\mathcal{U}_n\subset \mathcal{Y}_n$ such that for any $x\in \mathcal{U}_n(\bar{\mathbb{Q}})$,
\[h_{\mathcal{Y}_n,\ell(N\mathcal{L}_n-M\mathcal{M}_n)}(x)\geq -C_1\]
for some constant $C_1$ depending only on $n$. Now we use successively functorial properties of Weil height functions, see~e.g.~\cite{Silvermandiophantine}. First, we find that for any $y\in \mathcal{U}_n(\bar{\mathbb{Q}})$,
\begin{align*}
h_{\mathcal{Y}_n,\ell(N\mathcal{L}_n-M\mathcal{M}_n)}(y) & = \ell\left( Nh_{\mathcal{Y}^{[m]},\mathcal{L}^{[m]}}(F^n(y))-Mh_{S,\mathcal{M}}(\pi_{[m]}\circ\psi_n(y)\right)+O(1).
\end{align*}
Since $F_n=(f^{[m]})^n\circ \psi_n^{(m)}$ on the non-empty Zariski open set $\mathcal{U}_n\cap (\psi_n^{(m)})^{-1}((\mathcal{Y}^{[m]})^0)$, since $\psi_n$ is an isomorphism from $\mathcal{U}_n\cap (\psi_n^{(m)})^{-1}((\mathcal{Y}^{[m]})^0)$ to its image $\mathcal{U}^1:=\psi_n^{(m)}\left(\mathcal{U}_n\right)\cap(\mathcal{Y}^{[m]})^0$, and since $\mathcal{Y}^{[m]}$ is a subvariety of $\mathcal{X}^{[m]}$ we deduce that, for any $x\in \mathcal{U}^1(\bar{\mathbb{Q}})$,
\begin{align*}
h_{\mathcal{Y}_n,\ell(N\mathcal{L}_n-M\mathcal{M}_n)}(\psi_n^{-1}(x)) & = \ell\left( Nh_{\mathcal{X}^{[m]},\mathcal{L}^{[m]}}(f^n(x))-Mh_{S,\mathcal{M}}(\pi_{[m]}(x))\right)+O(1).
\end{align*}
In particular, the above gives
\begin{align*}
h_{\mathcal{X}^{[m]},\mathcal{L}^{[m]}}((f^{[m]})^n(x) ) & \geq \frac{M}{N} h_{S,\mathcal{M}}(\pi_{[m]}(x))-C_2\\
& \geq d^n\left(\widehat{\mathcal{H}}^{(m)}_{f,\mathcal{M}}(\mathcal{Y})-\varepsilon/2\right)h_{S,\mathcal{M}}(\pi_{[m]}(x))-C_2,
\end{align*}
for any $x\in \mathcal{U}(\bar{\mathbb{Q}})$, where $C_2$ is a constant depending on $n$. This rewrites as
\begin{align*}
\frac{1}{d^n}h_{\mathcal{X}^{[m]},\mathcal{L}^{[m]}}((f^{[m]})^n(x))& \geq \left(\widehat{\mathcal{H}}^{(m)}_{f,\mathcal{M}}(\mathcal{Y})-\varepsilon/2\right)h_{S,\mathcal{M}}(\pi_{[m]}(x))-C_3,
\end{align*}
for any $x\in\mathcal{U}^1(\bar{\mathbb{Q}})$, where $C_3$ depends on $n$. We now use an estimate of Call and Silverman~\cite[Theorem~3.1]{CS-height}: there is a constant $C_4>0$ depending only on $(\mathcal{X},f,\mathcal{L})$ and $\mathcal{M}$ such that for any $x\in \mathcal{X}^0(\bar{\mathbb{Q}})$, 
\[\left|\widehat{h}_{f_{\pi(x)}}(x)-h_{\mathcal{X},\mathcal{L}}(x)\right|\leq C_4\left(h_{S,\mathcal{M}}(\pi(x))+1\right).\]
By functorial properties of heights \[h_{\mathcal{X}^{[m]},\mathcal{L}^{[m]}}(x)=\sum_{j=1}^mh_{\mathcal{X},\mathcal{L}}(x_j)+O(1), \quad x=(x_1,\ldots,x_m)\in \mathcal{X}^{[m]}(\bar{\mathbb{Q}}),\]
and the construction of the canonical height gives
\[\widehat{h}_{f^{[m]}_{\pi_{[m]}(x)}}(x)=\sum_{i=1}^m\widehat{h}_{f_{\pi(x_i)}}(x_i), \quad x=(x_1,\ldots,x_m)\in \mathcal{X}^{[m]}(\bar{\mathbb{Q}}),\]
since $\pi_{[m]}(x)=\pi(x_i)$ for any $i$ by construction.
Applying this inequality to $(f^{[m]})^n(x)$ and using that $\widehat{h}_{f_{\pi(x_i)}}(f^n(x_i))=d^n\widehat{h}_{f_{\pi(x_i)}}(x_i)$ for any $i$, we find
\[\sum_{j=1}^m\widehat{h}_{f_{\pi(x_j)}}(x_j)\geq \left(\widehat{\mathcal{H}}_{f,\mathcal{M}}(\mathcal{Y})-\frac{\varepsilon}{2} -\frac{C_4}{d^n}\right)h_{S,\mathcal{M}}(\pi_{[m]}(x))-C_3 -\frac{C_4}{d^n},\]
for any $x=(x_1,\ldots,x_m)\in \mathcal{U}^1(\bar{\mathbb{Q}})$. Up to increasing $n$, we can assume $C_4\leq d^n\varepsilon/2$, which gives the expected inequality.
\end{proof}

As an immediate application of Theorem~\ref{tm:Gao-Habegger}, we have

\begin{corollary}\label{cor:Gao-Habegger}
Fix $m\geq\dim S$ and assume $\mathrm{Vol}^{(m)}_{f}(\mathcal{Y})>0$. Let $\mathcal{M}$ be any ample $\mathbb{Q}$-line bundle on $S$ of volume $1$ and let $0<\varepsilon<\widehat{\mathcal{H}}_{f,\mathcal{M}}^{(m)}(\mathcal{Y})$. There is a non-empty Zariski open subset $\mathcal{U}\subset \mathcal{Y}^0$ and a constant $C\geq1$ depending only on $(\mathcal{X},f,\mathcal{L})$, $\mathcal{Y}$, $\mathcal{M}$, $m$ and $\varepsilon$ such that
\[h_{S,\mathcal{M}}(\pi(x))\leq \frac{m}{\widehat{\mathcal{H}}^{(m)}_{f,\mathcal{M}}(\mathcal{Y})-\varepsilon}\widehat{h}_{f_{\pi(x)}}(x)+C, \quad x\in\mathcal{U}(\bar{\mathbb{Q}}).\]
\end{corollary}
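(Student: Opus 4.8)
The corollary is a direct consequence of Theorem~\ref{tm:Gao-Habegger} applied to the $m$-fiber power, combined with Lemma~\ref{lm:Zhang-product} to relate the height of $\mathcal{Y}^{[m]}$ to that of $\mathcal{Y}$. The plan is to first observe that since $\mathrm{Vol}^{(m)}_{f}(\mathcal{Y})>0$, the bifurcation measure $\mu_{f,\mathcal{Y}}$ is non-zero by Proposition~\ref{prop-volumes}, so that $\widehat{\mathcal{H}}^{(m)}_{f,\mathcal{M}}(\mathcal{Y})>0$ and Theorem~\ref{tm:Gao-Habegger} applies. Theorem~\ref{tm:Gao-Habegger} gives a non-empty Zariski open subset $\mathcal{U}'\subset(\mathcal{Y}^{[m]})^0$ and a constant $C'\geq1$ such that
\[h_{S,\mathcal{M}}(\pi_{[m]}(x))\leq \frac{1}{\widehat{\mathcal{H}}^{(m)}_{f,\mathcal{M}}(\mathcal{Y})-\varepsilon}\sum_{j=1}^m\widehat{h}_{f_{\pi(x_j)}}(x_j)+C'\]
for all $x=(x_1,\ldots,x_m)\in\mathcal{U}'(\bar{\mathbb{Q}})$.

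The second step is to specialize this inequality to the ``diagonal'' locus. Given a point $y\in\mathcal{Y}^0(\bar{\mathbb{Q}})$ with $t=\pi(y)$, consider the point $x:=(y,y,\ldots,y)\in(\mathcal{Y}_t)^m\subset \mathcal{Y}^{[m]}$; this lies in $(\mathcal{Y}^{[m]})^0$ since $t\in S^0_\mathcal{Y}$. Then $\pi_{[m]}(x)=t=\pi(y)$ and, since each $x_j=y$ with $\pi(x_j)=\pi(y)$, the right-hand sum is simply $m\cdot\widehat{h}_{f_{\pi(y)}}(y)$. The only point requiring care is that the diagonal $\Delta(\mathcal{Y}^0):=\{(y,\ldots,y)\,:\,y\in\mathcal{Y}^0\}$ may fail to meet the Zariski open set $\mathcal{U}'$; however, $\Delta(\mathcal{Y}^0)$ is irreducible (being isomorphic to $\mathcal{Y}^0$, which is irreducible as $\mathcal{X}$ and hence $\mathcal{Y}$ is, or at least one works on an irreducible component), and $\Delta(\mathcal{Y}^0)$ is not contained in $\mathcal{Y}^{[m]}\setminus\mathcal{U}'$ unless the latter contains the whole diagonal. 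To handle this cleanly, one applies Theorem~\ref{tm:Gao-Habegger} directly to the sub-dynamical-pair obtained by restricting to the diagonal — more precisely, one notes that the proof of Theorem~\ref{tm:Gao-Habegger} actually yields the bigness of $N\mathcal{L}_n-M\mathcal{M}_n$ on $\mathcal{Y}_n$, and one may intersect the resulting big locus with (a desingularized model of) the diagonal, which is legitimate because the diagonal dominates $S$ and the restriction of a big line bundle to a subvariety dominating the base remains big once we know the volume inequality persists. Alternatively, and more simply, set $\mathcal{U}:=\{y\in\mathcal{Y}^0\,:\,(y,\ldots,y)\in\mathcal{U}'\}$, which is Zariski open in $\mathcal{Y}^0$; if it were empty then $\Delta(\mathcal{Y}^0)\subset\mathcal{Y}^{[m]}\setminus\mathcal{U}'$, and one removes this degenerate possibility by choosing in Theorem~\ref{tm:Gao-Habegger} the open set $\mathcal{U}'$ to be the complement of a big-locus base locus that, by Lemma~\ref{lm:Zhang-product}, can be taken to meet the diagonal (since $h_{\bar{\mathcal{L}^{[m]}}}(\mathcal{Y}^{[m]})=m\,h_{\bar{\mathcal{L}}}(\mathcal{Y})$ forces the relevant positivity along $\Delta(\mathcal{Y}^0)$).

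Putting these together: for $y\in\mathcal{U}(\bar{\mathbb{Q}})$ we obtain
\[h_{S,\mathcal{M}}(\pi(y))\leq \frac{m}{\widehat{\mathcal{H}}^{(m)}_{f,\mathcal{M}}(\mathcal{Y})-\varepsilon}\widehat{h}_{f_{\pi(y)}}(y)+C,\]
with $C:=C'$, which is the claimed inequality. The constant $C'$ from Theorem~\ref{tm:Gao-Habegger} depends only on $(\mathcal{X},f,\mathcal{L})$, $\mathcal{Y}$, $\mathcal{M}$, $m$ and $\varepsilon$, so the same holds for $C$.

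\textbf{Main obstacle.} The genuinely delicate point is the compatibility of the Zariski-open set of Theorem~\ref{tm:Gao-Habegger} with the diagonal embedding $\mathcal{Y}^0\hookrightarrow\mathcal{Y}^{[m]}$: a priori the exceptional set excised in that theorem could swallow the entire diagonal. Resolving this requires either re-running the bigness argument of Theorem~\ref{tm:Gao-Habegger}'s proof with the diagonal built in, using Lemma~\ref{lm:Zhang-product} to certify that the volume/degree inequality still holds after restriction to the diagonal, or invoking that the big locus is the complement of the augmented base locus, which (being intrinsic) cannot contain a subvariety along which the restricted bundle stays big. This is where essentially all the content beyond bookkeeping sits; the height-functoriality manipulations and the identification of the diagonal sum with $m\,\widehat{h}_{f_{\pi(y)}}(y)$ are routine.
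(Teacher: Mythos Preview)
Your route via the diagonal $\Delta:\mathcal{Y}^0\hookrightarrow(\mathcal{Y}^{[m]})^0$ has a genuine gap: there is no reason the open set $\mathcal{U}'$ of Theorem~\ref{tm:Gao-Habegger} must meet $\Delta(\mathcal{Y}^0)$, and your proposed fixes do not settle this. Invoking Lemma~\ref{lm:Zhang-product} does not help --- that lemma concerns heights of product varieties, not the Siu-type volume inequality used in the proof of Theorem~\ref{tm:Gao-Habegger} to manufacture $\mathcal{U}'$ --- and the assertion that the restriction of the big line bundle to the diagonal remains big is precisely what would need to be \emph{proved}, not assumed.

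The paper avoids the diagonal entirely by a short contrapositive argument. With $C$ the constant from Theorem~\ref{tm:Gao-Habegger}, set
\[
\mathcal{B}:=\Bigl\{x\in\mathcal{Y}^0(\bar{\mathbb{Q}}):\ h_{S,\mathcal{M}}(\pi(x))>\tfrac{m}{\widehat{\mathcal{H}}^{(m)}_{f,\mathcal{M}}(\mathcal{Y})-\varepsilon}\,\widehat{h}_{f_{\pi(x)}}(x)+C\Bigr\}.
\]
If $(x_1,\ldots,x_m)\in\mathcal{B}^{[m]}$ lies over a common $t$, then each $x_j$ satisfies the strict inequality above; summing over $j$ gives
\[
h_{S,\mathcal{M}}(t)>\tfrac{1}{\widehat{\mathcal{H}}^{(m)}_{f,\mathcal{M}}(\mathcal{Y})-\varepsilon}\sum_{j=1}^m\widehat{h}_{f_t}(x_j)+C,
\]
which is exactly the negation of the inequality in Theorem~\ref{tm:Gao-Habegger}. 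Hence $\mathcal{B}^{[m]}\subset(\mathcal{Y}^{[m]})^0\setminus\mathcal{U}'$, a proper Zariski closed set, and the paper concludes that $\mathcal{B}$ lies in a proper Zariski closed subset of $\mathcal{Y}$; its complement is the desired $\mathcal{U}$. The point is that this averaging trick uses the \emph{full} fiber product $\mathcal{B}^{[m]}$ rather than the diagonal slice, so the obstacle you identify simply does not arise.
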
 

\begin{proof} Fix $\mathcal{M}$ and $0<\varepsilon<\widehat{\mathcal{H}}_{f,\mathcal{M}}^{(m)}(\mathcal{Y})$ and let $\mathcal{B}$ be the set of points $x\in \mathcal{Y}$ such that 
	  	  \[h_{S,\mathcal{M}}(\pi(x))> \frac{m}{\widehat{\mathcal{H}}_{f,\mathcal{M}}^{(m)}(\mathcal{Y})-\varepsilon}\widehat{h}_{f_{\pi(x)}}(x)+C\]
	where $C$ is the constant given by Theorem~\ref{tm:Gao-Habegger}. We deduce that
	\[\forall (x_1, \dots, x_m)\in \mathcal{B}^{[m]},\  h_{S,\mathcal{M}}(\pi_{[m]}(x)) > \frac{1}{\widehat{\mathcal{H}}_{f,\mathcal{M}}^{(m)}(\mathcal{Y})-\varepsilon}\sum_{j=1}^{m}\widehat{h}_{f_{\pi(x_j)}}(x_j)+C,\]
	so that $\mathcal{B}^{[m]}$ is necessarily contained in a strict Zariski closed subset of $\mathcal{Y}^{[m]}$ by Theorem~\ref{tm:Gao-Habegger}, hence $\mathcal{B}$ is contained in a strict Zariski closed subset of $\mathcal{Y}$.
	\end{proof}

\subsection{General dynamical heights as moduli heights}
Let $(\mathcal{X},f,\mathcal{L},\mathcal{Y})$  be a dynamical pair of degree $d\geq2$ parametrized by a smooth projective variety $S$, with regular part $S^0_\mathcal{Y}$, all defined over a number field $\mathbb{K}$. 

When $Z$ is a subvariety of $S^0_\mathcal{Y}$, we let $\mathcal{Y}_Z:=\pi|_\mathcal{Y}^{-1}(Z)$ and we define $(\mathcal{X}_Z,f_Z,\mathcal{L}_Z)$ as the family of polarized endomorphisms induced by restriction of $(\mathcal{X},f,\mathcal{L})$ to $\mathcal{X}_Z:=\pi^{-1}(Z)$.

\medskip

We prove here the following

\begin{theorem}\label{tm:ample-height}
let $(\mathcal{X},f,\mathcal{L})$, $S$ and $\mathcal{Y}$ be all defined over a number field $\mathbb{K}$. Fix an embedding $\iota:\mathbb{K} \to \C$ for which we define the different bifurcation currents and assume that $\mu_{f,\mathcal{Y}}\neq0$. Then, there is a non-empty Zariski open subset $U\subset S^0_\mathcal{Y}$ such that for any ample height $h$ on $U$, there are constants $C_1,C_2>0$ and $C_3,C_4\in\mathbb{R}$ such that
\[C_1 h(t)+C_3 \leq \widehat{h}_{f_t}(Y_t) \leq C_2 h(t)+C_4 \quad \text{for all} \ \ t\in U(\bar{\mathbb{Q}}).\]
 Moreover, for any archimedean place $v\in M_\mathbb{K}$, any irreducible component $Z$ of $S^0_\mathcal{Y}\setminus U$ satisfies $T_{f,\mathcal{Y},v}^{(\dim Z)}\wedge[Z]=0$.
\end{theorem}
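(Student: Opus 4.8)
The plan is to combine the inequalities of \S\ref{sec:canonical-height} (Zhang's inequalities) with Corollary~\ref{cor:Gao-Habegger} for the lower bound and with a standard Call--Silverman specialization argument for the upper bound, and then to identify the exceptional locus via the vanishing of the bifurcation currents. First I would fix an ample $\mathbb{Q}$-line bundle $\mathcal{M}$ on $S$ of volume $1$ and, by Proposition~\ref{prop-volumes}, record that $\mu_{f,\mathcal{Y}}\neq 0$ forces $\mathrm{Vol}^{(m)}_f(\mathcal{Y})>0$ and $\deg^{(m)}_{f,\mathcal{M}}(\mathcal{Y})>0$ for every $m\geq\dim S$, so in particular $\widehat{\mathcal{H}}^{(\dim S)}_{f,\mathcal{M}}(\mathcal{Y})>0$. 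For the \emph{upper bound}, I would invoke the Call--Silverman height estimate $|\widehat{h}_{f_t}(x)-h_{\mathcal{X},\mathcal{L}}(x)|\leq C_4(h_{S,\mathcal{M}}(t)+1)$ fiberwise, integrate it over the fibers of $\mathcal{Y}\to S$ (using that $\widehat{h}_{f_t}(Y_t)$ is, up to normalization, an average of pointwise canonical heights of a generic sequence in $Y_t$, via \eqref{Zhang-faible}) and compare with the Weil height of $Y_t$ attached to $\mathcal{L}$; the Weil height of the cycle $Y_t$ is bounded above by $C_2 h_{S,\mathcal{M}}(t)+C_4$ on a suitable Zariski open set since $\mathcal{Y}\to S$ is a family of subvarieties and heights vary functorially. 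This gives the right-hand inequality with $h$ replaced by $h_{S,\mathcal{M}}$, and then by definition of an ample height (two ample heights are mutually bounded up to affine change) it holds for any ample height $h$ on a possibly smaller Zariski open $U$.

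For the \emph{lower bound}, the key input is Corollary~\ref{cor:Gao-Habegger} applied with $m=\dim S$ and some $0<\varepsilon<\widehat{\mathcal{H}}^{(\dim S)}_{f,\mathcal{M}}(\mathcal{Y})$: there is a non-empty Zariski open $\mathcal{U}\subset\mathcal{Y}^0$ and a constant $C$ such that
\[
h_{S,\mathcal{M}}(\pi(x))\leq \frac{\dim S}{\widehat{\mathcal{H}}^{(\dim S)}_{f,\mathcal{M}}(\mathcal{Y})-\varepsilon}\,\widehat{h}_{f_{\pi(x)}}(x)+C,\qquad x\in\mathcal{U}(\bar{\mathbb{Q}}).
\]
Shrinking the base, let $U\subset S^0_\mathcal{Y}$ be a non-empty Zariski open subset over which $\pi|_{\mathcal{Y}}$ is flat and $\pi^{-1}(U)\cap\mathcal{Y}\subset\mathcal{U}$ away from a proper subvariety; then, applying \eqref{Zhang-forte}--\eqref{Zhang-faible} to $Y_t$ with its canonical metric, choose for each $t\in U(\bar{\mathbb{Q}})$ a point $x\in Y_t$ with $\widehat{h}_{f_t}(x)\leq (q+1)\widehat{h}_{f_t}(Y_t)+o(1)$ lying in $\mathcal{U}$ (possible since the ``bad'' locus of $\mathcal{U}$ in $Y_t$ is a proper subvariety and Zhang's inequality produces a generic sequence). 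Feeding this $x$ into the displayed inequality yields $h_{S,\mathcal{M}}(t)\leq C_1^{-1}\widehat{h}_{f_t}(Y_t)+C_3'$, i.e. the left-hand inequality for $h_{S,\mathcal{M}}$, hence for any ample height after comparison. Combining both sides on the common Zariski open subset $U$ gives the two-sided estimate.

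For the last assertion, fix an archimedean place $v$ and an irreducible component $Z$ of $S^0_\mathcal{Y}\setminus U$. By the displayed double inequality on $U$, $\widehat{h}_{f_t}(Y_t)$ is an ample height on $U$; the complement components are precisely where this fails. I would argue that on $Z$ the restricted dynamical pair $(\mathcal{X}_Z,f_Z,\mathcal{L}_Z,\mathcal{Y}_Z)$ has vanishing bifurcation measure $\mu_{f_Z,\mathcal{Y}_Z}$: if not, the same construction applied to $Z$ (using Corollary~\ref{cor:Gao-Habegger} over $Z$ and the upper bound over $Z$) would make $\widehat{h}_{f_t}(Y_t)$ an ample height on a Zariski open subset of $Z$, contradicting that $Z\subset S^0_\mathcal{Y}\setminus U$ was chosen as an exceptional component; more precisely one runs the whole argument by Noetherian induction on $\dim S$, at each stage peeling off the locus where the bifurcation measure of the restricted family is non-zero. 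Then $\mu_{f_Z,\mathcal{Y}_Z}=T^{(\dim Z)}_{f_Z,\mathcal{Y}_Z}=0$, and by the definition of the higher bifurcation currents together with the computation in Lemma~\ref{lm:GV} relating $T^{(\dim Z)}_{f,\mathcal{Y}}\wedge[Z]$ on $S$ with $T^{(\dim Z)}_{f_Z,\mathcal{Y}_Z}$ on $Z$, this gives $T^{(\dim Z)}_{f,\mathcal{Y},v}\wedge[Z]=0$.

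The main obstacle I anticipate is the bookkeeping needed to pass from the generic inequality of Corollary~\ref{cor:Gao-Habegger} (valid on a Zariski open subset of $\mathcal{Y}^0$ and involving points in $\mathcal{Y}$) to a clean inequality for the canonical \emph{height of the fiber $Y_t$} uniformly over $t$ in a fixed Zariski open $U$: one must control, uniformly in $t$, that Zhang's near-minimal points of $Y_t$ can be chosen inside the good open set, which requires a uniform version of the genericity in \eqref{Zhang-faible} over the family --- this is where the hypothesis that $\mathcal{Y}\to S$ is flat and that the bad locus is a \emph{proper closed} subset of $\mathcal{Y}$, hence meets each fiber properly, does the work, but making it precise (and dealing with the archimedean vs.\ non-archimedean normalizations of the metrics) is the delicate point. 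The identification of the exceptional components with the vanishing of $T^{(\dim Z)}_{f,\mathcal{Y},v}\wedge[Z]$ is then essentially formal given Lemma~\ref{lm:GV} and Proposition~\ref{prop-volumes}.
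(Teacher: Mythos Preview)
Your proposal is essentially the same approach as the paper's, with the same three ingredients: Corollary~\ref{cor:Gao-Habegger} plus Zhang's inequalities for the lower bound (this is exactly Proposition~\ref{cor:use-Zhang}), Call--Silverman plus Zhang for the upper bound, and Noetherian induction over irreducible components for the exceptional locus. Two small points of comparison are worth making. First, the ``bookkeeping'' you worry about is simpler than you suggest: the paper just sets $V$ to be the set of $t\in\pi(\mathcal{U})$ for which $U_t:=\mathcal{U}\cap Y_t$ is a non-empty Zariski open subset of $Y_t$, which is itself Zariski open in $S^0_\mathcal{Y}$; then one takes the infimum of $\widehat{h}_{f_t}(x)$ over $x\in U_t(\bar{\mathbb{Q}})$ and applies \eqref{Zhang-faible} directly, with no need for a uniform-in-$t$ choice of near-minimal point. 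Second, for the upper bound your phrase ``heights vary functorially'' hides a genuine step: one needs to bound the Weil height $h_{\bar{\mathcal{L}}}(Y_t)$ of the cycle $Y_t$ by an ample height on the base, and the paper does this via Chow forms (Lemma~\ref{lm:Chow-form}), reducing to the height of the coefficient vector of the Chow form, which varies as a regular map $S^0_\mathcal{Y}\to\mathbb{P}^M$. This is the content that makes your functoriality remark precise.
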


We are now in position to prove Theorem~\ref{tm:critical-ample-height}.

\begin{proof}[Proof of Theorem~\ref{tm:critical-ample-height}]
We work at the archimedean place of $\mathbb{Q}$.
It follows from  Proposition~\ref{Joe'suggestion} (or from Theorem~\ref{tm:mu-interior}) that the bifurcation measure $\mu_{f,\mathrm{Crit}}$ is non-zero on $\mathscr{M}_{d}^{k}(\mathbb{C})$, hence it is sufficient to apply Theorem~\ref{tm:ample-height} to conclude the proof of Theorem~\ref{tm:critical-ample-height}.
\end{proof}

The proof of Theorem~\ref{tm:ample-height} splits into two distinct parts that are summarized in two Propositions below.
We first use Zhang's inequalities over number fields to deduce the following from Corollary~\ref{cor:Gao-Habegger}:

\begin{proposition}\label{cor:use-Zhang}
Let $\mathcal{M}$ be an ample $\mathbb{Q}$-line bundle on $S$ of volume $1$ and assume $\mathrm{Vol}^{(\dim S)}_{f}(\mathcal{Y})>0$. There are constants $C_1>0$ and $C_2\geq1$ depending only on $(\mathcal{X},f,\mathcal{L},\mathcal{Y})$ and $\mathcal{M}$ and a non-empty Zariski open subset $V\subset S^0_\mathcal{Y}$ defined over $\bar{\mathbb{Q}}$ such that
\[h_{S,\mathcal{M}}(t)\leq C_1\widehat{h}_{f_t}(Y_t)+C_2, \quad t\in V(\bar{\mathbb{Q}}).\]
Moreover, for any irreducible component $Z$ of $S^0_\mathcal{Y}\setminus V$, we have $\mathrm{Vol}_{f_Z}(\mathcal{Y}_Z)=0$.
\end{proposition}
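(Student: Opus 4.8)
The statement has two parts: an upper bound on the base height $h_{S,\mathcal M}(t)$ in terms of the fiberwise canonical height $\widehat h_{f_t}(Y_t)$, valid on a Zariski open subset $V$; and the geometric vanishing $\mathrm{Vol}_{f_Z}(\mathcal Y_Z)=0$ for every irreducible component $Z$ of the complement. The first part follows by combining Corollary~\ref{cor:Gao-Habegger} (applied with $m=\dim S$) with Zhang's inequalities \eqref{Zhang-forte}--\eqref{Zhang-faible} over the number field $\mathbb K$, together with Lemma~\ref{lm:Zhang-product}. Concretely, the plan is as follows. First, since $\mu_{f,\mathcal Y}\neq 0$, Proposition~\ref{prop-volumes} gives $\mathrm{Vol}^{(\dim S)}_f(\mathcal Y)>0$, so Corollary~\ref{cor:Gao-Habegger} with $m=\dim S$ produces a non-empty Zariski open $\mathcal U\subset \mathcal Y^0$ and a constant $C\geq 1$ with
\[
h_{S,\mathcal M}(\pi(x))\leq \frac{\dim S}{\widehat{\mathcal H}^{(\dim S)}_{f,\mathcal M}(\mathcal Y)-\varepsilon}\,\widehat h_{f_{\pi(x)}}(x)+C,\qquad x\in\mathcal U(\bar{\mathbb Q}),
\]
for any fixed $0<\varepsilon<\widehat{\mathcal H}^{(\dim S)}_{f,\mathcal M}(\mathcal Y)$. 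The issue is that this bounds $h_{S,\mathcal M}(t)$ only in terms of the height of a \emph{single generic point} $x$ in the fiber $Y_t$, not in terms of $\widehat h_{f_t}(Y_t)$, which is a kind of average; one must pass from pointwise heights on the fibers to the height of the subvariety.

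\textbf{From pointwise fiber heights to the fiber height.} This is the technical heart of the first part. The plan is to apply Zhang's inequality \eqref{Zhang-forte} fiberwise: for a fixed $t\in S^0_\mathcal Y(\bar{\mathbb Q})$, the triple $(X_t,f_t,L_t)$ is a polarized endomorphism over a number field (a finite extension of $\mathbb K$ containing $t$), so $\widehat h_{f_t}\geq 0$ on $X_t(\bar{\mathbb Q})$ and \eqref{Zhang-forte} gives
\[
e_1(\bar L_t)\geq \widehat h_{f_t}(Y_t)\geq \frac{1}{q+1}e_1(\bar L_t),\qquad q=\dim Y_t,
\]
where $e_1(\bar L_t)=\sup_{Z'\subsetneq Y_t}\inf_{y\in(Y_t\setminus Z')(\bar{\mathbb Q})}\widehat h_{f_t}(y)$. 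The point is that the Zariski open set $\mathcal U\cap Y_t$ from Corollary~\ref{cor:Gao-Habegger} is, for $t$ outside a further Zariski closed subset of the base, a dense Zariski open subset of $Y_t$ (this uses that $\mathcal U$ is Zariski open in $\mathcal Y^0$ and the flatness of $\pi|_\mathcal Y$; one throws into the bad locus those $t$ for which $Y_t\subset \mathcal Y\setminus\mathcal U$, a Zariski closed condition since $\pi|_\mathcal Y$ is proper over $S^0_\mathcal Y$). Taking $Z'$ to be the complement of $\mathcal U\cap Y_t$ in $Y_t$ and infimizing the Corollary~\ref{cor:Gao-Habegger} inequality over $y\in(Y_t\setminus Z')(\bar{\mathbb Q})$ gives
\[
h_{S,\mathcal M}(t)\leq \frac{\dim S}{\widehat{\mathcal H}^{(\dim S)}_{f,\mathcal M}(\mathcal Y)-\varepsilon}\, e_1(\bar L_t)+C \leq \frac{\dim S}{\widehat{\mathcal H}^{(\dim S)}_{f,\mathcal M}(\mathcal Y)-\varepsilon}\,(q+1)\,\widehat h_{f_t}(Y_t)+C,
\]
which is the desired inequality with $C_1=\dim S\cdot(q+1)/(\widehat{\mathcal H}^{(\dim S)}_{f,\mathcal M}(\mathcal Y)-\varepsilon)$ and $C_2=C$, valid on the Zariski open $V\subset S^0_\mathcal Y$ obtained by removing the extra bad locus from $\pi(\mathcal U)$. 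One subtlety to check carefully: $e_1(\bar L_t)$ is a sup over \emph{all} strict subvarieties $Z'$ of $Y_t$, so taking the particular choice $Z'=Y_t\setminus(\mathcal U\cap Y_t)$ only gives a lower bound for $e_1(\bar L_t)$ by the inner infimum, which is exactly what we used; the direction of the inequality is consistent.

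\textbf{The vanishing on boundary components.} For the second assertion, let $Z$ be an irreducible component of $S^0_\mathcal Y\setminus V$. I would argue by contradiction: if $\mathrm{Vol}_{f_Z}(\mathcal Y_Z)>0$, then applying the already-established first part of the Proposition to the dynamical pair $(\mathcal X_Z,f_Z,\mathcal L_Z,\mathcal Y_Z)$ parametrized by (a projective model of) $Z$ — which is legitimate since a positive volume on $Z$ means the corresponding bifurcation measure on $Z$ is non-zero by Proposition~\ref{prop-volumes} — would produce a non-empty Zariski open subset $V_Z\subset Z$ on which $h_{S,\mathcal M}|_Z\leq C_1'\widehat h_{f_t}(Y_t)+C_2'$. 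But this does not immediately contradict $Z\subset S^0_\mathcal Y\setminus V$; rather, the point of the construction of $V$ is precisely that it is built to be the maximal locus where the inequality holds, so any component of the complement must have vanishing volume. Concretely, the cleanest route is to observe that $\mathcal U$ (hence $V$) was obtained from a big line bundle on a model $\mathcal Y_n$ via Siu's criterion, and the locus $S^0_\mathcal Y\setminus V$ is contained in (the image of) the augmented base locus together with the bad fibers; one then invokes Proposition~\ref{prop-volumes}(2), namely $\deg^{(m)}_{f,\mathcal M}(\mathcal Y)\geq m\deg_{Y_\eta}(L_\eta)^{m-\dim S+1}\int_{S^0}T^{(\dim S-1)}_{f,\mathcal Y}\wedge\omega_S$, restricted to $Z$: if $\mathrm{Vol}_{f_Z}(\mathcal Y_Z)>0$ the bifurcation currents on $Z$ are non-trivial, forcing the Siu-bigness argument to succeed already over $Z$, which would place (a dense open subset of) $Z$ inside $V$, a contradiction. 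I expect this last paragraph to require the most care: one must track exactly how the exceptional/base locus produced by Siu's criterion sits over $S$, and ensure the logical loop (first part $\Rightarrow$ vanishing on boundary) is not circular. The safest formulation is to \emph{define} $V$ as the union over all valid $(\mathcal U,C)$ of $\pi(\mathcal U)$ minus bad fibers, so that its complement is automatically the locus of degenerate volume, and then the vanishing statement becomes essentially a tautology combined with Proposition~\ref{prop-volumes}; the main obstacle is making this definition and the finiteness/constructibility of $V$ precise.
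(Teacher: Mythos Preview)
Your first part is correct and is exactly the paper's argument: apply Corollary~\ref{cor:Gao-Habegger} with $m=\dim S$, restrict to the Zariski open $V\subset S^0_\mathcal Y$ of parameters $t$ for which $U_t:=\mathcal U\cap Y_t$ is a non-empty open of $Y_t$, infimize over $U_t(\bar{\mathbb Q})$, and bound that infimum by $(q+1)\widehat h_{f_t}(Y_t)$ via Zhang's inequality. (You do not need Lemma~\ref{lm:Zhang-product} here; it plays no role in this argument.)

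Your second part has a gap. The paper's argument is a clean Noetherian iteration, not a contradiction or a redefinition of $V$. Namely: after the first pass you have a Zariski open $V_1\subset S^0_\mathcal Y$ and constants $(C_1^{(1)},C_2^{(1)})$ where the inequality holds. For each irreducible component $Z$ of $S^0_\mathcal Y\setminus V_1$ with $\mathrm{Vol}_{f_Z}^{(\dim Z)}(\mathcal Y_Z)>0$, run the identical argument for the restricted family $(\mathcal X_Z,f_Z,\mathcal L_Z,\mathcal Y_Z)$ over $Z$: this produces a non-empty Zariski open $V_Z\subset Z$ and new constants on which the inequality holds. Replace $V_1$ by $V_1\cup\bigcup_Z V_Z$; the complement now has strictly smaller dimension. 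Repeat. Since dimension strictly drops, the process terminates after finitely many steps, leaving a Zariski open $V$ whose complement has every component with vanishing volume, and a finite collection of constants of which one takes the worst. Your attempt to get a direct contradiction (``this would place a dense open of $Z$ inside $V$'') is circular because $V$ was fixed in advance; and your alternative of defining $V$ as a union over all valid pairs $(\mathcal U,C)$ does not give uniform constants $C_1,C_2$. Digging into Siu's criterion and base loci is unnecessary: Corollary~\ref{cor:Gao-Habegger} can and should be used as a black box at each step.
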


\begin{proof}
Let $q:=\dim Y_\eta$. Fix $0<\varepsilon<\widehat{\mathcal{H}}_{f,\mathcal{M}}^{(\dim S)}(\mathcal{Y})$. Let $\mathcal{U}$ be the Zariski open subset in Corollary~\ref{cor:Gao-Habegger}, let $V$ be the set of $t\in\pi(\mathcal{U})$ so that $U_t:=\mathcal{U}\cap Y_t$ is non-empty a Zariski open subset of $Y_t$. The set $V$ is a Zariski open subset of $S^0$ and for any $t\in V(\bar{\mathbb{Q}})$, we have
\[h_{S,\mathcal{M}}(t)\leq \frac{\dim S}{\widehat{\mathcal{H}}_{f,\mathcal{M}}(\mathcal{Y})-\varepsilon}\left(\widehat{h}_{f_t}(x)+C\right), \quad x\in U_t(\bar{\mathbb{Q}}).\]
Taking the infimum of $\widehat{h}_{f_t}(x)$ over $x\in U_t(\bar{\mathbb{Q}})$ and using Zhang's inequalities \eqref{Zhang-faible} gives 
\[h_{S,\mathcal{M}}(t)\leq \frac{\dim S}{\widehat{\mathcal{H}}_{f,\mathcal{M}}(\mathcal{Y})-\varepsilon}\left((q+1)\widehat{h}_{f_t}(Y_t)+C\right).\]
This is the wanted inequality, but we may have restricted too much the open set.

To conclude, we can proceed exactly the same way on any irreducible component $Z$ of $S^0_\mathcal{Y}\setminus V$, where $\mathrm{Vol}_{f_Z}^{(\dim Z)}(\mathcal{Y}_Z)>0$. In finitely many steps, we end with the expected result.
\end{proof}

We now use another description of the height $h_{\bar{\mathcal{L}}}(Y_t)$, when $t\in S^0_{\mathcal{Y}}(\bar{\mathbb{Q}})$, using Chow forms as in \cite{Hutz-good}. The next is probably well-known, but we include a proof for the sake of completeness.

\begin{lemma}\label{lm:Chow-form}
Let $S$ be a projective variety, let $\pi:\mathcal{Y}\to S$ be a surjective morphism, both defined over a number field $\mathbb{K}$. Let $\bar{\mathcal{L}}$ be a relatively ample line bundle on $\mathcal{Y}$ endowed with an adelic relatively semi-positive metrization. Let $S^0_\mathcal{Y}\subseteq S$ be a Zariski open set such that $\pi$ is flat over $S^0_\mathcal{Y}$.

For any ample line bundle $\mathcal{M}$ on $S$, there are constants $C_1,C_2>0$ such that
\[h_{\bar{\mathcal{L}}}(Y_t)\leq C_1h_{S,\mathcal{M}}(t)+C_2, \quad t\in S^0_{\mathcal{Y}}(\bar{\mathbb{Q}}).\]
\end{lemma}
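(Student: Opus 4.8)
\textbf{Proof plan for Lemma~\ref{lm:Chow-form}.}
The statement is a comparison of the height of the fibers $Y_t$ (with respect to a relatively ample $\bar{\mathcal{L}}$) against a Weil height $h_{S,\mathcal{M}}$ on the base. The plan is to reduce to the case where $S^0_\mathcal{Y}$ is affine and $\pi$ is flat, embed the relative family into a projective space over the base using powers of $\bar{\mathcal{L}}$, and then realize $h_{\bar{\mathcal{L}}}(Y_t)$ (up to a bounded error, and up to the standard normalization by degree) as the height of the Chow form of the cycle $Y_t\subset\mathbb{P}^N$. The key point is that this Chow form is cut out by equations whose coefficients are \emph{polynomial} (in fact, regular) functions on the base $S^0_\mathcal{Y}$, so its height is bounded linearly by $h_{S,\mathcal{M}}(t)$ via functoriality of Weil heights.

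\textbf{Step 1: reduction and embedding.} First I would shrink $S^0_\mathcal{Y}$ to an affine open subset $S'$ (this only makes the statement stronger, since on the complement one argues by Noetherian induction, or one simply notes that the left-hand side is bounded there too by the same argument applied to an affine cover; alternatively, since $h_{S,\mathcal{M}}$ is bounded below and $h_{\bar{\mathcal{L}}}(Y_t)$ varies in an algebraic family, one can cover $S^0_\mathcal{Y}$ by finitely many affines). Replacing $\mathcal{L}$ by a sufficiently high power (which only changes $h_{\bar{\mathcal{L}}}(Y_t)$ by a positive multiplicative constant, by multilinearity of the arithmetic intersection product and the definition of $h_{\bar{\mathcal{L}}}$), I may assume $\mathcal{L}$ is very ample relative to $\pi$, so that $\pi_*\mathcal{L}$ is locally free on $S'$ and the relative evaluation map gives a closed immersion $\iota\colon\mathcal{Y}_{S'}\hookrightarrow\mathbb{P}^N_{S'}=\mathbb{P}^N\times S'$ over $S'$, with $\mathcal{L}\simeq\iota^*\mathcal{O}_{\mathbb{P}^N}(1)$ up to twisting by a line bundle pulled back from $S'$ (such a twist contributes only a term $O(h_{S,\mathcal{M}}(t))$ to $h_{\bar{\mathcal{L}}}(Y_t)$, which is harmless).

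\textbf{Step 2: Chow form and height comparison.} For $t\in S'(\bar{\mathbb{Q}})$, the fiber $Y_t\subset\mathbb{P}^N_{\bar{\mathbb{Q}}}$ is an effective cycle of pure dimension $q:=\dim Y_\eta$ and of degree $\delta:=\deg_{\mathcal{L}}(Y_\eta)$ (constant by flatness). Its Chow form $R_{Y_t}$ is a multihomogeneous form in $q+1$ groups of $N+1$ variables, and a classical comparison (see e.g. Philippon, or the discussion in \cite{Hutz-good}) gives
\[
\bigl| h_{\bar{\mathcal{L}}}(Y_t) - \tfrac{1}{(q+1)\delta}\, h(R_{Y_t}) \bigr| \le C_0,
\]
where $h(R_{Y_t})$ is the (logarithmic, suitably normalized) height of the coefficient vector of $R_{Y_t}$ and $C_0$ depends only on $N,q,\delta$. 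Now the crucial geometric input: because $\mathcal{Y}_{S'}\subset\mathbb{P}^N\times S'$ is cut out by equations with coefficients in the coordinate ring $\mathcal{O}(S')$, the assignment $t\mapsto R_{Y_t}$ is given by evaluating a \emph{fixed} tuple of regular functions on $S'$ (the coefficients of the ``universal'' Chow form of the family, obtained by elimination theory applied over $\mathcal{O}(S')$). Hence $t\mapsto [\text{coefficients of }R_{Y_t}]$ defines a morphism $S'\to\mathbb{P}^M$ (or a rational map regular on $S'$), and functoriality of Weil heights yields $h(R_{Y_t}) \le C_1' h_{S,\mathcal{M}}(t) + C_2'$. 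Combining with the displayed inequality gives the claim with $C_1 = C_1'/((q+1)\delta)$ and $C_2 = C_2'/((q+1)\delta) + C_0$.

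\textbf{Main obstacle.} The genuinely delicate point is Step~2: making precise that the Chow form varies \emph{algebraically} in $t$ with controlled coefficients, and that the archimedean and non-archimedean contributions to $h_{\bar{\mathcal{L}}}(Y_t)$ match the Mahler-type measure of $R_{Y_t}$ up to a uniform constant. The algebraicity is standard elimination theory over the affine base (the resultant/Chow ideal is finitely generated with generators defined over $\mathbb{K}$), so the morphism $S'\to\mathbb{P}^M$ is legitimate; the only care needed is that we work over $S^0_\mathcal{Y}$ where $\pi$ is flat, so that the Chow form has the expected bidegree on every fiber and no fiber jumps dimension. The height comparison between $h_{\bar{\mathcal{L}}}(Y_t)$ and $h(R_{Y_t})$ is exactly the content of the classical formula relating the Chow/Faltings height of a projective cycle to its Chow coordinates (Philippon's theory of the normalized height, or Bost--Gillet--Soulé), and I would simply cite it; the dependence of the error $C_0$ only on $N,q,\delta$ is what makes it uniform in $t$. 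Finally, to descend from the affine $S'$ back to all of $S^0_\mathcal{Y}$, cover $S^0_\mathcal{Y}$ by finitely many such affines $S'_1,\dots,S'_r$, obtain constants on each, and take the maximum.
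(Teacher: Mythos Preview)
Your proposal is correct and follows essentially the same approach as the paper: embed $\mathcal{Y}$ into $\mathbb{P}^N_S$ using a power of $\mathcal{L}$, identify $h_{\bar{\mathcal{L}}}(Y_t)$ with the height of the Chow form of $Y_t$, observe that the Chow-coefficient map $t\mapsto[a_0(t):\cdots:a_M(t)]$ is a regular morphism $S^0_\mathcal{Y}\to\mathbb{P}^M$, and conclude by functoriality of Weil heights. The paper is slightly more streamlined: it embeds globally into $\mathbb{P}^N_S$ without passing to an affine cover, and it absorbs your error term $C_0$ by simply changing the metrization on $\mathcal{L}$ so that $h_{\bar{\mathcal{L}}}(Y_t)$ equals the naive height of the Chow point exactly (citing \cite{Dalbec-Sturmfels,Hutz-good}); but these are cosmetic differences, not substantive ones.
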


\begin{proof}
Up to replacing $\mathcal{L}$ by a large multiple and up to changing the metrization on $\mathcal{L}$, we may assume that there is an embedding $\iota:\mathcal{Y}\hookrightarrow \mathbb{P}^N_S$ such that $\bar{\mathcal{L}}=\iota^*\bar{\mathcal{O}}_{\mathbb{P}^N}(1)$, so that 
\[h_{\bar{\mathcal{L}}}(Y_t)=h_{\mathbb{P}^N}(\iota_*(Y_t)), \quad \text{for all} \ t\in S^0_\mathcal{Y}(\bar{\mathbb{Q}}),\]
where $h_{\mathbb{P}^N}$ is the naive height function on $\mathbb{P}^N$.
This is where Chow forms are used, to give a different description of $h_{\mathbb{P}^N}(Y_t)$, which makes easier the expected inequality.

\medskip

 For any irreducible subvariety $Y\subset\mathbb{P}^N$ of dimension $q\geq1$, in the Grassmannian $G(N-k-1,N)$ of linear subspaces of dimension $N-k-1$ of $\mathbb{P}^N$, the set
\[\mathcal{Z}_Y:=\{V\in G(N-k-1,N)\, ; \ V \cap Y\neq\varnothing\}\]
is an irreducible hypersurface. Moreover, in the Pl\"ucker coordinates, we have $\mathcal{Z}_Y=\{\mathcal{R}_Y=0\}$, where $\mathcal{R}_Y$ is a homogeneous polynomial satisfying the following properties, see,~e.g.,~\cite{Dalbec-Sturmfels,Hutz-good}:
\begin{enumerate}
\item if $Y$ is defined over $\bar{\mathbb{Q}}$, then $\mathcal{R}_Y$ is also defined over $\bar{\mathbb{Q}}$,
\item $\deg(\mathcal{R}_Y)=\deg(Y)$,
\item $h_{\mathbb{P}^N}(Y)=h([a_0:\cdots:a_M])$, where $a_0,\ldots,a_M$ are the coefficients of $\mathcal{R}_Y$.
\end{enumerate}
Coming back to our situation, the above gives
\[h_{\bar{\mathcal{L}}}(Y_t)=h([a_0(t):\cdots:a_M(t)]), \quad t\in S^0_{\mathcal{Y}}(\bar{\mathbb{Q}}).\]
We now observe that the map $A:t\in S^0_\mathcal{Y}\mapsto [a_0(t):\cdots:a_M(t)]\in \mathbb{P}^M$ is regular and defined over $\bar{\mathbb{Q}}$, i.e. $A\in \bar{\mathbb{Q}}[S^0_\mathcal{Y}]$. This observation is true by construction of the Chow form, see, e.g., \cite[\S3]{Sturmfels}. The lemma follows.
\end{proof}

As an application of Call and Silverman's fundamental work~\cite{CS-height} and from Lemma~\ref{lm:Chow-form}, we prove Theorem~\ref{tm:ample-height}:
\begin{proof}[Proof of Theorem~\ref{tm:ample-height}]
The left hand side inequality is proved in Proposition~\ref{cor:use-Zhang}. We now prove the right hand side inequality. Fix any closed point $t\in S^0(\bar{\mathbb{Q}})$.
By Zhang's inequality~\eqref{Zhang-faible}, if $(x_j)$ is a generic sequence of closed points of $Y_t(\bar{\mathbb{Q}})$, 
we have
\[\widehat{h}_{f_t}(Y_t)\leq\liminf_{j\to\infty}\widehat{h}_{f_t}(x_j) \quad \text{and} \quad \frac{1}{q+1}\liminf_{j\to\infty}h_{\bar{\mathcal{L}}}(x_j)\leq h_{\bar{\mathcal{L}}}(Y_t).\]
We now apply \cite[Theorem~3.1]{CS-height}: there exists constants $C,C'>0$ depending only on $(\mathcal{X},f,\mathcal{L},\mathcal{Y})$ and on $\mathcal{M}$ such that
\[\left|\widehat{h}_{f_t}(x)-h_{\bar{\mathcal{L}}}(x)\right|\leq Ch_\mathcal{M}(t)+C',\]
for all $x\in Y_t(\bar{\mathbb{Q}})$. The above implies
\[\widehat{h}_{f_t}(Y_t)\leq(q+1)h_{\bar{\mathcal{L}}}(Y_t)+Ch_\mathcal{M}(t)+C'.\]
The conclusion follows from Lemma~\ref{lm:Chow-form} above.
\end{proof}

\section{Two dynamical equidistribution results}\label{section_equi}
The purpose of this section is to state two arithmetic equidistribution's theorems on quasi-projective varieties. Equivalent statements were already obtained by Yuan and Zhang using their theory of adelic line bundles on quasi-projective varieties (\cite[Theorems 6.2.3 and 6.3.5]{YZ-adelic}). Nevertheless, for the sake of completeness, we choose to follow the works of the first author \cite{Good-height} and K\"uhne \cite{Kuhne} which are both of a more local nature.

\subsection{Good height functions on quasi-projective varieties}\label{sec:goodheight}
Let $V$ be a smooth quasi-projective variety defined over a number field $\mathbb{K}$ and let $\mathbb{K}\hookrightarrow \C$ be an embedding and let $h:V(\bar{\mathbb{Q}})\to\mathbb{R}$ be a function. A sequence $(F_i)_i$ of Galois-invariant finite subsets of $V(\bar{\mathbb{Q}})$ is 
\begin{itemize}
\item \emph{generic} if for any subvariety $Z\subset V$ defined over $\mathbb{K}$, there is $i_0$ such that $F_i\cap Z=\varnothing$ for $i\geq i_0$, and
\item $h$-\emph{small} if $h(F_i):=\frac{1}{\# F_i}\sum_{x\in F_i}h(x)\to0$, as $i\to\infty$.
\end{itemize}

 As in \cite{Good-height},  we say $h$ is a \emph{good height}  at the complex place if for any $n\geq0$, there is a projective model $X_n$ of $V$ together with a birational morphism $\psi_n:X_n\to X_0$ which is an isomorphism above $V$ and  a big and nef $\mathbb{Q}$-line bundle $L_n$ on $X_n$ endowed with an adelic semi-positive continuous metrization $\bar{L}_n$,
such that the following holds :
\begin{enumerate}
\item[$(1)$] For any generic $h$-small sequence $(F_i)_i$ of Galois-invariant finite subsets of $V(\bar{\mathbb{Q}})$, the sequence
$\varepsilon_n(\{F_i\}_i):=\limsup_ih_{\bar{L}_n}(\psi_n^{-1}(F_i))-h_{\bar{L}_n}(X_n)$ satisfies $\varepsilon_n(\{F_i\})\to0$ as $n\to\infty$,
\item[$(2)$] the sequence of volumes $\mathrm{vol}(L_n)$ converges to $\mathrm{vol}(h)>0$ as $n\to\infty$ and if $c_1(\bar{L}_n)$ is the curvature form of $\bar{L}_n$ on $X_{n}(\C)$, then the sequence of finite measures $\left(\mathrm{vol}(L_n)^{-1}(\psi_n)_*c_1(\bar{L}_n)^k\right)_n$ converges weakly on $V(\C)$ to a probability measure $\mu$,
\item[$(3)$] If $k:=\dim V>1$, for any ample line bundle $M_0$ on $X_0$ and any adelic semi-positive continuous metrization $\bar{M}_0$ on $M_0$, there is a constant $C\geq0$ such that 
\[\left(\psi_n^*(\bar{M}_0)\right)^j\cdot \left(\bar{L}_n\right)^{k+1-j}\leq C,\]
for any $2\leq j\leq k+1$ and any $n\geq0$.
\end{enumerate}
We say that $\mathrm{vol}(h)$ is the \emph{volume} of $h$ and that $\mu$ is the measure \emph{induced by $h$} over the complex numbers.

%
%

~

The first author proved in \cite[Theorem~1]{Good-height} the next result:
 
\begin{theorem}\label{tm:equidistrib}
For any $h$-small and generic sequence $(F_m)_m$ of Galois-invariant finite subsets of $V(\bar{\mathbb{Q}})$, the probability measure $\mu_{F_m}$ on $V(\C)$ which is  equidistributed on $F_m$ converges to $\mu$ in the weak sense of measures, i.e. for any $\varphi\in\mathscr{C}^0_c(V(\C))$, we have
\[\lim_{m\to\infty}\frac{1}{\# F_m}\sum_{y\in F_m}\varphi(y)=\int_{V(\C)}\varphi\,\mu.\]
\end{theorem}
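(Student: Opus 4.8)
\textbf{Proof plan for Theorem~\ref{tm:equidistrib}.} The statement is precisely the content of \cite[Theorem~1]{Good-height}, so the plan is to verify that the abstract hypotheses of that equidistribution theorem are met in our situation, namely that $h$ is a good height in the sense of \S~\ref{sec:goodheight}, and then invoke it verbatim. More concretely, the first step would be to recall the setup: $V$ is a smooth quasi-projective variety over $\mathbb{K}$, the function $h$ is good at the complex place with respect to a sequence of projective models $(X_n,\psi_n,\bar L_n)$, and $(F_m)_m$ is a generic $h$-small sequence of Galois-invariant finite subsets of $V(\bar{\mathbb{Q}})$. The measure $\mu$ appearing in the conclusion is, by definition, the weak limit $\lim_n \mathrm{vol}(L_n)^{-1}(\psi_n)_*c_1(\bar L_n)^k$ provided by condition~$(2)$ of the definition of a good height.

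The heart of the argument is the following chain of inequalities, carried out at a fixed model level $n$ and then letting $n\to\infty$. For each $n$, Zhang's inequalities~\eqref{Zhang-forte}–\eqref{Zhang-faible} applied to $\bar L_n$ on $X_n$, together with condition~$(1)$ (which forces $\limsup_m h_{\bar L_n}(\psi_n^{-1}(F_m)) \le h_{\bar L_n}(X_n) + \varepsilon_n$ with $\varepsilon_n\to0$) and the assumed $h$-smallness, put us in position to apply the arithmetic equidistribution theorem of Yuan (in the nef/big adelic setting, via \cite{YZ-adelic}, or the version recorded in \cite{Good-height}) on the projective model $X_n$: any generic sequence of small points for $\bar L_n$ equidistributes towards $\mathrm{vol}(L_n)^{-1}c_1(\bar L_n)^k$ on $X_n(\C)$. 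Pushing forward by $\psi_n$, which is an isomorphism over $V$, one gets equidistribution of $(F_m)_m$ towards $\mathrm{vol}(L_n)^{-1}(\psi_n)_*c_1(\bar L_n)^k$ \emph{up to an error controlled by $\varepsilon_n$ and by the boundary contributions}; condition~$(3)$ is exactly what is needed to bound these boundary contributions uniformly in $n$ when $\dim V>1$. Testing against a fixed $\varphi\in\mathscr{C}^0_c(V(\C))$, one then first lets $m\to\infty$ and afterwards $n\to\infty$, using condition~$(2)$ to replace $\mathrm{vol}(L_n)^{-1}(\psi_n)_*c_1(\bar L_n)^k$ by $\mu$ in the limit. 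A standard diagonal/interchange-of-limits argument, legitimate precisely because the errors $\varepsilon_n$ and the bounds from~$(3)$ are independent of $m$, yields $\lim_{m\to\infty}\frac{1}{\#F_m}\sum_{y\in F_m}\varphi(y)=\int_{V(\C)}\varphi\,\mu$.

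The main obstacle in writing this out is not any single estimate but the careful bookkeeping of the two-parameter limit: one must ensure that the error terms coming from the difference between working on the open variety $V$ and on its compactifications $X_n$ genuinely go to zero after the limit in $m$ is taken, uniformly enough to survive the subsequent limit in $n$. This is where condition~$(3)$ on the uniform boundedness of the mixed arithmetic intersection numbers $(\psi_n^*\bar M_0)^j\cdot\bar L_n^{k+1-j}$ is indispensable, and where the hypothesis that $h$ is a \emph{good} height does all the work. Since all of this is already packaged in \cite[Theorem~1]{Good-height}, the cleanest route is simply to cite it once the goodness of $h$ has been established in the relevant application (which is done in Section~\ref{section_harvest}); there is no need to reproduce the proof here. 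I would therefore keep the argument in the paper to the one-line reduction to \cite{Good-height}, and reserve the verification of the good-height axioms for the place where the theorem is actually used.
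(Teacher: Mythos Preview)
Your proposal is correct and matches the paper's approach exactly: the paper does not prove this theorem but simply cites it as \cite[Theorem~1]{Good-height}, and you correctly identify this. Your additional sketch of the underlying mechanism (Yuan-type equidistribution on each model $X_n$, then passing to the limit in $n$ using conditions (1)--(3)) is accurate commentary but goes beyond what the paper itself records.
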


\subsection{A dynamical relative equidistribution Theorem}

When $\pi:\mathcal{A}\to S$ is a family of abelian varieties defined over a number field $\mathbb{K}$, where $S$ is a smooth projective variety, and $\mathcal{Y}\subset \mathcal{A}$ is a non-degenerate subvariety also defined over $\mathbb{K}$, K\"uhne~\cite{Kuhne} proposes and proves a \emph{Relative Equidistribution Conjecture} which, in turn, says that if there is a generic sequence $\{x_i\}_i$ in $\mathcal{Y}^0(\bar{\mathbb{Q}})$ with $\widehat{h}_{\mathcal{A}}(x_i)\to0$, then the measure $\mu_{x_i}$ on $\mathcal{Y}^{0}(\mathbb{C})$ equidistributed on the Galois orbit $\mathsf{O}(x_i)$ converges weakly on $\mathcal{Y}^{0}(\mathbb{C})$ to a given probability measure $\mu$.

~

We want here to prove the next dynamical generalization of K\" uhne's Relative Equidistribution Conjecture:

\begin{theorem}[Dynamical Relative Equidistribution]\label{tm-dyn-REC}
Let $(\mathcal{X},f,\mathcal{L})$ be a family of polarized endomorphisms parametrized by a smooth projective variety $S$ and let $\mathcal{Y}\subset\mathcal{X}$ be a family of subvarieties of $\mathcal{X}$. Assume $\mu_{f,\mathcal{Y}}$ is non zero on $S^0(\mathbb{C})$. 

Then for any $m\geq\dim S$ and any $\varphi\in\mathscr{C}^0_c((\mathcal{Y}^{[m]})^{0}(\mathbb{C}),\mathbb{R})$ and any generic and $\widehat{h}_{f^{[m]}}$-small sequence $\{F_i\}_i$ of Galois invariant subsets of $(\mathcal{Y}^{[m]})^{0}(\bar{\mathbb{Q}})$, we have
\[\lim_{i\to\infty}\frac{1}{\# F_i}\sum_{x\in F_i}\varphi(x)=\frac{1}{\mathrm{Vol}_{f}^{(m)}(\mathcal{Y})}\int_{(\mathcal{Y}^{[m]})^{0}(\mathbb{C})}\varphi\, \widehat{T}_{f^{[m]}}^{\dim \mathcal{Y}^{[m]}}.\]
\end{theorem}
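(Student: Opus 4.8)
The plan is to recognize the fiberwise canonical height $h:=\widehat{h}_{f^{[m]}}$, given by $h(x)=\sum_{j=1}^m\widehat{h}_{f_{\pi(x_j)}}(x_j)$ for $x=(x_1,\dots,x_m)$, as a \emph{good height} on the quasi-projective variety $V:=(\mathcal{Y}^{[m]})^0$ in the sense of \S\ref{sec:goodheight}, and then to invoke Theorem~\ref{tm:equidistrib}. First I would build, for each $n\geq0$ (possibly after replacing $V$ by a desingularization which is an isomorphism over its smooth locus), the projective model $X_n$ of $V$ as the strict transform of $\mathcal{Y}^{[m]}$ inside the variety $\mathcal{X}_n^{(m)}$ provided by Lemma~\ref{lm:goodmodel}, with $\psi_n:=\psi_n^{(m)}|_{X_n}$, and equip it with the $\mathbb{Q}$-line bundle $L_n:=\tfrac{1}{d^n}(F_n^{(m)})^*\mathcal{L}^{[m]}|_{X_n}$ carrying the adelic metric pulled back from the model (naive) metric on $\mathcal{L}^{[m]}$. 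By construction $h_{\bar L_n}(\psi_n^{-1}(x))=d^{-n}h_{\mathcal{L}^{[m]}}((f^{[m]})^n(x))$, which converges to $h(x)$, and the curvature form is $c_1(\bar L_n)=d^{-n}((f^{[m]})^n)^*\omega^{[m]}$, so that $(\psi_n)_*c_1(\bar L_n)^{\dim V}=\bigl(d^{-n}((f^{[m]})^n)^*\omega^{[m]}\bigr)^{\dim\mathcal{Y}^{[m]}}\wedge[\mathcal{Y}^{[m]}]$ converges weakly on $V(\mathbb{C})$ to $\widehat{T}_{f^{[m]}}^{\dim\mathcal{Y}^{[m]}}\wedge[\mathcal{Y}^{[m]}]$ since $\widehat{T}_{f^{[m]}}$ has continuous local potentials. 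Bigness and nefness of $L_n$, and the uniform bound on mixed intersection numbers (condition~(3) of the definition of good height), follow from the mass estimates of \cite{GV_Northcott} already used in Lemma~\ref{cor:GV}.

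Next I would check condition~(2): by Lemma~\ref{cor:GV}, $\mathrm{vol}(L_n)=(L_n^{\dim V})\to\mathrm{Vol}_f^{(m)}(\mathcal{Y})$, which is positive by Proposition~\ref{prop-volumes} because $\mu_{f,\mathcal{Y}}\neq0$; hence the normalized measures $\mathrm{vol}(L_n)^{-1}(\psi_n)_*c_1(\bar L_n)^{\dim V}$ converge to the probability measure $\mu:=\mathrm{Vol}_f^{(m)}(\mathcal{Y})^{-1}\,\widehat{T}_{f^{[m]}}^{\dim\mathcal{Y}^{[m]}}$ on $V(\mathbb{C})$, which is exactly the measure appearing in the statement.

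The crux is condition~(1): for a generic, $h$-small sequence $(F_i)_i$ in $V(\bar{\mathbb{Q}})$ one must show that $\varepsilon_n(\{F_i\}):=\limsup_i h_{\bar L_n}(\psi_n^{-1}(F_i))-h_{\bar L_n}(X_n)\to0$ as $n\to\infty$. Iterating the Call--Silverman estimate \cite{CS-height} on the $m$-fold fiber product gives $|h(x)-h_{\bar L_n}(\psi_n^{-1}(x))|\leq C d^{-n}\bigl(h_{S,\mathcal{M}}(\pi_{[m]}(x))+1\bigr)$; since $(F_i)$ is generic it eventually lies in the Zariski-open subset of $(\mathcal{Y}^{[m]})^0$ of Theorem~\ref{tm:Gao-Habegger}, where $h_{S,\mathcal{M}}(\pi_{[m]}(x))\leq(\widehat{\mathcal{H}}^{(m)}_{f,\mathcal{M}}(\mathcal{Y})-\varepsilon)^{-1}h(x)+C$, and together with $h$-smallness ($h(F_i)\to0$) this bounds $h_{S,\mathcal{M}}(\pi_{[m]}(F_i))$ uniformly in $i$; hence $\limsup_i h_{\bar L_n}(\psi_n^{-1}(F_i))=O(d^{-n})$. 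On the other hand $h_{\bar L_n}(X_n)\geq0$, being $d^{-n(\dim V+1)}$ times the non-negative naive height of the effective cycle $(f^{[m]})^n_*\mathcal{Y}^{[m]}$, while for a generic sequence Zhang's inequalities yield $\liminf_i h_{\bar L_n}(\psi_n^{-1}(F_i))\geq h_{\bar L_n}(X_n)$; so $0\leq\varepsilon_n=O(d^{-n})\to0$. With all three conditions verified, Theorem~\ref{tm:equidistrib} applied to $(V,h)$ gives, for every $\varphi\in\mathscr{C}^0_c((\mathcal{Y}^{[m]})^{0}(\mathbb{C}),\mathbb{R})$, the convergence $\tfrac{1}{\#F_i}\sum_{x\in F_i}\varphi(x)\to\int_V\varphi\,\mu=\mathrm{Vol}_f^{(m)}(\mathcal{Y})^{-1}\int_{(\mathcal{Y}^{[m]})^{0}(\mathbb{C})}\varphi\,\widehat{T}_{f^{[m]}}^{\dim\mathcal{Y}^{[m]}}$, which is the asserted formula. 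The main obstacle is precisely this uniform-in-$n$ control in condition~(1), which rests on feeding the Gao--Habegger-type inequality of Theorem~\ref{tm:Gao-Habegger} into the Call--Silverman comparison; the remaining verifications are bookkeeping with estimates already recorded in Section~\ref{height_and_current}.
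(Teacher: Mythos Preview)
Your proposal is essentially correct and follows the same strategy as the paper's proof: verify that $\widehat{h}_{f^{[m]}}$ is a good height on $(\mathcal{Y}^{[m]})^0$ by building the models $X_n$ from Lemma~\ref{lm:goodmodel}, use the Call--Silverman estimate together with Theorem~\ref{tm:Gao-Habegger} to bound $h_{S,\mathcal{M}}(\pi_{[m]}(F_i))$ and hence obtain $\varepsilon_n=O(d^{-n})$, and identify the limit measure via Lemma~\ref{cor:GV}; then invoke Theorem~\ref{tm:equidistrib}.

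One technical point where the paper differs: it takes $\bar{\mathcal{L}}_0:=\bar{\mathcal{L}}^{[m]}+\pi_{[m]}^*\bar{\mathcal{M}}$ rather than $\bar{\mathcal{L}}^{[m]}$ alone, because $\mathcal{L}$ is only nef and \emph{relatively} ample, so $\mathcal{L}^{[m]}$ need not be ample on $\mathcal{Y}^{[m]}$ and your $L_n$ need not be big for small $n$ (though it is for $n$ large since $\mathrm{vol}(L_n)\to\mathrm{Vol}_f^{(m)}(\mathcal{Y})>0$). The twist by $\pi_{[m]}^*\mathcal{M}$ also streamlines the verification of condition~(3): the paper bounds $\bar{\mathcal{L}}_{n+1}-\phi_{n+1}^*\bar{\mathcal{L}}_n$ by $d^{-(n+1)}\pi_{n+1}^*(\bar D+\bar{\mathcal{M}})$ for an effective $\bar D$ on $S$, and telescopes to get $\bar{\mathcal{L}}_n\leq\psi_n^*\bigl(\bar{\mathcal{L}}_0+\tfrac{d}{d-1}\pi_{[m]}^*(\bar D+\bar{\mathcal{M}})\bigr)$, which immediately yields the uniform arithmetic intersection bound. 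Your appeal to ``mass estimates of \cite{GV_Northcott}'' for condition~(3) is too brief---this is an \emph{arithmetic} intersection bound, not just a geometric one---but the telescoping argument works equally well for your $L_n$. These are cosmetic adjustments; the architecture of your proof is the paper's.
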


As mentioned above, we could also have used \cite[Theorem~6.2.3]{YZ-adelic}. To do that, we need to check that $\mathcal{Y}^{[m]}$ is non-degenerate which, in turn, is equivalent to the non-vanishing of the bifurcation measure (\cite[Lemma 5.4.4]{YZ-adelic}).

\begin{proof}
We fix an archimedean place of $\mathbb{K}$ and a corresponding embedding $\mathbb{K}\hookrightarrow \mathbb{C}$.
By Theorem~\ref{tm:equidistrib}, all there is to prove is that $\widehat{h}_{f^{[m]}}$ is a good height function on $(\mathcal{Y}^{[m]})^0$ and to show its induced measure on $(\mathcal{Y}^{[m]})^0(\mathbb{C})$ is indeed $\widehat{T}_{f^{[m]}}^{\dim \mathcal{Y}
^{[m]}}$.

Let $\mathcal{M}$ be an ample $\mathbb{Q}$-line bundle on $S$ of volume $1$ and let $\mathcal{L}_0:=\mathcal{L}^{[m]}+\pi_{[m]}^*\mathcal{M}$. The line bundle $\mathcal{L}_0$ is ample on $\mathcal{X}^{[m]}$.
Recall Call and Silverman's result~\cite[Theorem~3.1]{CS-height} guarantees the existence of $C\geq1$ such that
\[\left|\widehat{h}_f(x)-h_{\mathcal{X},\mathcal{L}}(x)\right| \leq C\left(h_S(\pi(x))+1\right),\]
for all $x\in \mathcal{X}^0(\bar{\mathbb{Q}})$.
As in the proof of Theorem~\ref{tm:Gao-Habegger}, using that $\widehat{h}_f\circ f=d\cdot\widehat{h}_f$, we deduce that up to changing the constant $C$, we have
\[\left|\widehat{h}_{f^{[m]}}(x)-\frac{1}{d^n}h_{\mathcal{X}^{[m]},\mathcal{L}^{[m]}}((f^{[m]})^n(x))\right| \leq \frac{C}{d^n}\left(h_S(\pi_{[m]}(x))+1\right),\]
for any $x\in (\mathcal{X}^{[m]})^0(\bar{\mathbb{Q}})$ and any $n\geq0$. As $((f^{[m]})^n)^*\mathcal{L}_0=d^n\mathcal{L}^{[m]}+\pi_{[m]}^*\mathcal{M}$, this implies
\[\left|\widehat{h}_{f^{[m]}}(x)-\frac{1}{d^n}h_{\mathcal{X}^{[m]},\mathcal{L}_0}((f^{[m]})^n(x))\right| \leq \frac{C}{d^n}\left(h_S(\pi_{[m]}(x))+2\right),\]
for any $x\in (\mathcal{X}^{[m]})^0(\bar{\mathbb{Q}})$ and any $n\geq0$. 
We now use Theorem~\ref{tm:Gao-Habegger}: there is a non-empty Zariski open set $\mathcal{V}\subset(\mathcal{Y}^{[m]})^0$ such that for any $x\in\mathcal{V}(\bar{\mathbb{Q}})$, we have
\[h_S(\pi_{[m]}(x))\leq \frac{2}{\widehat{\mathcal{H}}^{(m)}_{f,\mathcal{M}}(\mathcal{Y})}\left(\widehat{h}_{f^{[m]}}(x)+1\right).\]
We thus have a constant $C_2>0$ such that for any $x\in \mathcal{V}(\bar{\mathbb{Q}})$ and any $n\geq0$,
\begin{align}
\left|\widehat{h}_{f^{[m]}}(x)-\frac{1}{d^n}h_{\mathcal{X}^{[m]},\mathcal{L}^{[m]}}((f^{[m]})^n(x))\right| \leq \frac{C_2}{d^n}\left(\widehat{h}_{f^{[m]}}(x)+1\right).\label{eq:assumption1-good}
\end{align}
We now use Lemma~\ref{lm:goodmodel}: let $F_n,\psi_n:\mathcal{X}_n\to \mathcal{X}^{[m]}$ be such that $F_n=(f^{[m]})^n\circ\psi_n$ on $\psi_n^{-1}(\mathcal{X}^0)$ with $\psi_n$ birational. We also let $\pi_n:\mathcal{X}_n\to S$ be the structure morphism induced by $\pi_{[m]}$, i.e. such that $\pi_n=\pi_{[m]}\circ\psi_n$. Choose a model metric $\bar{\mathcal{M}}$ on $\mathcal{M}$ with $h_{\bar{\mathcal{M}}}\geq0$ on $S(\bar{\mathbb{Q}})$. We endow $\mathcal{L}$ with a metrization $\bar{\mathcal{L}}$ coming from the embedding
$\iota:\mathcal{X}\hookrightarrow \mathbb{P}^N\times S$ for which (a multiple) of $\mathcal{L}$ is $\iota^*\bar{\mathcal{O}}_{\mathbb{P}^N}(1)$, where $\bar{\mathcal{O}}_{\mathbb{P}^N}(1)$ is the naive metrization. Define
\begin{center}
$\bar{\mathcal{L}}_0:=\bar{\mathcal{L}}^{[m]}+(\pi_{[m]})^*\bar{\mathcal{M}}$.
\end{center}
We then let $\mathcal{Y}_n:=\psi^{-1}_n(\mathcal{Y}^{[m]})$ and
\[\bar{\mathcal{L}}_n:=\frac{1}{d^n}\left(F_n^*\bar{\mathcal{L}}_0
\right)|_{\mathcal{Y}_n}=\frac{1}{d^n}\left(F_n^*\bar{\mathcal{L}}^{[m]}\right)|_{\mathcal{Y}_n}+\frac{1}{d^n}(\pi_n^*\bar{\mathcal{M}})|_{\mathcal{Y}_n}, \quad n\geq0.\]
By construction the map $F_n$ is a generically finite morphism. Since $\bar{\mathcal{L}}_0$ is an adelic semi-positive continuous ample line bundle, $\bar{\mathcal{L}}_n$ is thus an adelic semi-positive continuous big and nef $\mathbb{Q}$-line bundle on $\mathcal{Y}_n$. Moreover, by construction, we have
\[h_{\mathcal{Y}_n,\bar{\mathcal{L}}_n}(\psi_n^{-1}(x))=\frac{1}{d^n}h_{\mathcal{X}^{[m]},\mathcal{L}_0}((f^{[m]})^n(x)),\]
for any $n\geq0$ and any $x\in(\mathcal{Y}^{[m]})^0(\bar{\mathbb{Q}})$. Note also that, by construction, $h_{\mathcal{Y}_n,\bar{\mathcal{L}}_n}\geq0$ on $\mathcal{Y}_n(\bar{\mathbb{Q}})$, so that \cite[Lemma~6]{Good-height} gives
\[h_{\bar{\mathcal{L}}_n}(\mathcal{Y}_n)\geq0.\]
We combine this inequality with the inequality~\eqref{eq:assumption1-good}: this implies that for any generic sequence $\{F_i\}_i$ of Galois invariant subsets of $(\mathcal{Y}^{[m]})^0(\bar{\mathbb{Q}})$ with $\widehat{h}_{f^{[m]}}(F_i)\to0$, we have
\[\limsup_{i\to\infty}\left(h_{\mathcal{Y}_n,\bar{\mathcal{L}}_n}(\psi_n^{-1}(F_i))-h_{\mathcal{Y}_n,\bar{\mathcal{L}}_n}(\mathcal{Y}_n)\right)\leq\limsup_{i\to\infty}h_{\mathcal{Y}_n,\bar{\mathcal{L}}_n}(\psi_n^{-1}(F_i))\leq 2\frac{C_2}{d^n}.\]
We now let $\omega$ and $\rho$ be the respective curvature forms $\omega:=c_1(\bar{\mathcal{L}}|_{\mathcal{Y}})$ and $\rho=c_1(\bar{\mathcal{M}})$ on $\mathcal{Y}(\mathbb{C})$ and $S(\mathbb{C})$ respectively. Then $\omega$ is a smooth form on $\mathcal{Y}(\mathbb{C})$ representing $c_1(\mathcal{L}|_{\mathcal{Y}})$, and if we denote as before $p_j:\mathcal{Y}^{[m]}\to\mathcal{Y}$ the projection onto the $j$-th factor of the fiber-product, the curvature form of $\bar{\mathcal{L}}_n$ satisfies as forms on $(\mathcal{Y}^{[m]})(\mathbb{C})$:
\begin{align*}
c_1(\bar{\mathcal{L}}_n)^{\dim \mathcal{Y}^{[m]}} & =d^{-n\dim\mathcal{Y}^{[m]}}\psi_n^*\left(((f^{[m]})^n)^*\left(p_1^*(\omega)+\cdots+p_m^*(\omega)+\pi_{[m]}^*(\rho)\right)^{\dim\mathcal{Y}^{[m]}}\right)
\end{align*}
so that, if $\omega_{m}:=p_1^*(\omega)+\cdots+p_m^*(\omega)$, we have as measures on $(\mathcal{Y}^{[m]})(\mathbb{C})$:
\[(\psi_n)_*\left(c_1(\bar{\mathcal{L}}_n)^{\dim\mathcal{Y}^{[m]}}\right)=d^{-n\dim\mathcal{Y}^{[m]}}\left(((f^{[m]})^n)^*\left(\omega_{m}+\pi_{[m]}^*(\rho)\right)^{\dim\mathcal{Y}^{[m]}}\right).\]
Now, as $d^{-n}((f^{[m]})^n)^*\omega_{m}$ converges to $\widehat{T}_{f^{[m]}}$ with a uniform convergence of local potentials and as we have $((f^{[m]})^n)^*\pi_{[m]}^*(\rho)=\pi_{[m]}^*(\rho)$ by construction, the following holds in the weak sense of measures on $(\mathcal{Y}^{[m]})(\mathbb{C})$:
\[d^{-n\dim\mathcal{Y}^{[m]}}\left(((f^{[m]})^n)^*\left(\omega_{m}+\pi_{[m]}^*(\rho)\right)^{\dim\mathcal{Y}^{[m]}}\right)\to\widehat{T}_{f^{[m]}}^{\dim\mathcal{Y}^{[m]}}.\]
Finally,  the volume of $\mathcal{L}_n$ can be computed as
\[\mathrm{deg}_{\mathcal{Y}_n}(\mathcal{L}_n)=\mathrm{Vol}_{f}^{(m)}(\mathcal{Y})+O\left(\frac{1}{d^n}\right).\]
Indeed, by definition of $\mathcal{L}_0$ and by Lemma~\ref{lm:GV}, we find
\begin{align*}
\mathrm{deg}_{\mathcal{Y}_n}(\mathcal{L}_n) &  =\left(c_1(\mathcal{L}_n)^{\dim\mathcal{Y}^{[m]}}\cdot\{\mathcal{Y}_n\}\right)=\left(d^{-n\dim \mathcal{Y}^{[m]}}(F_n)^*c_1(\mathcal{L}_0)^{\dim\mathcal{Y}^{[m]}}\cdot \{\mathcal{Y}^{[m]}\}\right)\\
 &  = \left(d^{-n\dim \mathcal{Y}^{[m]}}(F_n)^*c_1(\mathcal{L}^{[m]})^{\dim\mathcal{Y}^{[m]}}\cdot \{\mathcal{Y}^{[m]}\}\right)+ O\left(\frac{1}{d^n}\right)\\
& = \mathrm{Vol}^{(m)}_{f}(\mathcal{Y})+ O\left(\frac{1}{d^n}\right).
\end{align*}
Our assumption that $\mathrm{Vol}^{(m)}_{f}(\mathcal{Y})>0$ thus implies $\lim\limits_{n\to\infty}\deg_{\mathcal{Y}_n}(\mathcal{L}_n)=\mathrm{Vol}^{(m)}_{f}(\mathcal{Y})>0$.

\medskip

 To prove that $\widehat{h}_{f^{[m]}}$ is a good height function on $(\mathcal{Y}^{[m]})^0(\bar{\mathbb{Q}})$, the last thing to check is condition (3) introduced in Section~\ref{sec:goodheight}.  Let $\pi_n:\mathcal{Y}_n\to S$ be the morphism induced by $\pi_{[m]}:\mathcal{Y}^{[m]}\to S$. 
 
Let $\bar{M}_0$ be an ample adelic semi-positive continuous line bundle on $\mathcal{Y}_0$. Then $\psi_n^*\bar{M}_0$ is a big and nef $\mathbb{Q}$-line bundle on $\mathcal{Y}_n$ and $\psi_n^*\bar{M}_0$ is a semi-positive adelically metrized line bundle on $\mathcal{Y}_n$. Let $\bar{\mathcal{E}}:=d^{-1}F_1^*\bar{\mathcal{L}}-\psi_1^*\bar{\mathcal{L}}$. Then, there is $\bar{D}$ effective on $S$ such that $-d^{-1}\pi_1^*\bar{D}\leq \bar{\mathcal{E}}\leq d^{-1}\pi_1^*\bar{D}$.

\medskip

By construction, we can assume there is a birational morphism $\phi_n:\mathcal{Y}_{n+1}\to\mathcal{Y}_n$ with $\pi_n\circ\phi_{n+1}=\pi_{n+1}$ and that $\psi_{n+1}=\psi_n\circ \phi_{n+1}$. Without loss of generality, we can also assume $\psi_n=\phi_1\circ\cdots\circ\phi_n$ and there is a morphism $g_n:\mathcal{Y}_{n+1}\to\mathcal{Y}_1$ such that 
\begin{center}
$\phi_{1}\circ g_n= F_n\circ \phi_{n+1}$ and $F_1\circ g_n=F_{n+1}$ on $\mathcal{Y}_{n+1}$.
\end{center} 
We have $d^{-n}g_n^*(\bar{\mathcal{E}})\leq  d^{-(n+1)}g_n^*\pi_{1}^*\bar{D}=d^{-(n+1)}\pi_{n+1}^*\bar{D}$.
In particular, one sees that
 \begin{align*}
\bar{\mathcal{L}}_{n+1}-\phi_{n+1}^*\bar{\mathcal{L}}_n= & \, \frac{1}{d^n}g_{n}^*\left(\bar{\mathcal{E}}\right)+\left(\frac{1}{d^{n+1}}-\frac{1}{d^n}\right)\pi_{n+1}^*(\bar{\mathcal{M}})\\
& \leq \frac{1}{d^{n+1}}\pi_{n+1}^*(\bar{D}+\bar{\mathcal{M}}).
\end{align*}
Hence $\bar{\mathcal{L}}_{n+1}\leq \phi_{n+1}^*(\bar{\mathcal{L}}_n+d^{-(n+1)}\pi_n^*(\bar{D}+\bar{\mathcal{M}}))$.
An immediate induction gives
 \begin{align}
\bar{\mathcal{L}}_{n}\leq \psi_{n}^*\left(\bar{\mathcal{L}}_0+\frac{d}{d-1}\pi_{[m]}^*(\bar{D}+\bar{\mathcal{M}})\right).\label{ineg-good-height}
\end{align}
Let $P:=\dim\mathcal{Y}^{[m]}$ and pick $0\leq \ell\leq P+1$. For all $n\geq0$, \eqref{ineg-good-height} gives
\begin{align*}
\left(\left.\left(\psi_n^*(\bar{M}_0)\right)^\ell\cdot\left(\bar{\mathcal{L}}_n\right)^{P+1-\ell}\right|\mathcal{Y}_n\right) & \leq \left(\left.\left(\psi_n^*(\bar{M}_0)\right)^\ell\cdot\psi_{n}^*\left(\bar{\mathcal{L}}_0+\frac{d}{d-1}\pi_{[m]}^*(\bar{D}+\bar{\mathcal{M}})\right)^{P+1-\ell}\right|\mathcal{Y}_n\right)\\
& \leq \left(\left.\left(\bar{M}_0\right)^\ell\cdot\left(\bar{\mathcal{L}}_0+\frac{d}{d-1}\pi_{[m]}^*(\bar{D}+\bar{\mathcal{M}})\right)^{P+1-\ell}\right|\mathcal{Y}^{[m]}\right),
\end{align*}
where we used the projection formula and that $(\psi_n)_*(\mathcal{Y}_n)=\mathcal{Y}^{[m]}$. This proves hypothesis (3) of section~\ref{sec:goodheight} is satisfied as the last quantity is independent of $n$ and the proof of Theorem~\ref{tm:equidistrib} is complete.
\end{proof}

\subsection{Parametric equidistribution}
For any finite Galois invariant subset $F\subset S^0(\bar{\mathbb{Q}})$, we define $h_{f,\mathcal{Y}}(F)$ as
\[h_{f,\mathcal{Y}}(F):=\frac{1}{\# F}\sum_{t\in F}\widehat{h}_{f_t}(Y_t).\]
As usual, we say a sequence $F_i$ of finite Galois invariant subsets of $S^0(\bar{\mathbb{Q}})$ is \emph{$h_{f,\mathcal{Y}}$-small} if $h_{f,\mathcal{Y}}(F_i)\to0$.

\begin{corollary}\label{tm:small-distrib}
Let $(\mathcal{X},f,\mathcal{L},\mathcal{Y})$ be a dynamical pair parametrized by a smooth projective variety $S$ with regular part $S^0$, all defined over a number field $\mathbb{K}$. Assume $\mathrm{Vol}_f^{(\dim S)}(\mathcal{Y})>0$. Assume also there is a generic and $h_{f,\mathcal{Y}}$-small sequence $\{F_i\}_i$ of finite Galois invariant subsets of $S^0(\bar{\mathbb{Q}})$. Then for any $\varphi\in\mathscr{C}^0_c(S^{0}(\mathbb{C}),\mathbb{R})$, we have
\[\lim_{i\to\infty}\frac{1}{\# F_i}\sum_{t\in F_i}\varphi(t)=\int_{S^{0}(\mathbb{C})}\varphi \, \frac{\mu_{f,\mathcal{Y}}}{\mathrm{Vol}_f(\mathcal{Y})}.\]
\end{corollary}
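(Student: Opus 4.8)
The plan is to set $m:=\dim S$ and to deduce the statement from the Dynamical Relative Equidistribution Theorem~\ref{tm-dyn-REC} on $(\mathcal{Y}^{[m]})^{0}$, by lifting the sequence $\{F_i\}_i$ from $S^{0}$ to $(\mathcal{Y}^{[m]})^{0}$ in a Galois‑equivariant way and pushing forward along $\pi_{[m]}$. Since $\dim\mathcal{Y}^{[m]}=m\dim Y_\eta+\dim S=\dim S\,(\dim Y_\eta+1)$, one has $\mu_{f,\mathcal{Y}}=T^{(\dim S)}_{f,\mathcal{Y}}=(\pi_{[m]})_{*}\big(\widehat{T}_{f^{[m]}}^{\dim\mathcal{Y}^{[m]}}\wedge[\mathcal{Y}^{[m]}]\big)$ and $\mathrm{Vol}_{f}(\mathcal{Y})=\mathrm{Vol}^{(m)}_{f}(\mathcal{Y})$, which is positive by hypothesis (equivalently $\mu_{f,\mathcal{Y}}\neq0$, by Proposition~\ref{prop-volumes}), so Theorem~\ref{tm-dyn-REC} applies. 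As $\{F_i\}_i$ is generic, discarding finitely many terms we may assume $F_i\subset S^{0}_{\mathcal{Y}}(\bar{\mathbb{Q}})$, so that for $t\in F_i$ the fibre $Y_t^{m}=(\mathcal{Y}^{[m]})_{t}$ is a genuine subvariety of $X_t^{m}$ defined over the residue field $\kappa(t)$, of dimension $mq$ with $q:=\dim Y_\eta$. The fibral input comes from Zhang's inequalities over the number fields $\kappa(t)$: combining the sharp form~\eqref{Zhang-forte} applied to $(Y_t^{m},\bar{\mathcal{L}}^{[m]}_t)$ with Lemma~\ref{lm:Zhang-product} (which gives $h_{\bar{\mathcal{L}}^{[m]}_t}(Y_t^{m})=m\,\widehat{h}_{f_t}(Y_t)$), one gets, for every $\delta>0$ and every proper subvariety $Z\subsetneq Y_t^{m}$, a closed point $\xi\in(Y_t^{m}\setminus Z)(\bar{\mathbb{Q}})$ with $\widehat{h}_{f_t^{[m]}}(\xi)\le (mq+1)\,m\,\widehat{h}_{f_t}(Y_t)+\delta$; iterating, while avoiding the finitely many points already chosen, produces an infinite supply of distinct small closed points in each fibre.

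Next I assemble the lift. Enumerating the countably many $\mathbb{K}$‑subvarieties $W_1,W_2,\dots$ of $\mathcal{Y}^{[m]}$, for the $i$‑th term I work Galois‑orbit by Galois‑orbit in $F_i$ and choose in each fibre $Y_t^{m}$ a $\mathrm{Gal}(\bar{\mathbb{Q}}/\kappa(t))$‑stable finite set $E_t$ of small points, of individual heights $\le (mq+1)\,m\,\widehat{h}_{f_t}(Y_t)+\delta_i$ with $\delta_i\to0$, disjoint from $W_1\cup\dots\cup W_i$, and—crucially—with $\#E_t=D_i$ a constant independent of $t\in F_i$: using the infinite supply of small Galois orbits in each $Y_t^{m}$ together with a numerical‑semigroup argument, one takes $D_i$ to be a sufficiently large common multiple of the greatest common divisors of the available orbit degrees and fills each fibre up to total degree exactly $D_i$. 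Setting $G_i:=\bigcup_{t\in F_i}E_t$, chosen Galois‑equivariantly over $F_i$, the set $G_i$ is a Galois‑invariant finite subset of $(\mathcal{Y}^{[m]})^{0}(\bar{\mathbb{Q}})$ with $\pi_{[m]}(G_i)=F_i$ and every fibre of $G_i\to F_i$ of cardinality $D_i$; the sequence $\{G_i\}_i$ is generic (by the exhaustion $W_1,W_2,\dots$) and $\widehat{h}_{f^{[m]}}$‑small because $\tfrac{1}{\#G_i}\sum_{\xi\in G_i}\widehat{h}_{f^{[m]}}(\xi)\le (mq+1)\,m\,h_{f,\mathcal{Y}}(F_i)+\delta_i\to0$.

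To conclude, fix $\varphi\in\mathscr{C}^{0}_{c}(S^{0}(\mathbb{C}),\mathbb{R})$; using genericity of $\{F_i\}_i$ and, for the absence of escape of mass, the lower bound $\widehat{h}_{f_t}(Y_t)\ge C_1 h(t)+C_3$ of Theorem~\ref{tm:ample-height}, one reduces to the case where $\mathrm{supp}\,\varphi$ is a compact subset of $S^{0}_{\mathcal{Y}}(\mathbb{C})$. Since $\pi_{[m]}$ is proper over $S^{0}_{\mathcal{Y}}$, the function $\pi_{[m]}^{*}\varphi$ lies in $\mathscr{C}^{0}_{c}((\mathcal{Y}^{[m]})^{0}(\mathbb{C}),\mathbb{R})$. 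Applying Theorem~\ref{tm-dyn-REC} to $\{G_i\}_i$ with test function $\pi_{[m]}^{*}\varphi$ and pushing forward by $\pi_{[m]}$—which, every fibre of $G_i\to F_i$ having the same cardinality $D_i$, carries the uniform probability measure on $G_i$ to $\mu_{F_i}$—gives
\begin{align*}
\lim_{i\to\infty}\frac{1}{\#F_i}\sum_{t\in F_i}\varphi(t)
&=\frac{1}{\mathrm{Vol}_{f}(\mathcal{Y})}\int_{(\mathcal{Y}^{[m]})^{0}(\mathbb{C})}\pi_{[m]}^{*}\varphi\;\widehat{T}_{f^{[m]}}^{\dim\mathcal{Y}^{[m]}}\\
&=\frac{1}{\mathrm{Vol}_{f}(\mathcal{Y})}\int_{S^{0}(\mathbb{C})}\varphi\;\mu_{f,\mathcal{Y}},
\end{align*}
which is the desired equidistribution.

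The main obstacle is the construction of the second paragraph: making the lift $\{G_i\}_i$ simultaneously generic, $\widehat{h}_{f^{[m]}}$‑small, and of constant fibral degree $D_i$ over $F_i$, the last being exactly what forces the push‑forward of the equidistribution on $(\mathcal{Y}^{[m]})^{0}$ to land on $\mu_{F_i}$ and not on a weighted variant. An alternative that sidesteps this is to build directly a good height on $S^{0}$, via the relative Chow form (Deligne pairing) of $\mathcal{Y}^{[m]}\to S$ endowed with the canonical metric of $f^{[m]}$, whose induced complex‑place measure is $\mu_{f,\mathcal{Y}}/\mathrm{Vol}_{f}(\mathcal{Y})$—the identification of its top self‑intersection with $\mu_{f,\mathcal{Y}}$ being, in the hypersurface case $\dim Y_\eta=\dim X_\eta-1$ relevant to $\mathcal{Y}=\mathrm{Crit}(f)$, precisely Proposition~\ref{prop-higher-currents}(3)—and then to invoke Theorem~\ref{tm:equidistrib}, or else to use the Yuan--Zhang adelic line bundle formalism.
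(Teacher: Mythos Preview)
Your approach is essentially the same as the paper's: lift $\{F_i\}$ to a generic $\widehat{h}_{f^{[m]}}$-small sequence in $(\mathcal{Y}^{[m]})^{0}(\bar{\mathbb{Q}})$ via Zhang's inequalities and Lemma~\ref{lm:Zhang-product}, apply Theorem~\ref{tm-dyn-REC} with the test function $\pi_{[m]}^{*}\varphi$, and push forward. The paper carries out exactly this scheme.

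The one point of divergence is how the push-forward of the empirical measure is controlled. You insist on constant fibral cardinality $D_i$ and invoke a numerical-semigroup argument to achieve it. The paper sidesteps this: for each $t\in F_i$ it picks a \emph{single} closed point $x_{j(i)}^{(t)}\in Y_t^{m}$ (rather than a set of prescribed degree), sets $Z_i=\bigcup_{t\in F_i}\mathsf{O}(x_{j(i)}^{(t)})$, and observes that the double sum
\[
\frac{1}{\#F_i}\sum_{t\in F_i}\frac{1}{\#\mathsf{O}(x_{j(i)}^{(t)})}\sum_{x\in\mathsf{O}(x_{j(i)}^{(t)})}\varphi(\pi_{[m]}(x))
\]
collapses to $\tfrac{1}{\#Z_i}\sum_{x\in Z_i}\varphi(\pi_{[m]}(x))$. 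Your route is more explicit about why the uniform measure on the lift pushes forward to $\mu_{F_i}$; the paper's is shorter but relies on this reorganization of Galois orbits. Both lead to the same conclusion.

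Two small remarks. First, the detour through Theorem~\ref{tm:ample-height} to rule out escape of mass is unnecessary: since $\varphi$ has compact support in $S^{0}(\mathbb{C})$ and $\{F_i\}$ is generic, you may from the start replace $S^{0}$ by $S^{0}_{\mathcal{Y}}$ and use properness of $\pi_{[m]}$ there, as you in fact do. Second, your closing paragraph's alternative via a good height on $S^{0}$ (or the Yuan--Zhang formalism) is a legitimate independent route, and the paper itself flags the latter as an option.
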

Observe that this result corresponds to \cite[Theorem 6.3.5]{YZ-adelic}. Our approach is based on Zhang inequalities over number fields and Theorem~\ref{tm-dyn-REC}, whereas Yuan and Zhang rely on properties of metrics on the Deligne pairing on adelic line bundles.

\begin{proof}
Fix $m\geq\dim S$ and fix $i$ and let $t\in F_i$. Zhang's inequalities~\eqref{Zhang-faible} imply there exists a generic sequence $\{x_j^{(t)}\}_j$ of $Y_{t}^{[m]}(\bar{\mathbb{Q}})$ such that we have 
\[\limsup_{j\to\infty}\widehat{h}_{f^{[m]}}(x_j^{(t)})\leq (q+1)\widehat{h}_{f^{[m]}_{t}}(Y^{[m]}_{t})~.\]
For any $i,j$, we define a finite Galois invariant subset $Z_j^i$ of $(\mathcal{Y}^{[m]})^0(\bar{\mathbb{Q}})$ by letting
\[Z_j^{i}:=\bigcup_{t\in F_i}\mathsf{O}(x_j^{(t)}).\]
By the above, and by Lemma~\ref{lm:Zhang-product}, we deduce that
\[\liminf_{j\to\infty}\widehat{h}_{f^{[m]}}(Z_j^i)\leq (q+1)h_{f^{[m]},\mathcal{Y}^{[m]}}(F_i)= m(q+1)\cdot h_{f,\mathcal{Y}}(F_i).\]
Take $\varepsilon_i>0$ such that $\varepsilon_i\to0$ as $i\to\infty$ and such that $(q+1)^2m\cdot h_{f,\mathcal{Y}}(F_i)\leq \varepsilon_i$ for any $i$. For any $i\geq1$, there is an infinite sequence $(j_n(i+1))_n$, extracted from $(j_n(i))_n$ such that, for any $n\geq0$, we have $\widehat{h}_{f^{[m]}}(Z_{j_n(i)}^i)\leq 2\varepsilon_i$.
We deduce there exists a sequence $\{Z_i\}_i$ of finite Galois invariant finite subsets of $(\mathcal{Y}^{[m]})^0(\bar{\mathbb{Q}})$ such that $\pi_{[m]}(Z_i)=F_i$ and such that for any $t\in F_i$, we have $\mathsf{O}(x_{j(i)}^{(t)})\subset \pi_{[m]}^{-1}\{t\}$ for some $j(i)\geq j_0(i)$. Moreover, by construction we can choose $Z_i$ generic, and we have
\begin{align*}
0\leq \frac{1}{\# Z_i}\sum_{x\in Z_i}\widehat{h}_{f^{[m]}}(x)\leq  2\varepsilon_i.
\end{align*}
As $\varepsilon_i\to0$ and $\{Z_i\}_i$ is generic, Theorem~\ref{tm-dyn-REC} implies 
\[\frac{1}{\# Z_i}\sum_{x\in Z_i}\delta_{x,v}\to \mu_v,\]
where $\mu_v$ is a probability measure on $(\mathcal{Y}^{[m]})_v^{0,\mathrm{an}}$ which satisfies
\begin{center}
$(\pi_{[m]})_*(\mu_v)=\mu_{f,[\mathcal{Y}],v}(S^{0,\mathrm{an}}_v)^{-1}\mu_{f,[\mathcal{Y}],v}$, 
\end{center}
when $v$ is archimedean.
Let $\nu_v:=(\pi_{[m]})_*(\mu_v)$ and take $\varphi\in\mathscr{C}^0_v(S_v^{0,\mathrm{an}},\mathbb{R})$. Then
\[\frac{1}{\# F_i}\sum_{t\in F_i}\varphi(t)=\frac{1}{\# F_i}\sum_{t\in F_i}\frac{1}{\#\mathsf{O}(x_{j(i)}^{(t)})}\sum_{x\in\mathsf{O}(x_{j(i)}^{(t)})}\varphi(\pi_{[m]}(x))=\frac{1}{\# Z_i}\sum_{x\in Z_i}\varphi(\pi_{[m]}(x)).\]
We now use that $\nu_v=(\pi_{[m]})_*(\mu_v)$ so that
\[\int_{S_v^{0,\mathrm{an}}}\varphi \nu_v=\int_{(\mathcal{Y}^{[m]})_v^{0,\mathrm{an}}}(\varphi\circ \pi_{[m]})\mu_v.\]
Finally, if $v$ is archimedean, since $(\pi_{[m]})_*(\mu_v)=\left(\mu_{f,[\mathcal{Y}],v}(S^{0,\mathrm{an}}_v)\right)^{-1}\cdot\mu_{f,\mathcal{Y},v}$, we have $\nu_v=\left(\mu_{f,\mathcal{Y},v}(S^{0,\mathrm{an}}_v)\right)^{-1}\cdot\mu_{f,\mathcal{Y},v}$ and the proof is complete.
\end{proof}

\section{Sparsity and uniformity: proof of the main results}\label{section_harvest}
We are now interested in applying all the above results in two specific situations, where we study the variations of the dynamics of the critical set.
\subsection{Sparsity of PCF maps of $\mathbb{P}^k$}
We focus the family introduced in Section~\ref{sec:moduli-space} (which plays the role of a universal family here),
which is a family $(\mathbb{P}^k_{S},f,\mathcal{O}_{\mathbb{P}^k}(1))$ of degree $d$ endomorphisms of $\mathbb{P}^k$ parametrized by a projective model $S$ of finite branched cover of $\mathscr{M}_d^k$ with regular part $\mathcal{U}_d^k$ -- if we follow the notations introduced above -- which is defined over $\mathbb{Q}$, see Lemma~\ref{good-family-end}.

\medskip 

The critical variety $\mathrm{Crit}(f)\subsetneq \mathbb{P}^k_S$ satisfies $\pi(\mathrm{Crit}(f))=S$, where $\pi:\mathbb{P}^k_S\to S$ is the canonical projection, and $\pi|_{\mathrm{Crit}(f)}$ is flat and projective over a Zariski open subset $S^0\subseteq\mathscr{M}_d^k$. Moreover, for any $t\in S^0$, the fiber $\mathrm{Crit}(f_t)=\pi|_{\mathrm{Crit}(f)}^{-1}(t)$ is the critical locus of $f_t$. Moreover, up to reducing the open set $S^0$, we can assume $\mathrm{Crit}(f_t)$ is irreducible for all $t\in S^0$.

\bigskip

We are now in position to prove Theorem~\ref{tm:not-Zariski-dense}.

\begin{proof}[Proof of Theorem~\ref{tm:not-Zariski-dense}]
Recall that being PCF is a property which is invariant under conjugacy, hence the set of PCF maps $f\in\mathrm{End}_d^k$ is Zariski dense in $\mathrm{End}_d^k$ if and only if their conjugacy classes are Zariski dense in $\mathscr{M}_d^k$. 
To prove Theorem~\ref{tm:not-Zariski-dense}, we proceed by contradiction. Assume PCF maps are Zariski dense in $\mathrm{End}_d^k$. Then they are Zariski dense in $\mathcal{U}_d^k$. In this case, those which are defined over $\bar{\mathbb{Q}}$ are countable and Zariski dense.

We thus can find a generic sequence $(t_n)_{n\in\mathbb{N}}$ of PCF parameters $t_n\in\mathcal{U}_d^k(\bar{\mathbb{Q}})$. 
Note here that the bifurcation measure $\mu_{f,\mathrm{Cirt}}$ of the family we consider is the the pull-back of the measure of the moduli space by the canonical projection $\Pi:\mathcal{U}_d^k\to\mathscr{M}_d^k$ which is finite and whose image $\mathscr{U}_d^k$ contains a non-empty Zariski open subset of $\mathscr{M}_d^k$. In particular, $\mu_{f,\mathrm{Crit}}$ is non-zero, and $\mathrm{supp}(\mu_{f,\mathrm{Crit}})$ contains a non-empty analytic open subset $\Omega$ which contains no PCF parameters, by Theorem~\ref{tm:mu-interior}. 

Let now $\mu_n$ be the measure of $\mathcal{U}_{d}^{k}(\mathbb{C})$ equidistributed on the Galois orbit $\mathsf{O}(t_n)$ of $t_n$. By the parametric equidistribution Theorem (see Corollary~\ref{tm:small-distrib}), we have
\[\mu_n:=\frac{1}{\mathrm{Card}(\mathsf{O}(t_n))}\sum_{t\in\mathsf{O}(t_n)}\delta_t\to\mu_{f,\mathrm{Crit}}, \quad \text{as} \ n\to\infty.\]
In particular, in the analytic topology of $\mathcal{U}_{d}^{k}(\mathbb{C})$, the support of $\mu_{f,\mathrm{Crit}}$ is accumulated by PCF classes. In particular, PCF parameters are dense in $\Omega$. This is a contradiction.
\end{proof}

\subsection{Height gap and uniformity for regular maps of the affine space}

In this section, we focus on the case when $\mathcal{X}=\mathbb{P}^k\times S$ and where there is a hyperplane $H_\infty\subset \mathbb{P}^k$ such that $f_t^{-1}(H_\infty)=H_\infty$ for all $t\in S^0$. We call such a family of \emph{regular polynomial endomorphisms of the affine space} $\mathbb{A}^k$, see~\cite{bedford-jonsson}. 
Choosing an affine chart, we can assume the hyperplane $H_\infty$ is the hyperplane at infinity of $\mathbb{A}^k$ in $\mathbb{P}^k$. When $(\mathbb{P}^k\times S,f,\mathcal{O}_{\mathbb{P}^k}(1))$ is such a family of regular polynomial endomorphisms, we let
\[G_{f_t}(z)=G_f(z,t):=\lim_{n\to\infty}\frac{1}{d^n}\log^+\|f_t^n(z)\|, \]
for all $z\in \mathbb{C}^k$ and all $t\in S^0(\mathbb{C})$.

We let $\mathcal{Y}\subset\mathbb{P}^k\times S$ be an irreducible hypersurface that projects surjectively onto $S$ and which intersects properly $H_\infty\times S$. Up to reducing the Zariski open set $S^0$, we can assume $\mathcal{Y}$ is flat over $S^0$ and $Y_t\neq H_\infty$ for all $t\in S^0$. 

\begin{definition}
The \emph{polynomial bifurcation measure} $\mu^{\mathrm{pol}}_{f\mathcal{Y}}$ of the pair $(\mathbb{P}^k\times S,f,\mathcal{O}_{\mathbb{P}^k}(1),\mathcal{Y})$ is the Monge-Amp\`ere measure associated to the function $G_{f,\mathcal{Y}}:S^0(\mathbb{C})\to\mathbb{R}^+$ defined by
\[G_{f,\mathcal{Y}}(t):=\int_{\mathbb{C}^k}G_{f_t} (dd^cG_{f_t}(z))^{k-1}\wedge [Y_t], \quad t\in S^0(\mathbb{C}),\]
i.e. $\mu^{\mathrm{pol}}_{f\mathcal{Y}}:=(dd^cG_{f,\mathcal{Y}})^{\dim S}$ as a measure on $S^0(\C)$.
\end{definition}
The measure $\mu^{\mathrm{pol}}_{f\mathcal{Y}}$ detects phenomena which occur in the affine space. However, it does not in general allow to collect all the informations that $\mu_{f,\mathcal{Y}}$ carries. However, as measures on $S^0$, we have
\[\mu_{f,\mathcal{Y}}\geq \mu^{\mathrm{pol}}_{f,\mathcal{Y}}.\]

We now prove here the following which is a sufficient condition to get a height gap, and then to deduce uniformity in a Bogomolov-type statement.  To pursue the parallel with \cite{YZ-adelic}, this Theorem in turn says that if the Deligne pairing $\bar{M}$ relatively to $\pi|_{\mathcal{Y}}:\mathcal{Y}\to S$ of the adelic line bundle $\bar{L}_f$ has strictly positive arithmetic volume, then one has a uniform Bogomolov-type statement in the total space.

\begin{theorem}\label{prop-empty-interior}
Let $(\mathbb{P}^k\times S,f,\mathcal{O}_{\mathbb{P}^k}(1))$ be a family of regular polynomial endomorphisms of degree $d$ of the affine space parametrized by $S$, and  let $\mathcal{Y}\subset\mathbb{P}^k\times S$ be an irreducible hypersurface such that $\pi|_\mathcal{Y}:\mathcal{Y}\to S$ is surjective,  all defined over a number field. Assume $\mathcal{Y}$ intersects properly $H_\infty\times S$.
Assume also
\[\int_{S(\mathbb{C})}G_{f,\mathcal{Y}}\cdot\mu^{\mathrm{pol}}_{f,\mathcal{Y}}>0.\]
Then there exists $Z\subsetneq S$ Zariski-closed, $\varepsilon>0$, and an integer $N\geq1$, such that for all $t\in (S^0\setminus Z)(\bar{\mathbb{Q}})$, there exists a strict subvariety $W_t\subsetneq Y_t$ with $\deg(W_t)\leq N$ and such that 
\[\{z\in Y_t(\bar{\mathbb{Q}})\, : \ \widehat{h}_{f_t}(z)\leq \varepsilon\}\subset W_t.\]
\end{theorem}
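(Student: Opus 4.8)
The plan is to combine the two equidistribution mechanisms already at our disposal with the positivity of the Green function at all places, which is the special feature of the polynomial setting. The heart of the matter is to produce a \emph{height gap}: a uniform $\varepsilon>0$ and a strict Zariski-closed $Z\subsetneq S$ so that, for $t\notin Z$, the set of $z\in Y_t$ with $\widehat{h}_{f_t}(z)\le\varepsilon$ is not Zariski dense in $Y_t$, with a uniform bound on the degree of the exceptional subvariety. Once we have this, the degree bound combined with a bounded-height statement (coming from Theorem~\ref{tm:ample-height} applied on $S^0\setminus Z$, using that $\mu^{\mathrm{pol}}_{f,\mathcal{Y}}\ne 0$ implies $\mu_{f,\mathcal{Y}}\ne0$) gives the conclusion. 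So the real work is the gap.

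\textbf{Overfibering.} Following Gao--Ge--K\"uhne (and Ullmo--Zhang), I would first pass to a fiber power. Fix $m:=\dim S$ and consider $(\mathbb{P}^k\times S,f,\mathcal O_{\mathbb{P}^k}(1),\mathcal Y^{[m]})$, noting $\dim\mathcal Y^{[m]}=\dim S+m(k-1)$ so that the parametric degree and dynamical volume of $\mathcal Y^{[m]}$ are positive by Proposition~\ref{prop-volumes}, since $\mu^{\mathrm{pol}}_{f,\mathcal Y}\ne 0$ forces $\mu_{f,\mathcal Y}\ne0$. Suppose for contradiction that no height gap exists: then for every $\varepsilon>0$ and every proper Zariski-closed $Z$ and every $N$ there is $t\notin Z$ and a Zariski-dense-in-$Y_t$ set of points of canonical height $\le\varepsilon$ that is not contained in any degree-$\le N$ subvariety. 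Using Zhang's inequalities \eqref{Zhang-faible} fiberwise (as in the proof of Corollary~\ref{tm:small-distrib}), this produces a generic, $\widehat h_{f^{[m]}}$-small sequence $\{F_i\}$ of Galois-invariant finite subsets of $(\mathcal Y^{[m]})^0(\bar{\mathbb Q})$. The Dynamical Relative Equidistribution Theorem~\ref{tm-dyn-REC} then says $\frac{1}{\#F_i}\sum_{x\in F_i}\delta_x$ converges weakly to $\mathrm{Vol}_f^{(m)}(\mathcal Y)^{-1}\,\widehat T_{f^{[m]}}^{\dim\mathcal Y^{[m]}}$ on $(\mathcal Y^{[m]})^0(\mathbb C)$. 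Pushing forward by $\pi_{[m]}$ and using Proposition~\ref{prop-higher-currents}(3), the projections $\pi_{[m]}(F_i)$ equidistribute a measure proportional to $\mu_{f,\mathcal Y}$ on $S^0(\mathbb C)$.

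\textbf{Using positivity of the Green function.} Here is where the polynomial hypothesis enters. Because $G_{f_t}\ge 0$ on $\mathbb C^k$ at every place, the local canonical heights decompose as a sum of nonnegative contributions; in particular for $x\in (\mathcal Y^{[m]})^0(\bar{\mathbb Q})$ the quantity $\widehat h_{f^{[m]}}(x)$ dominates the ``finite-part'' contribution $\frac{1}{[\mathbb K:\mathbb Q]\deg(x)}\sum_{v}\sum_{y\in\mathsf O(x)} n_v\, G_{f^{[m]},y,v}$, and the archimedean term is exactly the one detected by $\mu^{\mathrm{pol}}_{f,\mathcal Y}$. Concretely, the finite-part height is computed by the function $G_{f^{[m]},\mathcal Y^{[m]}}$, and a generic $\widehat h_{f^{[m]}}$-small sequence is in particular small for this nonnegative finite-part height, hence its $S$-projections cannot accumulate on $\mathrm{supp}(\mu^{\mathrm{pol}}_{f,\mathcal Y})$ — this is the standard ``an equidistributing small sequence avoids the support of a positive piece of the limit measure unless the small points already realize that piece'' argument, run through the polynomial Green function. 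But on the open set $\Omega\subset\mathrm{supp}(\mu^{\mathrm{pol}}_{f,\mathcal Y})$ the limit measure $\mu_{f,\mathcal Y}$ gives positive mass, so the projected sequence $\pi_{[m]}(F_i)$ must charge $\Omega$ asymptotically, forcing the small points to lie (asymptotically) where $G_{f_t}$ is large on $Y_t$; combined with $G_{f_t}>0$ exactly off $K_{f_t}$ and the fact that preperiodic points lie in $K_{f_t}$, this yields $\liminf_i \widehat h_{f^{[m]}}(F_i)>0$, contradicting $\widehat h_{f^{[m]}}$-smallness. This contradiction proves the gap: there exist $\varepsilon>0$, $Z\subsetneq S$ Zariski-closed, and $N\ge1$ with the asserted property.

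\textbf{Main obstacle.} The delicate point is the last paragraph: making rigorous the passage from ``$\Omega\subset\mathrm{supp}(\mu^{\mathrm{pol}}_{f,\mathcal Y})$'' to a genuine \emph{quantitative} lower bound on $\widehat h_{f^{[m]}}$ along any would-be small sequence, uniformly in $t$. One must control, with explicit constants, the finite-part local heights $\sum_v n_v G_{f^{[m]},y,v}$ from below by testing $G_{f,\mathcal Y}$ against a fixed test function supported in $\Omega$, and then argue that if infinitely many $F_i$ had average finite-part height $\to 0$ their $S$-projections would miss $\Omega$ in measure — contradicting the equidistribution toward $\mu_{f,\mathcal Y}\ge\mu^{\mathrm{pol}}_{f,\mathcal Y}$, which is strictly positive on $\Omega$. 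Handling the degree bound $\deg(W_t)\le N$ requires noting that the exceptional locus arising from Zhang's inequality in the fiber power has bounded degree (it is cut out by sections of a fixed line bundle on $\mathcal Y^{[m]}$, via the Chow-form argument of Lemma~\ref{lm:Chow-form}), and then descending this bound from $\mathcal Y_t^{[m]}$ back to $Y_t$. Everything else — the overfibering bookkeeping, the application of Theorems~\ref{tm-dyn-REC} and~\ref{tm:ample-height}, and the final bounded-height conclusion — is routine given the machinery already set up.
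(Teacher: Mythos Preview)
Your overall strategy is correct — overfibering, relative equidistribution, and the nonnegativity of local Green functions at all places are indeed the three ingredients — but two steps are not carried out correctly.

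\textbf{The contradiction step.} You push forward to $S$ and try to test $G_{f,\mathcal Y}$ against a function supported in $\Omega$. This loses exactly the information you need: knowing that $\pi_{[m]}(F_i)$ charges $\Omega$ tells you nothing quantitative about where the points of $F_i$ sit inside the fibers $Y_t^m$, hence nothing about the archimedean Green contribution. The paper instead works on the \emph{total space} $(\mathcal Y^{[m+1]})^0(\mathbb C)$, with $m=\dim S$ — note the extra factor. The key computation (Lemma~\ref{lm:interior}) is that
\[
\int_{\mathbb C^{k(m+1)}\times S} G_{f^{[m+1]}}\,(dd^cG_{f^{[m+1]}})^{k(m+1)-1}\wedge[\mathcal Y^{[m+1]}]\ \ge\ \int_S G_{f,\mathcal Y}\cdot\mu^{\mathrm{pol}}_{f,\mathcal Y}\ >\ 0,
\]
where the extra factor is precisely what lets you peel off one copy of $G_f(dd^cG_f)^{k-1}\wedge[\mathcal Y]$ (producing $G_{f,\mathcal Y}$) while the remaining $m$ copies yield $\mu^{\mathrm{pol}}_{f,\mathcal Y}$. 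With only $m$ factors this separation does not produce the bifurcation measure. Once $\int G\,\mu_{m+1}>0$, one picks an open set where $G>0$ and $\mu_{m+1}$ has mass, tests the equidistribution (Theorem~\ref{tm-dyn-REC}) against $\phi:=G\cdot\chi$ on the total space, and uses $G\ge\phi$ together with the inequality
\[
\widehat h_{f^{[m+1]}}(x)\ \ge\ \frac{n_{v_0}}{\#\mathsf O(x)}\sum_{y\in\mathsf O(x)} G_{f^{[m+1]}}(y)
\]
(which is where positivity at \emph{all} places is used) to get $\liminf_n \widehat h_{f^{[m+1]}}(x_n)>0$. Your sentence ``its $S$-projections cannot accumulate on $\mathrm{supp}(\mu^{\mathrm{pol}}_{f,\mathcal Y})$'' is backwards: they \emph{must} accumulate there, and that is what forces the height up.

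\textbf{The degree bound.} Once you know $E_{\varepsilon_0}\subset V\subsetneq\mathcal Y^{[m+1]}$, you need to descend from ``$\Sigma_t^{m+1}\subset V_t\subsetneq Y_t^{m+1}$'' to ``$\Sigma_t\subset W_t\subsetneq Y_t$ with $\deg W_t$ bounded''. The Chow-form argument of Lemma~\ref{lm:Chow-form} does not do this; it bounds heights, not degrees of projections of power-containment. What is needed is the elementary combinatorial Lemma~\ref{lm:GGK} of Gao--Ge--K\"uhne: if $\Sigma^N\subset Z(K)$ with $Z\subsetneq X^N$, then $\Sigma\subset X'(K)$ for some $X'\subsetneq X$ with $\deg_L X'$ bounded in terms of $N$, $\dim X$, $\deg_L X$, $\deg_{L^{\boxtimes N}}Z$. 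The uniform degree of $V_t$ (flatness of $V$ over $S^0\setminus Z$) then gives the uniform $N$.

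Finally, Theorem~\ref{tm:ample-height} plays no role here; the argument is purely the gap plus Lemma~\ref{lm:GGK}.
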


\begin{remark}\normalfont
By Zhang's inequalities~\eqref{Zhang-forte}, this in particular implies that 
\[\widehat{h}_{f_t}(Y_t)\geq \frac{\varepsilon}{k}>0, \quad \text{for all} \quad t\in (S^0\setminus Z)(\bar{\mathbb{Q}}).\]
\end{remark}

The key ingredient is the next lemma, which is of purely complex analytic nature. Again, in the parallel with \cite{YZ-adelic}, it says that if the Deligne pairing $\bar{M}$ has positive arithmetic volume, then $\bar{L}_f^{[m+1]}|\mathcal{Y}^{[m+1]}$ also has positive arithmetic volume. 
\begin{lemma}\label{lm:interior}
Let $(\mathbb{P}^k\times S,f,\mathcal{O}_{\mathbb{P}^k}(1))$ be a complex family of regular polynomial endomorphisms of the affine space of degree $d$ parametrized by $S$ of dimension $m$.
Then
\[\int_{\mathbb{C}^{k(m+1)}\times S}G_{f^{[m+1]}} (dd^cG_{f^{[m+1]}})^{km+k-1}\wedge[\mathcal{Y}^{[m+1]}]\geq \int_S G_{f,\mathcal{Y}}\cdot\mu^{\mathrm{pol}}_{f,\mathcal{Y}} .\]
\end{lemma}
\begin{proof}
We denote by $p_i:(\mathbb{P}^k)^{m+1}\times S\to\mathbb{P}^k\times S$ the projection onto the $i$-th factor of the fiber product and by $\pi_i:(\mathbb{P}^k)^{m+1}\times S\to(\mathbb{P}^k)^{m}\times S$ the projection consisting in forgetting the $i$-th factor. By construction $f^{[m]}$ and $f^{[m+1]}$ are families of regular polynomial endomorphisms of the affine spaces $\mathbb{A}^{km}$ and $\mathbb{A}^{k(m+1)}$ respectively. Moreover, for any $1\leq i\leq m+1$, we have
\[G_{f^{[m+1]}}=\sum_{j=1}^{m+1}G_{f}\circ p_j=G_f\circ p_i+G_{f^{[m]}}\circ\pi_i.\]
Using that $[\mathcal{Y}^{[m+1]}]=p_1^*[\mathcal{Y}]\wedge \pi_1^*[\mathcal{Y}^{[m]}]$, we find
\begin{align*}
I:= & \int_{\mathbb{C}^{k(m+1)}\times S}G_{f^{[m+1]}} (dd^cG_{f^{[m+1]}})^{km+k-1}\wedge[\mathcal{Y}^{[m+1]}]\\
\geq & \int_{\mathbb{C}^{k(m+1)}\times S}(G_f\circ p_1)\cdot (p_1^*dd^cG_{f}+\pi_1^*dd^cG_{f^{[m]}})^{km+k-1}\wedge[\mathcal{Y}^{[m+1]}]\\
\geq & \int_{\mathbb{C}^{k(m+1)}\times S}(G_f\circ p_1)\cdot (p_1^*dd^cG_{f})^{k-1}\wedge(\pi_1^*dd^cG_{f^{[m]}})^{km}\wedge[\mathcal{Y}^{[m+1]}]\\
= & \int_{\mathbb{C}^{k(m+1)}\times S}p_1^*\left(G_f(dd^cG_{f})^{k-1}\wedge[\mathcal{Y}]\right)\wedge\pi_1^*\left((dd^cG_{f^{[m]}})^{km}\wedge[\mathcal{Y}^{[m]}]\right)\\
= & \int_{\mathbb{C}^{km}\times S}\left(\int_{\mathbb{C}^{k}}G_{f_t}(dd^cG_{f_t})^{k-1}\wedge[Y_t]\right)(dd^cG_{f^{[m]}})^{km}\wedge[\mathcal{Y}^{[m]}]\\
= & \int_{\mathbb{C}^{km}\times S}(G_{f,\mathcal{Y}}\circ \pi_{[m]})\cdot (dd^cG_{f^{[m]}})^{km}\wedge[\mathcal{Y}^{[m]}].
\end{align*}
\begin{claim}
For $m\geq\dim S$, there is $C(m)\geq1$ such that
\[(\pi_{[m]})_*\left((dd^cG_{f^{[m]}})^{km}\wedge[\mathcal{Y}^{[m]}]\right)=C(m)\mu^{\mathrm{pol}}_{f,\mathcal{Y}}.\]
\end{claim}
According to the Claim above, we find 
\[I\geq \int_S G_{f,\mathcal{Y}}\cdot\mu^{\mathrm{pol}}_{f,\mathcal{Y}},\]
which concludes the proof.
\end{proof}
All there is left to do is to prove the Claim.
\begin{proof}[Proof of the Claim]
We first prove that $dd^cG_{f,\mathcal{Y}}=\pi_*\left((dd^cG_f)^k\wedge[\mathcal{Y}]\right)$ using a slicing argument. Indeed, if $\phi$ is a smooth compactly supported $(\dim S -1,\dim S-1)$-form on $S^0(\mathbb{C})$, we have
\begin{align*}
\int_{\mathcal{X}^0(\mathbb{C})}\pi^*\phi\wedge (dd^c G_f)^k\wedge[\mathcal{Y}]  & =\int_{\mathcal{X}^0(\mathbb{C})} G_f(dd^c G_f)^{k-1}\wedge[\mathcal{Y}]\wedge\pi^*(dd^c\phi)\\
& =\int_{S^0(\mathbb{C})} \left(\int_{\pi^{-1}\{t\}}G_{f_t}(dd^c G_{f_t})^{k-1}\wedge (\iota_t)^*[\mathcal{Y}]\right)\cdot dd^c\phi,
\end{align*}
where $\iota_t:X_t\to \mathcal{X}$ is the natural injection, so that $\iota_t^*[\mathcal{Y}]=[Y_t]$, hence
\begin{align*}
\int_{\mathcal{X}^0(\mathbb{C})}\pi^*\phi\cdot (dd^c G_f)^k\wedge[\mathcal{Y}] 
& =\int_{S^0(\mathbb{C})} G_{f,\mathcal{Y}}\cdot dd^c\phi=\int_{S^0(\mathbb{C})} dd^cG_{f,\mathcal{Y}}\wedge \phi.
\end{align*}
To conclude, we proceed as in the proof of Proposition~\ref{prop-higher-currents}. 
\end{proof}

Now, when $X$ is a projective variety and $L$ is a line bundle on $X$, we denote by $L^{\boxtimes N}$ the induced line bundle on $X^N$, i.e. $L^{\boxtimes N}=\tau_1^*L+\cdots +\tau_N^*L$, where $\tau_i:X^N\to X$ is the canonical projection onto the $i$-th coordinate. We will also use the next Lemma due to Gao, Ge and K\"uhne~\cite[Lemma~4.3]{GGK}.
\begin{lemma}\label{lm:GGK}
Let $X$ be an irreducible projective variety with a very ample line bundle $L$, defined over an algebraically closed field $K$ and $N\geq2$. Let $Z\subsetneq X^N$ be a proper closed subvariety. There exists a constant
\[B=B(N,\dim X,\deg_L(X),\deg_{L^{\boxtimes N}}(Z))>0,\]
such that for any subset $\Sigma\subset X(K)$ with $\Sigma^N\subseteq Z(K)$, there exists a proper closed
subvariety $X'$ of $X$ with $\Sigma\subset X'(K)$ and $\deg_L(X')<B$.
\end{lemma}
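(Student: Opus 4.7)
The plan is to prove the lemma by induction on $N\ge 2$, using at each step a dichotomy between ``a fiber of $Z$ is already a proper subvariety of $X^{N-1}$'' and ``$\Sigma$ lies in the locus where $Z$ is vertically degenerate over the first factor.'' Throughout, fix the embedding $X\hookrightarrow\Pb^M$ given by $L$ and let $d_X:=\deg_L(X)$ and $d_Z:=\deg_{L^{\boxtimes N}}(Z)$; all degrees below are computed with respect to (the restrictions of) $L$ or $L^{\boxtimes N}$.

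For the inductive step, pick an arbitrary $\sigma_0\in\Sigma$ and form the fiber
\[Z_{\sigma_0}:=\{(x_2,\ldots,x_N)\in X^{N-1}\ ;\ (\sigma_0,x_2,\ldots,x_N)\in Z\}\subseteq X^{N-1}.\]
By slicing (take $Z\cap(\{\sigma_0\}\times X^{N-1})$ scheme-theoretically), the degree of each irreducible component of $Z_{\sigma_0}$ is bounded by an explicit function of $d_X$, $d_Z$, $\dim X$ and $N$. If $Z_{\sigma_0}\subsetneq X^{N-1}$ is proper, then $\Sigma^{N-1}\subseteq Z_{\sigma_0}(K)$, and we are reduced to the $N-1$ instance of the lemma applied to the proper subvariety $Z_{\sigma_0}\subsetneq X^{N-1}$; when $N-1=1$, the desired $X'$ is simply $Z_{\sigma_0}$ itself. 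Otherwise $Z_{\sigma_0}=X^{N-1}$, i.e.\ $\{\sigma_0\}\times X^{N-1}\subseteq Z$; if this happens for \emph{every} $\sigma_0\in\Sigma$, then $\Sigma$ is contained in the \emph{vertical locus}
\[X_{\mathrm{vert}}:=\left\{x\in X\ ;\ \{x\}\times X^{N-1}\subseteq Z\right\}.\]

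The locus $X_{\mathrm{vert}}$ is closed in $X$ (it is the locus where the fiber of the projection $\pi_1\colon Z\to X$ has maximal dimension $(N-1)\dim X$) and it is strictly contained in $X$ because $Z\subsetneq X^N$. The key point is to bound $\deg_L(X_{\mathrm{vert}})$ in terms of $N$, $\dim X$, $d_X$ and $d_Z$. For this, cut $X$ by $\dim X-1$ general hyperplane sections coming from $L$ to obtain a curve $C\subseteq X$ of degree $d_X$, and restrict the projection $\pi_1$ to the trace $Z_C:=Z\cap(C\times X^{N-1})$. An effective version of the flattening stratification (or alternatively Bezout applied to the Fitting ideals of the direct image $(\pi_1)_*\mathcal{O}_Z$, pulled back to $C$) yields a bound of the form
\[\deg_L(X_{\mathrm{vert}}\cap C)\leq F\bigl(N,\dim X,d_X,d_Z\bigr),\]
for an explicit polynomial function $F$, and then $\deg_L(X_{\mathrm{vert}})=\deg_L(X_{\mathrm{vert}}\cap C)/(\text{relative degree factor})$ is bounded by the same kind of expression.

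Assembling the two cases, the induction produces in at most $N-1$ steps a proper closed subvariety $X'\subsetneq X$ containing $\Sigma$, with $\deg_L(X')$ controlled by a function depending only on $N$, $\dim X$, $d_X$ and $d_Z$. The main technical obstacle is the explicit degree estimate on $X_{\mathrm{vert}}$: a soft semicontinuity argument shows $X_{\mathrm{vert}}$ is closed and proper, but obtaining a quantitative bound on its degree requires the effective intersection-theoretic analysis sketched above (this is exactly the content of \cite[Lemma~4.3]{GGK}, whose proof proceeds along these lines; a self-contained argument can also be given using Chow forms of the fibers of $\pi_1\colon Z\to X$, whose coefficients vary in a parameter space of bounded degree, and whose identical vanishing cuts out $X_{\mathrm{vert}}$).
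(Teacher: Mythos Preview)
The paper does not give a proof of this lemma: it is quoted verbatim from \cite[Lemma~4.3]{GGK} and used as a black box. So there is no ``paper's own proof'' to compare against; your proposal is in effect an attempt to reprove the cited result.

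Your inductive scheme is the right one, and the dichotomy (either some fiber $Z_{\sigma_0}\subsetneq X^{N-1}$ lets you descend, or else $\Sigma\subset X_{\mathrm{vert}}$) is correct. The degree control on the fiber $Z_{\sigma_0}$ is also fine: in the Segre embedding, $\{\sigma_0\}\times X^{N-1}$ is cut out by linear equations from the first factor, and refined B\'ezout bounds the total degree of the intersection. Where your argument is incomplete is precisely the step you flag yourself: bounding $\deg_L(X_{\mathrm{vert}})$. The sketch via flattening stratification or Fitting ideals is plausible but not actually carried out, and the ``relative degree factor'' correction at the end of that paragraph is not a valid step as written (cutting $X$ down to a curve $C$ and bounding $\deg(X_{\mathrm{vert}}\cap C)$ bounds $\deg(X_{\mathrm{vert}})$ from above only if the hyperplanes are generic for $X_{\mathrm{vert}}$, which you do not control a priori).

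There is a simpler route that avoids estimating $\deg_L(X_{\mathrm{vert}})$ altogether. If every $\sigma_0\in\Sigma$ has $Z_{\sigma_0}=X^{N-1}$, pick any point $a=(a_2,\ldots,a_N)\in X^{N-1}(K)$ such that the ``reverse'' fiber
\[
Z^a:=\{x\in X\ :\ (x,a_2,\ldots,a_N)\in Z\}
\]
is a proper subvariety of $X$; such $a$ exists since $Z\subsetneq X^N$. Then $X_{\mathrm{vert}}\subset Z^a$, hence $\Sigma\subset Z^a$, and $\deg_L(Z^a)$ is bounded by the same fiber-degree B\'ezout argument you already use for $Z_{\sigma_0}$. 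This closes the gap without any effective flattening input and makes the induction go through cleanly.
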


We are now in position to prove Theorem~\ref{prop-empty-interior}.
\begin{proof}[Proof of Theorem~\ref{prop-empty-interior}]
For any $v\in M_\mathbb{K}$, recall that the Green function of $f^{[m+1]}$ is
\[G_{f^{[m+1]},v}(x):=\lim_{n\to\infty}\sum_{j=1}^{m+1} \frac{1}{d^n}\log^+\|f^n\circ p_i(x)\|_v, \quad x\in\mathbb{A}^{k(m+1)}(\bar{\mathbb{Q}})\times S^0(\bar{\mathbb{Q}}). \]
One can show that for any $x\in \mathbb{A}^{k(m+1)}(\bar{\mathbb{Q}})\times S^0(\bar{\mathbb{Q}})$, we have 
\[\widehat{h}_{f^{[m+1]}}(x)=\frac{1}{[\mathbb{L}:\mathbb{K}]}\sum_{v\in M_\mathbb{K}}\sum_{\sigma\in\mathrm{Gal}(\mathbb{L}/\mathbb{K})}n_vG_{f^{[m+1]},v}(\sigma(x)),\]
where $\mathbb{L}$ is any finite extension of $\mathbb{K}$ so that $x\in\mathbb{A}^{k(m+1)}(\mathbb{L})\times S^0(\mathbb{L})$. In particular, for a given place $v\in M_\mathbb{K}$, we deduce that
\begin{align}
\frac{n_{v}}{[\mathbb{L}:\mathbb{K}]}\sum_{\sigma\in\mathrm{Gal}(\mathbb{L}/\mathbb{K})}G_{f^{[m+1]},v}(\sigma(x))\leq \widehat{h}_{f^{[m+1]}}(x).\label{ineg-heigh-poly}
\end{align}
We proceed by contradiction, assuming that there is a Zariski dense subset of small points, i.e. for all $\varepsilon>0$, the set
\[E_\varepsilon:=\{x\in \mathcal{Y}^{[m+1]}(\bar{\mathbb{Q}})\, : \ \widehat{h}_{f^{[m+1]}}(x)\leq\varepsilon\}\]
is Zariski dense in $\mathcal{Y}^{[m+1]}(\bar{\mathbb{Q}})$. In particular, there exists a generic sequence $(x_n)\in (\mathcal{Y}^{[m+1]})^0(\bar{\mathbb{Q}})$ such that $\widehat{h}_{f^{[m+1]}}(x_n)\to0$ as $n\to \infty$. 
Let now $v_0\in M_\mathbb{K}$ be an archimedean place. Since we will now work only at the place $v_0$, we forget the subscript $v_0$ in the rest of the proof.

By construction of the Green current $\widehat{T}_{f^{[m+1]}}$ and of the Green function $G:=G_{f^{[m+1]}}$, as measures on $\mathbb{C}^{k(m+1)}\times S^0(\mathbb{C})$, we have
\[\widehat{T}^{k(m+1)-1}_{f^{[m+1]}}\wedge[\mathcal{Y}^{[m+1]}]=(dd^cG_{f^{[m+1]}})^{k(m+1)-1}\wedge[\mathcal{Y}^{[m+1]}].\]
In particular, Lemma~\ref{lm:interior} says that
\[\int_{(\mathcal{Y}^{[m+1]})^0_{\star}(\mathbb{C})}G\cdot\mu_{m+1} >0,\]
where $\mu_{m+1}=\widehat{T}_{f^{[m+1]}}^{\dim\mathcal{Y}^{[m+1]}}$ and $(\mathcal{Y}^{[m+1]})^0_{\star}=(\mathcal{Y}^{[m+1]})^0\cap(\mathbb{A}^{k(m+1)}\times S)$.

As $G$ is continuous and non-negative on $(\mathcal{Y}^{[m+1]})^0_\star(\mathbb{C})$, we deduce that there exists a non-empty open analytic set $\Omega\Subset (\mathcal{Y}^{[m+1]})^0_\star(\mathbb{C})$ such that $G>0$ on $\Omega$ and such that $\mu_m(\Omega)>0$. Let $\chi:(\mathcal{Y}^{[m+1]})^0_\star(\mathbb{C})\to\mathbb{R}_+$ be a smooth compactly supported function with $\chi=1$ on $\Omega$ and $0\leq \chi\leq 1$. The function $\phi:=G\cdot \chi$ is thus continuous, compactly supported, and $G\geq\phi$. 
 We now apply the Equidistribution Theorem~\ref{tm:equidistrib}:
\[\lim_{n\to\infty}\frac{1}{\mathrm{Card}(\mathsf{O}(x_n))}\sum_{y\in\mathsf{O}(x_n)}\phi(y)=\int_{(\mathcal{Y}^{[m+1]})_\star^0(\mathbb{C})} \phi\mu_{m+1}.\]
In particular, there is $n_0\geq1$ such that for any $n\geq n_0$, we have 
\[\frac{1}{\mathrm{Card}(\mathsf{O}(x_n))}\sum_{y\in\mathsf{O}(x_n)}\phi(y)\geq \frac{1}{2}\int_{(\mathcal{Y}^{[m+1]})^0_\star(\mathbb{C})} \phi\mu_{m+1}\geq \frac{1}{2}\int_{\Omega} G\mu_{m+1}>0.\]
Moreover, for any finite extension $\mathbb{L}_n$ of $\mathbb{K}$ with $x_n\in\mathcal{Y}^{[m+1]}(\mathbb{L}_n)$,
\begin{align*}
\frac{1}{[\mathbb{L}_n:\mathbb{K}]}\sum_{\sigma\in\mathrm{Gal}(\mathbb{L}_n/\mathbb{K})}G_{f^{[m+1]}}(\sigma(x_n)) & = \frac{1}{\mathrm{Card}(\mathsf{O}(x_n))}\sum_{y\in\mathsf{O}(x_n)}G(y)\\
& \geq \frac{1}{\mathrm{Card}(\mathsf{O}(x_n))}\sum_{y\in\mathsf{O}(x_n)}\phi(y),
\end{align*}
where we used that $G\geq\phi$. Together with \eqref{ineg-heigh-poly}, this gives
\[\widehat{h}_{f^{[m+1]}}(x_n)\geq \frac{n_{v_0}}{2}\int_{\Omega} G\mu_{m+1}>0,\]
for any $n\geq n_0$. This is a contradiction since $\widehat{h}_{f^{[m+1]}}(x_n)\to0$ as $n\to\infty$ by hypothesis. 

\medskip

We have thus proved there exists $\varepsilon_0>0$ such that the set $E_{\varepsilon_0}$ is not Zariski dense in $\mathcal{Y}^{[m+1]}(\bar{\mathbb{Q}})$. In particular, there is a proper Zariski closed subset $V\subsetneq \mathcal{Y}^{[m+1]}$ which is defined over $\bar{\mathbb{Q}}$ and that contains $E_{\varepsilon_0}$. If $Z:=\pi_{[m+1]}(V)\subsetneq S$ is a proper closed subvariety of $S$, then for any $t\in (S^0\setminus Z)(\bar{\mathbb{Q}})$, we have $\widehat{h}_{f^{[m+1]}}\geq\varepsilon_0$ on $Y^{m+1}_t(\bar{\mathbb{Q}})$. It is in particular true on $\Delta:=\{(z,\ldots,z)\, : \ z\in Y_t(\bar{\mathbb{Q}})\}$. Let $\varepsilon:=\varepsilon_0/(m+1)$. This gives
\[\sum_{j=1}^{m+1}\widehat{h}_{f_t}(z)=\widehat{h}_{f^{[m+1]}}(z,\ldots,z,t)\geq\varepsilon_0,\]
which rewrites as $\widehat{h}_{f_t}\geq \varepsilon=\varepsilon_0/(m+1)$ on $Y_t(\bar{\mathbb{Q}})$.

Assume now that $\pi(V)=S$ and let $Z\subsetneq S$ be the proper closed subvariety  of $S$ such that $\pi_{[m+1]}$ is flat on each irreducible component of $V$ over $S^0\setminus Z$. Pick now $t\in (S^0\setminus Z)(\bar{\mathbb{Q}})$. By definition, the line bundle $L_t:=\mathcal{O}_{\mathbb{P}^k}(1)|_{Y_t}$ is very ample and  the set $V_t:=V\cap Y_t^{m+1}$ is a proper closed subvariety of $Y_t^{m+1}$ with $D:=\deg_{L_t}(V_t)$ independent of $t$.
Let
\[\Sigma_t:=\left\{z\in Y_t(\bar{\mathbb{Q}})\, : \ \widehat{h}_{f_t}(z)\leq \varepsilon\right\}.\]
where $\varepsilon=\varepsilon_0/(m+1)$ as above.
The conclusion follows from Lemma~\ref{lm:GGK}.
\end{proof}

\subsection{Uniformity in the moduli space $\mathscr{P}_d^2$} 
As before, we focus on the good family $(\mathbb{P}^2_{S},f,\mathcal{O}_{\mathbb{P}^2}(1))$ of degree $d$ endomorphisms of $\mathbb{P}^2$ which is defined over $\mathbb{Q}$ introduced in Lemma~\ref{good-family-poly} and let, as before, $\mathcal{V}_d^2$ be its maximal regular part (see \S~\ref{sec:moduli-space}).

\medskip

We also study here the variation of the canonical height of the critical locus. However, when $f:\mathbb{A}^2\to\mathbb{A}^2$ is a degree $d$ regular polynomial endomorphism,  $L_\infty$ is an irreducible component of the critical locus of $f$ and $f$ induces an endomorphism of $L_\infty$, we denote by $f_{L_\infty}$. This induces a map
\[r:\mathcal{V}_d^2\longrightarrow \mathcal{U}_d^{1}\]
defined by $r(t)=f_{t,L_\infty}$. This map is well defined and surjective and, for every $g\in \mathcal{U}_d^{1}$, the set $r^{-1}(g)$ consists of conjugacy classes of regular polynomial endomorphisms whose restriction to $L_\infty$ are conjugate to $g$. It thus is a subvariety of $\mathcal{V}_d^2$ of dimension $\dim\mathscr{P}_d^2-\dim\mathscr{M}_d^1>0$.

\medskip

In the present situation, one sees that $\mathrm{Crit}(f_t)$ decomposes as
\[\mathrm{Crit}(f_t)=L_\infty\cup C_{f_t}\]
where $C_{f_t}\cap \mathbb{A}^2=\{z\in\mathbb{A}^2\, : \ \det(D_zf_t)=0\}=\mathrm{Crit}(f_t)\cap\mathbb{A}^2$. We now let 
\begin{center}
$\mathcal{C}:=\{(z,t)\in \mathbb{P}^2\times \mathcal{V}_d^2\, : \ z\in C_{f_t}\}$.
\end{center}
The next key lemma is a consequence of Theorem~\ref{tm:mu-interior} (see~Theorem~\ref{th-existence}).
\begin{lemma}\label{lm:polynomialmubif}
There exists a non-empty open set $\Omega\subset\mathcal{V}_d^2(\mathbb{C})$ that is contained in $\mathrm{supp}(\mu_{f,\mathcal{C}}^{\mathrm{poly}})$. In particular, 
\[\int_{S(\mathbb{C})}G_{f,\mathcal{Y}}\cdot\mu^{\mathrm{pol}}_{f,\mathcal{Y}}>0.\]
\end{lemma}
\begin{proof}
Write $P:=\dim \mathcal{P}_d^2$ for simplicity. First, as currents on $\mathbb{C}^{2P}\times \mathcal{V}_d^2(\C)$, we have
\[\widehat{T}_{f^{[P]}}=dd^cG_{f^{[P]}}.\]
In particular, as measures on $\mathbb{C}^{2P}\times \mathcal{V}_d^2(\C)$, we also have
\[\widehat{T}_{f^{[P]}}^{2P}\wedge[\mathrm{Crit}^{[P]}]=\widehat{T}_{f^{[P]}}^{2P}\wedge[\mathcal{C}^{[P]}]=\left(dd^cG_{f^{[P]}}\right)^{2P}\wedge[\mathcal{C}^{[P]}].\]
By construction, the points of the support of $\widehat{T}_{f^{[P]}}^{2P}\wedge[\mathrm{Crit}^{[P]}]$ constructed in Theorem~\ref{th-existence} belong to $\mathbb{C}^{2P}\times \mathcal{V}_d^2(\C)$. In particular, they belong to the support of $\left(dd^cG_{f^{[P]}}\right)^{2P}\wedge[\mathcal{C}^{[P]}]$. We conclude by pushing forward this measure by $\pi_{[P]}$ that there exists $\Omega\subset \mathrm{supp}(\mu_{f,\mathcal{C}}^{\mathrm{poly}})$.

We prove that the integral is strictly positive by contradiction. If the integral vanishes, for any $\varepsilon>0$, the set of points $t\in \Omega$ such that $G_{f,\mathcal{Y}}(t)\leq\varepsilon$ is dense in $\Omega$. As $G_{f,\mathcal{Y}}\geq0$, this implies the continuous function $G_{f,\mathcal{Y}}:S^0(\mathbb{C})\to\mathbb{R}$ 
is constant equal to zero on $\Omega$. This is a contradiction since $(dd^c G_{f,\mathcal{Y}})^{P}$ would be zero on $\Omega$.
\end{proof}

We are now in position to prove the following result.

\begin{theorem}\label{tm:gap}
Fix $d\geq2$. There are constants $B(d)\geq1$ and $\varepsilon(d)>0$ and a non-empty Zariski open subset $U\subset \mathrm{Poly}_d^2$ such for any $f\in U(\bar{\mathbb{Q}})$, then
\[\# \{z\in C_{f}(\bar{\mathbb{Q}})\, : \ \widehat{h}_f(z)\leq \varepsilon(d)\}\leq B(d).\]
\end{theorem}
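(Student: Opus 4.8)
The plan is to deduce Theorem~\ref{tm:gap} as a direct application of the general height-gap result Theorem~\ref{prop-empty-interior} to the universal family of regular polynomial endomorphisms of $\mathbb{A}^2$ over $\mathscr{P}_d^2$, taking for the hypersurface $\mathcal{Y}$ the fibered critical locus $\mathcal{C}$. First I would recall from \S~\ref{sec:moduli-space} that the natural projection $\Pi:\mathrm{Poly}_d^2\setminus V\to\mathscr{P}_d^2\setminus\Pi(V)$ is an $\mathrm{Aut}(\mathbb{A}^2)$-principal bundle for a suitable proper Zariski closed $V$, so that it suffices to prove the statement for conjugacy classes $[f]\in\mathscr{P}_d^2(\bar{\mathbb{Q}})$ outside a proper Zariski closed subset; the number $\#\{z\in C_f(\bar{\mathbb{Q}})\,:\,\widehat{h}_f(z)\leq\varepsilon\}$ and the degree of $C_f$ are invariant under affine conjugacy, so nothing is lost. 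Then I would verify that the triple $(\mathbb{P}^2_S,f,\mathcal{O}_{\mathbb{P}^2}(1))$ together with $\mathcal{Y}:=\mathcal{C}$ (the closure in $\mathbb{P}^2\times\mathscr{P}_d^2$ of the affine critical set) satisfies the hypotheses of Theorem~\ref{prop-empty-interior}: it is a family of regular polynomial endomorphisms of degree $d$ all defined over $\mathbb{Q}$, and $\mathcal{C}$ is an irreducible hypersurface projecting surjectively onto $S$ which meets $H_\infty\times S$ properly (one restricts to a Zariski open $S^0$ over which $\mathcal{C}$ is flat, irreducible fiberwise, and $C_{f_t}\neq H_\infty$, as in the discussion preceding Lemma~\ref{lm:polynomialmubif}).

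The one nontrivial input is the existence of a non-empty analytic open subset $\Omega\subset S^0(\mathbb{C})$ contained in $\mathrm{supp}(\mu^{\mathrm{pol}}_{f,\mathcal{C}})$. This is exactly Lemma~\ref{lm:polynomialmubif}, whose proof in turn rests on Theorem~\ref{th-existence} (the existence of the good open set $\Omega\subset\mathrm{Poly}_d^2$ realizing all the assumptions of \S~\ref{sec-ass}) together with the identity $\widehat{T}_{f^{[P]}}=dd^cG_{f^{[P]}}$ as currents on $\mathbb{C}^{2P}\times\mathscr{P}_d^2(\mathbb{C})$, valid because the map is a regular polynomial endomorphism of the affine space, $P:=\mathcal{P}_d^2=\dim\mathscr{P}_d^2$. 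The point is that the $P$-properly $J_2$-prerepelling parameters produced in Section~\ref{sec-existence} lie in the affine part $\mathbb{C}^{2P}\times\mathscr{P}_d^2(\mathbb{C})$, so they belong to the support of $(dd^cG_{f^{[P]}})^{2P}\wedge[\mathcal{C}^{[P]}]$, and pushing forward by $\pi_{[P]}$ and projecting to the moduli space via the open map $\Pi$ gives the desired $\Omega$.

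Granting this, Theorem~\ref{prop-empty-interior} applies with $k=2$ and yields a proper Zariski closed $Z\subsetneq S$, a constant $\varepsilon(d)>0$ and an integer $N\geq1$ such that for every $t\in(S^0\setminus Z)(\bar{\mathbb{Q}})$ there is a proper closed subvariety $W_t\subsetneq C_{f_t}$ with $\deg(W_t)\leq N$ and
\[
\{z\in C_{f_t}(\bar{\mathbb{Q}})\,:\,\widehat{h}_{f_t}(z)\leq\varepsilon(d)\}\subseteq W_t.
\]
Since $C_{f_t}$ is a curve in $\mathbb{P}^2$ (possibly reducible, of degree $\leq d-1$ bounded uniformly in $t$), a proper closed subvariety of $C_{f_t}$ is a finite set of points; its cardinality is bounded by $\deg(W_t)\leq N$, so the set of small critical points has at most $B(d):=N$ elements. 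Pulling everything back to $\mathrm{Poly}_d^2$ via $\Pi$ and enlarging the removed locus to a proper Zariski closed subset $U^c\subset\mathrm{Poly}_d^2$ (the union of $\Pi^{-1}(Z)$, $\Pi^{-1}(S\setminus S^0)$ and $V$), we obtain the statement for all $f\in U(\bar{\mathbb{Q}})$.

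The main obstacle is genuinely Lemma~\ref{lm:polynomialmubif} — equivalently, the verification that the complex-analytic construction of Section~\ref{sec-existence} produces intersections living in the affine chart rather than escaping to infinity, so that the \emph{polynomial} bifurcation measure $\mu^{\mathrm{pol}}_{f,\mathcal{C}}$ (and not merely $\mu_{f,\mathcal{C}}$) has non-empty interior; this is what makes the positivity of all local Green functions usable and is the reason the uniformity statement is available in the polynomial setting. Everything else (descent through the principal bundle, identifying a proper subvariety of a plane curve with a finite point set, bookkeeping of the exceptional loci) is routine.
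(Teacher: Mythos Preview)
Your proposal is correct and follows essentially the same route as the paper: apply Theorem~\ref{prop-empty-interior} to the universal family over $\mathscr{P}_d^2$ with $\mathcal{Y}=\mathcal{C}$, invoke Lemma~\ref{lm:polynomialmubif} for the open set in $\mathrm{supp}(\mu^{\mathrm{pol}}_{f,\mathcal{C}})$, and descend to $\mathrm{Poly}_d^2$ via the $\mathrm{Aut}(\mathbb{A}^2)$-principal bundle. You make explicit the step the paper leaves implicit, namely that since $C_{f_t}$ is a curve, the proper subvariety $W_t$ of bounded degree produced by Theorem~\ref{prop-empty-interior} is a finite set of at most $N$ points.
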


\begin{proof}
Let $(\mathbb{P}^2_S,f,\mathcal{O}_{\mathbb{P}^2}(1))$ be the family introduced in Lemma~\ref{good-family-poly}. Lemma~\ref{lm:polynomialmubif} with Theorem~\ref{prop-empty-interior} imply that there are $\varepsilon>0$, $B\geq1$ and a non-empty Zariski open set $\mathscr{U}\subset\mathcal{V}_d^2$ such that for any $t\in\mathscr{U}(\bar{\mathbb{Q}})$ 
\[ \#  \{z\in C_{f_t}(\bar{\mathbb{Q}})\, : \ \widehat{h}_{f_t}(z)\leq \varepsilon\}\leq B.\]
Now, recall that if two maps $f,g\in \mathrm{Poly}_d^2(\bar{\mathbb{Q}})$ are conjugate by $\phi\in \mathrm{Aut}(\mathbb{A}^2)$, i.e. if $f\circ \phi=\phi\circ g$, then $\widehat{h}_f\circ \phi=\widehat{h}_g$ and $\phi^{-1}(C_f)=C_g$ so that
\[\{z\in C_{f}(\bar{\mathbb{Q}})\, : \ \widehat{h}_{f}(z)\leq \varepsilon\}=\phi\left(\{z\in C_{g}(\bar{\mathbb{Q}})\, : \ \widehat{h}_{g}(z)\leq \varepsilon\}\right).\]
As $\phi$ is an automorphism of $\mathbb{P}^2$, the conclusion follows.
\end{proof}
%
To conclude, it remains to prove Theorem~\ref{tm:uniformity}.

\begin{proof}[Proof of Theorem~\ref{tm:uniformity} ]
Observe that the statement is a direct consequence of Theorem~\ref{tm:gap} in $\bar{\Q}$: 
there exists a constant $B(d)\geq1$, $\varepsilon(d)>0$ and a non-empty Zariski open subset $U\subset \mathrm{Poly}_d^2$ such for any $f\in U(\bar{\mathbb{\Q}})$, we have
\[\# \{z\in C_f(\bar{\mathbb{Q}})\, : \ \widehat{h}_f(z)\leq \varepsilon(d)\}\leq B(d).\]
As preperiodic points of $f\in U(\bar{\mathbb{Q}})$ are those $z\in \mathbb{P}^2(\bar{\mathbb{Q}})$ with $\widehat{h}_f(z)=0$, this implies
	\[\#\mathrm{Preper}(f)\cap C_f\leq B(d),\]
for any $f\in U(\bar{\mathbb{Q}})$.

Now, let $f_0\in  U(\mathbb{\C})$ with $\#\mathrm{Preper}(f_0)\cap C_{f_0}\geq B(d)+1$. Write $\mathrm{Preper}(f_0)\cap C_{f_0}=\{z_1,\ldots,z_N\}$ and let $n_i>m_j\geq0$ be minimal such that $f_0^{n_i}(z_i)=f_0^{m_i}(z_i)$ for $1\leq i\leq N$.
For any $i$, the set
\[X_i:=\{(f,z)\in \mathrm{Poly}_d^2\times \mathbb{A}^2, \ f^{n_i}(z)=f^{m_i}(z) \ \text{and}\ \det(D_zf)=0\}.\]
is a closed subvariety of $\mathrm{Poly}_d^2\times \mathbb{A}^2$ which is defined over $\bar{\mathbb{Q}}$.
For $1\leq j\leq N$, let $p_j:\mathrm{Poly}_d^2\times (\mathbb{A}^2)^N\to\mathrm{Poly}_d^2\times \mathbb{A}^2$ be the map defined by $p_j(f,z_1,\ldots,z_N)=(f,z_j)$ and set 
\[X:=\bigcap_{j=1}^Np_j^{-1}(X_j).\]
Then $X$ is a closed subvariety of $\mathrm{Poly}_d^2\times (\mathbb{A}^2)^N$ which is defined over $\bar{\mathbb{Q}}$. Let 
\[\Delta:=\bigcup_{i\neq j}\{(f,z_1,\ldots,z_N)\in \mathrm{Poly}_d\times (\mathbb{A}^2)^N\, : \ z_i=z_j\}.\]
$\Delta$ is also a closed subvariety of $\mathrm{Poly}_d\times (\mathbb{A}^2)^N$.
Our assumption on $f_0$ guarantees that $(f_0,z_1,\ldots,z_N)\in X(\C)$ so that $(X\setminus \Delta)(\C)\neq\varnothing$. As $X$ is defined over $\bar{\mathbb{Q}}$, this implies $(X\setminus \Delta)(\bar{\mathbb{Q}})\neq\varnothing$. This is  a contradiction.  
\end{proof}

\bibliographystyle{short}
\bibliography{biblio}
\end{document}